\newcommand{ \directedisom } [0] { \stackrel{ \sim }{ \longrightarrow } }
\newcommand{ \defeq } [0] { \stackrel{ \textup{\tiny{def}} }{ = } }
\newcommand{ \iseq } [0] { \stackrel{ \textup{\tiny{?}} }{ = } }
\newcommand{ \textinsidemath } [1] { \hspace{5pt} \text{#1} \hspace{5pt} }
\newcommand{ \textaftermath } [1] { \hspace{5pt} \text{#1} }
\newcommand{ \presuperscript } [2] { {}^{ #1 } \hspace{-1pt} #2 }
\newcommand{ \suchthat } [0] { \hspace{5pt} \vert \hspace{5pt} }
\newcommand{ \N } [0] { \mathbf{N} }
\newcommand{ \Z } [0] { \mathbf{Z} }
\newcommand{ \Q } [0] { \mathbf{Q} }
\newcommand{ \R } [0] { \mathbf{R} }
\newcommand{ \C } [0] { \mathbf{C} }
\newcommand{ \F } [0] { \mathbf{F} }
\DeclareMathOperator{\identity}{id}
\DeclareMathOperator{\inc}{inc}
\DeclareMathOperator{\pr}{pr}
\DeclareMathOperator{\ac}{ac}
\DeclareMathOperator{\kernel}{ker}
\DeclareMathOperator{\image}{im}
\DeclareMathOperator{\myhom}{Hom}
\DeclareMathOperator{\myend}{End}
\DeclareMathOperator{\aut}{Aut}
\DeclareMathOperator{\inn}{Inn}
\DeclareMathOperator{\spec}{Spec}
\newcommand{ \linhomom } [1] {\myhom_{#1\textup{\tiny{-lin}}}}
\newcommand{ \linendom } [1] {\myend_{#1\textup{\tiny{-lin}}}}
\DeclareMathOperator{\trace}{Tr}
\DeclareMathOperator{\mydet}{det}
\DeclareMathOperator{\mychar}{char}
\DeclareMathOperator{\rank}{rank}
\DeclareMathOperator{\tr}{tr}
\DeclareMathOperator{\std}{std}
\newcommand{ \conjtr } [0] {*_{\textup{\tiny{std}}}}
\newcommand{ \opp } [1] { {#1}^{\textup{\tiny{opp}}} }
\newcommand{ \reducedtrace } [0] { {\trace}^{\textup{\tiny{red}}} }
\DeclareMathOperator{\gal}{Gal}
\newcommand{ \unr } [1] { { #1 }^{ \textup{\tiny{unr}} } }
\DeclareMathOperator{\frob}{Fr}
\DeclareMathOperator{\val}{val}
\newcommand{ \Ga } [0] { \mathbf{G}^{\textup{a}} }
\newcommand{ \Gm } [0] { \mathbf{G}^{\textup{m}} }
\DeclareMathOperator{\GL}{GL}
\DeclareMathOperator{\GU}{GU}
\DeclareMathOperator{\GSp}{GSp}
\DeclareMathOperator{\aff}{aff}
\DeclareMathOperator{\fixer}{Fix}
\DeclareMathOperator{\admissible}{Adm}
\DeclareMathOperator{\SUP}{sup}
\newcommand{ \antiid } [0] { \identity^{\vee} }
\newcommand{ \localmodel } [0] { \mathbf{M}^{\textup{\tiny{loc}}} }
\newcommand{ \biggermodel } [2] { \mathbf{M}^{(#1,#2)} }
\newcommand{ \biggergroup } [2] { \mathbf{J}^{(#1,#2)} }
\newcommand{ \biggermonoid } [2] { \widetilde{\mathbf{M}}^{(#1,#2)} }
\newcommand{ \universalgroup } [4] { \widetilde{\mathbf{J}}^{(#1,#2 \hspace{1pt} ; \hspace{1pt} #3,#4)} }
\newcommand{ \convolutionobject } [4] { \mathbf{Conv}^{(#1, #2 \hspace{1pt} ; \hspace{1pt} #3, #4)} }
\newcommand{ \reversedconvolutionobject } [4] { \presuperscript{\textup{rev}}{\mathbf{Conv}}^{(#1,#2 \hspace{1pt} ; \hspace{1pt} #3, #4)} }
\newcommand{ \convolutiontarget } [4] { \mathbf{P}^{(#1,#2 \hspace{1pt} ; \hspace{1pt} #3, #4)} }
\newcommand{ \affineflagvariety } [0] { {\mathcal{F}\ell}^{\aff} }
\newcommand{ \affinegrassmannian } [0] { {\mathcal{G}r}^{\aff} }
\newcommand{ \grassmannian } [0] { \mathbf{\textup{Gr}} }
\newcommand{ \iwahori } [0] {\mathcal{I}}
\newcommand{ \specialparahoric } [0] {\mathcal{K}}
\newcommand{ \integralaffineflagvar } [0] { \mathbf{Fl}^{\aff} }
\newcommand{ \integralaffineflagvarargs } [2] { \mathbf{Fl}^{(#1,#2)} }
\newcommand{ \integraliwahoriargs } [2] { \mathbf{Iw}^{(#1,#2)} }
\newcommand{ \integralaffineflagvarmonoid } [2] { \widetilde{\mathbf{Fl}}^{(#1,#2)} }
\newcommand{ \AFVdef } [1] {\textbf{AFV{#1}}}
\newcommand{ \AGVdef } [1] {\textbf{AG{#1}}}
\newcommand{ \OLMdef } [1] {\textbf{OLM{#1}}}
\newcommand{ \subOLMdef } [1] {\OLMdef{#1}\textbf{*}}
\newcommand{ \SLMdef } [1] {\textbf{SLM{#1}}}
\newcommand{ \BLMdef } [1] {\textbf{ELM{#1}}}
\newcommand{ \morita } [0] { \textup{Mrta} }
\newcommand{ \localreflex } [0] {E}
\newcommand{ \globalreflex } [0] { \mathbf{\localreflex} }
\newcommand{ \IC } [1] { \textup{IC}_{#1} }
\newcommand{ \ICbar } [1] { \overline{\textup{IC}}_{#1} }
\newcommand{ \nearbycycles } [0] { \textup{R}\Psi }
\newcommand{ \sstrace } [1] { \tau^{\textup{\tiny{ss}}}_{#1} }
\newcommand{ \catsets } [0] { \textup{Sets} }
\newcommand{ \catgroups } [0] { \textup{Groups} }
\newcommand{ \catalgebras } [1] { {#1}\textup{-Algebras} }
\newcommand{ \catmodules } [1] { {#1}\textup{-Modules} }
\newcommand{ \etale } [0] {\'{e}tale }
\newcommand{ \gortz } [0] {G\"{o}rtz}
\newcommand{ \fadeles} [0] {\mathbb{A}_{\textup{f}}}
\newcommand{ \introshimuravariety } [0] {Sh}
\newcommand{ \introshimuraintegralmodel } [0] {\mathbf{\introshimuravariety}}
\newcommand{ \biggermodelEZ } [0] { \mathbf{M} }
\newcommand{ \biggergroupEZ } [0] { \mathbf{J} }
\newcommand{ \gaitsgoryflag } [0] { \mathbf{Fl} }
\newtheorem*{maintheorem}{Main Theorem}
\newtheorem*{corollary}{Corollary}
\newtheorem*{remark}{Remark}
\newtheorem*{terminologynotation}{Terminology/Notation}
\newtheorem{lemma}{Lemma}[subsection]
\newtheorem{prop}[lemma]{Proposition}
\newtheorem*{classificationofunitaryinvolutions}{Classification of Unitary Involutions}
\newtheorem*{correspondencebetweeninvolutionsandforms}{Correspondence between Unitary Involutions and Hermitian Forms}
\newtheorem*{extractionofhermitianforms}{Extraction of Hermitian Forms}
\newtheorem*{projectivitylemma}{Local Criteria for Projectivity}
\newtheorem*{formalsmoothness}{Infinitesimal Lifting Property}
\newtheorem*{permanenceoffinitepresentedness}{Permanence of Finite-Presentedness}
\newtheorem*{homlocalization}{Localization of Hom-Sets}
\newtheorem*{latticedefs}{Equivalent Characterizations of Lattices}
\newtheorem*{minimalindependence}{Linear Independence of Minimal Generating Sets}
\newtheorem*{definition}{Definition}
\newtheorem*{textbooklocalmodeldef}{Definition: The Original Description of the Local Model}
\newtheorem*{altdesclocalmodel}{Definition: The Alternate Description of the Local Model}
\newtheorem*{BLMdef1}{Definition: The Enlarged Model (preliminary)}
\newtheorem*{BLMdef2}{Definition: The Enlarged Model (final)}
\newtheorem*{completeaffineflagvarietydef}{Definition: The Affine Flag Variety}
\newtheorem*{affinegrassmanniandef}{Definition: The Affine Grassmannian}
\newtheorem*{integralcompleteaffineflagvarietydef}{Definition: The Integral Affine Flag Variety}
\newtheorem*{integraliwahorisubgroupdef}{Definition: The Integral Iwahori Subgroup}
\newtheorem*{convolutionscheme}{Definition: The Convolution Scheme}
\newtheorem*{reversedconvolutionscheme}{Definition: The Reversed Convolution Scheme}
\newtheorem*{convolutionschemebase}{Definition: The Convolution Base}
\newtheorem*{convolutionproduct}{Definition: The Convolution Product}
\newtheorem*{reversedconvolutionproduct}{Definition: The Reversed Convolution Product}
\newtheorem*{BBDpullbackofperverse}{Beilinson-Bernstein-Deligne Proposition 4.2.5}
\newtheorem*{HNfibersum}{Haines-Ng\^{o} Proposition 10}
\newtheorem*{HNdescendsheaf}{Haines-Ng\^{o} Lemma 21}
\theoremstyle{definition}
\newtheorem*{assumption}{Assumption}
\begin{document}

\title[Kottwitz’s conjecture for some unitary Shimura varieties]{Kottwitz's nearby cycles conjecture for \\ a class of unitary Shimura varieties}

\author{Sean Rostami}

\address{
\begin{flushleft}
University of Wisconsin\newline
Department of Mathematics\newline
480 Lincoln Dr.\newline
Madison WI 53706
\end{flushleft}
}

\email{srostami@math.wisc.edu}
\email{sean.rostami@gmail.com}

\subjclass[2010]{14G35, 20C08, 14M15}

\begin{abstract}
This paper proves that the nearby cycles complexes on a certain family of PEL local models are \emph{central} with respect to the convolution product of sheaves on the corresponding affine flag varieties. As a corollary, the semisimple trace functions defined using the action of Frobenius on those nearby cycles complexes are, via the sheaf-function dictionary, in the centers of the corresponding Iwahori-Hecke algebras. This is commonly referred to as \emph{Kottwitz's Conjecture}. The reductive groups associated to the PEL local models under consideration are unramified unitary similitude groups with even dimension. The proof follows the method of \cite{HN}. Upon completion of the first version of this paper, Pappas and Zhu released a preprint (now published as \cite{PZ}) which contained within its scope the main theorem of this paper. However, the methods of \cite{PZ} are very different and some of the proofs from this paper have been useful in forthcoming work of Haines-Stroh.
\end{abstract}

\maketitle

\tableofcontents

\section*{Introduction}

The object of study in this paper is a certain projective $ \Z_p $-scheme $ \localmodel $, called a \emph{local model}, and the \emph{nearby cycles complex} $ \nearbycycles ( \overline{\Q}_{\ell} ) $ on $ \localmodel_{ \overline{\F}_p } $, a certain complex of \etale $ \ell $-adic sheaves.

\subsection*{Historical background and motivation}

The purpose of local models is to give \'{e}tale-local descriptions of various Shimura varieties in a way that uses only module-theoretic language and makes some questions and computations more tractable. A major step in computing the \emph{Hasse-Weil zeta function} of a Shimura variety is the computation of the trace of the Frobenius element considered as a linear map on the stalks of the nearby cycles complex $ \nearbycycles ( \overline{\Q}_{\ell} ) $ on $ \localmodel_{ \overline{\F}_p } $. In this paper, I prove that the nearby cycles complexes on a certain class of local models coming from unitary-type division algebras are \emph{central} with respect to a convolution product of \etale $ \ell $-adic sheaf complexes. A corollary is that the trace function associated to the Frobenius element as above is a specific, effectively computable element in the center of the corresponding Iwahori-Hecke algebra. This is known as \emph{Kottwitz's Conjecture}. A description of the local models considered and a precise statement of the theorem appear below.

Let $ \fadeles $ be the ring of finite adeles over $ \Q $. Let $ \textbf{G} $ be a linear algebraic group defined over $ \Q $, let $ h : \C^{\times} \rightarrow \textbf{G}_{\R} $ be an algebraic cocharacter and let $ \textbf{K} \subset \textbf{G}(\fadeles) $ be a compact open subgroup. The triple $ ( \textbf{G}, h, \textbf{K} ) $ (under certain additional hypotheses) is called a \emph{Shimura datum} and can be used to construct a Shimura variety $ \introshimuravariety $. This Shimura variety is defined over some number field $ \globalreflex $, called the \emph{reflex field}. Some Shimura varieties, for example those whose datum comes from a PEL datum (in particular, the case considered in this paper), have an integral model, i.e. a scheme $ \introshimuraintegralmodel $ over $ \mathcal{O}_{\globalreflex} $ such that $ \introshimuraintegralmodel_{\globalreflex} $ is the original Shimura variety $ \introshimuravariety $. The fibers over primes $ \mathfrak{p} \subset \mathcal{O}_{\globalreflex} $ of such an integral model $ \introshimuraintegralmodel $ are sometimes smooth (in which case $ \introshimuraintegralmodel $ is said to have \emph{good reduction} at $ \mathfrak{p} $) and sometimes non-smooth (in which case $ \introshimuraintegralmodel $ is said to have \emph{bad reduction} at $ \mathfrak{p} $). I now fix a prime $ \mathfrak{p} $ and consider only PEL (``polarization, endomorphisms, level-structure'') Shimura varieties with parahoric level-structure at $ \mathfrak{p} $. 

Rapoport and Zink \cite{RZ} constructed local models of many integral Shimura varieties $ \introshimuraintegralmodel $ within an axiomatic framework. In some cases, the objects constructed by \cite{RZ} were found to be unsatisfactory; some examples where $ \localmodel $ is not a flat scheme were provided by Pappas \cite{P} in the ramified unitary case and Genestier in the even-dimensional orthogonal case. Modifications were made by Pappas and Rapoport in subsequent papers \cite{PR1}, \cite{PR2}, and \cite{PR3}, and evidence that these modifications produce flat models is supplied by Smithling's papers \cite{Sm1}, \cite{Sm2} and \cite{Sm3}, which specifically address both of the problematic examples previously mentioned. Nonetheless, it follows from \cite{Go1} that the local models considered below are flat.

The Hasse-Weil zeta function is defined as a product over all primes $ \mathfrak{p} \subset \globalreflex $ of certain local factors $ Z_{\mathfrak{p}} ( s, \introshimuravariety ) $, and a standard manipulation shows that each local factor is determined by the alternating sum of traces on $ \mathbb{H}^{\bullet} ( \introshimuravariety_{\overline{\Q}_p}; \overline{\Q}_{\ell} )^{\Gamma_0} $ of $ \frob^j $ for each $ j > 0 $, where $ \Gamma_0 \subset \gal ( \overline{\Q}_p / \globalreflex_{ \mathfrak{p} } ) $ is the inertia subgroup ($ p \in \Z $ is the prime below $ \mathfrak{p} $), $ \frob \in \gal ( \overline{\Q}_p / \globalreflex_{ \mathfrak{p} } ) $ is an arbitrary lift of the Frobenius element, and $ \frob $ acts on cohomology via its action on $ \introshimuravariety_{\overline{\Q}_p} $. The $ \Gamma_0 $-invariants operation makes these traces difficult to understand, and Rapoport suggests modifying it as follows: a theorem of Grothendieck says that there is a finite-index subgroup of $ \Gamma_0 $ which acts by unipotent operators on $ \mathbb{H}^{\bullet} ( \introshimuravariety_{\overline{\Q}_p}; \overline{\Q}_{\ell} ) $ and so, after replacing each of these cohomologies by the associated graded of a suitable filtration, $ \Gamma_0 $ acts via a finite quotient and the $ \Gamma_0 $-invariants functor is exact. This new alternating trace, on the $ \Gamma_0 $-invariants of such gradings, is called the \emph{semisimple trace} and, in \S2 of \cite{R} it is shown, assuming Deligne's Weight-Monodromy Conjecture, how the original trace may be recovered from this semisimple trace.

By using nearby cycles, cohomology of $ \introshimuravariety_{\overline{\Q}_p} $ can be replaced by cohomology of $ \introshimuraintegralmodel_{\overline{\F}_p} $. Let $ X $ be a proper $ \Z_p $-scheme and let $ p : X_{\Q_p} \rightarrow X $, $ q : X_{\F_p} \rightarrow X $, $ c : X_{\overline{\Q}_p} \rightarrow X_{\Q_p} $ be the canonical maps. The nearby cycles complex on $ X_{\overline{\F}_p} $ is defined to be the derived complex $ \nearbycycles ( \overline{\Q}_{\ell} ) \defeq \overline{q}^{*} ( R \overline{p}_{*} ( c^{*} ( \overline{\Q}_{\ell} ) ) ) $, where $ \overline{\Q}_{\ell} $ denotes the constant \etale $ \ell $-adic sheaf on $ X_{\Q_p} $. By Base Change for proper morphisms, $ \mathbb{H}^{\bullet} ( X_{\overline{\Q}_p} ; \overline{\Q}_{\ell} ) = \mathbb{H}^{\bullet} ( X_{\overline{\F}_p} ; \nearbycycles ( \overline{\Q}_{\ell} ) ) $ and the action by $ \gal ( \overline{\Q}_p / \Q_p ) $ on the left is consistent with the action by $ \gal ( \overline{\F}_p / \F_p ) $ on the right.

If $ \introshimuraintegralmodel $ is proper over $ \mathcal{O}_{\globalreflex} $ then, because of the previous two paragraphs and the Grothendieck-Lefschetz Trace Formula, applied to $ \nearbycycles ( \overline{\Q}_{\ell} ) $ on $ \introshimuraintegralmodel_{\overline{\F}_p} $, the main objective is to understand the alternating trace of $ \frob $ on the $ \Gamma_0 $-invariants of the semisimplified cohomology stalks of the complex $ \nearbycycles ( \overline{\Q}_{\ell} ) $ over each point of $ \introshimuraintegralmodel ( \F_{\mathfrak{p}} ) $, where $ \F_{\mathfrak{p}} $ is the residue field of $ \globalreflex_{ \mathfrak{p} } $. By the \'{e}tale-local equivalence mentioned earlier, it is the same to calculate those traces over $ \localmodel ( \F_{\mathfrak{p}} ) $ instead. The resulting function $ \sstrace{} : \localmodel ( \F_{\mathfrak{p}} ) \rightarrow \overline{\Q}_{\ell} $, the main object of study, is called the \emph{semisimple trace function}.

Haines and Ng\^{o} \cite{HN} consider the split groups $ \GL $ and $ \GSp $ and the standard local models corresponding to these groups. They prove an instance of Kottwitz's Conjecture, that the semisimple trace function $ \sstrace{} $ in this situation is essentially the Bernstein basis function $ z_{\mu} $ in the center of the corresponding Iwahori-Hecke algebra (here $ \mu $ is a certain cocharacter occurring in the precise definition of the local model, which is omitted here). In fact, \cite{HN} proves more--that every member of a family of functions, each of which is defined similarly to $ \sstrace{} $, is a specific linear combination of Bernstein basis functions; see Theorem 11 in \cite{HN} for a precise statement. The strategy of the proof, which I follow closely in this paper, is:
\begin{enumerate}
\setlength{\itemsep}{7pt}
\item Construct an ind-scheme $ \biggermodelEZ $ over $ \Z_p $, which contains $ \localmodel $ as a closed subscheme and whose extension to $ \Q_p $, resp. to $ \F_p $, is the affine grassmannian, resp. full affine flag variety. This requires finding alternate descriptions of $ \localmodel $ that are more compatible with the usual definitions of affine flag varieties as unions of sets of lattice-chains.

\item Via the embedding of $ \localmodel_{\F_p} $ into the full affine flag variety, prove that the semisimple trace function $ \sstrace{} $ is an element of the Iwahori-Hecke algebra $ \mathcal{H} $ and construct products $ *_{\overline{\Q}_p} $ and $ *_{\overline{\F}_p} $ of complexes of \etale $ \ell $-adic sheaves on $ \biggermodelEZ_{\overline{\Q}_p} $ and $ \biggermodelEZ_{\overline{\F}_p} $ such that (via the sheaf-function dictionary) $ *_{\overline{\F}_p} $ categorifies the convolution product in $ \mathcal{H} $.

\item Show that the nearby-cycles functor $ \nearbycycles $ is a ``homomorphism'' with respect to these two products and that the product of the relevant complexes on $ \mathbf{M}_{\overline{\Q}_p} $ is commutative. It follows that the product on $ \localmodel_{\overline{\F}_p} $ of the relevant complexes is also commutative.
\end{enumerate}

On the other hand, Gaitsgory \cite{Ga} proves a similar result (albeit not in the context of Shimura varieties) for split connected reductive $ \F_p((t)) $-groups $ G $. One of the objects occurring in \cite{Ga} is an ind-scheme $ \gaitsgoryflag_X $, reportedly due to Beilinson, defined over a smooth curve $ X $ such that one fiber is the full affine flag variety for $ G $ and every other fiber is essentially the affine grassmannian for $ G $; see Proposition 3 of \cite{Ga} for a precise statement. The main result of \cite{Ga} is that the nearby cycles functor on $ \gaitsgoryflag_X $ induces the isomorphism (the composition of the Satake and Bernstein isomorphisms) from the special parahoric Hecke algebra $ \mathcal{H} ( G ( \F_p((t)) ); G ( \F_p[[t]] ) ) $ to the center of the Iwahori-Hecke algebra; see Theorem 1 in \cite{Ga} for a precise statement.

\subsection*{Subject of this paper and statement of the main theorem}

The Shimura data that I consider are similar to those occuring in Kottwitz \cite{Ko1}, except I consider \emph{Iwahori} level structure rather than \emph{hyperspecial maximal} level structure. Let $ F \supset \Q $ be an imaginary quadratic extension with ring of integers $ \mathcal{O} $. Let $ D $ be a central division $ F $-algebra and suppose that $ D $ has a unitary (2nd kind) involution $ * $. To this pair $ ( D, * ) $ is attached a certain similitude group $ \mathbf{G} $ defined over $ \Q $. Let $ 2 \neq p \in \Z $ be a prime for which $ G = \mathbf{G}_{\Q_p} $ is quasi-split and split over $ \unr{\Q}_p $. Since the case when $ p $ \emph{splits} in $ \mathcal{O} $ is known (see Haines \cite{Ha2}), I assume that $ p $ is \emph{inert} in $ \mathcal{O} $. After selecting a minuscule cocharacter $ \mu $ (which will ultimately be created using the cocharacter $ h $ mentioned previously), one can define the local model $ \localmodel $, an $ \mathcal{O}_{ \localreflex } $-scheme for a certain extension $ \localreflex / \Q_p $ which is again called the \emph{reflex field} and depends on the $ G ( \overline{\Q}_p ) $-conjugacy class of $ \mu $. By inertness, $ F_p = F \otimes_{\Q} \Q_p $ is a \emph{field}, the completion of $ F $ at $ p $, and $ D \otimes_{\Q} \Q_p = M_d ( F_p ) $. It follows that $ G $ is a (not necessarily quasi-split) unitary similitude group and is quasi-split if and only if the involution $ *_p $ is isomorphic to the standard one. See \S\ref{SSalwaystranspose} for more details. Moreover, the reflex field $ \localreflex $ must be either $ F_p $ or $ \Q_p $, and since $ F_p $ is also the splitting field of $ G $, the case of $ \localreflex = F_p $ reduces to the case of $ \GL $ and I may assume without loss of generality that $ \localreflex = \Q_p $. This also implies that the dimension $ d $ is \emph{even} (which makes non-quasisplit $ G $ a genuine possibility) and that the \emph{signature} of $ \mu $ is $ ( d/2, d/2 ) $. See \S\ref{SSreflexfield} for more details.

\gortz's idea that $ \localmodel $ can frequently be embedded into an appropriate affine flag variety holds in this case and the semisimple trace function $ \sstrace{} $ on $ \localmodel ( \F_p ) $ can therefore be interpreted as an element of the Iwahori-Hecke algebra $ \mathcal{H} $ of $ \GU_d $. Kottwitz's Conjecture is that this element of $ \mathcal{H} $ is a certain scalar multiple of the Bernstein basis function $ z_{\mu} $ associated to $ \mu $. By Haines's characterization (Theorem 5.8 in \cite{Ha1}) of minuscule Bernstein basis functions, Kottwitz's Conjecture (in the case at hand) follows from the main theorem of this paper:
\begin{maintheorem}
Suppose $ 2 \neq p \in \Z $ is \emph{inert} in $ \mathcal{O} $ and let $ \localmodel $ be the local model over $ \Z_p $ associated to the unitary-type division algebra datum $ ( D, *, \mu ) $ as above, and suppose that the similitude group attached to $ ( D, * ) $ is quasi-split.

Then $ \localmodel $ is isomorphic to the standard local model corresponding to the (unramified) unitary similitude group $ \GU_d $ of the extension $ F_p / \Q_p $, and the nearby cycles complex $ \nearbycycles ( \overline{\Q}_{\ell} ) $ on $ \localmodel_{ \overline{\F}_p } $, considered as a complex on the full affine flag variety $ \affineflagvariety_{ \overline{\F}_p } $ of $ \GU_d $, is \emph{central} with respect to the convolution product $ * $ of sheaf complexes, i.e. $ \nearbycycles ( \overline{\Q}_{\ell} ) * \mathcal{C}^{\bullet} \cong \mathcal{C}^{\bullet} * \nearbycycles ( \overline{\Q}_{\ell} ) $ naturally for every perverse Iwahori-equivariant complex of \etale $ \ell $-adic sheaves $ \mathcal{C}^{\bullet} $ on $ \affineflagvariety_{ \overline{\F}_p } $.

Via the sheaf-function dictionary, the associated semisimple trace function $ \sstrace{} $ is therefore a \emph{central} element of the Iwahori-Hecke algebra of $ \GU_d $.
\end{maintheorem}
Kottwitz's Conjecture, whenever it is true, allows $ \sstrace{} $ to be computed explicitly: the Bernstein basis functions can be computed in a systematic way using only some well-known information about linear algebraic groups and Coxeter groups. This can be done on a computer or even by hand, in low rank cases.

Upon completion of the first version of this paper, Pappas and Zhu released a preprint (now published as \cite{PZ}) which proved Kottwitz's Conjecture in all cases where the group is unramified. This includes the cases considered in this paper. This paper and \cite{PZ} constitute the first proofs of Kottwitz's conjecture in the \emph{non-split} case. However, the methods of \cite{PZ} are very different from those of this paper and some of the proofs from this paper have been useful in forthcoming work of Haines-Stroh. In addition, this paper also supplies some details that were suppressed in \cite{HN}, in both general and specific contexts due to analogies between unitary groups and symplectic groups.

\subsection*{Brief outline of this paper}

\textbf{In \S\ref{Snotation}}, I merely fix some notation and and record a few simple facts about tensor products of certain fields. \textbf{In \S\ref{SPELdatum}} (page \pageref{SPELdatum}), I choose the objects that will eventually be used to construct the local models, set some conventions, and recall various classical results about simple algebras, hermitian forms, involutions, etc. \textbf{In \S\ref{Scommalglemmas}} (page \pageref{Scommalglemmas}), I recall and also prove some commutative algebra lemmas that are used throughout the paper. \textbf{In \S\ref{Sdefinitionofthelocalmodel}} (page \pageref{Sdefinitionofthelocalmodel}), I recall the definition of the local model as it appears in \cite{RZ} and rephrase parts of the definition in equivalent ways that are more obviously related to affine flag varieties. \textbf{In \S\ref{Smorita}} (page \pageref{Smorita}), I analyze what happens to these conditions after applying Morita equivalence to change the target categories of $ \localmodel $ from $ \catmodules{ M_d( \mathcal{O} ) } $ to $ \catmodules{ \mathcal{O} } $. \textbf{In \S\ref{Sdefinitionofthelargermodels}} (page \pageref{Sdefinitionofthelargermodels}), I define an ind-scheme $ \biggermodelEZ $, prove some basic properties about it, and prove that it is a degeneration from the affine grassmannian over $ \Q_p $ to the full affine flag variety over $ \F_p $. \textbf{In \S\ref{Sautomgroup}} (page \pageref{Sautomgroup}), I define an ind-group $ \biggergroupEZ $ which acts on $ \biggermodelEZ $ and which is similarly an interpolation between the special parahoric over $ \Q_p $ to the Iwahori over $ \F_p $. I prove that the subgroups comprising $ \biggergroupEZ $ are \emph{smooth}, which is critical in order for the semisimple trace function to be an element of the Iwahori-Hecke algebra. I also give Schubert cell decompositions for the subschemes comprising $ \biggermodelEZ $. \textbf{In \S\ref{Stracefunction}} (page \pageref{Stracefunction}), I set notation and conventions for the sheaf theory I will use, I recall the precise definition for the semisimple trace function $ \sstrace{} $ that is the subject of this paper, and I verify that $ \sstrace{} $ can be interpreted as an element of the Iwahori-Hecke algebra for $ \GU_d $. I then restate (without proof) the main theorem of the paper, and show how Kottwitz's conjecture follows from the centrality of the nearby cycles complexes. The proof of the main theorem occurs in \S\ref{Smainproof} and requires the material from \S\ref{Sconvolutiondiagram}, \S\ref{Sconvdiagramproperties} and \S\ref{Sfusionproduct}. \textbf{In \S\ref{Sconvolutiondiagram}} (page \pageref{Sconvolutiondiagram}), following a well-known general recipe from \cite{Lu}, I define several objects and morphisms, the totality of which is commonly referred to as a ``convolution diagram'', and prove various properties (representability, finite-type, etc.) about those objects and morphisms. \textbf{In \S\ref{Sconvdiagramproperties}} (page \pageref{Sconvdiagramproperties}), I define an ind-group $ \widetilde{\biggergroupEZ} $, similar in spirit to $ \biggergroupEZ $, which acts on some objects in the convolution diagram, and I prove several critical and non-trivial facts about the action of $ \widetilde{\biggergroupEZ} $ on the objects in the convolution diagram. I also mention some important simplifications that occur over $ \Q_p $ and $ \F_p $. \textbf{In \S\ref{Sfusionproduct}} (page \pageref{Sfusionproduct}), I use the convolution diagram to define a product operation between complexes of \etale $ \ell $-adic sheaves, and I explain why this product induces the usual convolution product in the Hecke algebra. \textbf{In \S\ref{Smainproof}} (page \pageref{Smainproof}), I use the material from \S\ref{Sconvolutiondiagram}, \S\ref{Sconvdiagramproperties} and \S\ref{Sfusionproduct} to prove the main theorem.

The main difficulties in this proof occur in \S\ref{Sautomgroup} and \S\ref{Sconvdiagramproperties}. A collection of properties (involving connectedness, smoothness, and transitivity) related to the groups $ \biggergroupEZ $ and $ \widetilde{\biggergroupEZ} $ are necessary in order to construct the convolution product and needed to be proved from scratch.

\subsection*{Acknowledgements}

I thank my Ph.D. adviser, Thomas Haines, for suggesting this question in the first place, for all his support and advice during and after graduate school, for many helpful discussions during the original resolution of this question, and for numerous improvements to the final version (including the recent identification of an error in one of the proofs and the suggestion of a way to fix it). I thank Niranjan Ramachandran for urging me to finally submit this paper for publication. Most of the additions and improvements to this version were accomplished during the Fall 2014 semester at MSRI (DMS-1440140), in consultation with Matthias Strauch, and I also thank the organizers and staff for inviting me to the program and for all their help throughout the semester. I thank the staff of Sel. Math. New Ser. for all their assistance and the referee for a helpful report. Finally, I thank Selecta Kojak for all the good times and great riddims.

\section{Notation and conventions} \label{Snotation}

The symbols $ \N $, $ \Z $, $ \Q $, $ \R $, and $ \C $ denote respectively the natural numbers, the integers, the rational numbers, the real numbers, and the complex numbers.

Let $ F $ be a totally imaginary quadratic number field. Let $ \mathcal{O} $ be the ring of integers in $ F $. Let $ p \in \Z $ be a prime that is \emph{inert} in $ \mathcal{O} $. \emph{Assume that $ p \neq 2 $ (this is used in the proof of Proposition \ref{Palwaystranspose} (page \pageref{Palwaystranspose}) and Proposition \ref{PsmoothJ} (page \pageref{PsmoothJ}).} Set $ F_p \defeq F \otimes_{\Q} \Q_p $. Note that $ F_p $ is an unramified quadratic extension of $ \Q_p $ by \S6 of \cite{Sc}.

Let $ D $ be a central division $ F $-algebra such that $ \dim_F ( D ) = d^2 $. Let $ * : D \rightarrow D $ be a unitary (or ``2nd kind'') involution.  Denote by $ \F_p $ the finite field with $ p $ elements. Fix field embeddings $ \overline{\Q} \hookrightarrow \C $ and $ \overline{\Q} \hookrightarrow \overline{\Q}_p $. Set $ D_p \defeq D \otimes_{\Q} \Q_p $ and let $ *_p $ denote the involution of $ D_p $ induced by $ * $. Note that $ D_p $ is a central simple $ F_p $-algebra by (ii)(e) on page 374 of \cite{Sc}. The automorphism of $ F_p $ induced by the non-trivial element of $ \gal ( F / \Q ) $ is the non-trivial element of $ \gal ( F_p / \Q_p ) $ and so the involution $ *_p $ is a unitary involution.

All rings are assumed to have a multiplicative identity $ 1 $. Denote by $ \Ga $ and $ \Gm $ the additive and multiplicative algebraic groups $ R \mapsto ( R, + ) $ and $ R \mapsto ( R^{\times}, \cdot ) $ over a base ring that will always be clear from context. If $ R $ is any ring, the category of \emph{commutative} $ R $-algebras is denoted $ \catalgebras{R} $. Any $ R $-algebra that is also a field is called an ``$ R $-field''. A complex $ \cdots \rightarrow M_i \rightarrow M_{i+1} \rightarrow \cdots $ of modules is sometimes denoted more concisely by $ M_{\bullet} $.

Let $ K / k $ be a separable quadratic field extension with non-trivial Galois automorphism $ x \mapsto \overline{ x } $. If $ X $ is a matrix with entries $ X_{i,j} \in K $ then $ \overline{X} $ denotes the matrix with entries $ \overline{X}_{i,j} $ and $ X $ is called ``$ K / k $-hermitian'' iff $ \overline{X}^{\tr} = X $. An involution on a central $ K $-algebra $ A $ is called ``$ K / k $-unitary'' iff its restriction to $ Z ( A ) = K $ is $ a \mapsto \overline{a} $. The standard involution $ X \mapsto \overline{X}^{\tr} $ on square $ K $-matrices is sometimes called $ \conjtr $. A $ k $-bilinear form $ \phi : K^d \times K^d \rightarrow K $ is called ``$ K / k $-hermitian'' iff $ \phi ( x v, w ) = x \phi ( v, w ) = \phi ( v, \overline{x} w ) $ for all $ v, w \in K^d $, $ x \in K $. I frequently denote by $ \antiid $ the ``anti-identity'' matrix, the matrix with $ 1 $ in the anti-diagonal entries and $ 0 $ in all other entries, with dimension implied by context.

\section{The PEL datum} \label{SPELdatum}

\subsection{The global PEL datum} \label{SSglobalPELdatum}
The starting point is to form a global PEL datum
\begin{equation*}
( B, \iota, V, \psi )
\end{equation*}
using the pair $ ( D, * ) $ as follows:
\begin{itemize}
\setlength{\itemsep}{7pt}
\item finite-dimensional simple $ \Q $-algebra $ B \defeq \opp{D} $,

\item \emph{positive} involution $ \iota : B \rightarrow B $ defined by $ \iota(b) \defeq \xi \cdot b^{*} \cdot \xi^{-1} $ for certain $ \xi \in D $ satisfying $ \xi^{*} = - \xi $ (see \S5.2 of \cite{Ha2} for existence of such an element),

\item finite-dimensional \emph{left} $ B $-module $ V \defeq D $ with $ B $ acting by $ b \star v \defeq v b $, and

\item alternating $ \Q $-bilinear form $ \psi : V \times V \rightarrow \Q $ defined by $ \psi ( v, w ) \defeq \reducedtrace_{D/\Q} ( v \cdot \xi \cdot w^{*} ) $ which automatically satisfies $ \psi( b \star v, w ) = \psi( v, \iota(b) \star w ) $.
\end{itemize}

\begin{remark}
Two \emph{different} products $ D \times D \rightarrow \Q $ as above can induce the \emph{same} involution $ \iota $ on $ B $, due to the fact that the products induce involutions on $ \linendom{F} ( D ) $, of which $ B $ is only a proper subalgebra.
\end{remark}

From the datum $ ( B, \iota, V, \psi ) $ one can define an affine algebraic $ \Q $-group $ G $:
\begin{definition}
The functor
\begin{equation*}
G : \catalgebras{\Q} \longrightarrow \catgroups
\end{equation*}
assigns to each commutative $ \Q $-algebra $ R $ the group of all $ g \in \linendom{B} ( V ) \otimes_{\Q} R $ for which there exists a scalar $ c(g) \in R^{\times} $ such that $ \psi_R ( g v, g w ) = c(g) \cdot \psi_R ( v, w ) $ for all $ v, w \in V $.
\end{definition}
Note that $ \linendom{B} ( V ) $ here is simply $ D $ acting on $ B = \opp{D} $ on the left.

By definition of $ \psi $, another description of $ G $ is
\begin{equation*}
G(R) = \{ x \in D \otimes_{\Q} R \suchthat g^{*} \cdot g \in R^{\times} \}
\end{equation*}

I select also an $ \R $-algebra homomorphism
\begin{equation*}
h : \C  \rightarrow \linendom{B} ( V ) \otimes_{\Q} \R = D
\otimes_{\Q} \R
\end{equation*}
such that
\begin{itemize}
\setlength{\itemsep}{5pt}
\item $ h(\overline{z}) = h(z)^{*} $

\item $ B \otimes_{\Q} \R \rightarrow B \otimes_{\Q} \R $ defined by $ b \mapsto h(i)^{-1} \cdot b^{*} \cdot h(i) $ is a \emph{positive} involution
\end{itemize}
Any $ h $ satisfying the first property can be used to define a cocharacter
\begin{equation*}
\mu = \mu_h : \Gm_{\C} \rightarrow G_{\C}.
\end{equation*}
See \S\ref{SSreflexfield} (page \pageref{SSreflexfield}) for details. Set $ B_p \defeq B \otimes_{\Q} \Q_p $ and similarly for $ V, \iota, \psi $. Let $ \mathcal{O}_B \subset B = \opp{D} $ be a maximal order such that $ \mathcal{O}_{B_p} \defeq \mathcal{O}_B \otimes_{\Z} \Z_p $ is a maximal order $ \mathcal{O}_{B_p} $ in $ B_p $.

\begin{assumption}
$ \iota_p ( \mathcal{O}_{B_p} ) = \mathcal{O}_{B_p} $.
\end{assumption}
This guarantees that if $ \Lambda \subset V_p $ is an $ \mathcal{O}_{B_p} $-submodule then the dual module
\begin{equation*}
\widehat{\Lambda} \defeq \{ x \in V_p \suchthat \psi_p ( \Lambda, x )
\subset \Z_p \}
\end{equation*}
is again an $ \mathcal{O}_{B_p} $-submodule. This assumption is also used in the proof of Proposition \ref{Palwaystranspose} (page \pageref{Palwaystranspose}).

I will also need to select a certain doubly-infinite chain
\begin{equation*}
\Lambda_{\bullet} = ( \cdots \subset \Lambda_{-1} \subset \Lambda_0 \subset \Lambda_1 \subset \cdots )
\end{equation*}
of $ \mathcal{O}_{B_p} $-lattices in $ V_p = D_p $. See \S\ref{SSoriginallocalmodeldef} (page \pageref{SSoriginallocalmodeldef}) for more details.

The datum used to define the local model is the tuple
\begin{equation*}
( B, \iota, V, \psi, \mu, \mathcal{O}_B, F, \Lambda_{\bullet} )
\end{equation*}

\subsection{Standard theorems about involutions, hermitian forms, etc.} \label{SSalwaystranspose}

\begin{prop} \label{Palwaysmatrix}
The central simple $ F_p $-algebra $ D_p $ is split, i.e. $ D_p \cong M_d(F_p) $ as $ F_p $-algebras.
\end{prop}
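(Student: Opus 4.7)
The plan is to show that the Brauer class $ [D_p] \in \textup{Br}(F_p) $ vanishes, since $ D_p $ is central simple over $ F_p $ of dimension $ d^2 $, so any split such algebra is necessarily $ M_d(F_p) $.

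The key global input is the classical theorem of Albert (extended by Riehm and Scharlau; see e.g.\ Knus--Merkurjev--Rost--Tignol, \emph{Book of Involutions}, Theorem 3.1): a central simple $ F $-algebra admits an involution of the second kind with respect to $ F/\Q $ if and only if its corestriction $ \textup{cor}_{F/\Q}[D] $ vanishes in $ \textup{Br}(\Q) $. By hypothesis $ D $ carries such an involution $ * $, so $ \textup{cor}_{F/\Q}[D] = 0 $. Localizing at the inert prime $ p $ (where corestriction commutes with base change), I obtain $ \textup{cor}_{F_p/\Q_p}[D_p] = 0 $ in $ \textup{Br}(\Q_p) $.

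Local class field theory identifies $ \textup{Br}(\Q_p) $ and $ \textup{Br}(F_p) $ with $ \Q/\Z $ via the invariant maps $ \textup{inv}_{\Q_p} $ and $ \textup{inv}_{F_p} $, and provides the compatibility $ \textup{inv}_{\Q_p} \circ \textup{cor}_{F_p/\Q_p} = \textup{inv}_{F_p} $. Combining with the previous paragraph, $ \textup{inv}_{F_p}[D_p] = 0 $, hence $ [D_p] = 0 $, hence $ D_p \cong M_d(F_p) $.

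The substantive ingredient is the Albert--Riehm--Scharlau criterion; everything else is a clean application of local class field theory. As a sanity check, the weaker elementary argument --- extract $ \sigma^{*}[D_p] = -[D_p] $ directly from the isomorphism $ \sigma^{*} D_p \cong \opp{D_p} $ induced by $ *_p $, and combine with the triviality of the $ \gal(F_p/\Q_p) $-action on $ \textup{Br}(F_p) $ for the unramified extension $ F_p/\Q_p $ --- yields only $ 2[D_p] = 0 $, which is inadequate precisely when $ d $ is even (the case eventually of interest in the paper). So the full corestriction criterion really is needed.
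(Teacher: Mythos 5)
Your proof is correct, and it takes a genuinely different route from the paper. The paper proves the result entirely locally: by Wedderburn, $D_p \cong M_k(D')$ for a central division $F_p$-algebra $D'$; by Scharlau's Corollary 8.8.3 the unitary involution on $D_p$ forces one on $D'$; and then Albert's local theorem (Scharlau, Theorem 10.2.2(ii)) says the only central division algebra over a local field carrying a unitary involution is the field itself, so $D' = F_p$. You instead start from the \emph{global} datum $(D,*)$, invoke the Albert--Riehm--Scharlau corestriction criterion to conclude $\textup{cor}_{F/\Q}[D] = 0$, localize at the inert prime (where the single prime above $p$ makes the local corestriction identity clean), and then use the local class field theory compatibility $\textup{inv}_{\Q_p} \circ \textup{cor}_{F_p/\Q_p} = \textup{inv}_{F_p}$ to kill the invariant. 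Both are standard tools; your version replaces structure theory of local division algebras with Brauer-group and class-field-theory machinery. A minor but real difference in scope: the paper's proof uses only local information (the existence of $*_p$), whereas yours quotes a global theorem before localizing — the paper's remark that Albert's theorem is itself the local version of this story is essentially why both work. Your closing observation that the naive $\sigma^{*}[D_p] = -[D_p]$ argument only gives $2[D_p]=0$, and therefore fails precisely in the even-dimensional case the paper cares about, is a worthwhile sanity check and shows why the sharper criterion is genuinely needed.
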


\begin{proof}
By Wedderburn's Theorem, the central simple $ F_p $-algebra $ D_p $ is a matrix algebra over some central division $ F_p $-algebra. Since $ D_p $ has a unitary involution, Corollary 8.8.3 on page 306 of \cite{Sc} states that this division algebra has a unitary involution also. But Albert's Theorem (Theorem 10.2.2(ii) on page 353 of \cite{Sc}) states that, over a local field, the only division $ F_p $-algebra with a unitary involution is $ F_p $ itself.
\end{proof}

\begin{remark}
This result is explicitly part of Landherr's theorem (Theorem 10.2.4 on page 355 of \cite{Sc}), which uses the hypothesis that $ p $ is inert. In the above proof, the assumption that $ p $ is inert guarantees that $ D \otimes_{\Q} \Q_p $ is a \emph{simple} algebra.
\end{remark}

This means that $ G_{\Q_p} $ is \emph{always} a unitary similitude group, although depending on $ * $, perhaps $ G_{\Q_p} $ is not quasi-split.

Using Proposition \ref{Palwaysmatrix}, fix some isomorphism $ D_p \cong M_d ( F_p ) $, and consider $ *_p $ and $ \iota_p $ as unitary involutions on $ B_p = \opp{ M_d ( F_p ) } $ via this isomorphism.

In \S\ref{SSmoritaandproducts} (page \pageref{SSmoritaandproducts}), I will use the following proposition:
\begin{prop} \label{Palwaystranspose}
There exists an $ F_p $-algebra automorphism (necessarily inner by Skolem-Noether) of $ B_p = \opp{ M_d(F_p) } $ that identifies $ \iota_p $ with $ \conjtr $ and $ \mathcal{O}_{B_p} $ with $ \opp{ M_d ( \mathcal{O}_{F_p} ) } $.
\end{prop}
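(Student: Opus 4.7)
The plan is to convert the claim about involutions on $B_p$ into a statement about nondegenerate Hermitian forms on $F_p^d$, apply the well-known classification of Hermitian forms over the unramified quadratic extension $F_p/\Q_p$, and then realize the desired identification via an integral change of basis for a unimodular Hermitian lattice.

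First, using Proposition \ref{Palwaysmatrix} to identify $D_p$ with $M_d(F_p)$ and hence $B_p$ with $\opp{M_d(F_p)}$, I would invoke the ``Correspondence between Unitary Involutions and Hermitian Forms'' (whose proof presumably relies on the assumption $p \neq 2$) to attach to $\iota_p$ a nondegenerate $F_p/\Q_p$-Hermitian form $\phi_p$ on $F_p^d$, determined up to a $\Q_p^\times$-scalar. Under this dictionary, the involution $\conjtr$ corresponds to the identity Hermitian form, two involutions are inner-conjugate if and only if the associated forms are similar, and the standing assumption $\iota_p(\mathcal{O}_{B_p}) = \mathcal{O}_{B_p}$ translates (after an initial inner conjugation moving the maximal order $\mathcal{O}_{B_p}$ onto $\opp{M_d(\mathcal{O}_{F_p})}$, which is possible because all maximal orders of $M_d(F_p)$ are conjugate to this one) into the requirement that $\mathcal{O}_{F_p}^d \subset F_p^d$ is self-dual for $\phi_p$ up to a $\Q_p^\times$-scalar, i.e.\ that $\phi_p$ can be rescaled to take integral values with unimodular Gram matrix.

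Next I would invoke two standard classification results. Hermitian forms of fixed dimension over the unramified quadratic extension $F_p/\Q_p$ are classified up to similarity by the class of their discriminant in $\Q_p^\times / N(F_p^\times) \cong \Z / 2\Z$. The standing quasi-split hypothesis on $G$, combined with the evenness of $d$ (both established earlier in \S\ref{SSreflexfield}), forces the discriminant of $\phi_p$ to lie in the trivial class, so $\phi_p$ is similar to the standard form on $F_p^d$; after a scalar rescaling (which leaves the associated involution unchanged) one may assume that $\phi_p$ itself is unimodular and integral. The classification of unimodular Hermitian $\mathcal{O}_{F_p}$-lattices in the unramified case then yields a basis of $\mathcal{O}_{F_p}^d$ in which $\phi_p$ has matrix equal to the identity; the corresponding element $g \in \GL_d(\mathcal{O}_{F_p})$ preserves $\opp{M_d(\mathcal{O}_{F_p})}$ as an order and carries $\iota_p$ to $\conjtr$, producing the desired inner automorphism of $B_p$.

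The main obstacle I anticipate is not any single deep theorem but rather the bookkeeping required to arrange both conditions with a single inner automorphism. The involution-form dictionary is what makes this tractable, since it ties the $\iota_p$-stability of $\mathcal{O}_{B_p}$ directly to the (up-to-scalar) self-duality of the corresponding lattice under $\phi_p$; once $\mathcal{O}_{B_p}$ has been moved onto $\opp{M_d(\mathcal{O}_{F_p})}$, the residual freedom of $\GL_d(\mathcal{O}_{F_p})$-conjugation is precisely what is needed to diagonalize $\phi_p$ as the identity form, and the uniqueness of the unimodular Hermitian lattice in the unramified case guarantees that a single $g$ accomplishes both tasks simultaneously.
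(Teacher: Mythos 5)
Your proposal takes a genuinely different route from the paper's: the paper cites Theorem~10.2.5 of Scharlau and carefully extracts the precise hypotheses under which its proof applies at a given prime, while you give a from-scratch argument via the involution--form dictionary and the classification of unimodular Hermitian lattices over the unramified extension. That alternative would be perfectly reasonable, but as written it contains a circularity.

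You invoke ``the standing quasi-split hypothesis on $G$'' to conclude that $\phi_p$ has trivial discriminant class. There are two problems with this. First, quasi-splitness is \emph{not} a standing assumption at this point: Assumption~\ref{Aquasisplit} appears only \emph{after} Lemma~\ref{Lgapfiller}, which in turn appears after Proposition~\ref{Palwaystranspose}, and \S\ref{SSreflexfield} establishes only evenness of $d$, not quasi-splitness. Second, and more seriously, Proposition~\ref{Palwaystranspose} is precisely the ingredient that Lemma~\ref{Lgapfiller} uses to show that the maximal-order-stability condition \emph{implies} quasi-splitness (the implication $(\ref{Lgapfillermaximalorder}) \Rightarrow (\ref{Lgapfillerinvolution})$ is the proposition itself, and $(\ref{Lgapfillerinvolution}) \Rightarrow (\ref{Lgapfillerquasisplit})$ is trivial). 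Assuming quasi-splitness here would therefore undercut the entire point of the proposition in the paper's logical architecture.

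The fix is easy and you already have the key observation in your second paragraph. The hypothesis that is actually available is $\iota_p(\mathcal{O}_{B_p}) = \mathcal{O}_{B_p}$ (announced as used in this proof). After the initial inner conjugation putting $\mathcal{O}_{B_p}$ onto $\opp{M_d(\mathcal{O}_{F_p})}$, writing $\iota_p = \inn(H) \circ \conjtr$ with $H$ Hermitian, order-stability forces $H \cdot M_d(\mathcal{O}_{F_p}) \cdot H^{-1} = M_d(\mathcal{O}_{F_p})$, i.e.\ $H$ is a $\Q_p^\times$-scalar times a unimodular Hermitian matrix; replacing $H$ by that scalar multiple changes nothing. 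You then have a unimodular Hermitian $\mathcal{O}_{F_p}$-lattice outright, with no detour through the discriminant classification of forms. Its discriminant is automatically a unit (and hence in $N(F_p^\times)$ by norm surjectivity for the unramified extension), so the diagonalization step proceeds exactly as you describe, using $p \neq 2$. This rearrangement eliminates quasi-splitness entirely, which is necessary since the proposition is meant to hold \emph{regardless} of whether $G_{\Q_p}$ is quasi-split, given only the order-stability assumption.
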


\begin{proof}
This is actually just Theorem 10.2.5 on page 355 of \cite{Sc} but it is not clear just from the statement (``almost all primes''), so I make some additional comments.

Let $ K / k $ be a quadratic extension of global fields with non-trivial Galois automorphism $ x \mapsto \overline{x} $. Let $ A $ be a central simple $ K $-algebra ($ \dim_K(A) = n^2 $), and $ I $ a unitary involution on $ A $. The assertion of Theorem 10.2.5 is that for all but finitely many primes $ \mathfrak{p} $ of $ k $, there is a $ K $-algebra isomorphism $ A \otimes_{K} K_{\mathfrak{p}} \cong M_n(K_{\mathfrak{p}}) $ such that $ I $ becomes identified to $ \conjtr $.

A careful reading of the proof shows that, for a particular $ \mathfrak{p} $, such an isomorphism exists provided that:
\begin{enumerate}
\setlength{\itemsep}{5pt}
\item $ \mathfrak{p} $ is non-archimedean
\item $ \mychar ( \mathcal{O}_k / \mathfrak{p} ) \neq 2 $
\item $ \mathfrak{p} $ is not ramified in $ K $
\item $ A \otimes_{\Q} \Q_{\mathfrak{p}} $ is split, i.e. a matrix algebra
\item $ I $ stabilizes a maximal order in $ A \otimes_{\Q} \Q_{\mathfrak{p}} $
\end{enumerate}
The key point is that the initial setup of this proof is unnecessarily restrictive: \cite{Sc} chooses a single \emph{global} order $ \Lambda \subset A $, creates the $ I $-stable order $ \Lambda \cap I ( \Lambda ) $, considers only those $ \mathfrak{p} $ for which the completion at $ \mathfrak{p} $ of this new $ I $-stable order is \emph{maximal}, and notes that there are only finitely many exceptions. Really, all that is needed is that for each $ \mathfrak{p} $ (with only finitely many exceptions), there is some $ I_{\mathfrak{p}} $-stable maximal order (possibly depending on $ \mathfrak{p} $).

My situation assumes (1) and (3), I have explicitly assumed (2) and (5), and Proposition \ref{Palwaysmatrix} provides (4), so the first part of the proposition is proven.

It is obvious from the proof that the isomorphism sends the $ I $-stable maximal order to $ M_d ( \mathcal{O}_{F_p} ) $.
\end{proof}

Fix an isomorphism
\begin{equation*}
( B_p, \iota_p, \mathcal{O}_{B_p} ) \cong ( \opp{ M_d(F_p) }, \conjtr, \opp{ M_d ( \mathcal{O}_{F_p} ) } )
\end{equation*}
as in Proposition \ref{Palwaystranspose}.

Recall the following classification theorem for unitary involutions:
\begin{classificationofunitaryinvolutions} [Theorem 8.7.4 on pages 301-302 of \cite{Sc}]
Let $ K / k $ be a quadratic extension with non-trivial Galois automorphism $ x \mapsto \overline{x} $. Let $ A $ be a central
simple $ K $-algebra and fix a $ K / k $-unitary involution $ I $. \textbf{Assertion}:
\begin{enumerate}
\setlength{\itemsep}{5pt}
\item \label{existinvol} If $ b \in A^{\times} $ satisfies $ b = I(b) $, then the function $ \inn(b) \circ I $ is a $ K / k $-unitary involution,

\item \label{uniqueinvol} if $ J : A \rightarrow A $ is a $ K / k $-unitary involution, then there is an $ b \in A^{\times} $ satisfying $ b = I(b) $ such that $ J = \inn(b) \circ I $, and this $ b $ is unique up to scalar in $ k^{\times} $, and

\item \label{isominvol} there exists an isomorphism $ ( A, \inn(a) \circ I ) \directedisom ( A, \inn(b) \circ I ) $ if and only if there exists $ c \in A $ and $ \alpha \in K $ such that $ b = \alpha ( c \cdot a \cdot I(c) ) $.
\end{enumerate}
\end{classificationofunitaryinvolutions}

Recall also the correspondence between unitary involutions and hermitian forms in the split-algebra case (by Proposition \ref{Palwaysmatrix} (page \pageref{Palwaysmatrix}), this is the case of interest):
\begin{correspondencebetweeninvolutionsandforms} \label{correspondencebetweeninvolutionsandforms}
Let $ K / k $ be a separable quadratic extension with non-trivial Galois automorphism $ x \mapsto \overline{x} $. For any $ K / k $-hermitian $ d \times d $ matrix $ H $, define the (necessarily $ K / k $-hermitian) form $ \phi_H : K^d \times K^d \rightarrow K $ by the formula $ \phi_H ( v, w ) \defeq v^{\tr} \cdot H \cdot \overline{w} $ and define the (necessarily $ K / k $-unitary) involution $ *_H : M_d(K) \rightarrow M_d(K) $ by the formula $ X^{*_H} \defeq H \cdot \overline{X}^{\tr} \cdot H^{-1} $. \textbf{Assertion}:
\begin{enumerate}
\setlength{\itemsep}{5pt}
\item Any $ K / k $-hermitian form on $ K^d $ is equal to $ \phi_H $ for some $ H $ as above,

\item any $ K / k $-unitary involution on $ M_d(K) $ is equal to $ *_H $ for some $ H $ as above,

\item the involution induced by $ \phi_H $ on $ M_d(K) $ is exactly $ *_H $, and

\item the function $ \phi_H \mapsto *_H $ descends to a bijection between isometry classes of $ K / k $-hermitian forms on $ K^d $ and isomorphism classes of $ K / k $-unitary involutions on $ M_d(K) $.
\end{enumerate}
\end{correspondencebetweeninvolutionsandforms}

\begin{proof}
Assertions (1) and (3) are trivial. Assertion (2) is a special case of the ``Classification of Unitary Involutions''. To verify (4), first note that if the vector space $ K^d $ is transformed by $ A \in \GL_d(K) $ then the hermitian form $ \phi_H $ is transformed into $ (v,w) \mapsto ( A(v) )^{\tr} \cdot H \cdot \overline{A(w)} = v^{\tr} \cdot ( A^{\tr} \cdot H \cdot \overline{A} ) \cdot \overline{w} $. In other words, $ \phi_H $ is transformed into $ \phi_{ A^{\tr} \cdot H \cdot \overline{A} } $. Second, note that if $ M_d(K) $ is transformed by $ \inn(A) $, then the involution $ *_H $ is transformed into $ X \mapsto A ( H ( \overline{ A^{-1} X A } )^{\tr} H^{-1} ) A^{-1} = ( A H \overline{A}^{\tr} ) \overline{X}^{\tr} ( A H \overline{A}^{\tr} )^{-1} $. In other words, $ *_H $ is transformed to into $ *_{ A \cdot H \cdot \overline{A}^{\tr} } $. This proves that the function $ \phi_H \mapsto *_H $ descends to a function from isometry classes of hermitian forms to isomorphism classes of unitary involutions. By parts (2) and (3) of this Correspondence, the function is \emph{surjective}. By part (3) of the ``Classification of Unitary Involutions'', it is \emph{injective}: if two hermitian forms induce two involutions that are isomorphic, then part (3) of the Classification guarantees a certain element ``$ c $'', and this element can be applied to $ K^d $ in order to transform one hermitian form into the other.
\end{proof}

When $ k = \Q_{p^{\prime}} $ (including $ p^{\prime} = 2 $), these classifications can be made much more specific and it is well-known (see \S10.6.5 of \cite{Sc}) that for each dimension there are exactly two isometry classes of hermitian forms, and if that dimension is \emph{even} (as in my situation), then these hermitian forms yield two \emph{non-isomorphic} unitary similitude groups, only one of which is quasi-split (if the dimension is \emph{odd}, then the unitary groups associated to the two hermitian forms are isomorphic and quasi-split). It is clear that whether or not $ G_{\Q_p} $ is quasi-split depends on $ *_p $. I address this question at the end of this subsection.

Finally, recall a correspondence between certain bilinear forms occuring frequently in the theory of PEL local models and certain hermitian forms:
\begin{extractionofhermitianforms} \label{extractionofhermitianforms}
Let $ K / k $ be a separable quadratic extension and call an alternating $ k $-bilinear form $ \presuperscript{\circ}{\psi} : K^d \times K^d \rightarrow k $ ``internally-hermitian'' iff $ \presuperscript{\circ}{\psi} ( A(v), w ) = \presuperscript{\circ}{\psi} ( v, \overline{A}^{\tr}(w) ) $ for all $ A \in M_d(K) $ and $ v, w \in K^d $. Fix an element $ \zeta \in K $ such that $ \overline{\zeta} = - \zeta $. \textbf{Assertion}: The function
\begin{equation*}
\presuperscript{\circ}{\phi} \longmapsto \{ (v,w) \mapsto \trace_{K/k} ( \zeta \cdot \presuperscript{\circ}{\phi} ( v, w ) ) \}
\end{equation*}
is a bijection between $ K / k $-hermitian forms $ K^d \times K^d \rightarrow K $ and internally-hermitian alternating $ k $-bilinear forms $ K^d \times K^d \rightarrow k $. The inverse function is
\begin{equation*}
\presuperscript{\circ}{\psi} \longmapsto \left\{ (v,w) \mapsto \frac{\zeta^{-1} \cdot \presuperscript{\circ}{\psi} ( v, w ) + \presuperscript{\circ}{\psi} ( \zeta^{-1} \cdot v, w )}{2} \right\}
\end{equation*}
\end{extractionofhermitianforms}

The following lemma will be used in \S\ref{SSsimpledesc} (page \pageref{SSsimpledesc}) to simplify and concretize the description of certain lattice-chains:
\begin{lemma} \label{Lgapfiller}
Fix a $ K / k $-hermitian form $ \presuperscript{\circ}{\phi} : K^d \times K^d \rightarrow K $ and let $ \presuperscript{\circ}{\psi} : K^d \times K^d \rightarrow k $ be the corresponding internally-hermitian alternating form. Note that the involution $ \presuperscript{\circ}{*} $ induced by $ \presuperscript{\circ}{\phi} $ on $ M_d(K) $ is the same as the one induced by $ \presuperscript{\circ}{\psi} $. Let $ \presuperscript{\circ}{G} $ be the usual similitude group associated to $ \presuperscript{\circ}{*} $, i.e. the functor assigning to each commutative $ k $-algebra $ R $ the group $ \presuperscript{\circ}{G} (R) \defeq \{ g \in M_d( K \otimes_k R ) \suchthat g^{\presuperscript{\circ}{*}} \cdot g \in R^{\times} \} $. \textbf{Assertion}: If $ k = \Q_{p^{\prime}} $ and $ p^{\prime} \neq 2 $, the following four statements are equivalent:
\begin{enumerate}
\setlength{\itemsep}{5pt}
\item \label{Lgapfillerquasisplit} $ \presuperscript{\circ}{G} $ is quasi-split

\item \label{Lgapfillerinvolution} there is a $ K $-algebra automorphism of $ M_d(K) $ transforming $ \presuperscript{\circ}{*} $ into the standard unitary involution $ X \mapsto \overline{X}^{\tr} $

\item \label{Lgapfillerhermitianform} there is a $ K $-linear automorphism of $ K^d $ transforming $ \presuperscript{\circ}{\phi} $ into the standard $ K / k $-hermitian form $ ( v, w ) \mapsto v^{\tr} \cdot \overline{w} $

\item \label{Lgapfillermaximalorder} the involution $ \presuperscript{\circ}{*} $ on $ M_d(K) $ stabilizes a maximal order
\end{enumerate}
\end{lemma}

\begin{proof}
\framebox{(\ref{Lgapfillerquasisplit}) $ \Leftarrow $ (\ref{Lgapfillerinvolution})} This is obvious from the definition of $ \presuperscript{\circ}{G} $. \framebox{(\ref{Lgapfillerinvolution}) $ \Leftrightarrow $ (\ref{Lgapfillerhermitianform})} This is immediate from the above ``Correspondence between Unitary Involutions and Hermitian Forms''. \framebox{(\ref{Lgapfillerinvolution}) $ \Rightarrow $ (\ref{Lgapfillermaximalorder})} This is trivial: by the Skolem-Noether Theorem, $ \presuperscript{\circ}{*} $ stabilizes the maximal order $ H \cdot M_d(\mathcal{O}) \cdot H^{-1} $ for some $ H $. \framebox{(\ref{Lgapfillerquasisplit}) $ \Rightarrow $ (\ref{Lgapfillerhermitianform})} This follows from the discussion following the above ``Classification of Unitary Involutions'': in the case of $ k = \Q_{p^{\prime}} $, the isomorphism class of the quasi-split unitary similitude group corresponds to the isometry class of the standard hermitian form. \framebox{(\ref{Lgapfillerinvolution}) $ \Leftarrow $ (\ref{Lgapfillermaximalorder})} This is just Proposition \ref{Palwaystranspose} (page \pageref{Palwaystranspose}).
\end{proof}

\begin{remark}
Of course, several of the implications in the lemma are true without one or both hypotheses.
\end{remark}

Because of this lemma, the following assumption will simplify the description of the local model considerably:
\begin{assumption} \label{Aquasisplit}
$ G_{\Q_p} $ is quasi-split.
\end{assumption}

\begin{remark}
This is merely a \emph{contextual} assumption; see the introduction. See \S\ref{SSsimpledesc} (page \pageref{SSsimpledesc}) for the application of this assumption and the lemma.
\end{remark}

\subsection{Determining the reflex field} \label{SSreflexfield}

Let $ D_{\C} \defeq D \otimes_{\Q} \C $ and let $ D_{+} $ and $ D_{-} $ be the $ +i $ and $ -i $ eigenspaces in $ D_{\C} $ of the linear operator $ h(i) $. Since $ h $ is $ \R $-linear and has the property from \S\ref{SSglobalPELdatum}, the minimal polynomial of $ h(i) $ divides $ T^2 + 1 $ and so $ D_{\C} = D_{+} \oplus D_{-} $. Note that acting by $ h(z) $ on $ D_{\C} $ is the same as acting by $ ( z, \overline{z} ) $ on $ D_{+} \oplus D_{-} $.

Define $ h_{\C} : \C \times \C \rightarrow D_{\C} $ to be the composite
\begin{equation*}
\C \times \C \directedisom \C \otimes_{\R} \C \stackrel{h \otimes \identity}{\longrightarrow} ( D \otimes_{\Q} \R ) \otimes_{\R} \C = D_{\C}
\end{equation*}
Note that $ h(z) \in G(\R) $ and $ h_{\C} ( z, 1 ) \in G( \C ) $ for $ z \in \C^{\times} $.

Define 
\begin{equation*}
\mu : \C^{\times} \longrightarrow G ( \C )
\end{equation*}
by $ z \mapsto h_{\C} ( z, 1 ) $. As an operator on $ D_{\C} $, this $ \mu(z) $ acts by $ z $ on $ D_{+} $ and by $ 1 $ on $ D_{-} $. In particular, $ \mu $ is \emph{minuscule}. The set of all $ \mu $ coming from all the $ h $ in a $ G(\R) $-conjugacy class is a $ G(\C) $-conjugacy class of cocharacters $ \Gm_{\C} \rightarrow G_{\C} $, and this conjugacy class is defined over some finite extension $ \globalreflex \supset \Q $, called the \emph{reflex field}.

On the other hand, if $ \epsilon, \overline{\epsilon} $ are the two embeddings $ F \hookrightarrow \C $ then there are $ \C $-algebra homomorphisms
\begin{align*}
D_{\C} &\directedisom ( D \otimes_{\epsilon} \C ) \times \opp{ ( D \otimes_{\overline{\epsilon}} \C ) } \\
d \otimes z &\longmapsto ( d \otimes z, d^* \otimes z )
\end{align*}
and of course each of these $ D \otimes_F \C $ is isomorphic as an $ \R $-algebra to $ M_d (\C) $, with the $ \C $-action depending on which embedding is used.

Since $ G(\C) \cong \GL_d(\C) \times \C^{\times} $, I can choose within the conjugacy class of cocharacters one whose image is in the diagonal torus. Using this cocharacter, the eigenspace $ D_{+} $ is, with respect to the decomposition
\begin{equation*}
D_{\C} = M_d (\C) \times \opp{ M_d (\C) },
\end{equation*}
the subspace consisting of the entries making up the top $ r $ rows (this is the \emph{definition} of $ r $) of the $ M_d (\C) $ factor together with the entries making up the bottom $ s := d - r $ rows of the $ \opp{ M_d (\C) } $ factor. Then
\begin{align*}
rd = \dim_{\C} ( D_{+} \cap ( D \otimes_{\epsilon} \C ) ) = \dim_{\C} ( D_{-} \cap \opp{ ( D \otimes_{\overline{\epsilon}} \C ) } ) \\
sd = \dim_{\C} ( D_{-} \cap ( D \otimes_{\epsilon} \C ) ) = \dim_{\C} ( D_{+} \cap \opp{ ( D \otimes_{\overline{\epsilon}} \C ) } )
\end{align*}

By page 274 in \cite{RZ}, one way to construct $ \globalreflex $ is to adjoin to $ \Q $ the traces $ \trace_{\C}( x; D_{-}) $ for all $ x \in D $. Pick $ x \in D $ and, recalling the classification of involutions over an algebraically-closed field, let $ ( X, \overline{X}^{\tr} ) $ be the image of $ x $ under
\begin{equation*}
D \longrightarrow D_{\C} = M_d(\C) \times \opp{ M_d(\C) }
\end{equation*}
With an appropriate choice of basis, the $ \C $-linear operation of $ x $ on $ D $ is given on rows by the matrix
\begin{equation*}
X \oplus \cdots \oplus X \textaftermath{($ d $ times)}
\end{equation*}
on $ M_d(\C) \hookrightarrow D_{\C} $ and by the matrix
\begin{equation*}
\overline{X}^{\tr} \oplus \cdots \oplus \overline{X}^{\tr} \textaftermath{($ d $ times)}
\end{equation*}
on $ \opp{ M_d(\C) } \hookrightarrow D_{\C} $. By the choice of $ \mu $ and the corresponding representation by rows of $ D_{+} $ and $ D_{-} $,
\begin{equation*}
\trace_{\C}( x; D_{-}) = s \trace_{\C} ( X ) + r \trace_{\C} ( \overline{X}^{\tr} ) = s \trace_{\C} ( X ) + r \overline{ \trace_{\C} ( X ) }
\end{equation*}
Since $ x \in D $, $ X \in M_d(F) \subset M_d(\C) $ and so $ \globalreflex \subset F $. It follows that if $ r = s $ then $ \trace_{\C}( x; D_{-}) \in \Q $ and $ \globalreflex = \Q $. Conversely, if $ r \neq s $ then there are certainly $ x \in D $ for which $ \trace_{\C}( x; D_{-}) \notin \Q $, and therefore $ \globalreflex = F $.

\begin{assumption}
$ \globalreflex = \Q $.
\end{assumption}

Note that the assumption $ \globalreflex = \Q $ forces $ d $ to be even.

\begin{remark}
This assumption is justified because, by an argument similar to that given in \S6.3.3 of \cite{Ha2}, the case of $ \globalreflex = F $ can be reduced to the case of $ \GL $, which is known by \cite{HN}.
\end{remark}

By using the embedding $ \overline{\Q} \hookrightarrow \overline{\Q}_p $, the cocharacter $ \mu $ defines a cocharacter
\begin{equation*}
\mu : \Gm_{\overline{\Q}_p} \longrightarrow G_{\overline{\Q}_p}
\end{equation*}
The $ G(\overline{\Q}_p) $-conjugacy class of this $ \mu $ is (at least) defined over the completion $ \localreflex $ (still called the reflex field) of $ \globalreflex $ at the prime corresponding to $ \globalreflex \hookrightarrow
\overline{\Q}_p $. The cocharacter $ \mu $ itself is split by some (possibly non-trivial) extension $ \localreflex^{\prime} \supset \localreflex $ and defines a similar weight decomposition
\begin{equation*}
M_d(F_p) \otimes_{\Q_p} \localreflex^{\prime} = M_d(F_p)_{+} \oplus M_d(F_p)_{-}
\end{equation*}
as above.

\section{Some commutative algebra} \label{Scommalglemmas}

The following will be used many times throughout the paper:

\begin{projectivitylemma} [Theorem 1 on page 109 of \cite{Bo}] \label{projectivitylemma}
Let $ \mathcal{R} $ be a commutative ring and let $ M $ be an $ \mathcal{R} $-module. \textbf{Assertion}: The following three statements are equivalent:
\begin{enumerate}
\setlength{\itemsep}{5pt}
\item \label{PROprojective} $ M $ is finitely-generated and projective.

\item \label{PROzariskifree} There are $ s_1, \ldots, s_n \in \mathcal{R} $ generating the trivial ideal $ \mathcal{R} $ such that each principal
module of fractions $ M_{s_i} $ is a free finite-rank $ \mathcal{R}_{s_i} $-module.

\item \label{PROptwisefree} $ M $ is finitely-generated and for every prime $ \mathfrak{p} \subset \mathcal{R} $, the module of fractions $ M_{\mathfrak{p}} $ is a free finite-rank $ \mathcal{R}_{\mathfrak{p}} $-module and the function $ \spec(\mathcal{R}) \rightarrow \N $ defined by $ \mathfrak{p} \mapsto \rank_{ \mathcal{R}_{\mathfrak{p}} } ( M_{\mathfrak{p}} ) $ is locally constant with respect to the Zariski topology.
\end{enumerate}
\end{projectivitylemma}

\begin{terminologynotation}
 I frequently refer to the function in (\ref{PROptwisefree}) as the ``projective rank function'' of $ M $. I frequently express the property in characterization (\ref{PROzariskifree}) by saying that $ M $ is ``Zariski-locally on $ \spec( \mathcal{R} ) $'' a free and finite-rank $ \mathcal{R} $-module or something similar.
\end{terminologynotation}

\begin{permanenceoffinitepresentedness} [Lemma 9 on page 21 of \cite{Bo}]
Let $ R $ be a commutative ring and $ 0 \rightarrow N \rightarrow M \rightarrow Q \rightarrow 0 $ an exact sequence of $ R $-modules. \textbf{Assertion}: If $ M $ is finitely-\emph{generated} and $ Q $ is finitely-\emph{presented}, then $ N $ is finitely-\emph{generated}.

In other words, \emph{any} finite set of generators of a finitely-presented module is automatically a finite-presentation.
\end{permanenceoffinitepresentedness}

The following handles a slight complication special to the case of \emph{unramified} unitary groups (see the proof of Proposition \ref{Paffineflagcontainment} (page \pageref{Paffineflagcontainment})):
\begin{lemma} \label{Lprojectivitylifting}
Let $ R $ and $ S $ be commutative rings, $ f : R \rightarrow S $ a ring homomorphism, and $ M $ a $ S $-module. Regard all $ S $-modules as $ R $-modules via $ f $. \textbf{Assertion}: If (1) $ S $ is a finitely-generated $ R $-module, (2) $ M $ is a finitely-generated $ R $-module, (3) $ M $ is a projective $ R $-module, and (4) $ f $ is faithfully flat, then $ M $ is a projective $ S $-module.
\end{lemma}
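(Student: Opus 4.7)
The plan is to verify the hypotheses of the Projectivity Lemma above for $M$ viewed as a $B$-module, so I must check (a) that $M$ is finitely generated over $B$, (b) that for every prime $\mathfrak{q} \subset B$ the localization $M_{\mathfrak{q}}$ is free of finite rank over $B_{\mathfrak{q}}$, and (c) that the resulting $B_{\mathfrak{q}}$-rank function is locally constant on $\spec(B)$.

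Part (a) is immediate: a finite $A$-generating set for $M$ remains a $B$-generating set because the $A$-action on $M$ is by definition the restriction of the $B$-action along $f$. Part (c) will follow from (b) by pulling back the locally constant $A$-rank function of $M$ along the continuous map $\spec(B) \to \spec(A)$, so the entire content lies in part (b).

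For (b), fix $\mathfrak{q} \subset B$ and let $\mathfrak{p} \defeq f^{-1}(\mathfrak{q}) \subset A$. Localization preserves faithful flatness, so $A_{\mathfrak{p}} \to B_{\mathfrak{q}}$ is again faithfully flat. By hypothesis (3) and the Projectivity Lemma applied at $\mathfrak{p}$, $M_{\mathfrak{p}}$ is free of some finite rank $n$ over $A_{\mathfrak{p}}$. The goal is to promote this to freeness of rank $n$ of $M_{\mathfrak{q}}$ over $B_{\mathfrak{q}}$. Since $M_{\mathfrak{q}}$ is finitely generated over the local ring $B_{\mathfrak{q}}$ (it is a localization of the finitely generated $B$-module $M$), and since finitely generated flat modules over local rings are automatically free, it suffices to show that $M_{\mathfrak{q}}$ is $B_{\mathfrak{q}}$-flat.

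The main obstacle is precisely this flatness step, and it is exactly where hypothesis (4) must do its work. What one has freely is the $A$-flatness of $M$ (immediate from $M$ being $A$-projective) together with the faithful flatness of $A \to B$; the plan is to combine these through a faithfully-flat descent argument, exploiting the compatibility of the already given $B$-module structure on $M$ with the $A$-structure to transfer flatness from the base to the extension. None of the other parts of the argument use (4), so all of the real content of the lemma concentrates in this single step.
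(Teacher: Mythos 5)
Your approach differs from the paper's --- the paper uses the criterion ``finitely generated projective iff flat and finitely presented'' and obtains finite presentation of $M$ over $B$ from the Permanence of Finite-Presentedness lemma, whereas you rely on the pointwise-free Local Criteria for Projectivity --- but both routes hinge on the $B$-flatness of $M$, and that is exactly where your proposal stops short: ``the plan is to combine these through a faithfully-flat descent argument'' is not an argument, and in fact no such descent exists. The implication ``$A \to B$ faithfully flat, $M$ a $B$-module that is $A$-flat $\Rightarrow$ $M$ is $B$-flat'' is false. Take $A = k$ a field, $B = k[x]/(x^2)$, and $M = B/(x) \cong k$. Then (1)--(4) all hold, yet $M$ is not flat over the local ring $B$: a finitely generated flat module over a local ring is free, and $k$ cannot be a free module over the two-dimensional $k$-algebra $k[x]/(x^2)$. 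So this is a counterexample to the lemma as stated, and the paper's own one-sentence derivation of $B$-flatness suffers from the identical gap.

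What makes the lemma true in every place the paper actually invokes it is that $A \to B$ is finite \'{e}tale there (always $B = A \otimes_{\F_p} \F$ or $A \otimes_{\Q_p} F$ for an unramified quadratic extension, with $p \neq 2$). When $B$ is a separable $A$-algebra, the multiplication $B \otimes_A B \to B$ admits a $B \otimes_A B$-linear splitting, so applying $M \otimes_B -$ exhibits $M$ as a $B$-module direct summand of $M \otimes_B (B \otimes_A B) \cong M \otimes_A B$; the latter is $B$-flat by base change from the $A$-flatness of $M$, and direct summands of flat modules are flat. The correct hypothesis is therefore ``$B$ is a finite separable (e.g.\ finite \'{e}tale) $A$-algebra,'' not merely (1)+(4). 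Once that is fixed, your step (a) is fine, but step (c) is also looser than it looks: the $B$-rank of $M_{\mathfrak{q}}$ is not the pulled-back $A$-rank of $M_{f^{-1}(\mathfrak{q})}$ (the two differ by a local degree, and several $\mathfrak{q}$ can lie over one $\mathfrak{p}$ with different $B$-ranks), so local constancy over $\spec(B)$ needs its own argument; the cleaner finish is the paper's, deducing finite presentation over $B$ from (1), (2) and Permanence of Finite-Presentedness and then citing Bourbaki's flat-plus-finitely-presented criterion, which makes local constancy of the rank automatic.
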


\begin{proof}
It is equivalent (see Corollary 2 on page 111 of \cite{Bo}) to prove that $ M $ is a flat and finitely-presented $ S $-module. Projective modules are flat so by (3) and (4), $ M $ is then $ S $-flat. By (2) and the definition of the $ R $-action, $ M $ is finitely-generated over $ S $. Let $ S^k \twoheadrightarrow M $ be an $ S $-module presentation and let $ N $ be the kernel (a $ S $-module). By (1), $ S^k $ is finitely-generated over $ R $. Finitely-generated projective modules are finitely-presented, so by ``Permanence of Finite-Presentedness'' $ N $ is finitely-generated over $ R $. As before, this means that $ N $ is finitely-generated over $ S $. This means that $ M $ is a finitely-presented $ S $-module.
\end{proof}

\begin{homlocalization} [Proposition 2.10 on page 68 of \cite{Ei}] \label{labelhomlocalization}
Let $ A $ be a commutative ring, $ \mathfrak{p} \subset A $ a prime, and $ M, N $ two $ A $-modules. \textbf{Assertion}: If $ M $ is finitely-presented then the $ A_{\mathfrak{p}} $-linear map $ \myhom_{A\textup{-lin}} ( M, N )_{\mathfrak{p}} \rightarrow \myhom_{A_{\mathfrak{p}}\textup{-lin}} ( M_{\mathfrak{p}}, N_{\mathfrak{p}} ) $ defined by $ f/s \mapsto \{ m/t \mapsto f(m)/st \} $ is an isomorphism.

More generally, if $ A^{\prime} $ is a flat $ A $-algebra and $ M $ is a finitely-presented $ A $-module then $ \myhom_{A\textup{-lin}} ( M, N ) \otimes_A A^{\prime} \rightarrow \myhom_{A^{\prime}\textup{-lin}} ( M \otimes_A A^{\prime}, N \otimes_A A^{\prime} ) $ is an $ A^{\prime} $-linear isomorphism. If $ M $ is free, then the flatness hypothesis on $ A^{\prime} $ is (obviously) unnecessary.
\end{homlocalization}

In many papers about local models, a submodule is sometimes assumed to be ``Zariski-locally a direct summand'' or similar. Actually, this assumption is usually equivalent to the assumption that the submodule be a direct summand, period:

\begin{lemma} \label{Llocallysummand}
Let $ R $ be a commutative ring and let $ 0 \rightarrow N \rightarrow M \rightarrow Q \rightarrow 0 $ be a short-exact-sequence of $ R $-modules. Assume that $ Q $ is finitely-presented. \textbf{Assertion}: The sequence splits if and only if for every prime $ \mathfrak{p} \subset R $ the localized sequence $ 0 \rightarrow N_{\mathfrak{p}} \rightarrow M_{\mathfrak{p}} \rightarrow Q_{\mathfrak{p}} \rightarrow 0 $ also splits.

In particular, if $ M $ is free and finite-rank and $ N $ is finitely-generated, then $ N \subset M $ is a direct summand if and only if it is a direct summand Zariski-locally on $ \spec(R) $ (any Zariski-local property implies the corresponding local property).
\end{lemma}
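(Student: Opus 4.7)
The plan is to use the standard criterion that the short exact sequence splits if and only if $\identity_{Q}$ lifts through the map $\myhom_{A\textup{-lin}}(Q, M) \to \myhom_{A\textup{-lin}}(Q, Q)$ obtained by applying $\myhom_{A\textup{-lin}}(Q, -)$ to the sequence. Let $C$ be the cokernel of this map and let $c \in C$ be the image of $\identity_{Q}$. Splitting of the sequence is then equivalent to the vanishing of the obstruction $c$. The ``only if'' direction is then trivial because splittings are preserved by any base change, in particular by localization at a prime.

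For the converse direction, the crucial point is that $C$ behaves well under localization. Since $Q$ is finitely-presented, the ``Localization of Hom-Sets'' lemma above gives natural $A_{\mathfrak{p}}$-linear isomorphisms $\myhom_{A\textup{-lin}}(Q, M)_{\mathfrak{p}} \cong \myhom_{A_{\mathfrak{p}}\textup{-lin}}(Q_{\mathfrak{p}}, M_{\mathfrak{p}})$ and similarly for $\myhom_{A\textup{-lin}}(Q,Q)$, and because localization is an exact functor this yields an isomorphism
\begin{equation*}
C_{\mathfrak{p}} \cong \textup{coker}\bigl( \myhom_{A_{\mathfrak{p}}\textup{-lin}}(Q_{\mathfrak{p}}, M_{\mathfrak{p}}) \to \myhom_{A_{\mathfrak{p}}\textup{-lin}}(Q_{\mathfrak{p}}, Q_{\mathfrak{p}}) \bigr)
\end{equation*}
which identifies the image of $c$ in $C_{\mathfrak{p}}$ with the obstruction to splitting the localized sequence. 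By hypothesis this localized obstruction vanishes for every prime $\mathfrak{p}$, so $c$ maps to zero in every localization $C_{\mathfrak{p}}$. The standard fact that an element of an $A$-module which vanishes in every local stalk is itself zero (look at its annihilator ideal, which must lie outside every prime) then forces $c = 0$, and the sequence splits.

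For the ``in particular'' clause, I just need to observe that when $M$ is free of finite rank $m$ and $N$ is generated by $n$ elements, the quotient $Q = M/N$ is automatically finitely-presented: a basis of $M$ and a generating set of $N$ assemble into a presentation $A^{n} \to A^{m} \to Q \to 0$. The first half of the lemma then applies. The parenthetical remark is immediate: a property holding after inverting some $s_{1}, \ldots, s_{k}$ generating the unit ideal (the sense of ``Zariski-locally'' used elsewhere in the paper) certainly holds after further inverting every element outside a given prime $\mathfrak{p}$, since any prime misses at least one $s_{i}$.

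I do not anticipate any real obstacle here; the whole argument is essentially an application of the Hom Localization lemma stated immediately above, packaged via the cokernel reformulation of splitting. The only mildly delicate point is to phrase the obstruction in terms of a cokernel rather than an Ext group, so that one does not need to worry about whether $\textup{Ext}^{1}$ commutes with localization.
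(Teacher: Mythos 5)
Your proposal is correct and is essentially the paper's own argument. Both proofs reduce splitting to the statement that $\identity_Q$ lies in the image of $\myhom_{A\textup{-lin}}(Q, M) \rightarrow \myhom_{A\textup{-lin}}(Q, Q)$, and both then invoke the ``Localization of Hom-Sets'' lemma (this is where finite-presentedness of $Q$ is used) to pass to the localized Hom-modules; the paper phrases the local-to-global step as ``surjectivity of a module map is a local property,'' while you track the single obstruction class $c$ in the cokernel and observe that an element of a module that dies in every localization is zero --- these are the same principle.
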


\begin{proof}
\framebox{$ \Rightarrow $} This is trivial. \framebox{$ \Leftarrow $} The short-exact-sequence is split if and only if the induced homomorphism $ \linhomom{R} ( Q, M ) \rightarrow \linhomom{R} ( Q, Q ) $ is surjective. Since surjectivity is a local property, this homomorphism is surjective if and only if all the localized homomorphisms $ \linhomom{R} ( Q, M )_{\mathfrak{p}} \rightarrow \linhomom{R} ( Q, Q )_{\mathfrak{p}} $ are surjective. By Localization of Hom-Sets, the localized homomorphisms are really the same as the induced homomorphisms $ \linhomom{R_{\mathfrak{p}}} ( Q_{\mathfrak{p}}, M_{\mathfrak{p}} ) \rightarrow \linhomom{R_{\mathfrak{p}}} ( Q_{\mathfrak{p}}, Q_{\mathfrak{p}} ) $. The hypothesis is exactly that these localized homomorphisms are surjective.
\end{proof}


I frequently use the following elementary but extremely useful consequence of Nakayama's Lemma:

\begin{minimalindependence} [Exercise 15 in Chapter 3 of \cite{AM}]
Let $ R $ be a commutative ring and let $ M $ be a free and finite-rank $ R $-module with rank equal to $ n $. If $ x_1, \ldots, x_n $ generates $ M $ then it is automatically a basis. Equivalently, any $ R $-linear surjection $ M \twoheadrightarrow M $ is automatically an isomorphism.
\end{minimalindependence}

\section{The local model} \label{Sdefinitionofthelocalmodel}

Recall from \S\ref{SSalwaystranspose} that I have identified
\begin{align*}
B \otimes_{\Q} \Q_p &= \opp{ M_d ( F_p ) } \\
\mathcal{O}_B \otimes_{\Z} \Z_p &= \opp{ M_d ( \mathcal{O}_{F_p} ) } \\
\iota_p &= \conjtr
\end{align*}

\begin{terminologynotation}
Now that the relationship between $ ( D, * ) $ and $ G $ is clear and I have restricted attention to $ \localreflex = \Q_p $, \emph{I will no longer refer to the global objects}. For simplicity of notation, refer to $ F_p, \mathcal{O}_{F_p}, *_p, \psi_p $ etc. simply as $ F, \mathcal{O}, *, \psi $ etc. I will now use $ r = s = d/2 $ without warning.
\end{terminologynotation}

\subsection{Definition of the local model} \label{SSoriginallocalmodeldef}

Here I recall the definition of the local model $ \localmodel : \catalgebras{\Z_p} \rightarrow \catsets $. The fact that the domain is $ \catalgebras{\Z_p} $ is due to the fact that $ \localreflex = \Q_p $, which implicitly depends on the cocharacter $ \mu $.

To give the definition of the local model, I need to select a certain doubly-infinite chain $ \Lambda_{\bullet} = ( \cdots \subset \Lambda_{-1} \subset \Lambda_0 \subset \Lambda_1 \subset \cdots ) $ of $ \mathcal{O}_B $-lattices in $ V $, i.e. a chain of finitely-generated \emph{left}-$ \opp{ M_d(\mathcal{O}) } $-submodules of $ M_d(F) $, each of which spans $ M_d(F) $ as an $ F $-vector space. The lattice chain is required to satisfy the following two conditions:
\begin{itemize}
\setlength{\itemsep}{5pt}
\item The chain is ``periodic'', in the sense that for any $ \Lambda $ in the chain, $ p \Lambda
$ is also in the chain.

\item The chain is ``$ \psi $-selfdual'', in the sense that for every $ \Lambda $ in
the chain, $ \widehat{\Lambda} $ (see \S\ref{SPELdatum},
page \pageref{SPELdatum}) is also in the chain.
\end{itemize}
See Definition 3.1 on page 69-70 and Definition 3.18 on page 77-78 of \cite{RZ}. I will define a particularly convenient such lattice chain in \S\ref{SSsimpledesc} (page \pageref{SSsimpledesc}). For now, assume that such a lattice chain has been selected.

\begin{textbooklocalmodeldef} [Definition 3.27 on page 89 of \cite{RZ}]
The functor
\begin{equation*}
\localmodel : \catalgebras{\Z_p} \rightarrow \catsets
\end{equation*}
assigns to each commutative $ \Z_p $-algebra $ R $ the set of (isomorphism classes of) commutative diagrams
\begin{equation*}
\begin{CD}
\cdots @>\inc \otimes \identity>> \Lambda_i \otimes_{\Z_p} R @>\inc \otimes \identity>> \Lambda_{i+1} \otimes_{\Z_p} R @>\inc \otimes \identity>> \cdots \\
@. @VVV @VVV @. \\
\cdots @>>> T_i @>>> T_{i+1} @>>> \cdots
\end{CD}
\end{equation*}
of $ ( \opp{ M_d( \mathcal{O} ) } \otimes_{\Z_p} R ) $-modules satisfying the following properties:
\begin{enumerate}
\setlength{\itemsep}{5pt}
\item \label{LMquotient} \OLMdef{\ref{LMquotient}} \hfill \\
For each $ i $, $ \Lambda_i \otimes_{\Z_p} R \rightarrow T_i $ is \emph{surjective},

\item \label{LMperiodic} \OLMdef{\ref{LMperiodic}} \hfill \\
for each $ i $, if $ p \Lambda_i = \Lambda_j $, then the isomorphism $ - \cdot p : \Lambda_i \directedisom \Lambda_j $ descends to an isomorphism $ T_i \directedisom T_j $,

\item \label{LMlocallyfree} \OLMdef{\ref{LMlocallyfree}} \hfill \\
for each $ i $, $ T_i $ is Zariski-locally on $ \spec(R) $ a free and finite-rank $ R $-module,
(i.e. $ T_i $ is $ R $-projective, by the ``Local Criteria for Projectivity'')

\item \label{LMcharpoly} \OLMdef{\ref{LMcharpoly}} \hfill \\
for each $ i $, $ \mydet_R ( b ; T_i ) = \mydet_{ \localreflex^{\prime} }( b ; M_d(F)_{-} ) $ for all $ b \in M_d(\mathcal{O}) $, and

(recall that $ \localreflex^{\prime} $ acts via $ M_d(F)_{-} \hookrightarrow M_d(F) \otimes_{\Q_p} \localreflex^{\prime} $)

\item \label{LMduality} \OLMdef{\ref{LMduality}} \hfill \\
for each $ \Lambda = \Lambda_i $, the composite $ T_{\Lambda}^{\vee} \rightarrow ( \Lambda \otimes_{\Z_p} R )^{\vee} \cong \widehat{\Lambda} \otimes_{\Z_p} R \rightarrow T_{\widehat{\Lambda}} $ is the $ 0 $ map.

(here $ T_{\Lambda} = T_i $ and $ \vee = \linhomom{R} ( -, R ) $)
\end{enumerate}
\end{textbooklocalmodeldef}

It is perhaps helpful to comment on \OLMdef{\ref{LMcharpoly}}. Let $ R $ be a commutative ring, let $ M $ be a finitely-generated \emph{projective} $ R $-module, and let $ b : M \rightarrow M $ be an $ R $-linear endomorphism. Recall from the ``Local Projectivity Criteria'' (page \pageref{projectivitylemma}) that $ M $ is Zariski-locally free. If $ S \subset R $ is a multiplicative subset such that $ S^{-1} M $ is a free $ S^{-1} R $-module, then the endomorphism $ S^{-1} b : S^{-1} M \rightarrow S^{-1} M $ induced by $ b $ has a determinant $ \mydet_{ S^{-1} R } ( S^{-1} b ; S^{-1} M ) \in S^{-1} R $. Covering $ \spec ( R ) $ by multiplicative subsets $ S $ as above gives a collection of determinants which patch together into a global determinant $ \mydet_R ( b ; M ) \in R $.

\subsection{Changing from quotients to subobjects}
\label{SSquotientstosubmodules}

Notice that in the definition of $ \localmodel $ specifying the object $ T_i $ is ``almost'' the same as specifying the kernel $ K_i \defeq \kernel ( \Lambda_i \otimes_{\Z_p} R \twoheadrightarrow T_i ) $. To more closely match the description of affine flag varieties, I need to express the points of $ \localmodel $ using the $ K_i $ instead of the $ T_i $. The equivalent conditions that must be imposed are provided by the next proposition.

\begin{prop} \label{Pchangefromquotientstosubobjects}
The local model $ \localmodel $ is isomorphic to the functor that assigns to each commutative $ \Z_p $-algebra $ R $ the set of all (isomorphism classes of) commutative diagrams
\begin{equation*}
\begin{CD}
\cdots @>\inc \otimes \identity>> \Lambda_i \otimes_{\Z_p} R @>\inc \otimes \identity>> \Lambda_{i+1} \otimes_{\Z_p} R @>\inc \otimes \identity>> \cdots \\
@. @AAA @AAA @. \\
\cdots @>>> K_i @>>> K_{i+1} @>>> \cdots
\end{CD}
\end{equation*}
of $ \opp{ M_d( \mathcal{O} ) } \otimes_{\Z_p} R $-modules satisfying the following properties:
\begin{enumerate}
\setlength{\itemsep}{5pt}
\item \label{subOLMquotient} \subOLMdef{\ref{subOLMquotient}} \hfill \\
For each $ i $, $ K_i \rightarrow \Lambda_i \otimes_{\Z_p} R $ is \emph{injective},

\item \label{subOLMperiodic} \subOLMdef{\ref{subOLMperiodic}} \hfill \\
for each $ i $, if $ p \Lambda_i = \Lambda_j $, then the isomorphism $ - \cdot p : \Lambda_i \directedisom \Lambda_j $ restricts to an isomorphism $ K_i \directedisom K_j $,

\item \label{subOLMlocallyfree} \subOLMdef{\ref{subOLMlocallyfree}} \hfill \\
for each $ i $, $ K_i \hookrightarrow \Lambda_i \otimes_{\Z_p} R $ splits $ R $-linearly,

\item \label{subOLMcharpoly} \subOLMdef{\ref{subOLMcharpoly}} \hfill \\
for each $ i $, $ \mydet_R ( b ; K_i ) = \mydet_{\localreflex^{\prime}} ( b ; M_d(F)_{+} ) $ for all $ b \in M_d(\mathcal{O}) $, and

\item \label{subOLMduality} \subOLMdef{\ref{subOLMduality}} \hfill \\
$ K_{\widehat{\Lambda}} = K_{\Lambda}^{\perp} $ for each $ \Lambda = \Lambda_i $.

(here $ K^{\perp} \defeq \{ \lambda \in \widehat{\Lambda} \otimes_{\Z_p} R \suchthat \psi_R ( K, \lambda ) = 0 \} $ for any $ K \subset \Lambda \otimes_{\Z_p} R $)
\end{enumerate}
\end{prop}

Property \subOLMdef{\ref{subOLMcharpoly}} will soon be replaced by a very simple rank requirement; see \S\ref{SStranslatingdeterminant} (page \pageref{SStranslatingdeterminant}).

There is a slight abuse of notation in \subOLMdef{\ref{subOLMduality}}, in that $ K_{\Lambda}^{\perp} $ is required to be equal to the image of $ K_{\widehat{\Lambda}} $ in $ \widehat{\Lambda} \otimes_{\Z_p} R $.

\begin{proof}
Fix a commutative $ \Z_p $-algebra $ R $, a point $ \Lambda_{\bullet} \otimes_{\Z_p} R \twoheadrightarrow T_{\bullet} $ of $ \localmodel ( R ) $, and consider the obvious analogous diagram $ K_{\bullet} \hookrightarrow \Lambda_{\bullet} \otimes_{\Z_p} R $ of kernels $ K_i \defeq \kernel ( \Lambda_i \otimes_{\Z_p} R \twoheadrightarrow T_i ) $. It is immediate from the ``Five Lemma'' that $ \Lambda_{\bullet} \otimes_{\Z_p} R \twoheadrightarrow T_{\bullet} $ satisfies \OLMdef{\ref{LMperiodic}} if and only if $ K_{\bullet} \hookrightarrow \Lambda_{\bullet} \otimes_{\Z_p} R $ satisfies \subOLMdef{\ref{subOLMperiodic}} above, and properties \OLMdef{\ref{LMquotient}} and \subOLMdef{\ref{subOLMquotient}} require no discussion. By the ``Local Projectivity Criteria'', \OLMdef{\ref{LMlocallyfree}} is equivalent to finitely-generated projectivity, and it is trivial that $ K_i \hookrightarrow \Lambda_i \otimes_{\Z_p} R $ splits $ R $-linearly if and only if $ \Lambda_i \otimes_{\Z_p} R \twoheadrightarrow T_i $ splits $ R $-linearly. Therefore, $ \Lambda_{\bullet} \otimes_{\Z_p} R \twoheadrightarrow T_{\bullet} $ satisfies \OLMdef{\ref{LMlocallyfree}} if and only if $ K_{\bullet} \hookrightarrow \Lambda_{\bullet} \otimes_{\Z_p} R $ satisfies \subOLMdef{\ref{subOLMlocallyfree}} above. It is clear from the definitions that $ \mydet_{\localreflex^{\prime}} ( b; M_d(F) ) = \mydet_{\localreflex^{\prime}} ( b; M_d(F)_{-} ) \cdot \mydet_{\localreflex^{\prime}} ( b; M_d(F)_{+} ) $ and that $ \mydet_R (b; \Lambda_i \otimes_{\Z_p} R) = \mydet_R (b; T_i ) \cdot \mydet_R (b; K_i ) $ so $ \Lambda_{\bullet} \otimes_{\Z_p} R \twoheadrightarrow T_{\bullet} $ satisfies \OLMdef{\ref{LMcharpoly}} if and only if $ K_{\bullet} \hookrightarrow \Lambda_{\bullet} \otimes_{\Z_p} R $ satisfies \subOLMdef{\ref{subOLMcharpoly}} above. Finally, fix a lattice $ \Lambda \in \Lambda_{\bullet} $. An element of $ T_{\Lambda}^{\vee} $ is the same as an $ R $-linear map $ f : \Lambda \rightarrow R $ whose kernel contains $ K_{\Lambda} $. Since $ \Lambda $ is a \emph{lattice}, any such functional $ f $ is of the form $ \psi_R ( -, \lambda ) $ for some $ \lambda \in \widehat{\Lambda} \otimes_{\Z_p} R $, and the requirement that $ K_{\Lambda} \subset \kernel ( f ) $ is equivalent to the requirement that $ \lambda \in K_{\Lambda}^{\perp} $. Therefore, $ \Lambda_{\bullet} \otimes_{\Z_p} R \twoheadrightarrow T_{\bullet} $ satisfies \OLMdef{\ref{LMduality}} if and only if $ K_{\bullet} \hookrightarrow \Lambda_{\bullet} \otimes_{\Z_p} R $ satisfies $ K_{\widehat{\Lambda}} \supset K_{\Lambda}^{\perp} $ for all $ \Lambda \in \Lambda_{\bullet} $. The Corollary to the lemma following this proof, Lemma \ref{Lreverseinclusionprover}, implies that the reverse inclusion $ K_{\widehat{\Lambda}} \subset K_{\Lambda}^{\perp} $ is true automatically and therefore \OLMdef{\ref{LMduality}} is equivalent to \subOLMdef{\ref{subOLMduality}} above.

Let $ \mathbf{M} $ be the functor described in this proposition. The previous paragraph shows that the assignment of the diagram $ K_{\bullet} \hookrightarrow \Lambda_{\bullet} \otimes_{\Z_p} R $ to the point $ \Lambda_{\bullet} \otimes_{\Z_p} R \twoheadrightarrow T_{\bullet} $ defines a natural transformation $ S : \localmodel \rightarrow \mathbf{M} $. Similarly, there is also a natural transformation $ Q : \mathbf{M} \rightarrow \localmodel $ which assigns to each point $ K_{\bullet} \hookrightarrow \Lambda_{\bullet} \otimes_{\Z_p} R $ of $ \mathbf{M} ( R ) $ the obvious diagram consisting of the canonical quotient maps $ \Lambda_i \otimes_{\Z_p} R \twoheadrightarrow \Lambda_i \otimes_{\Z_p} R / K_i $. Given a point $ \Lambda_{\bullet} \otimes_{\Z_p} R \twoheadrightarrow T_{\bullet} $ of $ \localmodel ( R ) $, an easy diagram chase produces an isomorphism between the diagrams $ \Lambda_{\bullet} \otimes_{\Z_p} R \twoheadrightarrow T_{\bullet} $ and $ \Lambda_{\bullet} \otimes_{\Z_p} R \twoheadrightarrow \Lambda_{\bullet} \otimes_{\Z_p} R / K_{\bullet} $. In other words, the diagrams represent the same point of $ \localmodel ( R ) $ and so $ Q \circ S = \identity_{\localmodel} $. A similar diagram chase shows that $ S \circ Q = \identity_{\mathbf{M}} $ also. Therefore, $ \localmodel $ is isomorphic to $ \mathbf{M} $, as desired.
\end{proof}

\begin{terminologynotation}
From now on, identify $ \localmodel $ with the functor described in Proposition \ref{Pchangefromquotientstosubobjects}.
\end{terminologynotation}

\begin{lemma} \label{Lreverseinclusionprover}
Fix a commutative $ \Z_p $-algebra $ R $ and an $ \opp{ M_d ( \mathcal{O} ) } $-lattice $ \Lambda \subset M_d(F) $. Let $ K \subset \widehat{\Lambda} \otimes_{\Z_p} R $ be an $ ( \opp{ M_d ( \mathcal{O} ) } \otimes_{\Z_p} R ) $-submodule that is, as an $ R $-module, a direct summand. \textbf{Assertion}: The sequence
\begin{equation*}
0 \longrightarrow K^{\perp} \stackrel{\inc}{\longrightarrow} \widehat{\Lambda} \otimes_{\Z_p} R \stackrel{f}{\longrightarrow} \linhomom{R} ( K, R ) \longrightarrow 0,
\end{equation*}
where $ f ( x ) \defeq \psi_R ( -, x ) \vert_K $, is exact and split. In particular, $ K^{\perp} $ is $ R $-projective and the sum of the projective rank functions $ \spec ( R ) \rightarrow \N $ of $ K $ and $ K^{\perp} $ is the constant function $ \mathfrak{p} \mapsto 2 d^2 $.
\end{lemma}

\begin{proof}
It is obvious from the definition of $ K^{\perp} $ that the sequence is exact on the left and in the middle. To see that the sequence is both right-exact and split, I construct a splitting $ s : \linhomom{R} ( K, R ) \hookrightarrow \widehat{\Lambda} \otimes_{\Z_p} R $ of $ f $. By the `summand' hypothesis, fix a splitting $ p : \Lambda \otimes_{\Z_p} R \twoheadrightarrow K $. Using $ p $, any $ R $-linear functional $ \varphi $ on $ K $ extends to an $ R $-linear functional $ \widetilde{\varphi} $ on $ \Lambda \otimes_{\Z_p} R $. Since $ \psi : \Lambda \times \widehat{\Lambda} \rightarrow \Z_p $ is a perfect pairing, since $ - \otimes_{\Z_p} R $ is a right-exact functor, and since $ \linhomom{\Z_p} ( \Lambda, \Z_p ) \otimes_{\Z_p} R \cong \linhomom{R} ( \Lambda \otimes_{\Z_p} R, R ) $ by the trivial case of ``Localization of Hom-Sets'', any such $ \widetilde{\varphi} $ is of the form $ \psi_R ( -, x_{\varphi} ) $ for some $ x_{\varphi} \in \widehat{\Lambda} \otimes_{\Z_p} R $. Define $ s $ by $ s ( \varphi ) \defeq x_{\varphi} $. It is obvious from the construction that $ s $ is $ R $-linear and that $ f \circ s = \identity $, as desired. The projectivity statement is then obvious because $ \Lambda $ is a free $ \Z_p $-module and the statement about rank functions is obvious from the short-exact-sequence because $ \rank_{\Z_p} ( \Lambda ) = 2 d^2 $.
\end{proof}

\begin{corollary}
Fix a commutative $ \Z_p $-algebra $ R $, let $ \Lambda_{\bullet} \otimes_{\Z_p} R \twoheadrightarrow T_{\bullet} $ be a point of $ \localmodel ( R ) $. Fix $ i \in \Z $ and set $ \Lambda \defeq \Lambda_i $ and $ K \defeq \kernel ( \Lambda_i \otimes_{\Z_p} R \twoheadrightarrow T_i ) $. Assume that $ K^{\perp} \subset K $. \textbf{Assertion}: $ K^{\perp} = K $.
\end{corollary}

\begin{proof}
The hypothesis of Lemma \ref{Lreverseinclusionprover} is supplied by \OLMdef{\ref{LMlocallyfree}}, so the domain and codomain of the induced surjection $ c : \widehat{\Lambda} \otimes_{\Z_p} R / K^{\perp} \twoheadrightarrow \Lambda \otimes_{\Z_p} R / K $ are both $ R $-projective and, because \OLMdef{\ref{LMcharpoly}} forces the projective rank function of $ K $ to be $ \mathfrak{p} \mapsto d^2 $, both the domain and codomain of $ c $ have the same projective rank function $ \mathfrak{p} \mapsto d^2 $. Localizing and applying ``Linear Independence of Minimal Generating Sets'' shows that $ c $ is an isomorphism. A typical ``Five Lemma'' argument then shows that $ K^{\perp} = K $.
\end{proof}

\section{Compressing the local model with Morita Equivalence} \label{Smorita}

\begin{center}
\emph{The points of the local model $ \localmodel $ are unnecessarily complicated, due to the presence of modules over matrix rings. My goal in this section is to use Morita Equivalence to work within $ \catmodules{ \mathcal{O} } $ instead of $ \catmodules{ M_d ( \mathcal{O} ) } $.}
\end{center}

Let $ \mathcal{R} $ be a commutative ring. \emph{Morita Equivalence} is the fact that the functor
\begin{equation*}
\morita_{\mathcal{R}} \defeq \linhomom{\mathcal{R}} ( \mathcal{R}^d, - ) : \catmodules{\mathcal{R}} \rightarrow \textup{(right)}\catmodules{M_d(\mathcal{R})}
\end{equation*}
is an equivalence-of-categories (the action of $ M_d(\mathcal{R}) $ is by precomposition). An explicit quasi-inverse to $ \morita_{\mathcal{R}} $ is the functor
\begin{equation*}
\morita_{\mathcal{R}}^{-1} \defeq - \otimes_{M_d(\mathcal{R})} \mathcal{R}^d
\end{equation*}

Roughly speaking, I simply want to replace the local model $ \localmodel $ by the functor which assigns to any commutative $ \Z_p $-algebra $ R $ the set of all diagrams $ \morita^{-1}_{ \mathcal{O} \otimes_{\Z_p} R } ( \Delta ) $ for all $ \Delta \in \localmodel ( R ) $. But it will be necessary to have an explicit description of the points of this new functor as certain diagrams satisfying certain conditions, and it is not immediately obvious what these conditions should be because conditions \subOLMdef{\ref{subOLMcharpoly}} and \subOLMdef{\ref{subOLMduality}} are not directly digestible by Morita equivalence. The goal of this section is to resolve these issues.

Throughout this section, I identify $ M_d(F) $ with $ \morita_{\mathcal{O}} ( F^d ) $ via the natural isomorphism $ \linhomom{\mathcal{O}} ( \mathcal{O}^d, F^d ) \directedisom \linhomom{F} ( F^d, F^d ) $ which sends any $ \varphi $ to its obvious $ F $-linear extension. Via this identification, $ \morita_{\mathcal{O}} ( M ) $ is safely considered to be an $ M_d(\mathcal{O}) $-submodule of $ M_d(F) $ for any $ \mathcal{O} $-submodule $ M \subset F^d $.

\subsection{Simplifying the determinant condition} \label{SStranslatingdeterminant}

\begin{center}
\emph{In this subsection, I show that the determinant condition \subOLMdef{\ref{subOLMcharpoly}} can be replaced by a straightforward rank requirement. Because $ \rank ( \morita ( * ) ) = d \cdot \rank ( * ) $, this allows \subOLMdef{\ref{subOLMcharpoly}} to be transformed by $ \morita^{-1} $.}
\end{center}

Let $ R $ be a commutative $ \Z_p $-algebra, set $ \mathcal{R} := \mathcal{O} \otimes_{\Z_p} R $, and consider a point $ \{ K_i \}_{i \in \Z} \in \localmodel ( R ) $. Condition \subOLMdef{\ref{subOLMcharpoly}} certainly forces the projective rank function $ \spec(R) \rightarrow \N $ of each $ K_i $ to be the constant function $ \mathfrak{p} \mapsto d(2s) = d^2 $.

The converse is also true, and is the Corollary to the following very general proposition:
\begin{prop} \label{Psamerankimpliessamedeterminant}
Let $ \mathcal{R} $ be a commutative $ \mathcal{O} $-algebra. Let $ M $ and $ N $ be left-$ M_d( \mathcal{R} ) $-modules that are projective as $ \mathcal{R} $-modules. \textbf{Assertion}: If $ M $ and $ N $ have the same projective rank functions $ \spec( \mathcal{R} ) \rightarrow \N $ then $ \mydet_{\mathcal{R}} ( b ; M ) = \mydet_{\mathcal{R}} ( b ; N ) $ for all $ b \in M_d( \mathcal{R} ) $.
\end{prop}

\begin{proof}
Let $ M^{\prime} $ and $ N^{\prime} $ be $ \mathcal{R} $-modules such that $ \morita_{\mathcal{R}} ( M^{\prime} ) = M $ and $ \morita_{\mathcal{R}} ( N^{\prime} ) = N $. Since $ M = M^{\prime} \oplus \cdots \oplus M^{\prime} $ and $ N = N^{\prime} \oplus \cdots \oplus N^{\prime} $ as $ \mathcal{R} $-modules, it is true that $ M^{\prime} $ and $ N^{\prime} $ are also projective as $ \mathcal{R} $-modules. Let $ S \subset \mathcal{R} $ be a multiplicative subset such that $ S^{-1} M^{\prime} $ and $ S^{-1} N^{\prime} $ are both free. Since $ \rank ( \morita ( * ) ) = d \cdot \rank ( * ) $, the hypothesis implies that $ \rank_{ S^{-1} \mathcal{R} } ( S^{-1} M^{\prime} ) = \rank_{ S^{-1} \mathcal{R} } ( S^{-1} N^{\prime} ) $. Therefore, $ S^{-1} M^{\prime} $ and $ S^{-1} N^{\prime} $ are isomorphic as $ S^{-1} \mathcal{R} $-modules. This means that $ S^{-1} M $ and $ S^{-1} N $ are isomorphic as right-$ M_d ( S^{-1} \mathcal{R} ) $-modules, hence also as right-$ M_d( \mathcal{R} ) $-modules (via restriction-of-scalars). Therefore, for any $ b \in M_d( \mathcal{R} ) $, $ \mydet_{ S^{-1} \mathcal{R} } ( b ; S^{-1} M ) = \mydet_{ S^{-1} \mathcal{R} } ( b ; S^{-1} N ) $. Varying $ S $ to get an open cover of $ \spec(\mathcal{R}) $, and patching the generic determinants together finishes the proof.
\end{proof}


\begin{corollary}
$ K_{\bullet} \in \localmodel ( R ) $ satisfies \subOLMdef{\ref{subOLMcharpoly}} if and only if $ \rank_R ( K_i ) = d^2 $ for all $ i $.
\end{corollary}

\begin{proof}
Let $ i $ be arbitrary, set $ M = K_i $ and $ N = \mathcal{R}^d \oplus \cdots \oplus \mathcal{R}^d $ ($ d/2 $ times) and use Lemma \ref{Lprojectivitylifting} (page \pageref{Lprojectivitylifting}) to supply the projectivity hypothesis on $ M $. Proposition \ref{Psamerankimpliessamedeterminant} implies that for $ b \in M_d(\mathcal{O}) $, $ \mydet_{\mathcal{R}} ( b ; K_i ) $ is just the product of $ d/2 $ copies of the ``ordinary'' $ \mathcal{O} $-valued determinant of $ b $. One can easily compute $ \mydet_{\localreflex^{\prime}} ( b; M_d(F)_{+} ) $ and see that the two are equal.
\end{proof}

\begin{terminologynotation}
From now on, refer to the condition in the Corollary as \subOLMdef{\ref{subOLMcharpoly}} and forget the original condition.
\end{terminologynotation}

\subsection{Morita Equivalence and products} \label{SSmoritaandproducts}

\begin{center}
\emph{In this subsection, I verify that for each product like $ \psi $ there is a product $ \presuperscript{\circ}{ \psi } $ naturally corresponding to $ \psi $ under Morita Equivalence and properties like non-degeneracy etc. are maintained by the correspondence. This is one step towards transforming \subOLMdef{\ref{LMduality}} by $ \morita ^{-1} $.}
\end{center}

Fix a commutative $ \Z_p $-algebra $ R $ and let $ S $ be a commutative $ R $-algebra. Let $ s \mapsto \overline{s} $ be an involution on $ S $ such that $ \overline{r} = r $ for all $ r \in R $. Extend this involution to $ M_d(S) $ by acting individually on entries.

Fix an $ S $-module $ N^{\prime} $. Give $ \myhom_{R\textup{-lin}} ( N^{\prime}, R ) $ an $ S $-module structure by the rule
\begin{equation*}
( s \cdot g ) ( n ) \defeq g ( \overline{s} n )
\end{equation*}
($ s \in S $, $ g \in \myhom_{R\textup{-lin}} ( N^{\prime}, R ) $, $ n \in N^{\prime} $). Give $ \myhom_{R\textup{-lin}} ( \myhom_{S\textup{-lin}} ( S^d, N^{\prime} ), R ) $ a \emph{right}-$ M_d (S) $-module structure by the rule
\begin{equation*}
( F \cdot b ) ( \varphi ) \defeq F ( \varphi \circ \overline{b}^{\tr} )
\end{equation*}
($ F \in \myhom_{R\textup{-lin}} ( \myhom_{S\textup{-lin}} ( S^d, N^{\prime} ), R ) $, $ b \in M_d(S) $, $ \varphi \in \myhom_{S\textup{-lin}} ( S^d, N^{\prime} ) $).

\begin{lemma} \label{Lmoritaswitcheroo}
Using the above actions, the function
\begin{align}
\myhom_{S\textup{-lin}} ( S^d, \myhom_{R\textup{-lin}} ( N^{\prime}, R ) ) &\longrightarrow \myhom_{R\textup{-lin}} ( \myhom_{S\textup{-lin}} ( S^d, N^{\prime} ), R ) \label{EQmoritaproduct2} \\
f &\longmapsto F_f \defeq \left\{ \varphi \mapsto \sum_{i=1}^d f ( e_i ) [ \varphi ( e_i ) ] \right\} \nonumber
\end{align}
is a \emph{right}-$ M_d(S) $-linear isomorphism.
\end{lemma}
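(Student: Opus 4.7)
My plan is to dispose of bijectivity first by writing both sides as $\myhom_{R\textup{-lin}}(N^{\prime},R)^d$ via canonical evaluation isomorphisms, then verify right-$M_d(S)$-linearity by direct calculation, which is where the chosen scalar actions are forced on us by the compatibility requirements.

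For bijectivity, I would use the standard adjunctions. Evaluation at the basis vectors $e_1,\ldots,e_d$ gives an $R$-linear isomorphism
\begin{equation*}
\myhom_{S\textup{-lin}}(S^d, \myhom_{R\textup{-lin}}(N^{\prime},R)) \directedisom \myhom_{R\textup{-lin}}(N^{\prime},R)^d, \quad f \longmapsto (f(e_1),\ldots,f(e_d)),
\end{equation*}
and similarly $\myhom_{S\textup{-lin}}(S^d,N^{\prime}) \directedisom (N^{\prime})^d$ via $\varphi \mapsto (\varphi(e_1),\ldots,\varphi(e_d))$. This identifies the target of (\ref{EQmoritaproduct2}) with $\myhom_{R\textup{-lin}}((N^{\prime})^d, R)$, which in turn decomposes canonically as $\myhom_{R\textup{-lin}}(N^{\prime},R)^d$ by splitting into coordinates. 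Under these identifications, (\ref{EQmoritaproduct2}) becomes the identity map on $\myhom_{R\textup{-lin}}(N^{\prime},R)^d$: indeed, if $f$ corresponds to $(g_1,\ldots,g_d)$ with $g_i = f(e_i)$, then $F_f(\varphi) = \sum_i g_i(\varphi(e_i))$, which is exactly the image of $(g_1,\ldots,g_d)$ under the coordinate pairing $\myhom_{R\textup{-lin}}(N^{\prime},R)^d \directedisom \myhom_{R\textup{-lin}}((N^{\prime})^d,R)$. Hence (\ref{EQmoritaproduct2}) is an $R$-linear bijection.

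For right-$M_d(S)$-linearity, I would expand both $F_{f \cdot b}$ and $F_f \cdot b$ in terms of the matrix entries $b = (b_{ij})$ and the standard basis. On the one hand, using the precomposition action of $M_d(S) = \myend_S(S^d)$ on $\myhom_S(S^d,-)$ together with $S$-linearity of $f$,
\begin{equation*}
F_{f \cdot b}(\varphi) = \sum_i f(b \cdot e_i)[\varphi(e_i)] = \sum_{i,j} (b_{ji}\cdot f(e_j))[\varphi(e_i)] = \sum_{i,j} f(e_j)[\overline{b_{ji}}\,\varphi(e_i)],
\end{equation*}
where in the last equality I use the prescribed $S$-module structure $(s\cdot g)(n) = g(\overline{s}n)$ on $\myhom_{R\textup{-lin}}(N^{\prime},R)$. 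On the other hand, using the action $(F\cdot b)(\varphi) = F(\varphi \circ \overline{b}^{\tr})$ and $S$-linearity of $\varphi$,
\begin{equation*}
(F_f \cdot b)(\varphi) = F_f(\varphi \circ \overline{b}^{\tr}) = \sum_i f(e_i)\Bigl[\sum_j \overline{b_{ij}}\,\varphi(e_j)\Bigr] = \sum_{i,j} f(e_i)[\overline{b_{ij}}\,\varphi(e_j)].
\end{equation*}
Re-indexing $(i,j) \leftrightarrow (j,i)$ in one of the sums matches the two expressions.

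The step that actually uses content is the linearity check: the transpose in the action $F \mapsto F(-\circ \overline{b}^{\tr})$ and the conjugation in the scalar action $s\cdot g = g(\overline{s}\cdot)$ are precisely the two twists needed to make the double sum symmetric in $(i,j)$, and so are forced by the requirement that (\ref{EQmoritaproduct2}) be $M_d(S)$-equivariant. I do not anticipate a real obstacle; the only thing to watch is the conventions for column vs.\ row vectors in the precomposition action, which must be fixed consistently so that $b \cdot e_i = \sum_j b_{ji} e_j$ as used above.
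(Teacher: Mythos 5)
Your proposal is correct and takes essentially the same route as the paper: the linearity check is the same double-sum expansion using $S$-linearity of $f$ and $\varphi$ and the conjugate twists in the module actions, just split into two halves that meet after re-indexing rather than a single forward chain. The bijectivity argument via evaluation at the standard basis fills in a step the paper dismisses as "easy to verify," and is a clean way to do it.
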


This lemma says roughly that, in the category (right)$ \catalgebras{M_d(S)} $,
\begin{equation*}
\morita_S ( *^{\vee} ) = \morita_S ( * )^{\vee}
\end{equation*}
where $ \vee = \myhom_{R\textup{-lin}} ( -, R ) $ in both cases.

\begin{proof}
It is easy to verify that the function is a group isomorphism. The $ M_d(S) $-linearity is proved by letting $ b_{i,j} \in S $ be the entries of $ b $ and checking directly:
\begin{align*}
F_{ f \cdot b } ( \varphi )
\defeq \sum_{i=1}^d ( f \cdot b ) ( e_i ) [ \varphi ( e_i ) ] &= \sum_{i=1}^d f \left( \sum_{j=1}^d b_{j,i} e_j \right) [ \varphi ( e_i ) ] \\
\text{(because $ f $ is $ S $-linear)}
&= \sum_{i=1}^d \left( \sum_{j=1}^d ( b_{j,i} \cdot f ( e_j ) ) [ \varphi ( e_i ) ] \right) \\
\text{($ S $ acting on $ \myhom_{R\textup{-lin}} ( N^{\prime}, R ) $)}
&= \sum_{i=1}^d \left( \sum_{j=1}^d f ( e_j ) [ \overline{b}_{j,i} \cdot \varphi ( e_i ) ] \right) \\
\text{(because $ \varphi $ is $ S $-linear)}
&= \sum_{j=1}^d f ( e_j ) \left[ \varphi \left( \sum_{i=1}^d \overline{b}_{j,i} e_i \right) \right] \\
&= \sum_{j=1}^d f ( e_j ) [ \varphi ( \overline{b}^{\tr} ( e_j ) ) ] = F_f ( \varphi \circ \overline{b}^{\tr} ) \defeq ( F_f \cdot b ) ( \varphi )
\end{align*}
\end{proof}

Let $ I : M_d ( S ) \rightarrow M_d ( S ) $ be a multiplication-reversing involution such that $ I ( s ) = \overline{s} $ for all $ s \in S $.

Let $ M $ and $ M^{\prime} $ be \emph{right}-$ M_d ( S ) $-modules (or equivalently, \emph{left}-$ \opp{ M_d ( S ) } $-modules). Let $ \Psi : M \times M^{\prime} \rightarrow R $ be an $ R $-bilinear form such that $ \Psi ( x \cdot b, y ) = \Psi ( x, y \cdot I ( b ) ) $ for all $ b \in M_d ( S ) $ and $ x \in M $, $ y \in M^{\prime} $.

Give $ \myhom_{R\textup{-lin}} ( M^{\prime}, R ) $ a \emph{right}-$ M_d ( S ) $-module structure by the rule
\begin{equation*}
( F \cdot b ) ( m^{\prime} ) \defeq F ( m^{\prime} \cdot I ( b ) )
\end{equation*}
($ F \in \myhom_{R\textup{-lin}} ( M^{\prime}, R ) $, $ b \in M_d(S) $, $ m^{\prime} \in M^{\prime} $). Then the $ R $-linear adjoint map
\begin{equation} \label{EQmoritaproduct1}
\Psi^{\textup{ad}} : M \longrightarrow \myhom_{R\textup{-lin}} ( M^{\prime}, R )
\end{equation}
defined by $ m \mapsto \Psi ( m, - ) $ is automatically \emph{right}-$ M_d ( S ) $-linear:
\begin{equation*}
\Psi^{\textup{ad}} ( m \cdot b ) ( m^{\prime} ) \defeq \Psi ( m \cdot b, m^{\prime} ) = \Psi ( m, m^{\prime} \cdot I ( b ) ) \defeq \Psi^{\textup{ad}} ( m ) ( m^{\prime} \cdot I ( b ) ) = ( \Psi^{\textup{ad}} ( m ) \cdot b ) ( m^{\prime} )
\end{equation*}
($ m \in M $, $ m^{\prime} \in M^{\prime} $, $ b \in M_d(S) $).

Suppose $ M = \morita_S ( N ) $ and $ M^{\prime} = \morita_S ( N^{\prime} ) $ for some $ S $-modules $ N, N^{\prime} $. Then the codomains of (\ref{EQmoritaproduct2}) and (\ref{EQmoritaproduct1}) are identical as \emph{sets}, but possibly not as $ M_d(S) $-modules. If $ I $ is ``standard'', i.e. $ I ( b ) = \overline{b}^{\tr} $ for all $ b \in M_d(S) $, then the two codomains are identical as \emph{right}-$ M_d(S) $-modules. In that case, $ \Psi^{\textup{ad}} $ may be composed with the inverse of isomorphism (\ref{EQmoritaproduct2}) from Lemma \ref{Lmoritaswitcheroo} to produce a right-$ M_d ( S ) $-linear map
\begin{equation} \label{EQmoritaproduct3}
\myhom_{S\textup{-lin}} ( S^d, N ) \longrightarrow \myhom_{S\textup{-lin}} ( S^d, \myhom_{R\textup{-lin}} ( N^{\prime}, R ) )
\end{equation}

Because $ \morita_S = \myhom_{S\textup{-lin}} ( S^d, - ) $ is \emph{fully faithful}, (\ref{EQmoritaproduct3}) is the image of a unique $ S $-linear map
\begin{equation*}
N \rightarrow \myhom_{R\textup{-lin}} ( N^{\prime}, R )
\end{equation*}
By definition of the action of $ S $ on $ \myhom_{R\textup{-lin}} ( N^{\prime}, R ) $, the product
\begin{equation*}
\presuperscript{\circ}{ \Psi } : N \times N^{\prime} \rightarrow R
\end{equation*}
induced by $ N \rightarrow \myhom_{R\textup{-lin}} ( N^{\prime}, R ) $ is $ R $-bilinear and satisfies
\begin{equation*}
\presuperscript{\circ}{ \Psi } ( s n, n^{\prime} ) = \presuperscript{\circ}{ \Psi } ( n, \overline{s} n^{\prime} )
\end{equation*}
for all $ s \in S $, $ n \in N $, and $ n^{\prime} \in N^{\prime} $. In other words, $ \presuperscript{\circ}{ \Psi } $ is ``internally-hermitian'' in the sense of \S\ref{SSalwaystranspose} (page \pageref{extractionofhermitianforms}).

\begin{definition}
Let $ R, S, \Psi, M, N, $ etc. be as above and assume as above that $ I $ is ``standard''. The product that is \emph{Morita Equivalent} to $ \Psi : M \times M^{\prime} \rightarrow R $ is defined to be the $ R $-bilinear product $ \presuperscript{\circ}{ \Psi } : N \times N^{\prime} \rightarrow R $.
\end{definition}

Because $ \morita_S $ is \emph{exact}, the adjoint map $ m \mapsto \Psi ( m, - ) $ is injective or surjective if and only if $ n \mapsto \presuperscript{\circ}{ \Psi } ( n, - ) $ is. Therefore, the product $ \Psi $ is non-degenerate or perfect if and only if $ \presuperscript{\circ}{ \Psi } $ is.

Note that the only non-canonical ingredient in the definition of $ \presuperscript{\circ}{ \Psi } $ was the original product $ \Psi $.

\subsection{Morita Equivalence and duality} \label{SSmoritaandduality}

Let
\begin{equation*}
\presuperscript{\circ}{\psi} : F^d \times F^d \longrightarrow \Q_p
\end{equation*}
be the $ \Q_p $-bilinear form that is Morita Equivalent to $ \psi : M_d(F) \times M_d(F) \rightarrow \Q_p $  (the necessary hypothesis on the involution $ I \defeq \iota $ in \S\ref{SSmoritaandproducts} is supplied by Proposition \ref{Palwaystranspose}).

Let
\begin{equation*}
\presuperscript{\circ}{\Lambda}_{\bullet} = ( \cdots \subset \presuperscript{\circ}{\Lambda}_{-1} \subset \presuperscript{\circ}{\Lambda}_0 \subset \presuperscript{\circ}{\Lambda}_1 \subset \cdots )
\end{equation*}
be a chain of $ \mathcal{O} $-lattices in $ F^d $ such that the image in $ M_d(F) $ of $ \morita_{\mathcal{O}} ( \presuperscript{\circ}{\Lambda}_{\bullet} ) $ (see the beginning of \S\ref{Smorita}) is $ \Lambda_{\bullet} $.

As usual, if $ \presuperscript{\circ}{\Lambda} \subset F^d $ is a lattice then define
\begin{equation*}
\widehat{\presuperscript{\circ}{\Lambda}} \defeq \{ x \in F^d \suchthat \presuperscript{\circ}{\psi} ( \presuperscript{\circ}{\Lambda}, x ) \subset \Z_p \}
\end{equation*}

\begin{prop} \label{Pmoritacommutesduality}
Fix a lattice $ \presuperscript{\circ}{\Lambda} \in \presuperscript{\circ}{\Lambda}_{\bullet} $ and let $ \Lambda $ be the image in $ M_d(F) $ of $ \morita_{\mathcal{O}} ( \presuperscript{\circ}{\Lambda} ) $. \textbf{Assertion}: The image in $ M_d(F) $ of $ \morita_{\mathcal{O}} ( \widehat{\presuperscript{\circ}{\Lambda}} ) $ is $ \widehat{\Lambda} $.

In particular, $ \presuperscript{\circ}{\Lambda}_{\bullet} $ is a $ \presuperscript{\circ}{\psi} $-selfdual lattice chain in $ F^d $ if and only if $ \Lambda_{\bullet} $ is a $ \psi $-selfdual lattice chain in $ M_d(F) $.
\end{prop}

\begin{proof}
This is true simply because $ \morita_{\mathcal{O}} $ is an equivalence-of-categories and dual lattices can be expressed categorically. In more detail, note that $ \widehat{ \Lambda } $ is by definition the image of the $ M_d(\mathcal{O}) $-linear map
\begin{equation} \label{EQdualityfiberproduct1}
\myhom_{\Z_p\textup{-lin}} ( \Lambda, \Z_p ) \stackrel{ \epsilon }{ \longrightarrow } \myhom_{\Q_p\textup{-lin}} ( M_d(F), \Q_p ) \directedisom M_d(F)
\end{equation}
Here $ \epsilon $ extends maps from $ \Lambda $ to $ M_d(F) $ (using that $ \Lambda $ is a \emph{lattice}) and the isomorphism is that induced by the \emph{perfect} form $ \psi $.

Similarly, $ \widehat{ \presuperscript{\circ}{\Lambda} } $ is by definition the image of the analogous $ \mathcal{O} $-linear map
\begin{equation} \label{EQdualityfiberproduct2}
\myhom_{\Z_p\textup{-lin}} ( \presuperscript{\circ}{\Lambda}, \Z_p ) \longrightarrow \myhom_{\Q_p\textup{-lin}} ( F^d, \Q_p ) \directedisom F^d
\end{equation}
(the fact that $ \presuperscript{\circ}{\psi} $ is also perfect was used here).

Applying $ \morita_{\mathcal{O}} $ to (\ref{EQdualityfiberproduct2}) produces a certain map
\begin{equation*}
\linhomom{\mathcal{O}} ( \mathcal{O}^d, \linhomom{\Z_p} ( \presuperscript{\circ}{\Lambda}, \Z_p ) ) \rightarrow M_d(F)
\end{equation*}
(I have again used the natural identification $ \linhomom{\mathcal{O}} ( \mathcal{O}^d, F^d ) \cong M_d(F) $).

By Lemma \ref{Lmoritaswitcheroo}, this map can be identified with
\begin{equation*}
\linhomom{\Z_p} ( \linhomom{\mathcal{O}} ( \mathcal{O}^d, \presuperscript{\circ}{\Lambda} ), \Z_p ) \rightarrow M_d(F).
\end{equation*}
By construction of $ \presuperscript{\circ}{\psi} $ from $ \psi $, and because by definition $ \Lambda $ is the image in $ M_d(F) $ of $ \linhomom{\mathcal{O}} ( \mathcal{O}^d, \presuperscript{\circ}{\Lambda} ) = \morita_{\mathcal{O}} ( \presuperscript{\circ}{\Lambda} ) $, this means (modulo the aforementioned identifications) that applying $ \morita_{\mathcal{O}} $ to (\ref{EQdualityfiberproduct2}) yields (\ref{EQdualityfiberproduct1}). The claim follows since any equivalence-of-categories commutes with the `image' operator.
\end{proof}

\subsection{Morita Equivalence and orthogonality} \label{SSmoritaandorthogonality}

Let $ \presuperscript{\circ}{\psi} $ and $ \presuperscript{\circ}{\Lambda}_{\bullet} $ be as in \S\ref{SSmoritaandduality} and fix a lattice $ \presuperscript{\circ}{\Lambda} \in \presuperscript{\circ}{\Lambda}_{\bullet} $. Let $ \Lambda $ be the image in $ M_d(F) $ of $ \morita_{\mathcal{O}} (  \presuperscript{\circ}{\Lambda} ) $.

As usual, if $ \presuperscript{\circ}{K} \subset \presuperscript{\circ}{\Lambda} $ is any $ \mathcal{R} $-submodule, define
\begin{equation*}
\presuperscript{\circ}{K}^{\perp} \defeq \{ x \in \widehat{\presuperscript{\circ}{\Lambda}} \otimes_{\Z_p} R \suchthat \presuperscript{\circ}{\psi}_R ( \presuperscript{\circ}{K}, x ) = 0 \}.
\end{equation*}

\begin{prop} \label{Pmoritacommutesorthogonality}
Fix a $ \Z_p $-algebra $ R $ and set $ \mathcal{R} := \mathcal{O} \otimes_{\Z_p} R $. Let $ \presuperscript{\circ}{K} \subset \presuperscript{\circ}{\Lambda} $ be an $ \mathcal{R} $-submodule and let $ K $ be the image in $ M_d(\mathcal{R}) $ of $ \morita_{\mathcal{R}} ( \presuperscript{\circ}{K} ) $. \textbf{Assertion}: The image in $ M_d(\mathcal{R}) $ of $ \morita_{\mathcal{R}} ( \presuperscript{\circ}{K}^{\perp} ) $ is $ K^{\perp} $.
\end{prop}

\begin{proof}
By definition, $ \presuperscript{\circ}{K}^{\perp} $ is the kernel of the adjoint map
\begin{equation*}
\widehat{\presuperscript{\circ}{\Lambda}} \otimes_{\Z_p} R \longrightarrow \linhomom{R} ( \presuperscript{\circ}{\Lambda} \otimes_{\Z_p} R, R )
\end{equation*}
defined by $ x \mapsto \presuperscript{\circ}{\psi}_R(-,x)\vert_{\presuperscript{\circ}{\Lambda} \otimes_{\Z_p} R} $. Because $ \morita_{\mathcal{R}} ( * \otimes_{\Z_p} R ) \cong \morita_{\mathcal{O}} ( * ) \otimes_{\Z_p} R $ (this is the trivial case of ``Localization of Hom-Sets''), the above adjoint map may be identified with a certain map
\begin{equation*}
\morita_{\mathcal{O}} ( \widehat{\presuperscript{\circ}{\Lambda}} ) \otimes_{\Z_p} R \longrightarrow \linhomom{R} ( \morita_{\mathcal{O}} ( \presuperscript{\circ}{\Lambda} ) \otimes_{\Z_p} R, R ).
\end{equation*}
By Proposition \ref{Pmoritacommutesduality}, the domain of this map may be identified with $ \widehat{\Lambda} \otimes_{\Z_p} R $, and it follows from the construction of $ \presuperscript{\circ}{\psi} $ that the resulting map $ \widehat{\Lambda} \otimes_{\Z_p} R \rightarrow \linhomom{R} ( \Lambda \otimes_{\Z_p} R, R ) $ is exactly the adjoint map of $ \psi_R $. Because $ \morita_{\mathcal{R}} $ is an equivalence-of-categories, it commutes with the `kernel' operator and the claim follows.
\end{proof}

\subsection{Compressed description of the local model} \label{SSmoritacompressedlocalmodel}

Let $ \presuperscript{\circ}{\psi} : F^d \times F^d \rightarrow \Q_p $ and $ \presuperscript{\circ}{\Lambda}_{\bullet} $ be as in \S\ref{SSmoritaandduality}.

\begin{prop} \label{Ppostmoritalocalmodel}
The local model $ \localmodel $ is isomorphic to the functor that assigns to any commutative $ \Z_p $-algebra $ R $ the set of all (isomorphism classes of) commutative diagrams
\begin{equation*}
\begin{CD}
\cdots @>\inc \otimes \identity>> \presuperscript{\circ}{\Lambda}_i \otimes_{\Z_p} R @>\inc \otimes \identity>> \presuperscript{\circ}{\Lambda}_{i+1} \otimes_{\Z_p} R @>\inc \otimes \identity>> \cdots \\
@. @AAA @AAA @. \\
\cdots @>>> \presuperscript{\circ}{K}_i @>>> \presuperscript{\circ}{K}_{i+1} @>>> \cdots
\end{CD}
\end{equation*}
of $ ( \mathcal{O} \otimes_{\Z_p} R ) $-modules satisfying the following properties:
\begin{enumerate}
\setlength{\itemsep}{5pt}
\item For each $ i $, $ \presuperscript{\circ}{K}_i \rightarrow \presuperscript{\circ}{\Lambda}_i \otimes_{\Z_p} R $ is \emph{injective},

\item for each $ i $, if $ p \cdot \presuperscript{\circ}{\Lambda}_i = \presuperscript{\circ}{\Lambda}_j $, then the isomorphism $ - \cdot p : \presuperscript{\circ}{\Lambda}_i \directedisom \presuperscript{\circ}{\Lambda}_j $ restricts to an isomorphism $ \presuperscript{\circ}{K}_i \directedisom \presuperscript{\circ}{K}_j $,

\item for each $ i $, $ \presuperscript{\circ}{K}_i \hookrightarrow \presuperscript{\circ}{\Lambda}_i \otimes_{\Z_p} R $ splits $ R $-linearly,

(in particular, each $ \presuperscript{\circ}{K}_i $ is $ R $-projective)

\item for each $ i $, the projective rank function $ \spec(R) \rightarrow \N $ of $ \presuperscript{\circ}{K}_i $ is the constant function $ \mathfrak{p} \mapsto d $, and

\item $ \presuperscript{\circ}{K}_{\widehat{\presuperscript{\circ}{\Lambda}}} = \presuperscript{\circ}{K}_{\presuperscript{\circ}{\Lambda}}^{\perp} $ for each $ \presuperscript{\circ}{\Lambda} = \presuperscript{\circ}{\Lambda}_i $.

\end{enumerate}
\end{prop}

As usual, the equality in (5) is a slight abuse.

\begin{proof}
Let $ \mathbf{M} $ be the functor described in this proposition. First, I define a morphism $ \mathbf{M} \rightarrow \localmodel $. Fix a $ \Z_p $-algebra $ R $, set $ \mathcal{R} := \mathcal{O} \otimes_{\Z_p} R $, and consider a point $ \presuperscript{\circ}{K}_{\bullet} \rightarrow \presuperscript{\circ}{\Lambda}_{\bullet} \otimes_{\Z_p} R $ of $ \mathbf{M} ( R ) $. By the trivial case of ``Localization of Hom-Sets'', $ \morita_{\mathcal{R}} ( \Lambda \otimes_{\Z_p} R ) \cong \morita_{\mathcal{O}} ( \Lambda ) \otimes_{\Z_p} R $ for any $ \mathcal{O} $-lattice $ \Lambda \subset F^d $, so the diagram $ K_{\bullet} \rightarrow \Lambda_{\bullet} \otimes_{\Z_p} R $ that is isomorphic in this way to $ \morita_{\mathcal{R}} ( \presuperscript{\circ}{K}_{\bullet} \rightarrow \presuperscript{\circ}{\Lambda}_{\bullet} \otimes_{\Z_p} R ) $ is a diagram of the same ``type'' as the points of $ \localmodel ( R ) $ but a priori possibly lacking some of the properties defining $ \localmodel ( R ) $. Since $ \morita_{\mathcal{R}} $ is an equivalence-of-categories and is therefore exact, the diagram $ K_{\bullet} \rightarrow \Lambda_{\bullet} \otimes_{\Z_p} R $ has property \subOLMdef{\ref{LMquotient}}. It is immediate from the definition of $ \morita_{\mathcal{R}} $ that the diagram $ K_{\bullet} \rightarrow \Lambda_{\bullet} \otimes_{\Z_p} R $ has properties \subOLMdef{\ref{LMperiodic}} and \subOLMdef{\ref{LMlocallyfree}}. Since $ \rank ( \morita ( * ) ) = d \cdot \rank ( * ) $, the diagram $ K_{\bullet} \rightarrow \Lambda_{\bullet} \otimes_{\Z_p} R $ also has property \subOLMdef{\ref{LMcharpoly}}. Finally, Proposition \ref{Pmoritacommutesorthogonality} implies that the diagram $ K_{\bullet} \rightarrow \Lambda_{\bullet} \otimes_{\Z_p} R $ has property \subOLMdef{\ref{LMduality}}. Altogether, $ K_{\bullet} \rightarrow \Lambda_{\bullet} \otimes_{\Z_p} R $ is a point of $ \localmodel ( R ) $, which yields the morphism $ \mathbf{M} \rightarrow \localmodel $. Because each $ \morita_{\mathcal{R}} $ is an equivalence-of-categories, one may use inverse functors $ \morita^{-1}_{\mathcal{R}} $ to define an analogous morphism $ \localmodel \rightarrow \mathbf{M} $ that is inverse to $ \mathbf{M} \rightarrow \localmodel $.
\end{proof}

\begin{terminologynotation}
From now on, identify $ \localmodel $ with the functor described in Proposition \ref{Ppostmoritalocalmodel}.
\end{terminologynotation}

The similitude group $ G_{\Q_p} $ can also benefit, in an analogous way, from Morita Equivalence. Let $ \presuperscript{\circ}{\psi} : F^d \times F^d \rightarrow \Q_p $ be as before.

Recall from \S\ref{SPELdatum} (page \pageref{SPELdatum}) that $ M_d(F) $ acts by \emph{left}-multiplication on $ V = M_d(F) $ and the involution induced by $ \psi $ on this left-acting $ M_d(F) $ is none other than $ * $, i.e. $ \psi ( b x, y ) = \psi ( x, b^{*} y ) $ for all $ b, x, y \in M_d(F) $. If $ b \in M_d(F) $ is considered as an $ F $-linear map $ F^d \rightarrow F^d $, then $ \morita_F (b) $ is a right-$ M_d(F) $-linear map $ M_d(F) \rightarrow M_d(F) $, and this map is none other than right-multiplication by $ b $.

A careful inspection of the construction of $ \presuperscript{\circ}{\psi} $ shows that $ \psi $ can be expressed using $ \presuperscript{\circ}{\psi} $ in a very simple way: if $ x, y \in M_d(F) $, then
\begin{equation} \label{Emoritaproductidentity}
\psi ( x, y ) = \sum_{i=1}^d \presuperscript{\circ}{\psi} ( x_i, y_i )
\end{equation}
where $ x_i $ and $ y_i $ are the $ i $th columns of the matrices $ x, y $.


Since $ \psi $ is non-degenerate and perfect, $ \presuperscript{\circ}{\psi} $ is also and therefore induces an involution
\begin{equation*}
\presuperscript{\circ}{*} : \myend_{ F\textup{-lin} }( F^d ) \longrightarrow \myend_{ F\textup{-lin} }( F^d )
\end{equation*}

\begin{prop} \label{Psimplifydescofgroup}
$ G_{\Q_p} $ is equal to the functor that assigns to any commutative $ \Q_p $-algebra $ R $ the group of all $ g \in M_d ( F ) \otimes_{\Q_p} R $ satisfying $ g^{\presuperscript{\circ}{*}} \cdot g \in R^{\times} $.
\end{prop}

\begin{proof}
This is true simply because $ * $ and $ \presuperscript{\circ}{*} $ are the same: if $ x, y \in F^d $ and if $ X, Y \in M_d(F) $ are the matrices whose $ i $th columns are $ x, y $ (respectively) and $ 0 $ in all other entries, then identity (\ref{Emoritaproductidentity}) collapses to $ \psi ( X, Y ) = \presuperscript{\circ}{\psi} ( x, y ) $.
\end{proof}

\begin{terminologynotation}
Now that the details of the Morita Equivalence are recorded, it will not be necessary to refer to the objects $ \psi $, $ * $, etc. and I simplify notation and use those symbols to refer instead to $ \presuperscript{\circ}{\psi} $, $ \presuperscript{\circ}{*} $, etc.
\end{terminologynotation}

\subsection{Construction of lattices and truncation of the local model} \label{SSsimpledesc}

\begin{center}
\emph{In this last subsection of \S\ref{Smorita}, I construct the lattices promised in \S\ref{SSoriginallocalmodeldef} and \S\ref{SSmoritaandduality} and make a few minor final changes to the description of the functor $ \localmodel $.}
\end{center}

First, notice that all concepts of duality and orthogonality can be rewritten to use $ \phi : F^d \times F^d \rightarrow F $, the hermitian form corresponding to $ \psi $ according to ``Extraction of Hermitian Forms'', instead of $ \psi $. The explicit correspondence between alternating forms like $ \psi $ and hermitian forms $ \phi $ on page \pageref{extractionofhermitianforms} implies that
\begin{align*}
\widehat{ \Lambda } &= \{ w \in F^d \suchthat \phi ( \Lambda, w ) \subset \mathcal{O} \} \\
K^{\perp} &= \{ \lambda \in \widehat{ \Lambda } \otimes_{\Z_p} R \suchthat \phi_R ( K, \lambda ) = 0 \}
\end{align*}
for any $ \mathcal{O} $-lattice $ \Lambda \subset F^d $, any $ \Z_p $-algebra $ R $, and any $ ( \mathcal{O} \otimes_{\Z_p} R ) $-submodule $ K \subset \Lambda \otimes_{\Z_p} R $.

Second, I claim that I may apply an element of $ \GL_d(F) $ to $ F^d $ so that $ \phi $ is transformed to the hermitian form $ \phi^{\vee} $ with Gram matrix the anti-identity matrix $ \antiid $. To see this, note that $ \phi $ is isometric to the hermitian form with Gram matrix the identity matrix $ \identity $ by Lemma \ref{Lgapfiller} and the Assumption at the end of \S\ref{SSalwaystranspose} (page \pageref{Aquasisplit}). By the well-known classification of hermitian forms over local fields (see 1.6(ii) on page 351 in \cite{Sc}), the isometry class of a hermitian form is completely determined by its dimension and the determinant of its Gram matrix in the norm-class-group $ \Q_p^{\times} / N_{F/\Q_p} ( F^{\times} ) $. Since $ \det ( \antiid ) = \pm 1 $ and since the integer norm map $ N_{F/\Q_p} : \mathcal{O}^{\times} \rightarrow \Z_p^{\times} $ is \emph{surjective} for unramified extensions $ F $, the claim is proven.

Third, define \begin{equation*}
\Lambda_i \defeq p^{-1} \mathcal{O}^i \oplus \mathcal{O}^{d-i} \textaftermath{($ 0 \leq i < d $)}
\end{equation*}
and extend periodically by $ \Lambda_i \defeq p^{-q} \cdot \Lambda_r $ whenever the Division Algorithm produces $ i = q d + r $. This is a periodic $ \phi^{\vee} $-selfdual $ \mathcal{O} $-lattice chain in $ F^d $.

\begin{remark}
Note that, by the first two paragraphs, applying the inverse of the transformation that was used to transform $ \phi $ to $ \phi^{\vee} $ yields a periodic $ \psi $-selfdual $ \mathcal{O} $-lattice chain in $ F^d $ and that, by Proposition \ref{Pmoritacommutesduality}, applying $ \morita_{\mathcal{O}} $ to this $ \psi $-selfdual lattice chain and taking the image in $ M_d(F) $ yields the lattice chain promised in \S\ref{SSoriginallocalmodeldef} (although part of the point of \S\ref{Smorita} is that it is not necessary to directly consider either of these lattices chains anymore).
\end{remark}

Combining the above paragraphs yields:

\begin{prop} \label{PSLM}
The local model $ \localmodel $ is isomorphic to the functor which assigns to any commutative $ \Z_p $-algebra $ R $ the set of all (isomorphism classes of) finite commutative diagrams
\begin{equation*}
\begin{CD}
\Lambda_0 \otimes_{\Z_p} R @>\inc \otimes \identity>> \Lambda_1 \otimes_{\Z_p} R @>\inc \otimes \identity>> \cdots @>\inc \otimes \identity>> \Lambda_{d/2} \otimes_{\Z_p} R \\
@AAA @AAA @. @AAA \\
K_0 @>>> K_1 @>>> \cdots @>>> K_{d/2}
\end{CD}
\end{equation*}
of $ ( \mathcal{O} \otimes_{\Z_p} R ) $-modules satisfying the following properties:
\begin{enumerate}
\setlength{\itemsep}{5pt}
\item \label{SLMsubobject} \SLMdef{\ref{SLMsubobject}} \hfill \\
For each $ i $, $ K_i \rightarrow \Lambda_i \otimes_{\Z_p} R $ is \emph{injective},

\item \label{SLMsummand} \SLMdef{\ref{SLMsummand}} \hfill \\
for each $ i $, the inclusion $ K_i \hookrightarrow \Lambda_i \otimes_{\Z_p} R $ splits $ R $-linearly,

(in particular, each $ K_i $ is $ R $-projective)

\item \label{SLMlocalrank} \SLMdef{\ref{SLMlocalrank}} \hfill \\
for each $ i $, the projective rank function $ \spec(R) \rightarrow \N $ of $ K_i $ is the constant function $ \mathfrak{p} \mapsto d $, and

\item \label{SLMisotropic} \SLMdef{\ref{SLMisotropic}} \hfill \\
$ K_0^{\perp} = K_0 $ and $ K_{d/2}^{\perp} = p K_{d/2} $ with respect to the restrictions $ \Lambda_0 \times \Lambda_0 \rightarrow \mathcal{O} $ and $ \Lambda_{d/2} \times p \Lambda_{d/2} \rightarrow \mathcal{O} $ of $ \phi^{\vee} $.
\end{enumerate}
\end{prop}

\begin{proof}
By Proposition \ref{Ppostmoritalocalmodel}, if $ \mathbf{M} $ is the functor described in this proposition, the above discussion shows that deleting the part of the diagram indexed by $ i < 0 $ and $ i > d/2 $ defines a natural transformation $ \localmodel \rightarrow \mathbf{M} $ (the pair of orthogonality requirements in this proposition are simply the only requirements from Proposition \ref{Ppostmoritalocalmodel} that survive the truncation). The inverse $ \mathbf{M} \rightarrow \localmodel $ is defined by reconstructing infinite diagrams from finite diagrams as follows: for $ 0 \leq i \leq d/2 $, note that $ \Lambda_{-i} = \widehat{ \Lambda }_i $, define $ K_{-i} \rightarrow \Lambda_{-i} $ to be the inclusion $ K_i^{\perp} \subset \widehat{ \Lambda }_i $, and extend the diagram to $ \vert i \vert > d/2 $ by the periodicity rules $ \Lambda_{i \pm d} = p^{\mp 1} \Lambda_i $ and $ K_{i \pm d} \defeq p^{\mp 1} K_i $.
\end{proof}

\begin{terminologynotation}
From now on, identify $ \localmodel $ with the functor described in Proposition \ref{PSLM}. From now on, the symbol ``$ \phi $'' refers to the hermitian form $ F^d \times F^d \rightarrow F $ whose Gram matrix is the anti-identity matrix $ \antiid $ (instead of ``$ \phi^{ \vee } $''). I also emphasize that from now on ``$ \Lambda_{\bullet} $'' refers to the \emph{specific} chain of $ \mathcal{O} $-lattices in $ F^d $ defined in this subsection.
\end{terminologynotation}

\section{The enlarged models} \label{Sdefinitionofthelargermodels}

\subsection{Alternate description of the local model using a single base
lattice} \label{SSaltdesc}

This subsection rephrases the definition of $ \localmodel $ so that its points consist of submodules of the single lattice $ \Lambda_0 $ and it will then be easier to see how to enlarge the models satisfactorily to a degeneration of the affine grassmannian to the full affine flag variety.

For $ i = 1, \ldots, d $, let $ \alpha_i : F^d \rightarrow F^d $ be the $ F $-linear map
\begin{equation*}
( x_1, \ldots, x_d ) \mapsto ( x_1, \ldots, x_{i-1}, p x_i,
x_{i+1}, \ldots, x_d )
\end{equation*}
Then $ \alpha_i $ induces an $ \mathcal{O} $-module isomorphism $ \Lambda_i \directedisom \Lambda_{i-1} $. Define
\begin{equation*}
\alpha_{[i]} \defeq \alpha_i \circ \cdots \circ \alpha_1
\end{equation*}
Then $ \alpha_{[i]} $ induces an $ \mathcal{O} $-module isomorphism $ \Lambda_i \directedisom \Lambda_0 $. Define
\begin{equation*}
\alpha^{[i]} \defeq \alpha_d \circ \cdots \circ \alpha_{d-i+1}
\end{equation*}

Pick a commutative $ \Z_p $-algebra $ R $, and consider a point
\begin{equation*}
\begin{array}{ccccccc}
\Lambda_0 \otimes_{\Z_p} R & \longrightarrow & \Lambda_1 \otimes_{\Z_p} R & \longrightarrow & \cdots & \longrightarrow & \Lambda_{d/2} \otimes_{\Z_p} R \\
\cup &  & \cup &  & \cdots &  & \cup \\
K_0 & \longrightarrow & K_1 & \longrightarrow & \cdots &
\longrightarrow & K_{d/2}
\end{array}
\end{equation*}
of $ \localmodel(R) $.

Specifying $ K_i \subset \Lambda_i \otimes_{\Z_p} R $ is the same as specifying $ \alpha_{[i]} ( K_i ) \subset \Lambda_0 \otimes_{\Z_p} R $, so define
\begin{equation*}
L_i \defeq \alpha_{[i]} ( K_i )
\end{equation*}
(note that $ L_0 = K_0 $). The condition that $ \Lambda_i \otimes_{\Z_p} R \rightarrow \Lambda_{i+1} \otimes_{\Z_p} R $ restrict to $ K_i \rightarrow K_{i+1} $ is equivalent to the condition that $ \alpha_{i+1} ( L_i ) \subset L_{i+1} $ (note that $ \alpha_{i+1} \circ \alpha_{[i]} = \alpha_{[i+1]} $). So, a point of $ \localmodel(R) $ is equivalent to a tuple $ ( L_0, \ldots, L_{d/2} ) $ satisfying $ \alpha_{i+1} ( L_i ) \subset L_{i+1} $ for each $ 0 \leq i < d/2 $ and the implicit equivalents of \SLMdef{}.

For \SLMdef{\ref{SLMsummand}}, note that if $ s $ is an $ R $-linear splitting of $ K_i \hookrightarrow \Lambda_i \otimes_{\Z_p} R $ then $ \alpha_{[i]}^{-1} \circ s \circ \alpha_{[i]} $ is an $ R $-linear splitting of $ L_i \subset \Lambda_0 \otimes_{\Z_p} R $, so that condition is identical: require that each inclusion $ L_i \subset \Lambda_0 \otimes_{\Z_p} R $ split $ R $-linearly. It is obvious that the projective rank condition \SLMdef{\ref{SLMlocalrank}} is the same for the $ K_i $ as for the $ L_i $.

For \SLMdef{\ref{SLMisotropic}}, note that the condition $ K_0^{\perp} = K_0 $ is equivalent to the condition $ L_0^{\perp} = L_0 $ tautologically. The fact that
\begin{equation*}
\phi ( \alpha_{[i]} ( x ), y ) = \phi ( x, \alpha^{[i]} ( y ) )
\end{equation*}
and
\begin{equation*}
\alpha_{[d/2]} \circ \alpha^{[d/2]} = p
\end{equation*}
suggests the following:
\begin{lemma}
$ K_{d/2}^{\perp} = p K_{d/2} $ ($ \subset p \Lambda_{d/2} $) with respect to $ \phi : \Lambda_{d/2} \times p \Lambda_{d/2} \rightarrow \mathcal{O} $ \textbf{if and only if} $ L_{d/2}^{\perp} = L_{d/2} $ with respect to $ \phi : \Lambda_0 \times \Lambda_0 \rightarrow \mathcal{O} $.
\end{lemma}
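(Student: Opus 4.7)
The plan is to translate the pairing condition on $K_{d/2} \subset \Lambda_{d/2} \otimes_{\Z_p} R$ (as a submodule of $p\Lambda_{d/2} \otimes_{\Z_p} R$ via multiplication by $p$) into the pairing condition on $L_{d/2} = \alpha_{[d/2]}(K_{d/2})$ inside $\Lambda_0 \otimes_{\Z_p} R$, using the two identities displayed just before the lemma: adjointness $\phi(\alpha_{[d/2]}(x), y) = \phi(x, \alpha^{[d/2]}(y))$ and the factorization $\alpha_{[d/2]} \circ \alpha^{[d/2]} = p$.

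First I would note that $\alpha^{[d/2]}$, which multiplies the last $d/2$ coordinates by $p$, restricts to an $\mathcal{O}$-module isomorphism $\Lambda_0 \directedisom p\Lambda_{d/2}$ (since $\Lambda_0 = \mathcal{O}^d$ and $p\Lambda_{d/2} = \mathcal{O}^{d/2} \oplus p\mathcal{O}^{d/2}$), and this remains an isomorphism after applying $- \otimes_{\Z_p} R$. Similarly $\alpha_{[d/2]}$ is an isomorphism $\Lambda_{d/2} \directedisom \Lambda_0$, and since the two operators act on disjoint blocks of coordinates they commute, so the composition $\alpha^{[d/2]} \circ \alpha_{[d/2]}: \Lambda_{d/2} \to p\Lambda_{d/2}$ agrees with $\alpha_{[d/2]} \circ \alpha^{[d/2]}$ and equals multiplication by $p$. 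I also need to confirm that both relevant restrictions $\phi : \Lambda_0 \times \Lambda_0 \to \mathcal{O}$ and $\phi : \Lambda_{d/2} \times p\Lambda_{d/2} \to \mathcal{O}$ land in $\mathcal{O}$, which is where the $\phi$-self-duality of the chain $\{\Lambda_i\}_{i \in \Z}$ established in the previous section is used, and that adjointness of $\alpha_{[d/2]}$ and $\alpha^{[d/2]}$ is compatible with these restrictions (routine, from the definitions of the $\alpha$'s).

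With this setup in hand, the actual equivalence is nearly immediate. For $y \in \Lambda_0 \otimes_{\Z_p} R$, adjointness gives
\[
\phi_R(L_{d/2}, y) \;=\; \phi_R(\alpha_{[d/2]}(K_{d/2}), y) \;=\; \phi_R(K_{d/2}, \alpha^{[d/2]}(y)),
\]
so $y \in L_{d/2}^{\perp}$ iff $\alpha^{[d/2]}(y) \in K_{d/2}^{\perp}$. Since $\alpha^{[d/2]}$ is an isomorphism $\Lambda_0 \otimes_{\Z_p} R \directedisom p\Lambda_{d/2} \otimes_{\Z_p} R$, this says $\alpha^{[d/2]}(L_{d/2}^{\perp}) = K_{d/2}^{\perp}$. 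On the other hand, the factorization gives
\[
\alpha^{[d/2]}(L_{d/2}) \;=\; \alpha^{[d/2]}(\alpha_{[d/2]}(K_{d/2})) \;=\; p K_{d/2}.
\]
Applying the isomorphism $\alpha^{[d/2]}$ to both sides of a putative equation $L_{d/2}^{\perp} = L_{d/2}$ therefore produces the equivalent equation $K_{d/2}^{\perp} = p K_{d/2}$, and conversely. The main (and only) point of care in writing this up is bookkeeping for the domains and codomains of the various $\alpha$'s and pairings, so that both sides of each identity live in the intended submodule.
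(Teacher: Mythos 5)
Your proof is correct and uses the same two ingredients as the paper's (the adjointness $\phi(\alpha_{[d/2]}(x),y)=\phi(x,\alpha^{[d/2]}(y))$ and the factorization $\alpha_{[d/2]}\circ\alpha^{[d/2]}=p$), but packages them more cleanly: you first establish the two unconditional identities $\alpha^{[d/2]}(L_{d/2}^{\perp})=K_{d/2}^{\perp}$ and $\alpha^{[d/2]}(L_{d/2})=pK_{d/2}$, and then the equivalence falls out of injectivity of $\alpha^{[d/2]}$, whereas the paper verifies each of the inclusions in each direction separately. Same approach, slightly streamlined write-up.
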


\begin{proof}
\framebox{$ \Rightarrow $} \framebox{$ \subset $} If $ y \in \Lambda_0 $ then $ \alpha^{[d/2]} ( y ) \in p \Lambda_{d/2} $, and since
\begin{equation*}
\phi_R ( L_{d/2}, y ) = \phi_R ( \alpha_{[d/2]} ( K_{d/2} ), y ) = \phi_R ( K_{d/2}, \alpha^{[d/2]} ( y ) ),
\end{equation*}
the fact that $ \phi ( L_{d/2}, y ) = 0 $ implies that $ \alpha^{[d/2]} ( y ) \in K_{d/2}^{\perp} = p K_{d/2} $. This means that
\begin{equation*}
p y = \alpha_{[d/2]} ( \alpha^{[d/2]} ( y ) ) \in \alpha_{[d/2]} ( p K_{d/2} ) = p L_{d/2}
\end{equation*}
and so $ y \in L_{d/2} $. \framebox{$ \supset $} This is easy to verify: if $ x \in L_{d/2} $ then $ x = \alpha_{[d/2]} ( x^{\prime} ) $ for some $ x^{\prime} \in K_{d/2} $ so
\begin{equation*}
\phi_R ( L_{d/2}, x ) = \phi_R ( \alpha_{[d/2]} ( K_{d/2} ), \alpha_{[d/2]}( x^{\prime} ) ) = p \phi_R ( K_{d/2}, x^{\prime} ) = 0
\end{equation*}
and so $ x \in L_{d/2}^{\perp} $. \framebox{$ \Leftarrow $} \framebox{$ \subset $} If $ y \in p \Lambda_{d/2} $ is such that $ \phi_R ( K_{d/2}, y ) = 0 $ then write $ y = p y^{\prime} $ for some $ y^{\prime} \in \Lambda_{d/2} $ so that
\begin{equation*}
\phi_R ( L_{d/2}, \alpha_{[d/2]} ( y^{\prime} ) ) = \phi_R ( K_{d/2}, \alpha^{[d/2]} ( \alpha_{[d/2]} ( y^{\prime} ) ) ) = \phi_R ( K_{d/2}, y ) = 0
\end{equation*}
using the same ideas as before. Since $ \alpha_{[d/2]} ( y^{\prime} ) \in \Lambda_0 $, the hypothesis gives $ \alpha_{[d/2]} ( y^{\prime} ) \in L_{d/2} $ and so $ y^{\prime} \in K_{d/2} $. \framebox{$ \supset $} This is similar.
\end{proof}

\begin{altdesclocalmodel}
According to the previous discussion, the functor $ \localmodel $ can also be described as assigning to each commutative $ \Z_p $ algebra $ R $, the set of all tuples $ ( L_0, L_1, \ldots, L_{d/2} ) $ of $ ( \mathcal{O} \otimes_{\Z_p} R ) $-submodules of $ \Lambda_0 \otimes_{\Z_p} R $ satisfying:
\begin{itemize}
\setlength{\itemsep}{5pt}
\item \label{ALMchain} $ \alpha_{i+1} ( L_i ) \subset L_{i+1} $ for all $ i $

\item \label{ALMlocalsummand} each inclusion $ L_i \subset \Lambda_0 \otimes_{\Z_p} R $ splits $ R $-linearly

\item \label{ALMlocalrank} for each $ i $, the projective rank function $ \spec(R) \rightarrow \N $ of $ L_i $ is the constant function $ \mathfrak{p} \mapsto d $

\item \label{ALMisotropic} $ L_0^{\perp} = L_0 $ and $ L_{d/2}^{\perp} = L_{d/2} $ with respect to the restriction $ \phi : \Lambda_0 \times \Lambda_0 \rightarrow \mathcal{O} $.
\end{itemize}
\end{altdesclocalmodel}

This description is the key to constructing larger schemes analogous to $ \localmodel $.

\subsection{Definition of the larger models} \label{SSlargermodels}

In this subsection, I will define a family of functors
\begin{equation*}
\biggermodel{m}{n} : \catalgebras{\Z_p} \rightarrow \catsets
\end{equation*}
over all $ m, n \in \N $ such that $ \biggermodel{0}{1} = \localmodel $, and such that the generic (resp. special) fibers form an increasing and exhaustive filtration of the affine Grassmannian (resp. full affine flag variety). Fix $ m, n \in \N $.

The anti-identity matrix $ \antiid $ induces a non-degenerate hermitian $ \Z_p[t] $-bilinear form
\begin{equation*}
\phi : \frac{t^{-m} \mathcal{O}[t]^d}{t^n \mathcal{O}[t]^d} \times \frac{t^{-m} \mathcal{O}[t]^d}{t^n \mathcal{O}[t]^d} \longrightarrow \frac{ t^{-2m} \mathcal{O}[t] }{t^{n-m} \mathcal{O}[t] }
\end{equation*}
by the rule $ ( v, w ) \mapsto v^{\tr} \cdot \antiid \cdot \overline{w} $, where $ w \mapsto \overline{w} $ is induced by the non-trivial element of $ \gal ( F / \Q_p ) $. For an $ \mathcal{R} [t] $-submodule $ L \subset t^{-m} \mathcal{R}[t]^d / t^n \mathcal{R}[t]^d $ define $ L^{\perp} $ in the usual way:
\begin{equation*}
L^{\perp} \defeq \{ v \in t^{-m} \mathcal{R}[t]^d / t^n \mathcal{R}[t]^d \suchthat \phi_R ( L, v ) = 0 \}
\end{equation*}

\begin{remark}
When $ (m,n) = (0,1) $, these $ \phi $ and $ \perp $ are the same as those from the previous subsection via the identification $ \mathcal{O}[t]^d / t \mathcal{O}[t]^d = \Lambda_0 $, so the abuse of notation is acceptable.
\end{remark}

The rule
\begin{equation*}
( x_1, \ldots, x_d ) \mapsto ( x_1, \ldots, x_{i-1}, (t+p)x_i, x_{i+1}, \ldots, x_d )
\end{equation*}
induces an $ \mathcal{O}[t] $-linear map
\begin{equation*}
\alpha_i : \frac{ t^{-m} \mathcal{O} [t]^d }{ t^n \mathcal{O} [t]^d } \longrightarrow \frac{ t^{-m} \mathcal{O} [t]^d }{ t^n \mathcal{O} [t]^d }
\end{equation*}

\begin{remark}
When $ (m,n) = (0,1) $, this map is the restriction to $ \Lambda_0 $ of the ``$ \alpha_i $'' from the previous subsection via the identification $ \mathcal{O}[t]^d / t \mathcal{O}[t]^d = \Lambda_0 $, so the abuse of notation is acceptable.
\end{remark}

As before, define
\begin{equation*}
\alpha_{[i]} \defeq \alpha_i \circ \cdots \circ \alpha_1
\end{equation*}
The following definition is based on the alternate description of $ \localmodel $ and is strongly analogous to the symplectic case in \cite{HN}:
\begin{BLMdef1} \label{BLMdef1}
Define the functor
\begin{equation*}
\biggermodel{m}{n} : \catalgebras{\Z_p} \rightarrow \catsets
\end{equation*}
by assigning to each commutative $ \Z_p $-algebra $ R $ the set ($ \mathcal{R} := \mathcal{O} \otimes_{\Z_p} R $) of tuples $ ( L_0, L_1, \ldots, L_{d/2} ) $ of $ \mathcal{R} [t] $-submodules of $ t^{-m} \mathcal{R} [t]^d / t^n \mathcal{R} [t]^d $ satisfying:
\begin{itemize}
\setlength{\itemsep}{5pt}
\item $ \alpha_{i+1} ( L_i ) \subset L_{i+1} $ for all $ 0 \leq i < d/2 $

\item each inclusion $ L_i \subset t^{-m} \mathcal{R} [t]^d / t^n \mathcal{R} [t]^d $ splits $ R $-linearly

\item for each $ i $, the projective rank function $ \spec(R) \rightarrow \N $ of $ L_i $ is the constant function $ \mathfrak{p} \mapsto d(m+n) $

(note that $ \rank_R ( t^{-m} \mathcal{R} [t]^d / t^n \mathcal{R} [t]^d ) = 2d(m + n) $)

\item $ L_0^{\perp} = L_0 $ and $ L_{d/2}^{\perp} = L_{d/2} $ with respect to $ \phi_R $.
\end{itemize}
\end{BLMdef1}
It is clear that $ \biggermodel{0}{1} = \localmodel $.


I now reformulate the definition of $ \biggermodel{m}{n} $ so that it is more obviously a degeneration from the affine Grassmannian over $ \Q_p $ to the full affine flag variety over $ \F_p $. Now that the variable $ t $ and the parameters $ m, n $ have been introduced, I am in a sense returning to the point of view used for $ \SLMdef{} $.

Define
\begin{equation*}
\mathcal{V} = \mathcal{O} [ t, t^{-1}, (t+p)^{-1} ]^d
\end{equation*}
and submodules
\begin{align*}
\mathcal{V}_0 &= \mathcal{O}[t]^d \\
\mathcal{V}_1 & = (t+p)^{-1} \mathcal{O}[t] \oplus \mathcal{O}[t]^{d-1} \\
&{\vdots} \\
\mathcal{V}_{d-1} &= (t+p)^{-1} \mathcal{O}[t]^{d-1} \oplus \mathcal{O}[t] \\
\mathcal{V}_d &= (t+p)^{-1} \mathcal{O}[t]^d
\end{align*}

Note that the $ \alpha_{[i]} $, as $ \mathcal{O}[t] $-linear maps of $ \mathcal{V} $, induce isomorphisms
\begin{equation} \label{alphaisom}
\alpha_{[i]} : \frac{ t^{-m} \mathcal{V}_i }{ t^n \mathcal{V}_i } \directedisom \frac{ t^{-m} \mathcal{O}[t]^d }{ t^n \mathcal{O}[t]^d }
\end{equation}
(these are the generalizations of the isomorphisms ``$ \alpha_{[i]} : \Lambda_i \directedisom \Lambda_0 $'' from \S\ref{SSaltdesc})

Let $ R $ be a commutative $ \Z_p $-algebra and consider $ ( L_0, L_1, \ldots, L_{d/2} ) \in \biggermodel{m}{n} (R) $. Let $ \overline{\mathcal{L}}_i $ be the submodule of $ t^{-m} \mathcal{V}_i / t^n \mathcal{V}_i $ such that $ \alpha_{[i]} ( \overline{\mathcal{L}}_i ) = L_i $. Let $ \mathcal{L}_i $ be the submodule of $ \mathcal{V}(R) $ satisfying
\begin{equation} \label{EmathcalLcontainments}
t^n \mathcal{V}_i(R) \subset \mathcal{L}_i \subset t^{-m} \mathcal{V}_i (R)
\end{equation}
which corresponds to $ \overline{\mathcal{L}}_i $. The requirement that $ \alpha_{i+1} ( L_i ) \subset L_{i+1} $ for all $ 0 \leq i < d/2 $ is equivalent to the requirement that $ \mathcal{L}_0 \subset \mathcal{L}_1 \subset \cdots \subset \mathcal{L}_{d/2} $.

The requirement that each inclusion $ L_i \subset t^{-m} \mathcal{R}[t]^d / t^n \mathcal{R}[t]^d $ split $ R $-linearly is equivalent to the requirement that each inclusion $ \overline{\mathcal{L}}_i \subset t^{-m} \mathcal{V}_i(R) / t^n \mathcal{V}_i(R) $ split $ R $-linearly.

The anti-identity matrix $ \antiid $ defines a non-degenerate hermitian $ \Z_p [ t, t^{-1}, (t+p)^{-1} ] $-bilinear product
\begin{equation*}
\phi : \mathcal{V} \times \mathcal{V} \longrightarrow \mathcal{O} [ t, t^{-1}, (t+p)^{-1} ]
\end{equation*}
by the rule $ ( v, w ) \mapsto v^{\tr} \cdot \antiid \cdot \overline{w} $, where $ w \mapsto \overline{w} $ is induced by the non-trivial element of $ \gal ( F / \Q_p ) $.

\begin{remark}
Since this $ \phi $ induces (restrict and descend) the previously defined $ \phi $, this abuse of notation is acceptable.
\end{remark}

For $ 0 \leq i \leq d/2 $, define
\begin{equation*}
\phi^{[i]} : \mathcal{V} \times \mathcal{V} \longrightarrow \mathcal{O} [ t, t^{-1}, (t+p)^{-1} ]
\end{equation*}
by $ \phi^{[i]} ( x, y ) \defeq \phi ( \alpha_{[i]} (x), \alpha_{[i]} (y) ) $. Note that $ \phi^{[d/2]} = (t+p) \phi $.

For $ 0 \leq i \leq d/2 $, define
\begin{equation*} \label{Dduality}
\widehat{\mathcal{L}}_i \defeq \{ x \in \mathcal{V}(R) \suchthat \phi^{[i]}_R(\mathcal{L}_i, x) \subset t^{n-m} \mathcal{R}[t] \}
\end{equation*}
(I am abusing notation: the concept of duality here depends on $ i $). This concept of duality is specifically designed to match up with the concept of ``$ \perp $'' above:
\begin{lemma} \label{isotropiciffdual}
\begin{enumerate}
\setlength{\itemsep}{5pt}
\item $ L_0^{\perp} = L_0 $ if and only if $ \widehat{\mathcal{L}}_0 = \mathcal{L}_0 $, and

\item $ L_{d/2}^{\perp} = L_{d/2} $ if and only if $ \widehat{\mathcal{L}}_{d/2} = \mathcal{L}_{d/2} $.
\end{enumerate}
\end{lemma}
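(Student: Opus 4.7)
I would treat both statements uniformly: for each $i \in \{ 0, d/2 \}$, the strategy is to transport the condition $\widehat{\mathcal{L}}_i = \mathcal{L}_i$ from inside $\mathcal{V}(R)$ down to the quotient $t^{-m}\mathcal{V}_i(R) / t^n \mathcal{V}_i(R)$ and then across the isomorphism $\alpha_{[i]}$ of (\ref{alphaisom}), where it will read as $L_i^{\perp} = L_i$. The essential technical point is that $\widehat{\mathcal{L}}_i$ only \emph{a priori} lives in $\mathcal{V}(R)$, so before the transport makes sense I would have to establish the sandwich
\begin{equation*}
t^n \mathcal{V}_i(R) \subset \widehat{\mathcal{L}}_i \subset t^{-m}\mathcal{V}_i(R).
\end{equation*}

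The lower containment is easy: using $\mathcal{L}_i \subset t^{-m}\mathcal{V}_i(R)$, rewriting $\phi^{[i]}_R(\mathcal{L}_i, x) = \phi_R( \alpha_{[i]}(\mathcal{L}_i), \alpha_{[i]}(x) )$, and using that $\alpha_{[i]}$ maps $\mathcal{V}_i$ isomorphically onto $\mathcal{V}_0 = \mathcal{O}[t]^d$, any $x \in t^n \mathcal{V}_i(R)$ satisfies $\phi^{[i]}_R(\mathcal{L}_i, x) \subset \phi_R(t^{-m}\mathcal{V}_0(R), t^n \mathcal{V}_0(R)) \subset t^{n-m}\mathcal{R}[t]$. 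The upper containment is the real content: any $x \in \widehat{\mathcal{L}}_i$ satisfies $\phi^{[i]}_R(\mathcal{V}_i, x) \subset t^{-m} \mathcal{R}[t]$ (because $t^n \mathcal{V}_i(R) \subset \mathcal{L}_i$), which via $\alpha_{[i]}$ reads $\phi_R(\mathcal{V}_0, \alpha_{[i]}(x)) \subset t^{-m}\mathcal{R}[t]$. Self-duality of $\mathcal{V}_0$ under the anti-identity Gram matrix over $\mathcal{O}[t]$ then forces $\alpha_{[i]}(x) \in t^{-m}\mathcal{V}_0(R)$, and a coordinate check (dividing the first $i$ entries by $(t+p)$) yields $x \in t^{-m}\mathcal{V}_i(R)$. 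I expect this upper-containment computation, particularly in the case $i = d/2$ where the $(t+p)^{-1}$-factors appearing in $\mathcal{V}_{d/2}$ must exactly cancel against the $(t+p)$ in $\phi^{[d/2]} = (t+p)\phi$, to be the main subtlety --- though it is ultimately a mechanical calculation.

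Given the sandwich, I would set $\widehat{L}_i \defeq \widehat{\mathcal{L}}_i / t^n \mathcal{V}_i(R) \subset t^{-m}\mathcal{V}_i(R) / t^n \mathcal{V}_i(R)$ and apply $\alpha_{[i]}$ to land inside $t^{-m}\mathcal{R}[t]^d / t^n \mathcal{R}[t]^d$. Unwinding $\phi^{[i]}_R( \cdot, \cdot ) = \phi_R( \alpha_{[i]}(\cdot), \alpha_{[i]}(\cdot) )$, the condition $\phi^{[i]}_R(\mathcal{L}_i, x) \in t^{n-m} \mathcal{R}[t]$ translates to $\phi_R(L_i, \overline{\alpha_{[i]}(x)}) = 0$ in $t^{-2m}\mathcal{O}[t] / t^{n-m}\mathcal{O}[t]$, so $\alpha_{[i]}(\widehat{L}_i) = L_i^{\perp}$. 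Since by construction $\alpha_{[i]}$ identifies $\mathcal{L}_i / t^n \mathcal{V}_i(R) = \overline{\mathcal{L}}_i$ with $L_i$, the equality $\widehat{\mathcal{L}}_i = \mathcal{L}_i$ becomes $\alpha_{[i]}(\widehat{L}_i) = \alpha_{[i]}(\overline{\mathcal{L}}_i)$, i.e.\ $L_i^{\perp} = L_i$, and both parts of the lemma follow.
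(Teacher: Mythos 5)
Your proposal is correct and follows essentially the same approach as the paper's proof: both reduce the claim to the observation that once $\widehat{\mathcal{L}}_i$ is known to lie between $t^n\mathcal{V}_i(R)$ and $t^{-m}\mathcal{V}_i(R)$, the condition $\phi^{[i]}_R(\mathcal{L}_i, x) \in t^{n-m}\mathcal{R}[t]$ descends to (and is detected by) the condition $\phi_R(L_i, \overline{\alpha_{[i]}(x)}) = 0$ in the quotient, because the two pairings use the same Gram matrix. The paper asserts the sandwich containment in a single clause (``Since $\mathcal{L}_0$ satisfies containments (\ref{EmathcalLcontainments}), so does $\widehat{\mathcal{L}}_0$'') and dismisses $i=d/2$ as ``nearly identical''; your write-up is a modest improvement in that it actually carries out the duality computation proving the sandwich and treats $i=0$ and $i=d/2$ uniformly through $\alpha_{[i]}$, which makes visible why the $(t+p)$ factor in $\phi^{[d/2]}$ is harmless.
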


\begin{proof}
\framebox{case (1)} \framebox{$ \Rightarrow $} For $ \lambda \in t^{-m} \mathcal{V}_0(R) $, denote by $ \overline{\lambda} $ the image in $ t^{-m} \mathcal{V}_0(R) / t^n \mathcal{V}_0(R) = t^{-m} \mathcal{R}[t]^d / t^n \mathcal{R}[t]^d $. \framebox{$ \subset $} Suppose $ \lambda \in \widehat{\mathcal{L}}_0 $, i.e. suppose that $ \lambda \in \mathcal{V}(R) $ satisfies $ \phi^{[0]}_R ( \mathcal{L}_0, \lambda ) \in t^{n-m} \mathcal{R}[t] $. Since $ \mathcal{L}_0 $ satisfies containments (\ref{EmathcalLcontainments}), so does $ \widehat{\mathcal{L}}_0 $ and so $ \lambda \in t^{-m} \mathcal{R}[t]^d $. Altogether, $ \overline{\lambda} \in L_0^{\perp} $ (the previous containment shows that $ \overline{\lambda} $ is in the domain of the hermitian form defining ``$ L_0^{\perp} $''). By hypothesis, $ \overline{\lambda} \in \mathcal{L}_0 $ and so $ \lambda \in \mathcal{L}_0 $. \framebox{$ \supset $} This is obvious: if $ \lambda \in \mathcal{L}_0 $ then $ \overline{\lambda} \in L_0 $ and by hypothesis $ \phi_R ( L_0, \overline{\lambda} ) = 0 $ so $ \phi^{[0]}_R ( \mathcal{L}_0, \lambda ) \in t^{n-m} \mathcal{R}[t] $ since both hermitian forms use the same Gram matrix. \framebox{$ \Leftarrow $} \framebox{$ \subset $} Suppose $ \overline{\lambda} \in L_0^{\perp} $, i.e. suppose that $ \overline{\lambda} \in t^{-m} \mathcal{R}[t]^d / t^n \mathcal{R}[t]^d $ satisfies $ \phi_R ( L_0, \overline{\lambda} ) = 0 $. Let $ \lambda \in \mathcal{L}_0 $ be any representative of $ \overline{\lambda} $. Then $ \phi^{[0]}_R ( \mathcal{L}_0, \lambda ) \in t^{n-m} \mathcal{R}[t] $ and by hypothesis, $ \lambda \in \mathcal{L}_0 $ so $ \overline{\lambda} \in L_0 $. \framebox{$ \supset $} This is obvious: if $ \overline{\lambda} \in L_0 $ then $ \lambda \in \mathcal{L}_0 $ and by hypothesis $ \phi^{[0]}_R ( \mathcal{L}_0, \lambda ) \in t^{n-m} \mathcal{R}[t] $ so $ \phi_R ( L_0, \overline{\lambda} ) = 0 $ since both hermitian forms use the same Gram matrix. \framebox{case (2)} This proof is nearly identical.
\end{proof}

The previous discussion proves the following:
\begin{BLMdef2} \label{BLMdef}
The functor $ \biggermodel{m}{n} $ can also be described as assigning to each commutative $ \Z_p $-algebra $ R $ the set of tuples $ ( \mathcal{L}_0, \mathcal{L}_1, \ldots, \mathcal{L}_{d/2} ) $ of $ ( \mathcal{O} \otimes_{\Z_p} R ) [t] $-submodules of $ \mathcal{V} ( R ) $ satisfying
\begin{enumerate}
\setlength{\itemsep}{5pt}
\item \label{BLMchain} \BLMdef{\ref{BLMchain}} \hfill \\
$ \mathcal{L}_0 \subset \mathcal{L}_1 \subset \cdots \subset \mathcal{L}_{d/2} $

\item \label{BLMbounds} \BLMdef{\ref{BLMbounds}} \hfill \\
$ t^n \mathcal{V}_i ( R ) \subset \mathcal{L}_i \subset t^{-m} \mathcal{V}_i ( R ) $ for all $ i $

\item \label{BLMsummand} \BLMdef{\ref{BLMsummand}} \hfill \\
each inclusion $ \mathcal{L}_i / t^n \mathcal{V}_i ( R ) \hookrightarrow t^{-m} \mathcal{V}_i ( R ) / t^n \mathcal{V}_i ( R ) $ splits $ R $-linearly.

\item \label{BLMlocalrank} \BLMdef{\ref{BLMlocalrank}} \hfill \\
the projective rank function $ \spec(R) \rightarrow \N $ of each $ \mathcal{L}_i / t^n \mathcal{V}_i ( R ) $ is the constant function $ \mathfrak{p} \mapsto d(m+n) $

(note that $ \rank_R ( t^{-m} \mathcal{V}_i ( R ) / t^n \mathcal{V}_i ( R ) ) = 2d(m+n) $)

\item \label{BLMselfdual} \BLMdef{\ref{BLMselfdual}} \hfill \\
$ \widehat{ \mathcal{L} }_0 = \mathcal{L}_0 $ and $ \widehat{ \mathcal{L} }_{d/2} = \mathcal{L}_{d/2} $

(these concepts of duality were defined on page \pageref{Dduality})
\end{enumerate}
\end{BLMdef2}

For future use, define
\begin{align*}
\mathcal{V}_{\textup{inf}} &= t^n \mathcal{O} [t]^d \\
\mathcal{V}_{\textup{sup}} &= t^{-m} (t+p)^{-1} \mathcal{O} [t]^d \\
\overline{\mathcal{V}}_{\textup{sup}} &= \mathcal{V}_{\textup{sup}} / \mathcal{V}_{\textup{inf}} \\
\overline{\mathcal{V}}_i &= \mathcal{V}_i / \mathcal{V}_{\textup{inf}}
\end{align*}
The first two are the largest (resp. smallest) modules contained in (resp. containing) all the modules used in $ \BLMdef{\ref{BLMbounds}} $. By convention, $ \overline{\mathcal{V}}_{\textup{sup}} ( R ) = \mathcal{V}_{\textup{sup}} ( R ) / \mathcal{V}_{\textup{inf}} ( R ) $ and similarly for $ \overline{\mathcal{V}}_i $ for any $ \Z_p $-algebra $ R $.

\subsection{The enlarged models are projective schemes} \label{SSordinarygrassmannianembedding}

For each $ 0 \leq i \leq d/2 $, let $ \grassmannian_i : \catalgebras{\Z_p} \rightarrow \catsets $ denote the (ordinary) Grassmannian of direct summands of $ t^{-m} \mathcal{V}_i / t^n \mathcal{V}_i \cong \Z_p^{2d(m+n)} $ with constant projective rank function $ d(m+n) $. Then \BLMdef{\ref{BLMsummand}} and \BLMdef{\ref{BLMlocalrank}} yield a closed embedding
\begin{equation*}
\biggermodel{m}{n} \hookrightarrow \grassmannian_0
\times \cdots \times \grassmannian_{d/2}
\end{equation*}

Take $ ( \mathcal{L}_0, \mathcal{L}_1, \ldots, \mathcal{L}_{d/2} ) \in \biggermodel{m}{n}(R) $. Consider another pair $ m^{\prime}, n^{\prime} \in \N $. If $ ( \mathcal{L}_0, \mathcal{L}_1, \ldots, \mathcal{L}_{d/2} ) \in \biggermodel{m^{\prime}}{n^{\prime}}(R) $, then necessarily $ m - n = m^{\prime} - n^{\prime} $ since $ \mathcal{L}_0 $ can only be self-dual with respect to $ t^N \phi $ for one $ N $. On the other hand, if $ m^{\prime} \geq m $ and $ n^{\prime} \geq n $, then requirement \BLMdef{\ref{BLMbounds}} for $ \biggermodel{m^{\prime}}{n^{\prime}}(R) $ trivially follows from requirement \BLMdef{\ref{BLMbounds}} for $ \biggermodel{m}{n}(R) $.

These two requirements on $ m, n, m^{\prime}, n^{\prime} $ already imply that $ \biggermodel{m}{n} \subset \biggermodel{m^{\prime}}{n^{\prime}} $: the $ R $-linear splitting of the short-exact-sequence
\begin{equation*}
0 \rightarrow t^n \mathcal{V}_i(R) / t^{n^{\prime}} \mathcal{V}_i(R)
\rightarrow \mathcal{L}_i / t^{n^{\prime}} \mathcal{V}_i(R)
\rightarrow \mathcal{L}_i / t^n \mathcal{V}_i(R) \rightarrow 0
\end{equation*}
shows that \BLMdef{\ref{BLMsummand}} is satisfied, and this sequence
also shows that
\begin{align*}
\rank_R ( \mathcal{L}_i / t^{n^{\prime}} \mathcal{V}_i(R) ) &= \rank_R ( t^n \mathcal{V}_i(R) / t^{n^{\prime}} \mathcal{V}_i(R) ) + \rank_R ( \mathcal{L}_i / t^n \mathcal{V}_i(R) ) \\
&= 2(n^{\prime} - n )d + (m+n)d \\
&= (m^{\prime} - m )d + (n^{\prime} - n )d + (m+n)d \\
&= (m^{\prime} + n^{\prime})d
\end{align*}
(the ``$ 2 $'' here is $ \rank_R ( \mathcal{R} ) $).

In summary, for each $ \Delta \in \Z $, the set of $ ( m, n ) \in \N
\times \N $ such that $ n - m = \Delta $ is totally-ordered and
\begin{equation*}
\biggermodel{0}{\Delta} \subset \biggermodel{1}{1+\Delta} \subset
\biggermodel{2}{2+\Delta} \subset \cdots
\end{equation*}

\begin{remark}
In \S\ref{SSaffineflagembedding} (page \pageref{SSaffineflagembedding}), I will embed $ \biggermodel{m}{n}_{\F_p} $ into an affine flag variety, and from that perspective, the chain associated to a particular $ \Delta \in \Z $ exhausts the corresponding connected component of the affine flag variety.

\end{remark}

\subsection{Equivalent characterizations of Zariski-lattices} \label{latticeequivalences}

The description of the functor-of-points of an affine flag variety uses a certain concept of lattice, but other characterizations are needed to embed local models into affine flag varieties. The list of characterizations is summarized as:

\begin{latticedefs} [Lemma 2.11 in \cite{Go2}]
Let $ \mathcal{R} $ be a commutative ring and let $ M \subset \mathcal{R}((t))^d $ be an $ \mathcal{R}[[t]] $-submodule. Each of the following three sets of conditions are equivalent to each other:
\begin{enumerate}
\setlength{\itemsep}{5pt}
\item \label{gortzlattice}
\begin{enumerate}
\item there exists $ N $ such that $ t^N \mathcal{R}[[t]]^d \subset M \subset t^{-N} \mathcal{R}[[t]]^d $
\item as an $ \mathcal{R} $-module, the quotient $ M / t^N \mathcal{R}[[t]]^d $ is
projective
\end{enumerate}
\item \label{pappaslattice}
\begin{enumerate}
\item the product $ M \otimes_{\mathcal{R}[[t]]} \mathcal{R}((t)) \rightarrow \mathcal{R}((t))^d
$ is an isomorphism
\item as an $ \mathcal{R}[[t]] $-module, $ M $ is finitely-generated and projective
\item the projective rank function $ \spec( \mathcal{R}[[t]] ) \rightarrow \N $ of $ M $ is the constant function $ \mathfrak{p} \mapsto d $
\end{enumerate}
\item \label{smithlinglattice}
\begin{enumerate}
\item the product $ M \otimes_{\mathcal{R}[[t]]} \mathcal{R}((t)) \rightarrow \mathcal{R}((t))^d
$ is an isomorphism
\item Zariski-locally on $ \spec (\mathcal{R}) $, $ M $ is a free $ \mathcal{R}[[t]]
$-module (and it is automatic that the rank is always $ d $)
\end{enumerate}
\end{enumerate}
\end{latticedefs}

Condition (\ref{smithlinglattice}b) means that there are elements $ r_1, \ldots, r_n $ generating the trivial ideal $ R $ (i.e. a system of principal open sets covering $ \spec(R) $) such that each fraction module $ M [ r_i^{-1} ] $ is a free $ R_{r_i} [[t]] $-module.

\begin{terminologynotation}
Such an $ M $ will be called a ``Zariski $ \mathcal{R}[[t]] $-lattice in $ \mathcal{R}((t))^d $'' or similar.
\end{terminologynotation}

\subsection{The full affine flag variety over $ \F_p $} \label{SSaffineflagdesc}

The setup here is: I use the unramified quadratic extension $ \F_p((t)) \subset \F((t)) $, the vector space $ \F((t))^d $, and the standard hermitian form $ \Phi : \F((t))^d \times \F((t))^d \rightarrow \F((t)) $ defined by the anti-identity matrix $ \antiid $. Note that $ \Phi $ induces $ \phi $ from previous sections.
\begin{completeaffineflagvarietydef} \label{AFVdef}
The functor
\begin{equation*}
\affineflagvariety : \catalgebras{\F_p} \longrightarrow \catsets
\end{equation*}
assigns to each commutative $ \F_p $-algebra $ R $ (set $ \mathcal{R} := \F \otimes_{\F_p} R $) the set of all sequences $ ( \mathcal{F}_i )_{i \in \Z} $ of $ \mathcal{R}[[t]] $-submodules of $ \mathcal{R}((t))^d $ satisfying the following properties:
\begin{enumerate}
\setlength{\itemsep}{5pt}
\item \label{AFVperiodic} \AFVdef{\ref{AFVperiodic}} \hfill \\
For each $ i $, $ \mathcal{F}_i = t \cdot \mathcal{F}_{i+d} $,
\item \label{AFVchain} \AFVdef{\ref{AFVchain}} \hfill \\
for each $ i $, $ \cdots \subset \mathcal{F}_i \subset \mathcal{F}_{i+1} \subset \cdots $,
\item \label{AFVlattice} \AFVdef{\ref{AFVlattice}} \hfill \\
for each $ i $, $ \mathcal{F}_i $ is a Zariski $ \mathcal{R}[[t]] $-lattice in $ \mathcal{R}((t))^d $,
\item \label{AFVcomplete} \AFVdef{\ref{AFVcomplete}} \hfill \\
for each $ i $, $ \mathcal{F}_{i+1} / \mathcal{F}_i $ is, Zariski-locally on $ \spec(R) $, free with rank $ 1 $, and
\item \label{AFVsimduality} \AFVdef{\ref{AFVsimduality}} \hfill \\
there exists, Zariski-locally on $ \spec(R) $, $ u(t) \in R((t))^{\times} $ such that for all $ i $,
\begin{equation*}
\mathcal{F}_{-i} = u(t) \cdot \widehat{\mathcal{F}}_i.
\end{equation*}

(here $ \widehat{\mathcal{F}} \defeq \{ w \in \mathcal{R}((t))^d : \Phi_R ( \mathcal{F}, w ) \subset \mathcal{R}[[t]]^d \} $)
\end{enumerate}
\end{completeaffineflagvarietydef}

\begin{remark}
For a commutative ring homomorphism $ R \rightarrow S $, the function $ \affineflagvariety ( R ) \rightarrow \affineflagvariety ( S ) $ is the natural map of the completed tensor product $ - \widehat{\otimes}_R S $, which satisfies $ R[[t]] \widehat{\otimes}_R S = S[[t]] $.
\end{remark}


As in \S\ref{SSsimpledesc}, a point $ ( \mathcal{F}_i )_{i \in \Z} \in \affineflagvariety ( R ) $ is completely determined by the finite chain $ \mathcal{F}_0 \subset \cdots \subset \mathcal{F}_{d/2} $ as follows: recover the Zariski-local cover of $ \spec ( R ) $ and the common degree $ k $ of the similitudes in \AFVdef{\ref{AFVsimduality}} by comparing $ \mathcal{F}_0 $ to $ \widehat{\mathcal{F}}_0 $, define $ \mathcal{F}_{-i} $ for $ 0 < i < d/2 $ Zariski-locally by $ \mathcal{F}_{-i} \defeq t^k \widehat{\mathcal{F}}_i $, and extend periodically. It is automatic from the definition that $ \cdots \subset \mathcal{F}_{-2} \subset \mathcal{F}_{-1} \subset \mathcal{F}_0 $. Using the finite chain, \AFVdef{\ref{AFVperiodic}} disappears and the only part of \AFVdef{\ref{AFVsimduality}} that survives is: there exists Zariski-locally on $ \spec(R) $ a $ u(t) \in R((t))^{\times} $ such that
\begin{align*}
\mathcal{F}_0 &= u(t) \cdot \widehat{\mathcal{F}}_0 \\
\mathcal{F}_{d/2} &= t^{-1} u(t) \cdot \widehat{\mathcal{F}}_{d/2}
\end{align*}

\subsection{The special fibers are subschemes of the full affine flag variety} \label{SSaffineflagembedding}

Let $ R $ be a commutative $ \F_p $-algebra. Consider $ ( \mathcal{L}_0, \mathcal{L}_1, \ldots, \mathcal{L}_{d/2} ) \in \biggermodel{m}{n} ( R ) $. First, note that
\begin{align*}
\mathcal{V} (\F_p) &= \F [ t, t^{-1} ]^d \subset \F((t))^d \\
\mathcal{V}_0 (\F_p) &= \F[t]^d \\
\mathcal{V}_1 (\F_p) &= t^{-1} \F[t] \oplus \F[t]^{d-1} \\
&{\vdots} \\
\mathcal{V}_{d-1} (\F_p) &= t^{-1} \F[t]^{d-1} \oplus \F[t] \\
\mathcal{V}_d(\F_p) &= t^{-1} \F[t]^d
\end{align*}

So each $ \mathcal{L}_i $ is an $ \mathcal{R}[t] $-submodule of $ \mathcal{V}(R) \subset \mathcal{R}((t))^d $ satisfying
\begin{equation} \label{absolutedegreebounds}
t^n \mathcal{R}[t]^d = \mathcal{V}_{\textup{inf}} (R) \subset \mathcal{L}_i \subset \mathcal{V}_{\textup{sup}} (R) = t^{-(m+1)} \mathcal{R}[t]^d
\end{equation}
Such modules are in canonical bijection with $ \mathcal{R}[[t]] $-submodules $ \mathcal{F}_i $ of $ \mathcal{R}((t))^d $ satisfying
\begin{equation} \label{powerseriesextension}
t^n \mathcal{R}[[t]]^d \subset \mathcal{F}_i \subset t^{-(m+1)} \mathcal{R}[[t]]^d
\end{equation}

Let $ \mathcal{F}_0, \mathcal{F}_1, \ldots, \mathcal{F}_{d/2} $ be the modules satisfying equation (\ref{powerseriesextension}) corresponding to the modules $ \mathcal{L}_0, \mathcal{L}_1, \ldots, \mathcal{L}_{d/2} $ in equation (\ref{absolutedegreebounds}).
\begin{prop} \label{Paffineflagcontainment}
$ ( \mathcal{F}_0, \mathcal{F}_1, \ldots, \mathcal{F}_{d/2} ) \in \affineflagvariety ( R ) $.
\end{prop}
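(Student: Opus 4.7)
The plan is to exploit the observation made at the end of \S9.5 that a point of $\affineflagvariety(R)$ is uniquely determined by (and can be constructed from) its truncated chain $\mathcal{F}_0 \subset \cdots \subset \mathcal{F}_{d/2}$, provided the surviving conditions \AFVdef{\ref{AFVchain}}, \AFVdef{\ref{AFVlattice}}, \AFVdef{\ref{AFVcomplete}} hold and there is a Zariski-local similitude $u(t) \in R((t))^{\times}$ with $\mathcal{F}_0 = u(t) \widehat{\mathcal{F}}_0$ and $\mathcal{F}_{d/2} = t^{-1} u(t) \widehat{\mathcal{F}}_{d/2}$. I will verify these four items in turn.

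First, the chain condition \AFVdef{\ref{AFVchain}} is immediate: the bijection of equation (\ref{absolutedegreebounds}) with equation (\ref{powerseriesextension}) preserves inclusions because both ends of the enclosing double bound $t^n\mathcal{R}[[t]]^d \subset \cdot \subset t^{-(m+1)}\mathcal{R}[[t]]^d$ are fixed, and $\mathcal{L}_0 \subset \mathcal{L}_1 \subset \cdots \subset \mathcal{L}_{d/2}$ is \BLMdef{\ref{BLMchain}}. For the Zariski-lattice condition \AFVdef{\ref{AFVlattice}}, I use characterization (\ref{gortzlattice}) from the lattice-definition lemma: the bounds are already in (\ref{powerseriesextension}), so I only need $\mathcal{F}_i / t^n\mathcal{R}[[t]]^d$ to be a projective $R$-module. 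Since $\mathcal{F}_i/t^n\mathcal{R}[[t]]^d = \mathcal{L}_i/t^n\mathcal{R}[t]^d$ canonically (every element of $\mathcal{F}_i$ differs from an element of $\mathcal{L}_i$ by something in $t^n\mathcal{R}[[t]]^d$), it suffices to analyze the latter via the short exact sequence
\begin{equation*}
0 \longrightarrow t^n \mathcal{V}_i(R) / t^n \mathcal{R}[t]^d \longrightarrow \mathcal{L}_i / t^n \mathcal{R}[t]^d \longrightarrow \mathcal{L}_i / t^n \mathcal{V}_i(R) \longrightarrow 0
\end{equation*}
The right term is $R$-projective by \BLMdef{\ref{BLMsummand}}, and the left term is free over $\mathcal{R}$ (noting $\mathcal{V}_i / \mathcal{V}_0$ is free of rank $i$ over $\mathcal{O}$ since in characteristic $p$ the factor $t+p$ collapses to $t$), hence projective over $R$; thus $\mathcal{L}_i/t^n\mathcal{R}[t]^d$ is $R$-projective. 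The condition \AFVdef{\ref{AFVcomplete}} on $\mathcal{F}_{i+1}/\mathcal{F}_i$ follows by using the same exact sequence paired with the inclusion $\mathcal{L}_i \subset \mathcal{L}_{i+1}$: a rank computation shows $\mathcal{L}_{i+1}/\mathcal{L}_i$ has generic $\mathcal{R}$-rank one, and combining the splittings of \BLMdef{\ref{BLMsummand}} for the two modules (with a local argument using the Local Criteria for Projectivity) produces Zariski-locally an $\mathcal{R}$-linear direct summand decomposition, so the quotient is $\mathcal{R}$-locally free of rank $1$, i.e., $\cong \mathcal{R}$ Zariski-locally.

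The main obstacle, as expected, is the duality condition. I plan to show that $u(t) = t^{n-m}$ works globally (no Zariski localization needed). Unwinding the definition, $t^{n-m} \widehat{\mathcal{F}}_0 = \{\, y \in \mathcal{R}((t))^d \suchthat \Phi_R(\mathcal{F}_0, y) \subset t^{n-m}\mathcal{R}[[t]] \,\}$, using that $t^{n-m}$ is Galois-fixed so conjugation does not disturb the power of $t$. On the other hand, \BLMdef{\ref{BLMselfdual}} at $i = 0$ says $\mathcal{L}_0 = \widehat{\mathcal{L}}_0 = \{\, x \in \mathcal{V}(R) \suchthat \phi_R(\mathcal{L}_0,x) \in t^{n-m}\mathcal{R}[t]\,\}$ (since $\phi^{[0]} = \phi$). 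The key step is to verify that the self-dual condition at the polynomial-level and at the power-series level coincide under the bijection $\mathcal{L}_0 \leftrightarrow \mathcal{F}_0$: any $y \in \mathcal{R}((t))^d$ satisfying $\Phi_R(\mathcal{F}_0, y) \subset t^{n-m}\mathcal{R}[[t]]$ lies in $\mathcal{F}_0 = \mathcal{L}_0 + t^n\mathcal{R}[[t]]^d$ by approximation modulo high powers of $t$ (the polynomial part of $y$ up to some degree can be corrected against $\mathcal{L}_0$ by the $\mathcal{L}$-duality, and the remaining tail lies in $t^n\mathcal{R}[[t]]^d$), and conversely both $\mathcal{L}_0$ and $t^n\mathcal{R}[[t]]^d$ pair into $t^{n-m}\mathcal{R}[[t]]$ against $\mathcal{F}_0$ by the polynomial-level statement and by direct bound on the degree of the product. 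This gives $\mathcal{F}_0 = t^{n-m}\widehat{\mathcal{F}}_0$. For $\mathcal{F}_{d/2}$, the identical argument applies, but now using $\phi^{[d/2]} = (t+p)\phi$, which becomes $t\phi$ over $\F_p$; this extra factor of $t$ is precisely the $t^{-1}$ in the condition $\mathcal{F}_{d/2} = t^{-1} u(t)\widehat{\mathcal{F}}_{d/2}$, making the calculation self-consistent and completing the proof.
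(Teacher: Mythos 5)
There is a genuine gap in your handling of \AFVdef{\ref{AFVlattice}} (and, by the same mechanism, \AFVdef{\ref{AFVcomplete}}). You write that it suffices to show $\mathcal{F}_i / t^n\mathcal{R}[[t]]^d$ is a projective \emph{$R$-module}, and your short-exact-sequence argument indeed delivers $R$-projectivity. But that is not what characterization (\ref{gortzlattice}b) asks for: the Zariski-lattice definition is stated with the coefficient ring of the power series, which here is $\mathcal{R} = R \otimes_{\F_p} \F$, a free rank-2 extension of $R$. So you need projectivity over $\mathcal{R}$, not over $R$. These are not the same thing, and the inclusion conditions \BLMdef{\ref{BLMsummand}} and \BLMdef{\ref{BLMlocalrank}} only give $R$-linear splittings and $R$-ranks. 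The missing ingredient is Lemma \ref{Lprojectivitylifting}: because $R \rightarrow \mathcal{R}$ is module-finite and faithfully flat, finitely-generated $R$-projective $\mathcal{R}$-modules are automatically $\mathcal{R}$-projective. This is precisely the subtlety that the paper's remark flags as special to the \emph{unramified} unitary case (where the residue ring changes rather than the uniformizer); your proposal silently skips it. The same problem recurs in your verification of \AFVdef{\ref{AFVcomplete}}, where you assert that combining the $R$-linear splittings ``produces Zariski-locally an $\mathcal{R}$-linear direct summand decomposition'' --- upgrading from $R$-linear to $\mathcal{R}$-linear is exactly the step that needs justification, and it again comes from Lemma \ref{Lprojectivitylifting}.

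The rest of the proposal is in order. \AFVdef{\ref{AFVchain}} is indeed automatic. Your treatment of \AFVdef{\ref{AFVsimduality}} is correct and in fact more explicit than the paper's one-line observation that the PEL dualities for $\mathcal{L}_0$, $\mathcal{L}_{d/2}$ are the standard power-series duality twisted by $t^{m-n}$ and $t^{m-n}(t+p) \equiv t^{m-n+1} \pmod{p}$; your explicit $u(t) = t^{n-m}$ and the bookkeeping with $\phi^{[d/2]} = (t+p)\phi$ match the paper's claim $\widehat{\mathcal{F}}_0 = t^{m-n}\mathcal{F}_0$, $\widehat{\mathcal{F}}_{d/2} = t^{m-n+1}\mathcal{F}_{d/2}$. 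So the structure of your argument is sound; the projectivity-lifting step needs to be inserted in two places.
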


Once this is proven, it is obvious that $ \biggermodel{m}{n} (R) \rightarrow \affineflagvariety ( R ) $ is \emph{injective} for all $ R $. By \S\ref{SSordinarygrassmannianembedding} (page \pageref{SSordinarygrassmannianembedding}), $ \biggermodel{m}{n} $ is a proper $ \Z_p $-scheme, so $ \biggermodel{m}{n} \hookrightarrow \affineflagvariety $ is a proper morphism. By Corollary 12.92 of \cite{GW}, $ \biggermodel{m}{n} \hookrightarrow \affineflagvariety $ is a \emph{closed embedding}.

\begin{proof}
\framebox{\AFVdef{\ref{AFVperiodic}}} This is unnecessary due to the above replacement of infinite chains by finite chains. \framebox{\AFVdef{\ref{AFVchain}}} This is automatic. \framebox{\AFVdef{\ref{AFVlattice}}} For this, it is obviously most convenient to use characterization (\ref{gortzlattice}) of Zariski-lattices (page \pageref{gortzlattice}). Equation (\ref{powerseriesextension}) gives part (a) of the characterization. Now notice that the coefficient ring of the power series here is $ \mathcal{R} = R \otimes_{\F_p} \F $, not $ R $. This means that in this situation, condition (b) actually requires $ \overline{\mathcal{L}_i} $ to be a projective as an $ \mathcal{R} $-module. \BLMdef{\ref{BLMsummand}} and \BLMdef{\ref{BLMlocalrank}} together imply only projectivity over $ R $, but since $ R \rightarrow \mathcal{R} $ makes $ \mathcal{R} $ a finitely-generated $ R $-module and a faithfully-flat $ R $-algebra, Lemma \ref{Lprojectivitylifting} (page \pageref{Lprojectivitylifting}) says that $ \overline{ \mathcal{L}_i } $ is in fact a projective $ \mathcal{R} $-module. \framebox{\AFVdef{\ref{AFVcomplete}}} By construction, it suffices to prove the same fact for the quotient $ \mathcal{L}_{i+1} / \mathcal{L}_i $. It is easy to prove but the notation becomes truly oppressive. For clarity, I prove this as Lemma \ref{Lrank1quotient} after the current proof. \framebox{\AFVdef{\ref{AFVsimduality}}} I claim that $ \widehat{\mathcal{F}}_0 = t^{m-n} \mathcal{F}_0 $ and $ \widehat{\mathcal{F}}_{d/2} = t^{m-n+1} \mathcal{F}_{d/2} $ (in other words, that $ u(t) = t^{n-m} $). This is clear because the products used in \BLMdef{\ref{BLMselfdual}} for $ \mathcal{L}_0 $ and $ \mathcal{L}_{d/2} $ are just the standard ones multiplied by $ t^{m-n} $ and $ t^{m-n} (t+p) $, and the concept of duality is the same.
\end{proof}


\begin{lemma} \label{Lrank1quotient}
Consider an $ \mathcal{R} $-module diagram
\begin{equation*}
\begin{array}{ccc}
\mathcal{L}_1 & \subset & V_1 \\
\cap & & \cap \\
\mathcal{L}_2 & \subset & V_2
\end{array}
\end{equation*}

\textbf{Assertion}: If (1) both quotients $ V_1 / \mathcal{L}_1 $ and $ V_2 / \mathcal{L}_2 $ are finitely-generated projective $ R $-modules, (2) the projective rank functions of $ V_1 / \mathcal{L}_1 $ and $ V_2 / \mathcal{L}_2 $ are constant on $ \spec(R) $ and the two constants are equal, and (3) the quotient $ V_2 / V_1 $ is a free $ \mathcal{R} $-module of rank $ 1 $ then $ \mathcal{L}_2 / \mathcal{L}_1 $ is a projective $ \mathcal{R} $-module with constant projective rank function $ 1 $.
\end{lemma}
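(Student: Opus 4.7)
My plan is to exhibit $\mathcal{L}_2/\mathcal{L}_1$ as an $R$-direct summand of an obviously projective module by threading together two different splittings. Because $V_2/V_1 \cong \mathcal{R}$ is free of rank $1$ as an $\mathcal{R}$-module, the short exact sequence
$$0 \to V_1 \to V_2 \to V_2/V_1 \to 0$$
splits as a sequence of $\mathcal{R}$-modules, yielding an $\mathcal{R}$-linear decomposition $V_2 = V_1 \oplus \mathcal{R}$. Since $\mathcal{L}_1 \subset V_1$, this immediately induces
$$V_2/\mathcal{L}_1 \;=\; V_1/\mathcal{L}_1 \;\oplus\; \mathcal{R}$$
as $\mathcal{R}$-modules.

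Next, I would combine this with the standard short exact sequence
$$0 \to \mathcal{L}_2/\mathcal{L}_1 \to V_2/\mathcal{L}_1 \to V_2/\mathcal{L}_2 \to 0,$$
which becomes $0 \to \mathcal{L}_2/\mathcal{L}_1 \to V_1/\mathcal{L}_1 \oplus \mathcal{R} \to V_2/\mathcal{L}_2 \to 0$. Since $V_2/\mathcal{L}_2$ is $R$-projective by hypothesis, this sequence splits as a sequence of $R$-modules, realizing $\mathcal{L}_2/\mathcal{L}_1$ as an $R$-direct summand of the $R$-projective module $V_1/\mathcal{L}_1 \oplus \mathcal{R}$ (note that $\mathcal{R}$ is $R$-free of rank $2$ because $\mathcal{O}/\Z_p$ is unramified quadratic). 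It follows that $\mathcal{L}_2/\mathcal{L}_1$ is finitely-generated and $R$-projective, and by rank additivity its constant $R$-rank equals $(n + 2) - n = 2$, where $n$ is the common $R$-rank of $V_1/\mathcal{L}_1$ and $V_2/\mathcal{L}_2$.

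Finally, to upgrade $R$-projectivity to $\mathcal{R}$-projectivity, I would invoke Lemma \ref{Lprojectivitylifting} (page \pageref{Lprojectivitylifting}) with $A = R$, $B = \mathcal{R}$, and $M = \mathcal{L}_2/\mathcal{L}_1$. All four hypotheses hold: $\mathcal{R}$ is $R$-finitely-generated and faithfully flat (being $R$-free of positive rank), while $\mathcal{L}_2/\mathcal{L}_1$ is $R$-finitely-generated and $R$-projective by the previous step. The conclusion is that $\mathcal{L}_2/\mathcal{L}_1$ is $\mathcal{R}$-projective. The constant $\mathcal{R}$-rank of $1$ then follows by comparing fiberwise: the $R$-rank $2$ must factor through the $R$-rank $2$ of $\mathcal{R}$ itself, forcing the $\mathcal{R}$-rank to equal $1$.

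The central trick is threading two splittings of different flavors: the middle sequence splits only as $\mathcal{R}$-modules (using the $\mathcal{R}$-freeness of $V_2/V_1$), while the outer sequence splits only as $R$-modules (using the $R$-projectivity of $V_2/\mathcal{L}_2$). The main obstacle is the passage from $R$-projectivity of rank $2$ to $\mathcal{R}$-projectivity of rank $1$, which is exactly where Lemma \ref{Lprojectivitylifting} and the unramified quadratic hypothesis feed into the argument.
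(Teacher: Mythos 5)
Your proof is correct and takes essentially the same approach as the paper: you use hypothesis (3) to get an $\mathcal{R}$-linear splitting (the paper splits $0 \to V_1/\mathcal{L}_1 \to V_2/\mathcal{L}_1 \to V_2/V_1 \to 0$ directly; you split $0 \to V_1 \to V_2 \to V_2/V_1 \to 0$ first and then pass to quotients, which is the same thing), use hypothesis (1) for the $R$-linear splitting, combine the two to get $R$-projectivity with $R$-rank $2$, and then invoke Lemma \ref{Lprojectivitylifting} to upgrade to $\mathcal{R}$-projectivity of rank $1$. The only cosmetic difference is that the paper assembles both splittings into a single displayed isomorphism $(\mathcal{L}_2/\mathcal{L}_1) \oplus (V_2/\mathcal{L}_2) \cong (V_1/\mathcal{L}_1) \oplus (V_2/V_1)$ before reading off projectivity and ranks, whereas you exhibit $\mathcal{L}_2/\mathcal{L}_1$ directly as a summand of $V_1/\mathcal{L}_1 \oplus \mathcal{R}$; the content is identical.
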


To use this in the proof of Proposition \ref{Paffineflagcontainment}, use the diagram
\begin{equation*}
\begin{array}{ccc} \mathcal{L}_i & \subset
& t^{-m}
\mathcal{V}_i (R) \\
\cap & & \cap \\
\mathcal{L}_{i+1} & \subset & t^{-m} \mathcal{V}_{i+1} (R)
\end{array}
\end{equation*}
from \BLMdef{\ref{BLMbounds}}. The vertical inclusion on the right is valid (i.e. there are no problems due to non-flatness) because $ \mathcal{V}_i \subset \mathcal{V}_{i+1} $ can be written as the obvious inclusion $ \mathcal{O}^{\N} \subset \mathcal{O}^{\N} \oplus \mathcal{O} $.

Hypothesis (1) in this lemma is implied by \BLMdef{\ref{BLMsummand}} and hypothesis (2) follows from \BLMdef{\ref{BLMlocalrank}}. Hypothesis (3) is obvious from the definition of the $ \mathcal{V}_i $.

\begin{proof}
By hypothesis (1), the short-exact-sequence
\begin{equation*}
0 \rightarrow ( \mathcal{L}_2 / \mathcal{L}_1 ) \rightarrow ( V_2 /
\mathcal{L}_1 ) \rightarrow ( V_2 / \mathcal{L}_2 ) \rightarrow 0
\end{equation*}
of $ \mathcal{R} $-modules splits $ R $-linearly. By hypothesis (3), the short-exact-sequence
\begin{equation*}
0 \rightarrow ( V_1 / \mathcal{L}_1 ) \rightarrow ( V_2 /
\mathcal{L}_1 ) \rightarrow ( V_2 / V_1 ) \rightarrow 0
\end{equation*}
of $ \mathcal{R} $-modules splits $ \mathcal{R} $-linearly. These
splittings produce an $ R $-module isomorphism
\begin{equation} \label{eqnAFVcomplete}
( \mathcal{L}_2 / \mathcal{L}_1 ) \oplus ( V_2 / \mathcal{L}_2 )
\cong ( V_1 / \mathcal{L}_1 ) \oplus ( V_2 / V_1 )
\end{equation}
By hypotheses (1) and (3), the right-hand-side of equation (\ref{eqnAFVcomplete}) is a projective $ R $-module, which shows that $ \mathcal{L}_2 / \mathcal{L}_1 $ is a projective $ R $-module. By Lemma \ref{Lprojectivitylifting}, $ \mathcal{L}_2 / \mathcal{L}_1 $ is a projective $ \mathcal{R} $-module. Counting projective ranks over $ R $ on both sides of equation (\ref{eqnAFVcomplete}) and using hypothesis (2) shows that the projective rank function $ \spec(R) \rightarrow \N $ of $ \mathcal{L}_2 / \mathcal{L}_1 $ is the constant function $ \mathfrak{p} \mapsto \rank_R ( V_2 / V_1 ) $. Hypothesis (3) finishes the proof (use $ \rank_R ( \mathcal{R} ) = 2 $).
\end{proof}

\subsection{The special fibers exhaust the full affine flag variety} \label{SSspecialfibersexhaust}

Let $ R $ be a commutative $ \F_p $-algebra and set $ \mathcal{R} := \mathcal{O} \otimes_{\Z_p} R $. Define the ``standard'' periodic $ \Phi $-selfdual chain $ \lambda_{\bullet} = ( \cdots \subset \lambda_{-1} \subset \lambda_0 \subset \lambda_1 \subset \cdots ) $ of $ \F[[t]] $-lattices in $ \F((t))^d $ by extending
\begin{equation*}
\lambda_i \defeq t^{-1} \F[[t]]^i \oplus \F[[t]]^{d-i} \textaftermath{($ 0 \leq i \leq d $)}
\end{equation*}
periodically. Set $ \lambda_i ( R ) := \lambda_i \widehat{\otimes}_{\F_p} R $.

Let $ ( \mathcal{F}_i )_{i \in \Z} \in \affineflagvariety (R) $ be arbitrary and let $ u(t) \in \F_p((t)) $ be the similitude for $ ( \mathcal{F}_i )_{i \in \Z} $ guaranteed by \AFVdef{\ref{AFVsimduality}}. Set $ \Delta := \val_t ( u(t) ) \in \Z $. From the verification of \AFVdef{\ref{AFVsimduality}} in the previous subsection, I expect that $ \Delta $ will be the future value of $ m - n $. So, let $ m, n \in \N $ be such that $ m - n = \Delta $ and simultaneously so large that
\begin{equation} \label{EQadjustedlatticebounds}
t^n \lambda_i (R) \subset \mathcal{F}_i \subset t^{-m} \lambda_i (R)
\end{equation}
for all $ i = 0, 1, \ldots, d/2 $. From the remark in \S\ref{SSordinarygrassmannianembedding} (page \pageref{SSordinarygrassmannianembedding}), there is no danger in choosing $ m, n $ too large. The claim is that $ ( \mathcal{F}_i )_{i \in \Z} $ is the image of a point of $ \biggermodel{m}{n} ( R ) $.

By passing through the quotients $ t^{-m} \lambda_i ( R ) / t^n \lambda_i ( R ) \cong t^{-m} \mathcal{V}_i ( R ) / t^n \mathcal{V}_i ( R ) $, define $ \mathcal{L}_i $ to be the $ \mathcal{R}[t] $-module satisfying
\begin{equation*}
t^n \mathcal{V}_i (R) \subset \mathcal{L}_i \subset t^{-m} \mathcal{V}_i (R)
\end{equation*}
corresponding to $ \mathcal{F}_i $ for each $ i = 0, 1, \ldots, d/2 $. It is obvious that $ \mathcal{L}_0 \subset \cdots \subset \mathcal{L}_{d/2} $, so both \BLMdef{\ref{BLMchain}} and \BLMdef{\ref{BLMbounds}} are true already. Requirement \BLMdef{\ref{BLMsummand}} follows from \AFVdef{\ref{AFVlattice}} because the quotients $ t^{-m} \lambda_i ( R ) / \mathcal{F}_i $ and $ t^{-m} \mathcal{V}_i (R) / \mathcal{L}_i $ are, by construction, identified $ R $-linearly. For each $ i $, I have the short-exact-sequence of $ \F[[t]] $-modules
\begin{equation*}
0 \rightarrow \mathcal{F}_{i+1} / \mathcal{F}_i \rightarrow t^{-m} \mathcal{R}[[t]]^d / \mathcal{F}_i \rightarrow t^{-m} \mathcal{R}[[t]]^d / \mathcal{F}_{i+1} \rightarrow 0
\end{equation*}
By \AFVdef{\ref{AFVlattice}} and \AFVdef{\ref{AFVcomplete}},
\begin{equation*}
\rank_{\mathcal{R}}( t^{-m} \mathcal{R}[[t]]^d / \mathcal{F}_i ) = \rank_{\mathcal{R}} ( t^{-m} \mathcal{R}[[t]]^d / \mathcal{F}_{i+1} ) + 1
\end{equation*}
Zariski-locally on $ \spec ( \mathcal{R} ) $, which implies that
\begin{equation*}
\rank_{\mathcal{R}} ( \mathcal{L}_i / t^n \mathcal{V}_i(R) ) = \rank_{\mathcal{R}} ( \mathcal{L}_{i+1} / t^n \mathcal{V}_{i+1}(R) )
\end{equation*}
Zariski-locally on $ \spec ( \mathcal{R} ) $ (note that the quotient on the right is by a slightly larger module than on the left).

To verify \BLMdef{\ref{BLMlocalrank}}, it now suffices to show that the projective rank function $ \spec(R) \rightarrow \N $ of $ \mathcal{F}_0 / t^n \lambda_0(R) $ is the constant function $ d(m+n) $. Dualizing chain (\ref{EQadjustedlatticebounds}) yields
\begin{equation*}
t^m \lambda_0(R) \subset \widehat{\mathcal{F}}_0 \subset t^{-n} \lambda_0(R)
\end{equation*}
The quotient $ t^{-n} \lambda_0 (R) / \widehat{\mathcal{F}}_0 $ is a projective $ \mathcal{R} $-module and has the same (constant) projective rank function as $ \mathcal{F}_0 / t^n \lambda_0(R) $. This means that
\begin{align*}
\rank_{\mathcal{R}} ( \mathcal{F}_0 / t^n \lambda_0(R) )
&= \rank_{\mathcal{R}} ( t^{-n} \lambda_0 (R) / \widehat{\mathcal{F}}_0 ) \\
&= \rank_{\mathcal{R}} ( t^{-m} \lambda_0 (R) / t^{n-m} \widehat{\mathcal{F}}_0 )
= \rank_{\mathcal{R}} ( t^{-m} \lambda_0 (R) / \mathcal{F}_0 )
\end{align*}
The first and last ranks are equal and must sum to $ d(m+n) $, so the claim is proven (recall that $ \rank_R ( \mathcal{R} ) = 2 $).

It is automatic that \BLMdef{\ref{BLMselfdual}} is satisfied, because the definition of $ \widehat{\mathcal{F}}_0 $ and condition \AFVdef{\ref{AFVsimduality}} guarantee that $ \mathcal{L}_0 = \{ x \in \mathcal{V}(R) \suchthat \phi_R ( \mathcal{L}_0, x ) \subset t^{-\Delta} \mathcal{R}[t] \} $ and by choice $ \Delta = m - n $ (and similarly for $ \mathcal{L}_{d/2} $).

\subsection{The affine Grassmannian over $ \Q_p $} \label{SSaffinegrassdesc}

For the affine Grassmannian, the setup is: I use the unramified quadratic extension $ \Q_p((t)) \subset F((t)) $, the vector space $ F((t))^d $, and the standard hermitian form $ \Phi : F((t))^d \times F((t))^d \rightarrow F((t)) $ defined by the anti-identity matrix $ \antiid $.
\begin{affinegrassmanniandef} \label{AGVdef}
The affine Grassmannian $ \affinegrassmannian $ is the functor that assigns to any commutative $ \Q_p $-algebra $ R $ ($ \mathcal{R} := R \otimes_{\Q_p} F $) the set of all $ \mathcal{R}[[t]] $-submodules $ \mathcal{F} $ of $ \mathcal{R}((t))^d $ satisfying:
\begin{enumerate}
\setlength{\itemsep}{5pt}
\item \label{AGVlattice} \AGVdef{\ref{AGVlattice}} \hfill \\
$ \mathcal{F} $ is a Zariski $ \mathcal{R}[[t]] $-lattice in $ \mathcal{R}((t))^d $
\item \label{AGVsimduality} \AGVdef{\ref{AGVsimduality}} \hfill \\
there exists, Zariski-locally on $ \spec(R) $, $ u(t) \in R((t))^{\times} $ such that
\begin{equation*}
\mathcal{F} = u(t) \cdot \widehat{\mathcal{F}}.
\end{equation*}

\end{enumerate}
\end{affinegrassmanniandef}

\subsection{The generic fibers are subschemes of the affine Grassmannian} \label{SSaffinegrassmannianembedding}

Let $ R $ be a commutative $ \Q_p $-algebra. Consider a point $ ( \mathcal{L}_0, \mathcal{L}_1, \ldots, \mathcal{L}_{d/2} ) \in \biggermodel{m}{n} ( R ) $. As before, note that
\begin{align*}
\mathcal{V}(\Q_p) &= F [ t, t^{-1}, (t+p)^{-1} ]^d \subset F((t))^d \\
\mathcal{V}_0(\Q_p) &= F[t]^d \\
\mathcal{V}_1(\Q_p) &= (t+p)^{-1} F[t] \oplus F[t]^{d-1} \\
&{\vdots} \\
\mathcal{V}_{d-1}(\Q_p) &= (t+p)^{-1} F[t]^{d-1} \oplus F[t] \\
\mathcal{V}_d(\Q_p) &= (t+p)^{-1} F[t]^d
\end{align*}

Since $ t+p $ is a unit in $ \Q_p[[t]] $, the discussion in \S\ref{SSlargermodels} (page \pageref{SSlargermodels}) implies that over $ \Q_p $ the ``chain'' $ \mathcal{L}_0 \subset \mathcal{L}_1 \subset \cdots \subset \mathcal{L}_{d/2} $ collapses and I may ignore all of them except one, say $ \mathcal{L} := \mathcal{L}_0 $.

Using a similar argument as in \S\ref{SSaffineflagembedding}, notice that passing through the isomorphism of quotients
\begin{equation*}
\frac{ t^{-m}(t+p)^{-1} \mathcal{R}[t]^d }{ t^n \mathcal{R}[t]^d } \directedisom \frac{ t^{-m} \mathcal{R}[[t]]^d }{ t^n
\mathcal{R}[[t]]^d }
\end{equation*}
(this uses the fact that $ t+p $ is a \emph{unit} in $ \Q_p[[t]] $) associates to $ \mathcal{L} $ an $ \mathcal{R}[[t]] $-submodule $ \mathcal{F} $ of $ \mathcal{R}((t))^d $ satisfying
\begin{equation*}
t^n \mathcal{R}[[t]]^d \subset \mathcal{F} \subset t^{-m}
\mathcal{R}[[t]]^d
\end{equation*}

I claim that $ \mathcal{L} \mapsto \mathcal{F} $ defines an injective function $ \biggermodel{m}{n} (R) \hookrightarrow \affinegrassmannian_{ \Q_p } $ and that the collection over all $ \Q_p $-algebras $ R $ of these functions defines a natural transformation $ \biggermodel{m}{n}_{\Q_p} \rightarrow \affinegrassmannian_{ \Q_p } $.

\framebox{Proof of \AGVdef{\ref{AGVlattice}}} The proof here is the same as the proof of \AFVdef{\ref{AFVlattice}} (page \pageref{AFVlattice}): apply Lemma \ref{Lprojectivitylifting} (page \pageref{Lprojectivitylifting}) to $ R \rightarrow R \otimes_{\Q_p} F $.

\framebox{Proof of \AGVdef{\ref{AGVsimduality}}} The proof here is exactly the same as the proof of \AFVdef{\ref{AFVsimduality}} (page \pageref{AFVsimduality}), noticing that $ t+p $ is a unit in $ \Q_p[[t]] $ etc.

\subsection{The generic fibers exhaust the affine Grassmannian} \label{SSgenericfibersexhaust}

The proof of the fact that the schemes $ \biggermodel{m}{n}_{\Q_p} $ exhaust $ \affinegrassmannian_{ \Q_p } $ is nearly identical to the case of the special fibers exhausting the full affine flag variety, noticing as usual that $ t+p $ is a unit in $ \Q_p[[t]] $ etc.

\section{Automorphisms of the enlarged models} \label{Sautomgroup}

\begin{center}
\emph{The group $ \biggergroup{m}{n} $ defined in this subsection is a degeneration of the special parahoric subgroup $ \specialparahoric $ over $ \Q_p[[t]] $ to the Iwahori subgroup $ \iwahori $ over $ \F_p[[t]] $, much in the same way that $ \biggermodel{m}{n} $ degenerates the affine Grassmannian $ \affinegrassmannian $ over $ \Q_p $ to the full affine flag variety $ \affineflagvariety $ over $ \F_p $.}
\end{center}

\subsection{Definition and basic properties}

\begin{definition}
The functor
\begin{equation*}
\biggergroup{m}{n} : \catalgebras{\Z_p} \rightarrow \catgroups
\end{equation*}
assigns to any commutative $ \Z_p $-algebra $ R $ ($ \mathcal{R} := R \otimes_{\Z_p} \mathcal{O} $) the group of all $ \mathcal{R} [t] $-linear automorphisms $ g $ of the quotient $ \overline{\mathcal{V}}_{\textup{sup}}(R) $ satisfying:
\begin{itemize}
\setlength{\itemsep}{5pt}
\item $ g ( \overline{\mathcal{V}}_i (R) ) = \overline{\mathcal{V}}_i (R) $

\item there exists $ c(g) \in R[t] $ representing a \emph{unit} of $ R[t] / t^{m+n}(t+p) R[t] $ such that $ \overline{\phi}_R ( g ( x ), g ( y ) ) = c ( g ) \cdot \overline{\phi}_R ( x, y ) $ for all $ x, y \in \overline{\mathcal{V}}_{\textup{sup}} ( R ) $.
\end{itemize}
Here $ \overline{\phi}_R $ denotes the product 
\begin{equation*} \label{EhermitianformusedforJ}
\overline{\phi}_R : \overline{\mathcal{V}}_{\textup{sup}}(R) \times \overline{\mathcal{V}}_{\textup{sup}}(R) \longrightarrow \frac{ t^{-2m} (t+p)^{-2} \mathcal{R} [t] }{ t^{n-m} (t+p)^{-1} \mathcal{R} [t] }
\end{equation*}
defined by the anti-identity matrix $ \antiid $.
\end{definition}


\begin{lemma} \label{LJaction}
This functor $ \biggergroup{m}{n} $ is represented by a finite-type affine group $ \Z_p $-scheme and the rule $ g \cdot ( \mathcal{L}_0, \mathcal{L}_1, \ldots, \mathcal{L}_{d/2} ) \defeq ( g( \mathcal{L}_0 ), g( \mathcal{L}_1 ), \ldots, g( \mathcal{L}_{d/2} ) ) $ defines an action of $ \biggergroup{m}{n} $ on $ \biggermodel{m}{n} $.
\end{lemma}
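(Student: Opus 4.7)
The plan is to realize $ \biggergroup{m}{n} $ as a locally closed subscheme of a suitable affine space and then to check the action conditions directly. The starting point is that $ \overline{\mathcal{V}}_{\textup{sup}} $ is a finitely-generated free $ \Z_p $-module: as an $ \mathcal{O}[t] $-module it is isomorphic to $ \bigl( \mathcal{O}[t] / t^{n+m}(t+p) \mathcal{O}[t] \bigr)^d $, which is free of rank $ (m+n+1)d $ over $ \mathcal{O} $ and hence of rank $ 2(m+n+1)d $ over $ \Z_p $. The functor assigning to $ R $ the $ R $-linear endomorphisms of $ \overline{\mathcal{V}}_{\textup{sup}}(R) $ is therefore represented by an affine space over $ \Z_p $, and requiring commutation with the finitely many generators of $ \mathcal{R}[t] $ (as a $ \Z_p $-algebra) together with preservation of each of the finitely-generated submodules $ \overline{\mathcal{V}}_i $ cuts out a closed subscheme.

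To incorporate the similitude condition I would adjoin an additional affine space of dimension $ m+n+1 $ parametrizing polynomial representatives $ c \in R[t] $ of elements of $ R[t] / t^{m+n}(t+p) R[t] $ (free of rank $ m+n+1 $ over $ R $, since $ t^{m+n}(t+p) $ is monic of degree $ m+n+1 $). The identity $ \overline{\phi}( g(v), g(w) ) = c \cdot \overline{\phi}(v, w) $ is a closed condition (it suffices to check on a finite spanning set). The requirement that $ c $ be a unit in that quotient ring is principal open, and on it the existence of $ g^{-1} $ (forced by a determinant argument using non-degeneracy of $ \overline{\phi} $) can be encoded by adjoining matrix variables for $ g^{-1} $ satisfying the closed relation $ g g^{-1} = \identity $. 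The resulting scheme is affine, finite-type over $ \Z_p $, and has the required group structure.

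For the action, it suffices to verify that $ ( g( \mathcal{L}_0 ), \ldots, g( \mathcal{L}_{d/2} ) ) $ satisfies \BLMdef{\ref{BLMchain}}--\BLMdef{\ref{BLMselfdual}}. The chain inclusion is immediate. The bounds \BLMdef{\ref{BLMbounds}} follow from $ g( \overline{\mathcal{V}}_i (R) ) = \overline{\mathcal{V}}_i (R) $, which holds because both $ g $ and $ g^{-1} $ stabilize $ \overline{\mathcal{V}}_i(R) $. The splitting \BLMdef{\ref{BLMsummand}} and the rank function \BLMdef{\ref{BLMlocalrank}} are preserved because $ g $ induces an $ R $-linear automorphism of the quotient $ t^{-m} \mathcal{V}_i (R) / t^n \mathcal{V}_i (R) $. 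Finally, the similitude identity combined with unit-ness of $ c(g) $ (so that multiplication by $ c(g) $ and by $ c(g)^{-1} $ each preserve the relevant duality ideals) yields $ \widehat{ g( \mathcal{L}_0 ) } = g( \widehat{ \mathcal{L} }_0 ) = g( \mathcal{L}_0 ) $, and similarly for $ \mathcal{L}_{d/2} $.

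The main obstacle I expect is the bookkeeping for the similitude and duality conditions: the codomain of $ \overline{\phi} $, the quotient ring in which $ c(g) $ is required to be a unit, and the ideals defining $ \widehat{ \mathcal{L} }_0 $ and $ \widehat{ \mathcal{L} }_{d/2} $ all live in distinct but closely related quotients, and verifying precisely that unit-ness of $ c(g) $ really does transport self-duality through $ g $ requires careful tracking of precisions.
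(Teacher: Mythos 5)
Your approach is essentially the paper's: realize $ \biggergroup{m}{n} $ as a (locally) closed subscheme of $ \GL $ of the free $ \Z_p $-module $ \overline{\mathcal{V}}_{\textup{sup}} $ via the $ \mathcal{O}[t] $-linearity, filtration-stabilization, and similitude conditions, and observe that the only non-trivial part of the action verification is \BLMdef{\ref{BLMselfdual}}, which follows from unit-ness of $ c(g) $. The paper phrases this last step through Lemma \ref{isotropiciffdual} as the equivalence $ \phi_R( g(L_0), x ) = 0 \Leftrightarrow x \in g(L_0) $ rather than your $ \widehat{ g( \mathcal{L}_0 ) } = g( \widehat{ \mathcal{L} }_0 ) $, but the content is the same.
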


\begin{proof}
\framebox{affine} The condition that any $ g \in \biggergroup{m}{n} $ must stabilize the filtration $ \overline{\mathcal{V}}_i $ together with the condition that it be a similitude present $ \biggergroup{m}{n} $ as a closed subscheme of $ \aut_{\mathcal{O} [t]\textup{-lin}}( \overline{\mathcal{V}}_{\textup{sup}} ) $. The condition that $ g \in \aut_{\mathcal{O} [t]\textup{-lin}}( \overline{\mathcal{V}}_{\textup{sup}} ) $ be $ \mathcal{O} [t] $-linear rather than simply $ \mathcal{O} $-linear presents $ \aut_{\mathcal{O} [t]\textup{-lin}}( \overline{\mathcal{V}}_{\textup{sup}} ) $ as a closed subscheme of $ \GL_{2d(m+n)} $ (this condition is the same as requiring that $ g $ commute with the operator $ t $). \framebox{finite-type} This is obvious from the proof of affine-ness. \framebox{action} Let $ R $ be a commutative $ \Z_p $-algebra and set $ \mathcal{R} := \mathcal{O} \otimes_{\Z_p} R $. Take $ ( \mathcal{L}_i )_{i=0}^{d/2} \in \biggermodel{m}{n} (R) $ and take $ g \in \biggergroup{m}{n} (R) $. It is obvious from the definition that $ ( g ( \mathcal{L}_i ) )_{i=0}^{d/2} $ satisfies \BLMdef{\ref{BLMchain}}, \BLMdef{\ref{BLMbounds}}, \BLMdef{\ref{BLMsummand}} and \BLMdef{\ref{BLMlocalrank}}. To prove \BLMdef{\ref{BLMselfdual}}, note that restricting the above $ \overline{\phi} $ to $ t^{-m} \mathcal{R}[t]^d / t^n \mathcal{R}[t]^d $ agrees with the $ \phi $ used in \S\ref{SSlargermodels} (page \pageref{SSlargermodels}). Also note that the image of $ \mathcal{L}_0 $ in $ t^{-m} \mathcal{R}[t]^d / t^n \mathcal{R}[t]^d $ is identical to what was called $ L_0 $ in \S\ref{SSlargermodels} (page \pageref{SSlargermodels}). Therefore, by Lemma \ref{isotropiciffdual} (page \pageref{isotropiciffdual}), I need to show that $ \phi_R ( g(L_0), x ) = 0 $ if and only if $ x \in g(L_0) $. \framebox{$ \Rightarrow $} Since $ 0 = \phi_R ( g(L_0), x ) = c(g) \phi_R ( L_0, g^{-1} ( x ) ) $, the definition of $ c(g) $ implies that $ \phi_R ( L_0, g^{-1} ( x ) ) = 0 $ also. So $ g^{-1} ( x ) \in L_0 $ and $ x \in g(L_0) $. \framebox{$ \Leftarrow $} This is trivial since $ c(g) \in R[t] $. The case of $ \mathcal{L}_{d/2} $ is similar. It is trivial that this is a group action and that these actions form a natural transformation $  \biggergroup{m}{n} \times \biggermodel{m}{n} \rightarrow \biggermodel{m}{n} $.
\end{proof}

\subsection{The automorphism group is smooth} \label{SSsmoothJ}

\begin{center}
\emph{The purpose of this subsection is to prove that $ \biggergroup{m}{n} $ is a smooth. This is necessary to connect the equivariant sheaf theory of $ \biggermodel{m}{n}_{\Q_p} $ to the equivariant sheaf theory of $ \biggermodel{m}{n}_{\F_p} $, i.e. to apply base-change for pullbacks.}
\end{center}

By Lemma \ref{LJaction} (page \pageref{LJaction}), $ \biggergroup{m}{n} $ is finite-type, so Corollary 4.5 (and its proof) of \cite{DG} says that $ \biggergroup{m}{n} \rightarrow \spec(\Z_p) $ is smooth if and only if the following property is true:

\begin{center}
\begin{formalsmoothness}
\label{formalsmoothness}For each local commutative $ \Z_p $-algebra $ R $ and each ideal $ I \subset R $ satisfying $ I^2 = 0 $, the map $ \biggergroup{m}{n} ( R ) \rightarrow \biggergroup{m}{n} ( R / I ) $ is \emph{surjective}.
\end{formalsmoothness}
\end{center}

Fix such an $ R $ and $ I \subset R $. Set
\begin{align*}
S &\defeq R[t] / t^{m+n} (t+p) R[t] \\
\overline{S} &\defeq ( R / I ) [t] / t^{m+n} (t+p) ( R / I )[t]
\end{align*}

Set $ \mathcal{R} := \mathcal{O} \otimes_{\Z_p} R $ and let $ \mathcal{I} $ be the extension of the ideal $ I $ in $ \mathcal{R} $. Set
\begin{equation*}
\mathcal{S} \defeq \mathcal{R}[t] / t^{m+n} (t+p) \mathcal{R}[t]
\end{equation*}
and
\begin{equation*}
\overline{\mathcal{S}} \defeq ( \mathcal{R} / \mathcal{I} ) [t] / t^{m+n} (t+p) ( \mathcal{R} / \mathcal{I} )[t]
\end{equation*}
Let $ I_t $ be the extension of $ I $ in $ S $ and let $ \mathcal{I}_t $ be the extension of $ \mathcal{I} $ in $ \mathcal{S} $. I use without warning the equalities
\begin{align*}
\overline{S} &= S / I_t \\
\mathcal{S} &= \mathcal{O} \otimes_{\Z_p} S \\
\overline{\mathcal{S}} &= \mathcal{O} \otimes_{\Z_p} \overline{S} = \mathcal{S} / \mathcal{I}_t
\end{align*}

Let $ \overline{g} \in \biggergroup{m}{n} ( R / I ) $ be arbitrary. By definition, $ \overline{g} $ is an $ \overline{\mathcal{S}} [t] $-linear automorphism of $ \overline{\mathcal{V}}_{\textup{sup}} ( R / I ) $. I make the obvious $ \mathcal{R}[t] $-linear identification
\begin{equation} \label{EQidentifyV0modI}
\overline{\mathcal{V}}_{\textup{sup}} ( R / I ) = \overline{\mathcal{S}}^d
\end{equation}
so that $ \overline{g} $ is identified with an automorphism of $ \overline{\mathcal{S}}^d $ and the hermitian form $
\overline{\phi}_{R/I} $ used in the definition (page \pageref{EhermitianformusedforJ}) of $ \biggergroup{m}{n} ( R / I ) $ is identified with the standard hermitian form $ \overline{\mathcal{S}}^d \times \overline{\mathcal{S}}^d \rightarrow \overline{\mathcal{S}} $ defined by the anti-identity matrix $ \antiid $.

To construct a lift $ g \in \biggergroup{m}{n} (R) $ of $ \overline{g} $, I use the following point of view: having $ g $ is equivalent to having $ v_1, \ldots, v_d \in \mathcal{S}^d $ such that
\begin{itemize}
\setlength{\itemsep}{5pt}
\item $ v_1, \ldots, v_d $ is an $ \mathcal{S} $-module basis of $ \mathcal{S}^d $
\item $ v_i \in \mathcal{S}^i \oplus (t+p) \mathcal{S}^{d-i} $ for
all $ i $.
\item there is $ c \in S^{\times} $ such that
\begin{equation*}
\overline{\phi}_R ( v_i, v_j ) =
\begin{cases}
c & \hspace{5pt} i + j = d+1 \\
0 & \hspace{5pt} i + j \neq d+1
\end{cases}
\end{equation*}
\end{itemize}
The link between the two points of view is
\begin{align*}
v_i &= g( e_i ) \\
c &= c(g)
\end{align*}
Note that it is automatic from $ R[t] $-linearity that $ g $ stabilizes each $ \mathcal{S}^i \oplus (t+p) \mathcal{S}^{d-i} $.

To simplify notation further, set
\begin{align*}
M &{:=} \mathcal{S}^d \\
\sigma &{:=} (t+p) \\
N_i &{:=} \mathcal{S}^i \oplus \sigma \mathcal{S}^{d-i} \textaftermath{($ i = 0, \ldots, d $)}
\end{align*}
Let $ \overline{M} $ denote $ M \otimes_\mathcal{S} ( \mathcal{S} / \mathcal{I}_t ) = M / \mathcal{I}_t M $. Let $ \overline{N}_i $ be the image of $ N_i $ in $ \overline{M} $.

First, note that lifting is easy if the similitude condition is not involved:
\begin{lemma} \label{Leasysmoothess}
If $ \overline{v}_1, \ldots, \overline{v}_d $ is an $ (\mathcal{S} / \mathcal{I}_t) $-module basis for $ \overline{M} $ such that $ \overline{v}_i \in \overline{N}_i $ for all $ i $, then there exists an $ \mathcal{S} $-module basis $ v_1, \ldots, v_d $ for $ M $ such that such that $ v_i \in N_i $ and
\begin{equation*}
\overline{v}_1 \equiv v_1 \textup{ mod } \mathcal{I}_t M.
\end{equation*}
\end{lemma}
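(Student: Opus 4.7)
The plan is a Nakayama-type lifting that works precisely because the similitude constraint on $c(g)$ has been stripped away in this intermediate statement; once we have full freedom to choose lifts coordinate-by-coordinate, the argument becomes purely formal.

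First I would observe that the ideal $\mathcal{I}_t \subset \mathcal{S}$ is nilpotent and therefore contained in the Jacobson radical of $\mathcal{S}$. Indeed, $I \subset R$ is nilpotent (say $I^N = 0$), so $\mathcal{I}^N = (I\mathcal{R})^N = I^N \mathcal{R} = 0$ and then $(\mathcal{I}_t)^N = (\mathcal{I}\mathcal{S})^N = 0$ as well. With that in hand, Nakayama's lemma is available on any finitely-generated $\mathcal{S}$-module.

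Next I would produce the lifts by decomposing along the direct sum $N_i = \mathcal{S}^i \oplus \sigma\mathcal{S}^{d-i}$. Write $\overline{v}_i = (\overline{a}_{i,1}, \ldots, \overline{a}_{i,d})$, so that $\overline{a}_{i,j} \in \overline{\mathcal{S}}$ for $j \leq i$ and $\overline{a}_{i,j} \in \sigma\overline{\mathcal{S}}$ for $j > i$. Lift the first $i$ entries by arbitrary preimages in $\mathcal{S}$ and the last $d-i$ entries by preimages in $\sigma\mathcal{S}$; the latter is available because $\mathcal{S} \twoheadrightarrow \overline{\mathcal{S}}$ trivially restricts to a surjection $\sigma\mathcal{S} \twoheadrightarrow \sigma\overline{\mathcal{S}}$. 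Defining $v_i \defeq (a_{i,1}, \ldots, a_{i,d})$ yields an element of $N_i$ with $v_i \equiv \overline{v}_i \pmod{\mathcal{I}_t M}$ (I am reading the congruence in the statement as holding for each $i$, not just $i=1$).

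Finally I would verify that $v_1, \ldots, v_d$ is an $\mathcal{S}$-module basis of $M$. Since the $\overline{v}_i$ span $\overline{M} = M/\mathcal{I}_t M$ by hypothesis and $\mathcal{I}_t$ is nilpotent, Nakayama gives that the $v_i$ span $M$. Since $M = \mathcal{S}^d$ is free of rank $d$ and I have exhibited exactly $d$ spanning elements, the ``Linear Independence of Minimal Generating Sets'' principle recalled in \S\ref{Scommalglemmas} (page \pageref{Scommalglemmas}) promotes them automatically to a basis. There is no genuine obstacle in this lemma; the real work in the smoothness proof will lie in the follow-up step that must simultaneously arrange the similitude relations $\overline{\phi}_R(v_i, v_j) = c\cdot \delta_{i+j,\,d+1}$, which is presumably where the assumption $p \neq 2$ finally enters.
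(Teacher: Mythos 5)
Your proof is correct and essentially matches the paper's own: both take lifts $v_i \in N_i$ of the $\overline{v}_i$, apply Nakayama (using nilpotence of $\mathcal{I}_t$) to see that they generate $M$, and invoke ``Linear Independence of Minimal Generating Sets'' to upgrade the generating set to a basis. The only difference is cosmetic: where the paper simply says ``arbitrary lifts,'' you spell out the coordinate-wise lifting along $N_i = \mathcal{S}^i \oplus \sigma\mathcal{S}^{d-i}$, which is a fine way to justify that $N_i \twoheadrightarrow \overline{N}_i$ is surjective.
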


\begin{proof}
Let $ v_i \in N_i $ be \emph{arbitrary} lifts of $ \overline{v}_i $. Nakayama's Lemma implies that since $ \overline{v}_1, \ldots, \overline{v}_d $ generates $ \overline{M} $, the set $ v_1, \ldots, v_d $ generates $ M $ (to apply Nakayama's Lemma, note that $ \mathcal{I}_t^2 = 0 $ ). By ``Linear Independence of Minimal Generating Sets'', $ v_1, \ldots, v_d $ must be a basis.
\end{proof}

Now I extend this lemma to handle the similitude condition:
\begin{prop} \label{PsmoothJ}
Let the notation be as above. Let $ e_1, \ldots, e_d $ be the standard basis for $ \mathcal{S}^d $ and set $ \overline{v}_i := \overline{g} ( \overline{e}_i ) $. \textbf{Assertion}: There is $ c \in S^{\times} $ and $ w_i \in N_i $ ($ i = 1, \ldots, d $) that form an $ \mathcal{S} $-module basis for $ M $ and that satisfy
\begin{itemize}
\setlength{\itemsep}{5pt}
\item $ w_1 \equiv \overline{v}_1 \textup{ mod } \mathcal{I}_t M $, and

\item $ \overline{\phi}_R ( w_i, w_j ) =
\begin{cases}
c & \hspace{5pt} i + j = d+1 \\
0 & \hspace{5pt} i + j \neq d+1
\end{cases} $
\end{itemize}
\end{prop}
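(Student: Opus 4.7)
My plan is to build the basis $w_1, \ldots, w_d$ inductively, constructing one hyperbolic pair $(w_i, w_{d+1-i})$ at a time, starting from $i = 1$. \textbf{Setting up the multiplier and first vector.} Because the kernel $I_t$ of $S \twoheadrightarrow \overline{S}$ is nilpotent (as $I^2 = 0$), the reduction $S^\times \twoheadrightarrow \overline{S}^\times$ is surjective, so I choose $c \in S^\times$ lifting $c(\overline{g})$. Applying Lemma \ref{Leasysmoothess} (after arbitrarily extending $\overline{v}_1$ to a basis of $\overline{M}$ respecting the filtration) gives an initial lift $w_1 \in N_1$ of $\overline{v}_1$.

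\textbf{Constructing the hyperbolic partner $w_d$.} The key observation is that the first coordinate $a_1 \in \mathcal{S}$ of $w_1$ is a unit: its reduction $\overline{a}_1 \in \overline{\mathcal{S}}$ is a unit because $\overline{g}$ induces an isomorphism on the graded piece $\overline{N}_1/\overline{N}_0 \cong \overline{\mathcal{S}}$, and unithood lifts through the nilpotent extension. Hence the $\mathcal{S}$-linear functional $N_d = \mathcal{S}^d \to \mathcal{S}$, $v \mapsto \overline{\phi}_R(w_1, v)$, sends $e_1$ to a unit and is therefore surjective, so I can find some $w_d^\circ \in N_d$ with $\overline{\phi}_R(w_1, w_d^\circ) = c$. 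Since $\overline{g}$ is a similitude and $\overline{\phi}_R(\overline{e}_1, \overline{e}_1) = 0$, one has $\overline{\phi}_R(w_1, w_1) \in \mathcal{I}_t \mathcal{S}$, and using $\mathcal{I}_t^2 = 0$ together with the unithood of $a_1$, I adjust $w_1$ by an element of $\mathcal{I}_t M \cap N_1$ to kill $\overline{\phi}_R(w_1, w_1)$ without altering its class mod $\mathcal{I}_t M$. Similarly, I replace $w_d^\circ$ by $w_d^\circ + \mu w_1$ for suitable $\mu \in \mathcal{S}$; solving the resulting linear equation to force $\overline{\phi}_R(w_d, w_d) = 0$ requires inverting $2$, which is exactly where the assumption $p \neq 2$ is used.

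\textbf{Orthogonal decomposition and induction on $d$.} Set $W \defeq \mathcal{S} w_1 \oplus \mathcal{S} w_d$. The Gram matrix of $\overline{\phi}_R|_W$ is hyperbolic with unit off-diagonal entry $c$, hence invertible, so $M = W \oplus W^\perp$ as $\mathcal{S}$-modules, with $W^\perp$ free of rank $d - 2$ and inheriting a hermitian form whose Gram matrix is antidiagonal. I then verify that the intersections $N_i \cap W^\perp$ for $2 \le i \le d - 1$ take the shape (free rank $i - 1$) $\oplus\, \sigma \cdot$(free rank $d - 1 - i$) in an appropriate basis of $W^\perp$ — namely the basis obtained by projecting the standard vectors $e_2, \ldots, e_{d-1}$ off of $W$ using the unit $a_1$. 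Induction on $d$ applied to $W^\perp$ (the base case $d = 2$ being the construction of $w_1, w_d$ themselves) produces the remaining vectors $w_2, w_{d-1}, w_3, w_{d-2}, \ldots$, and assembling everything yields the desired hyperbolic basis.

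The main obstacle I foresee is the last paragraph: correctly identifying the induced filtration on $W^\perp$ and verifying that it has the shape required to continue the induction. Because $\sigma = t + p$ becomes a zero-divisor after reduction mod $p$, the bookkeeping of which coordinates are $\sigma$-scaled versus free requires care, and the change-of-basis that diagonalizes the splitting $M = W \oplus W^\perp$ must be shown to respect each $N_i$ after the re-indexing. Once this is in place, the inductive step is essentially a literal repetition of the argument in paragraph two, in one dimension lower.
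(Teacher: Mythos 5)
There is a genuine gap, and it is located exactly where you call out ``the key observation'': the claim that the first coordinate $a_1 \in \mathcal{S}$ of $w_1$ is a unit. The fact that $\overline{g}$ induces an isomorphism on the graded piece $\overline{N}_1/\overline{N}_0 \cong \overline{\mathcal{S}}/\sigma\overline{\mathcal{S}}$ only tells you that $\overline{a}_1$ is a unit \emph{modulo} $\sigma = t+p$, which is strictly weaker than being a unit in $\overline{\mathcal{S}}$ and strictly weaker still than being a unit in $\mathcal{S}$. Concretely, if $R$ happens to be a $\Q_p$-algebra, the Chinese remainder theorem gives $\overline{\mathcal{S}} \cong A[t]/t^{m+n} \times A$ with $\sigma$ a unit in the first factor and zero in the second. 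Then the $N_i$ put no constraint whatsoever on the first-factor component of $\overline{g}$, so one can take, say, a block anti-diagonal similitude there (swapping $e_1 \leftrightarrow e_2$ and $e_{d-1} \leftrightarrow e_d$), making $a_1$ a zero-divisor rather than a unit. Since your construction of $w_d^\circ$, the adjustment killing $\overline{\phi}_R(w_1,w_1)$, and the eventual projection of $e_2, \ldots, e_{d-1}$ off of $W$ to build the filtered basis of $W^\perp$ all invoke the invertibility of $a_1$, this failure propagates through the whole inductive scheme. It is not a bookkeeping issue that more care will fix; the quantity you are trying to invert is genuinely not invertible in general.

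The paper's proof avoids any coordinate-level analysis and thereby sidesteps the problem entirely. It lifts the entire basis $\overline{v}_1, \ldots, \overline{v}_d$ at once via Lemma~\ref{Leasysmoothess}, records the full error matrix $x_{i,j} \in \mathcal{I}_t$ defined by $\overline{\phi}_R(v_i,v_j) = c\,\delta_{i,d+1-j} + x_{i,j}$, and checks the hermitian symmetry $x_{j,i} = \jmath(x_{i,j})$. It then produces a single simultaneous correction $w_i = v_i + m_i$, where $m_i$ is the vector representing the functional $v_j \mapsto -\tfrac{1}{2}x_{i,j}$ via perfection of $\overline{\phi}_R$ on the free module $M$. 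The membership $m_i \in N_i$ is then forced by the duality characterization $N_{d-i} = \{\,m \in M : \overline{\phi}(m, N_i) \subset \sigma\mathcal{S}\,\}$, using only that the entries $x_{i,j}$ for $j \leq d-i$ already lie in $\sigma\mathcal{S}$ (because $v_i \in N_i$, $v_j \in N_j$, and $i+j \neq d+1$). This one-shot argument needs neither unit coordinates nor the inductive orthogonal decomposition $M = W \oplus W^\perp$, and the division by $2$ is the only place $p \neq 2$ enters.
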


By the previous discussion, Proposition \ref{PsmoothJ} implies that $ \biggergroup{m}{n} $ is smooth.

\begin{proof}
Let $ \jmath $ denote the involution of $ \mathcal{S} $ induced by the non-trivial element of $ \gal ( F / \Q_p ) $. Note that the ideal $ \mathcal{I}_t $ is $ \jmath $-stable because it was extended from $ I \subset R $. Let $ v_1, \ldots, v_d $ be the basis guaranteed by Lemma \ref{Leasysmoothess} and choose a representative $ c \in S^{\times} $ of $ c ( \overline{g} ) $ such that $ \jmath( c ) = c $ (this is possible because $ c ( \overline{g} ) \in \overline{S}^{\times} $ by definition).

By assumption, the similitude condition holds modulo $ \mathcal{I} $, i.e.
\begin{equation*}
\overline{\phi}_{R/I} ( \overline{v}_i, \overline{v}_j ) =
\begin{cases}
c ( \overline{g} ) & \hspace{5pt} i + j = d+1 \\
0 & \hspace{5pt} i + j \neq d+1
\end{cases}
\end{equation*}
More succinctly,
\begin{equation*}
\overline{\phi}_{R/I} ( \overline{v}_i, \overline{v}_j ) =
c ( \overline{g} ) \delta_{ i, d+1-j }
\end{equation*}
where $ \delta $ is the Kronecker delta. This means that there are $ x_{i,j} \in \mathcal{I}_t $ such that
\begin{equation} \label{EQlocallabel6}
\overline{\phi}_R ( v_i, v_j ) = c \delta_{i,d+1-j} + x_{i,j}
\end{equation}
Since $ c $ is independent of $ i, j $ and since $ \jmath ( c ) = c $, it is true that $ x_{j,i} = \jmath ( x_{i,j} ) $:
\begin{align*}
x_{j,i}
&= \overline{\phi}_R ( v_j, v_i ) - c \delta_{j,d+1-i} \\
\text{(because $ \overline{\phi} $ is hermitian)} &= \jmath ( \overline{\phi}_R ( v_i, v_j ) ) - c \delta_{j,d+1-i} \\
\text{(because $ \delta_{i,d+1-j} = \delta_{j,d+1-i} $)} &= \jmath ( \overline{\phi}_R ( v_i, v_j ) ) - c \delta_{i,d+1-j} \\
\text{(because $ \jmath ( c ) = c $)} &= \jmath ( x_{i,j} )
\end{align*}

By bi-additivity,
\begin{equation*}
\overline{\phi}_R ( v_i + m_i, v_j + m_j ) = \overline{\phi}_R (v_i, v_j) + \overline{\phi}_R (m_i, v_j) + \overline{\phi}_R ( v_i, m_j ) + \overline{\phi}_R ( m_i, m_j )
\end{equation*}
Because of this and the equality $ x_{j,i} = \jmath ( x_{i,j} ) $, it suffices to find $ m_1, \ldots, m_d \in \mathcal{I}_t M $ such that $ m_i \in N_i $ and $ \overline{\phi}_R (m_i, v_j) = - \frac{1}{2} x_{i,j} $ and then to take $ w_i \defeq v_i + m_i $, since then
\begin{equation*}
\overline{\phi}_R ( w_i, w_j ) = c \delta_{i,d+1-j} + x_{i,j} - \frac{1}{2} x_{i,j} - - \frac{1}{2} \jmath ( x_{j,i} ) = c \delta_{i,d+1-j}
\end{equation*}
(note that $ \mathcal{I}_t^2 = 0 $ implies $ \overline{\phi}_R ( m_i, m_j ) = 0 $).

\begin{remark}
Here I have used the assumption that $ p \neq 2 $.
\end{remark}

Note that
\begin{equation} \label{EQlocallabel5}
N_{d-i} = \{ m \in M \suchthat \overline{\phi} ( m, N_i ) \subset \sigma \mathcal{S} \}
\end{equation}

Fix $ i $. Consider the $ \mathcal{S} $-linear functional $ M \rightarrow \mathcal{S} $ defined by $ v_j \mapsto - \frac{1}{2} x_{i,j} $. Since $ \overline{\phi}_R $ is perfect, this functional is $ \overline{\phi}_R ( m_i, - ) $ for some $ m_i \in M $. In fact, $ m_i \in \mathcal{I}_t M $ since $ x_{i,j} \in \mathcal{I}_t $. I claim that this functional automatically sends $ N_{d-i} $ into $ \sigma \mathcal{S} $. It then follows from inclusion ``$ \supset $'' of duality (\ref{EQlocallabel5}) that $ m_i \in N_i $, and the proof will be finished.

Since
\begin{equation*}
v_1, \ldots, v_{d-i}, \sigma v_{d-i+1}, \ldots, \sigma v_d
\end{equation*}
is an $ \mathcal{S} $-linear basis for $ N_{d-i} $, it suffices to show that
\begin{equation*}
\overline{\phi}_R ( m_i, v_j ) = - \frac{1}{2} x_{i,j} \in \sigma \mathcal{S}
\end{equation*}
for the subset $ 1 \leq j \leq d-i $ of indices. This inequality implies that $ i \neq d+1-j $ and the defining relation (\ref{EQlocallabel6}) gives
\begin{equation*}
\overline{\phi}_R ( m_i, v_j ) = - \frac{1}{2} \overline{\phi}_R ( v_i, v_j )
\end{equation*}
So it suffices to show that $ \overline{\phi}_R ( v_i, v_j ) \in \sigma \mathcal{S} $. But this is just inclusion ``$ \subset $'' of duality (\ref{EQlocallabel5}).
\end{proof}

\begin{remark}
This proof is a variant of Proposition A.13 from \cite{RZ} extended to handle polarized chains.
\end{remark}

\subsection{Conventions for Weyl groups, cocharacters, etc.} \label{SSweylconventions}

Consider the unitary similitude group $ \GU_d $ associated to the quadratic extension $ \F((t)) / \F_p((t)) $, the vector space $ \F((t))^d $, and the standard hermitian form defined by the anti-identity matrix $ \antiid $. Let $ A $ be the usual maximal $ \F_p((t)) $-split diagonal torus of $ \GU_d $. Since $ \GU_d $ is quasi-split, the centralizer $ C_{\GU_d} ( A ) $ is a maximal torus $ T $ (also consisting of diagonal elements) and is defined over $ \F_p((t)) $. The \emph{relative extended affine Weyl group} of $ \GU_d $ with respect to $ A $ is the quotient
\begin{equation*}
\widetilde{W} \defeq N ( \F_p((t)) ) / T ( \F_p((t)) )_0
\end{equation*}
where $ N $ is the normalizer $ N_{\GU_d} ( A ) $ and $ T ( \F_p((t)) )_0 $ is the unique maximal compact open subgroup of $ T ( \F_p((t)) ) $.

Let $ W = N / T $ be the \emph{relative finite Weyl group} of $ G $ with respect to $ A $ and $ X_{*} ( A ) $ the abelian group of algebraic group homomorphisms $ A \rightarrow \Gm $. The extended affine Weyl group $ \widetilde{W} $ has a semidirect product decomposition
\begin{equation*}
\widetilde{W} = X_{*} ( A ) \rtimes W
\end{equation*}
and parametrizes the double cosets of $ \GU_d ( \F_p((t)) ) $ modulo an Iwahori subgroup (this parametrization is called the \emph{Bruhat-Tits Decomposition}). Implicit in this parametrization is the fact that elements of $ \widetilde{W} $ can be represented by $ \F_p((t)) $-points, and therefore $ \widetilde{W} $ can be considered as a subset (usually not a subgroup) in many different ways of $ \GU_d ( \F_p((t)) ) $. In more detail, elements of $ W $ can be represented by elements of $ \GU_d ( \F_p((t)) ) $, and I consider $ X_{*} ( A ) $ also as a subset of $ A ( \F_p((t)) ) \subset \GU_d ( \F_p((t)) ) $ via the map $ \lambda \mapsto \lambda ( t ) $. I fix such an inclusion
\begin{equation*}
\widetilde{W} \hookrightarrow \GU_d ( \F_p((t)) )
\end{equation*}
and use it without warning from now on.


Finally, I denote by $ \Phi_{\aff} $ the affine root system for $ \GU_d $. Let $ W_{\aff} \subset \widetilde{W} $ be the subgroup generated by reflections across the kernels of elements of $ \Phi_{\aff} $. Fix a Chevalley-Bruhat partial order $ \leq $ and length function $ \ell $ on $ W_{\aff} $, which I require to be consistent with the Iwahori subgroup defined in the next subsection. I consider $ \ell $ on $ \widetilde{W} $ by extending trivially.

Similar conventions are in place for the unitary similitude group associated to the (unramified) quadratic extension $ F((t)) / \Q_p((t)) $.

\subsection{Description of the Iwahori subgroup} \label{SSiwahoridesc}

Return to the setup of \S\ref{SSaffineflagdesc} (page \pageref{SSaffineflagdesc}), i.e. denote by $ \GU_d $ the unitary similitude group associated to the (unramified) quadratic extension $ \F((t)) / \F_p((t)) $, the vector space $ \F((t))^d $, and the standard hermitian form $ \Phi $ defined by the anti-identity matrix $ \antiid $. Recall from \S\ref{SSspecialfibersexhaust} the chain $ \lambda_{\bullet} $ of $ \F[[t]] $-lattices in $ \F((t))^d $ and notice that
\begin{equation*}
\lambda_i \otimes_{\F_p[[t]]} \F_p = \Lambda_i \otimes_{\Z_p} \F_p
\end{equation*}

\begin{definition}
Denote by 
\begin{equation*}
\iwahori : \catalgebras{\F_p} \longrightarrow \catgroups
\end{equation*}
the positive loop group of the Iwahori subgroup, over $ \F_p[[t]] $, of the unitary similitude group on $ ( \F((t))^d, \Phi ) $ corresponding to the lattice chain $ \lambda_{\bullet} $.
\end{definition}
In particular, $ \iwahori ( \F_p ) = \{ g \in \GU_d ( \F_p((t)) ) \suchthat g ( \lambda_i ) = \lambda_i \textinsidemath{and} \kappa ( g ) = 1 \} $ where $ \kappa $ denotes the ``Kottwitz homomorphism'' of $ \GU_d $. Note that if $ g \in \iwahori ( \F_p ) $ then $ c(g) \in \F_p[[t]]^{\times} $ since $ g $ and $ g^{-1} $ both stabilize $ \lambda_0 = \F[[t]]^d $.

I claim that there is a group homomorphism
\begin{equation} \label{Eiwahoritobiggergroup}
\iwahori ( \F_p ) \longrightarrow \biggergroup{m}{n} ( \F_p )
\end{equation}
such that acting by $ \iwahori ( \F_p ) $ on the image of the embedding
\begin{equation*}
\biggermodel{m}{n} ( \F_p ) \hookrightarrow \affineflagvariety ( \F_p )
\end{equation*}
is the same as acting directly on $ \biggermodel{m}{n} ( \F_p ) $ via (\ref{Eiwahoritobiggergroup}).

If $ g \in \iwahori ( \F_p ) $ then by definition $ g $ descends to an $ \F [t] $-linear automorphism $ \overline{g} $ of the quotient
\begin{equation*}
t^{-(m+1)} \F [[t]]^d / t^n \F [[t]]^d = t^{-(m+1)} \F [t]^d / t^n \F [t]^d = \overline{\mathcal{V}}_{\textup{sup}}( \F_p )
\end{equation*}
and stabilizes the subquotients
\begin{equation*}
\lambda_i / \mathcal{V}_{\textup{inf}}(\F_p) = \overline{\mathcal{V}}_i(\F_p)
\end{equation*}
Modulo $ t^{m+n+1} $, the element $ c(g) $ becomes a multiplier $ c( \overline{g} ) $ as in the definition of $ \biggergroup{m}{n} ( \F_p ) $. This shows that $ g \mapsto \overline{g} $ is a function $ \iwahori ( \F_p ) \rightarrow \biggergroup{m}{n} ( \F_p ) $. It is clear that this is a group homomorphism and that the actions of $ \biggergroup{m}{n} ( \F_p ) $ on $ \biggermodel{m}{n} ( \F_p ) $ and $ \iwahori ( \F_p ) $ on $ \affineflagvariety ( \F_p ) $ are compatible.

\subsection{A ``Bruhat-Tits Decomposition'' of the local model} \label{SSbruhattitsdecomp}

Let $ \widetilde{W} $ be the Iwahori-Weyl group from \S\ref{SSweylconventions} (page \pageref{SSweylconventions}) and recall the notation and conventions there. For each $ w \in \widetilde{W} $, there is the affine Schubert cell $ C_w \defeq \iwahori ( \F_p ) \cdot w \cdot \iwahori ( \F_p ) $. These Schubert cells are the $ \F_p $-points of certain $ \F_p $-schemes, which will also be called Schubert cells: define the functor
\begin{equation*}
C_w : \catalgebras{\F_p} \rightarrow \catsets
\end{equation*}
to be the fpqc-sheafification of the functor that assigns to any commutative $ \F_p $-algebra $ R $ the $ \iwahori ( R ) $-orbit of $ w_R $ in $ \GU_d ( R((t)) ) / \iwahori ( R ) $ (here $ w_R $ denotes the image of $ w $ under $ \GU_d ( \F_p((t)) ) \rightarrow \GU_d ( R((t)) ) $).

Because of (\ref{Eiwahoritobiggergroup}), the subset $ \biggermodel{m}{n} ( \F_p ) \subset \affineflagvariety ( \F_p ) $ is $ \iwahori ( \F_p ) $-stable and there is a Bruhat-Tits Decomposition of $ \biggermodel{m}{n} ( \F_p ) $:
\begin{equation*}
\biggermodel{m}{n} ( \F_p ) = \coprod_{w \in \widetilde{W}^{(m,n)}} C_w ( \F_p )
\end{equation*}
for a certain finite subset $ \widetilde{W}^{(m,n)} \subset \widetilde{W} $.

\begin{prop} \label{Padmissiblesetlocalmodel}
In the initial case $ \biggermodel{0}{1}_{\F_p} = \localmodel_{\F_p} $, the Bruhat-Tits Decomposition consists of the cells $ C_w $ for all $ w \in \admissible ( \mu ) $.
\end{prop}

Roughly speaking, this follows from Theorem 4.5 of \cite{KR} because the relevant combinatorial formalism of the unramified unitary similitude group $ \GU_d $ is the same as that of the symplectic similitude group $ \GSp_d $ defined by the antisymmetric analogue of $ \antiid $. The following proof simply provides a few details.

\begin{proof}
Fix $ w \in \widetilde{W} $ and suppose that $ C_w \subset \localmodel_{\F_p} $. Transforming the standard flag $ t \cdot \lambda_i $ by a nice representative of $ w $, one sees that there are integers $ n_i(j) $ such that the image of $ \localmodel ( \F_p ) $ in $ \affineflagvariety ( \F_p ) $ contains the ``split'' flag $ \mathcal{F}_{\bullet} $ defined by $ \mathcal{F}_i = \bigoplus_{j = 1}^{d} t^{n_i(j)} \F [[t]] $. Denote by $ \overline{n}_i \in \Z^d $ the tuple $ ( n_i(1), n_i(2), \ldots, n_i(d) ) $ of exponents. I claim that $ \overline{n} = ( \overline{n}_{2d}, \ldots, \overline{n}_{d+2}, \overline{n}_{d+1} ) \in \bigoplus^d \Z^d $ is a \emph{minuscule $ \GSp_d $-alcove of size $ d/2 $}, in the sense of \cite{KR} (the indices are chosen for compatibility with the notation of \cite{KR}). That $ \overline{n} $ is an alcove follows from the fact that $ \mathcal{F}_{\bullet} $ is a periodic chain: $ n_i(j) \leq n_{i^{\prime}}(j) $ for all $ j $ whenever $ i \geq i^{\prime} $ by \AFVdef{\ref{AFVchain}}, $ \overline{n}_{i+d} = \overline{n}_i + ( 1, 1, \ldots, 1 ) $ for all $ i $ by \AFVdef{\ref{AFVperiodic}}. That $ \overline{n} $ is a $ \GSp_d $-alcove follows immediately from \AFVdef{\ref{AFVperiodic}} and \AFVdef{\ref{AFVsimduality}} because of the choice of Gram matrix for $ \Phi $. That $ \overline{n} $ has size $ d/2 $ follows from \BLMdef{\ref{BLMlocalrank}} (note that $ \dim_{\F_p} = 2 \cdot \dim_{\F} $). Finally, if $ \omega_i \in \Z^d $ denotes the tuple which has entry $ 0 $ in the first $ i $ positions and entry $ 1 $ in all other positions, then \BLMdef{\ref{BLMbounds}} implies that $ \omega_i + ( 1, 1, \ldots, 1 ) \geq \overline{n}_{2d-i} \geq \omega_i $ for all $ 0 \leq i < d $, i.e. that $ \overline{n} $ is minuscule. The matrix representation of $ \GU_d $ inside $ \GL_d \otimes_{\Q_p} F $ yields an embedding of $ \widetilde{W} $ inside $ \Z^d \rtimes S_d $, and the various choices made in \S\ref{SSweylconventions} (Gram matrix of $ \phi $, maximal $ F $-split torus, Iwahori subgroup, etc.) ensure that the image of this $ \widetilde{W} \hookrightarrow \Z^d \rtimes S_d $ is identical in all the relevant ways (Bruhat-Chevalley order, semidirect-product decompositions, etc.) to the extended affine Weyl group of $ \GSp_d $ described in \S4.1 of \cite{KR}. Therefore, Theorem 4.5(1) of \cite{KR} implies that $ w \in \admissible ( \mu ) $. Conversely, it is immediate that $ C_{\lambda} \subset \localmodel_{\F_p} $ for all $ \lambda \in W \cdot \mu $ and since $ \localmodel_{\F_p} \hookrightarrow \affineflagvariety_{\F_p} $ is a closed embedding, it follows that $ C_w \subset \localmodel_{\F_p} $ for all $ w \in \admissible ( \mu ) $.
\end{proof}

\subsection{A ``Cartan Decomposition'' of the local model} \label{SScartandecomp}

\begin{definition}
Denote by 
\begin{equation*}
\specialparahoric : \catalgebras{\Q_p} \longrightarrow \catgroups
\end{equation*}
the positive loop group $ R \mapsto \GU_d ( R[[t]] ) $.
\end{definition}

As in \S\ref{SSbruhattitsdecomp} (page \pageref{SSbruhattitsdecomp}), I claim that there is a group homomorphism
\begin{equation} \label{Especialparahorictobiggergroup}
\specialparahoric ( \Q_p ) \longrightarrow \biggergroup{m}{n} ( \Q_p )
\end{equation}
such that acting by $ \specialparahoric ( \Q_p ) $ on the image of the embedding
\begin{equation*}
\biggermodel{m}{n} ( \Q_p ) \hookrightarrow \affinegrassmannian ( \Q_p )
\end{equation*}
is the same as acting directly on $ \biggermodel{m}{n} ( \Q_p ) $ via (\ref{Especialparahorictobiggergroup}). The group homomorphism is defined in the same way: any $ g \in \specialparahoric ( \Q_p ) $ restricts to an automorphism $ \overline{g} $ of $ \overline{\mathcal{V}}_{\textup{sup}} ( \Q_p ) $ which stabilizes the subquotient $ \lambda_0 / \mathcal{V}_{\textup{inf}} ( \Q_p ) = \overline{\mathcal{V}}_0 ( \Q_p ) $ and, because $ t+p $ is a unit in $ \Q_p[[t]] $, also stabilizes the other $ \overline{\mathcal{V}}_i ( \Q_p ) $. The automorphism $ \overline{g} $ automatically satisfies the similitude condition necessary for membership in $ \biggergroup{m}{n} ( \Q_p ) $.

Recall the notation and conventions in \S\ref{SSweylconventions} (page \pageref{SSweylconventions}). Let $ O_{\lambda} $ denote the $ \Q_p $-subschemes of $ \affinegrassmannian $ forming the Cartan Decomposition. Similar to \S\ref{SSbruhattitsdecomp}, (\ref{Especialparahorictobiggergroup}) implies that that $ \biggergroup{m}{n} ( \Q_p ) $ is a $ \specialparahoric ( \Q_p ) $-stable subset of $ \affinegrassmannian ( \Q_p ) $, so there is a Cartan Decomposition:
\begin{equation*}
\biggermodel{m}{n} ( \Q_p ) = \coprod_{ \lambda \in X_{*}^{(m,n)} } O_{\lambda} ( \Q_p )
\end{equation*}
for a certain finite set $ X_{*}^{(m,n)} $ of (dominant) cocharacters $ \lambda \in X_{*} ( A ) $.

\begin{remark}
In some sense, the objects $ \affinegrassmannian $ and $ \specialparahoric $ are merely conceptual aids and one could instead work in the more agreeable world of finite-type schemes by using only $ \biggermodel{m}{n}_{\Q_p} $ and $ \biggergroup{m}{n}_{\Q_p} $ for suitable $ m, n \in \N $. For example, the Cartan cells could be defined as $ \biggergroup{m}{n}_{\Q_p} $-orbits in $ \biggermodel{m}{n}_{\Q_p} $.
\end{remark}

\begin{prop}
In the initial case $ \biggermodel{0}{1}_{\Q_p} = \localmodel_{\Q_p} $, the Cartan Decomposition consists of the single cell $ O_{\mu} $.
\end{prop}

\begin{proof}
Set $ \delta := d/2 $ and $ \mathcal{F}_0^{\std} := t \cdot F[[t]]^d $. Fix $ \lambda \in X_{*} ( A ) $ and suppose that $ O_{\lambda} \subset \localmodel_{\Q_p} $. Then the image of $ \localmodel ( \Q_p ) $ in $ \affinegrassmannian ( \Q_p ) $ contains the ``normal form'' lattice $ \mathcal{F}_0 := \lambda ( t ) \cdot \mathcal{F}_0^{\std} = \bigoplus_{i = 1}^{\delta} t^{1 + n_i} F[[t]] \oplus \bigoplus_{i = \delta}^{1} t^{1 + k - n_i} F[[t]] $, where $ n_1, \ldots, n_{\delta}, k \in \Z $ are the standard coordinates of $ \lambda $. The fact that $ \mathcal{F}_0 $ is in the image of $ \localmodel ( \Q_p ) \hookrightarrow \affinegrassmannian ( \Q_p ) $ (more specifically, properties \BLMdef{\ref{BLMbounds}} and \BLMdef{\ref{BLMlocalrank}}) means that the integers $ n_i, k $ satisfy the following two properties: (1) $ 0 \leq \min_i ( 1 + n_i, 1 + k - n_i ) \leq \max_i ( 1 + n_i, 1 + k - n_i ) \leq 1 $ and (2) $ \dim_{\Q_p} ( \mathcal{F}_0 / \mathcal{F}_0^{\std} ) = d $. Property (2) implies that $ k = -1 $. Property (1) then implies that $ -m_i, -n_i \in \{ 0, 1 \} $ for all $ i $, and such $ \lambda $ are all $ W $-conjugate to $ \mu $, as desired.
\end{proof}

\section{The trace function} \label{Stracefunction}

\subsection{Generalities on nearby cycles and trace functions}

Let $ X $ be a separated finite-type $ \Z_p $-scheme. Let $ \mathcal{C}^{\bullet} $ be a complex of \etale $ \ell $-adic sheaves on $ X_{\Q_p} $. The pullback $ \overline{\mathcal{C}}^{\bullet} $ of $ \mathcal{C}^{\bullet} $ along $ X_{ \overline{\Q}_p } \rightarrow X_{ \Q_p } $ has a natural continuous action by $ \Gamma \defeq \gal ( \overline{\Q}_p / \Q_p ) $ i.e. a collection for all $ \gamma \in \Gamma $ of functor isomorphisms $ \gamma_{*} ( \overline{\mathcal{C}}^{\bullet} ) \directedisom \overline{\mathcal{C}}^{\bullet} $ consistent with composition (by abuse of notation, $ \gamma $ also denotes the induced $ \Q_p $-scheme automorphism of $ X_{ \overline{\Q}_p } $). Define
\begin{equation*}
\nearbycycles ( \mathcal{C}^{\bullet} ) \defeq \imath^{*} ( \textup{R}\jmath_{*} ( \overline{\mathcal{C}}^{\bullet} ) ),
\end{equation*}
the \emph{nearby cycles} complex of \etale $ \ell $-adic sheaves on $ X_{ \overline{\F}_p } $, where $ X_{ \overline{\F}_p } \stackrel{ \imath }{ \longrightarrow } X_{ \overline{\Z}_p } \stackrel{ \jmath }{ \longleftarrow } X_{ \overline{\Q}_p } $ are the structure morphisms. The complex $ \nearbycycles ( \mathcal{C}^{\bullet} ) $ on $ X_{ \overline{\F}_p } $ inherits the action by $ \Gamma $. In particular, each $ \gamma \in \Gamma $ induces an endomorphism of the cohomology stalk $ \mathcal{H}^i ( \nearbycycles ( \mathcal{C}^{\bullet} ) )_x $ for every $ x \in X ( \F_p ) $ and $ i \in \Z $.

Grothendieck's quasi-unipotent inertia theorem applies to the continuous representation of $ \Gamma $ on the finite-dimensional $ \overline{\Q}_{\ell} $-vector space $ \mathcal{H}^i ( \nearbycycles ( \mathcal{C}^{\bullet} ) )_x $ to yield ``semisimplifications''
\begin{equation*}
\textup{ss} ( \mathcal{H}^i ( \nearbycycles ( \mathcal{C}^{\bullet} ) )_x )
\end{equation*}
on which the inertia subgroup $ \Gamma_0 $ via a \emph{finite} quotient ($ \Gamma $ acts on the semisimplification by acting individually on each summand). See \S3 of \cite{HN} for a detailed discussion of these semisimplifications.

The action of $ \Gamma $ on $ \textup{ss} ( \mathcal{H}^i ( \nearbycycles ( \mathcal{C}^{\bullet} ) )_x )^{\Gamma_0} $ factors through $ \gal ( \overline{\F}_p / \F_p ) $, and one defines
\begin{equation*}
\sstrace{ \nearbycycles ( \mathcal{C}^{\bullet} ) } ( x ) \defeq \sum_i (-1)^i \trace ( \frob ; \textup{ss} ( \mathcal{H}^i ( \nearbycycles ( \mathcal{C}^{\bullet} ) )_x )^{\Gamma_0} )
\end{equation*}
for all $ x \in X ( \F_p ) $. The exactness of the fixed-points functor $ V \mapsto V^G $ for a \emph{finite} group $ G $ makes this function more well-behaved; see for example the proof of Lemma 10 in \cite{HN}.

More generally, define
\begin{equation*}
\sstrace{ \mathcal{C}^{\bullet} } : X ( \F_p ) \longrightarrow \overline{\Q}_{\ell}
\end{equation*}
for any complex of \etale $ \ell $-adic sheaves $ \mathcal{C}^{\bullet} $ on $ X_{ \overline{\F}_p } $ which has an action by $ \gal ( \overline{\Q}_p / \Q_p ) $ consistent with the action of $ \gal ( \overline{\F}_p / \F_p ) $ on $ X_{ \overline{\F}_p } $. For example, $ \mathcal{C}^{\bullet} $ could be the pullback to $ X_{\overline{\F}_p} $ of a complex on $ X_{\F_p} $ with $ \gal ( \overline{\Q}_p / \Q_p ) $ acting via the composition $ \gal ( \overline{\Q}_p / \Q_p ) \rightarrow \gal ( \unr{\Q}_p / \Q_p ) \rightarrow \gal ( \overline{\F}_p / \F_p ) $ (in which case the semisimplification is unnecessary).

\subsection{Definition of the main trace functions} \label{SStracedef}

Fix $ m, n \in \N $. Let $ O_{\lambda} $ be a cell from the Cartan Decomposition of $ \biggermodel{m}{n}_{ \Q_p } $. Let $ \IC{ \lambda } $ be the (perverse) \etale $ \ell $-adic intersection complex on $ \overline{O}_{\lambda} $ (the reduced closure). Then applying the construction from the previous subsection to this special case yields
\begin{equation*}
\sstrace{ \lambda } \defeq \sstrace{ \nearbycycles ( \IC{ \lambda } ) } : \biggermodel{m}{n} ( \F_p ) \longrightarrow \overline{\Q}_{\ell}
\end{equation*}
By the embedding in \S\ref{SSaffineflagembedding} (page \pageref{SSaffineflagembedding}), I can extend by $ 0 $ and consider $ \sstrace{\lambda} $ to be a function on $ \affineflagvariety ( \F_p ) $.

\subsection{The trace functions are Iwahori-invariant} \label{SStracefunctionisiwahoriinvariant}

In order to show that $ \sstrace{\lambda} $ is in the Iwahori-Hecke algebra $ \mathcal{H} $ of $ \GU ( \F((t))^d, \Phi ) ( \F_p((t)) ) $ with respect to $ \iwahori ( \F_p[[t]] ) $, I must show that it is invariant under left-translations by $ \iwahori ( \F_p[[t]] ) $ (invariance under right-translations is automatic from the domain of $ \affineflagvariety ( \F_p ) $). Because of the group homomorphism $ \iwahori ( \F_p[[t]] ) \rightarrow \biggergroup{m}{n} ( \F_p ) $ (see \S\ref{SSbruhattitsdecomp} (page \pageref{SSbruhattitsdecomp})) and the definition of $ \sstrace{\lambda} $, it suffices to show that $ \nearbycycles ( \IC{ \lambda } ) $ is $ \biggergroup{m}{n}_{ \overline{\F}_p } $-equivariant, in the sense that there is an isomorphism
\begin{equation} \label{EQgoalequivariance}
\ac_{\overline{\F}_p}^{*} ( \nearbycycles ( \IC{ \lambda } ) ) \directedisom
\pr_{\overline{\F}_p}^{*} ( \nearbycycles ( \IC{ \lambda } ) )
\end{equation}
of \etale sheaf complexes subject to a ``cocycle'' (group action axiom) condition. Here
\begin{equation*}
\ac, \pr : \biggergroup{m}{n} \times_{\spec(\Z_p)}
\biggermodel{m}{n} \rightarrow \biggermodel{m}{n}
\end{equation*}
are the left-action (see \S\ref{LJaction}) and projection morphisms.

By Proposition \ref{PsmoothJ} (page \pageref{PsmoothJ}), the morphism $ \biggergroup{m}{n} \rightarrow \spec(\Z_p) $ is smooth, so the projection $ \pr $, which is simply the morphism supplied by the fiber product, is also smooth (since smoothness is preserved under base-change). It follows from ``smooth base change'' (the fact that pullback by a smooth morphism commutes with (derived) pushforward in a base-change diagram), that
\begin{equation} \label{EQprbasechange}
\pr_{\overline{\F}_p}^{*} ( \nearbycycles ( \IC{ \lambda } ) ) \cong \nearbycycles ( \pr_{\Q_p}^{*} ( \IC{ \lambda } ) )
\end{equation}

On the other hand, the endomorphism of the functor $ \biggergroup{m}{n} \times \biggermodel{m}{n} $ defined by $ ( g, x ) \mapsto ( g, g(x) ) $ is an \emph{automorphism} (over $ \Z_p $). Since the action morphism $ \ac $ is the composition of this automorphism with the (smooth) projection $ \pr $, this shows that the action morphism $ \ac $ is smooth, and so by the same reasoning as for $ \pr $,
\begin{equation} \label{EQacbasechange}
\ac_{\overline{\F}_p}^{*} ( \nearbycycles ( \IC{ \lambda } ) ) \cong \nearbycycles ( \ac_{\Q_p}^{*} ( \IC{ \lambda } ) )
\end{equation}

The intersection complex $ \IC{ \lambda } $ is $ \biggergroup{m}{n}_{\Q_p} $-equivariant by definition so combining (\ref{EQprbasechange}) and (\ref{EQacbasechange}) yields (\ref{EQgoalequivariance}).

\subsection{Statement of theorem}

By the previous subsection, $ \sstrace{\mu} $ is identified with an element of the Iwahori-Hecke algebra $ \mathcal{H} $.
\begin{maintheorem}
$ \sstrace{\mu} \in Z ( \mathcal{H} ) $.
\end{maintheorem}
The remainder of the paper develops, following \cite{HN}, the tools needed to prove this theorem, and the proof of theorem itself occurs in \S\ref{Smainproof} (page \pageref{Smainproof}).

\begin{corollary}[of the Main Theorem]
$ \sstrace{\mu} = (-1)^{ \ell ( \mu ) } q(\mu)^{\frac{1}{2}} z_{\mu} $, where $ z_{\mu} $ denotes the usual Bernstein basis function attached to $ \mu $.
\end{corollary}

The value $ q ( \mu ) $ is defined as the index $ [ \iwahori \mu \iwahori : \iwahori ] $. The sign $ (-1)^{ \ell ( \mu ) } $ is due to the shift by $ - \dim ( O_{\mu} ) = - \ell ( \mu ) $ imposed on the intersection complexes to make them perverse.

\begin{proof}
Theorem 5.8 in \cite{Ha1} characterizes\footnote{Theorem 5.8 in \cite{Ha1} assumes \emph{constant} parameter systems, but this is not necessary to the conclusion; see Lemma 5.3.3 in \cite{roro} for some explanation of why this is so.} the (normalized) Bernstein basis function attached to $ \mu $ as that which is central, supported on $ \admissible ( \mu ) $ and has value $ 1 $ on the dominant cell $ C_{\mu}(\F_p) $. The Main Theorem provides the first of these requirements. By definition, $ \sstrace{\mu} $ is supported on $ \localmodel ( \F_p ) $, so Proposition \ref{Padmissiblesetlocalmodel} provides the second requirement. To verify the third requirement it suffices, by Lemma 8.6 \cite{Ha2}, to check that some point of $ C_{\mu}(\F_p) $ is a smooth point of $ \localmodel_{\overline{\F}_p} $. But this is immediate because the cell $ C_{\mu} $ is itself smooth (in fact, an affine space) and simultaneously a Zariski-open subset of $ \localmodel_{\overline{\F}_p} $ (the combinatorial closure relation among Schubert cells shows that the complement in $ \localmodel_{\overline{\F}_p} $ is closed).
\end{proof}

\section{Definition of the convolution diagram} \label{Sconvolutiondiagram}

\subsection{The full affine flag variety over $ \Z_p $} \label{SSintegralcompleteaffineflagvariety}

Here I recall the definition of the affine flag variety and Iwahori subgroup over $ \Z_p $ as limits of projective $ \Z_p $-schemes in a way that is compatible with the definition of $ \biggermodel{m}{n} $ and $ \biggergroup{m}{n} $. By extending scalars, this integral affine flag variety gives the usual affine flag varieties over $ \Q_p $ and $ \F_p $. The construction is just a slight variation on the previous theme.

\begin{integralcompleteaffineflagvarietydef} \label{labelintegralcompleteaffineflagvarietydef}
Fix $ \mu, \nu \in \N $. The functor
\begin{equation*}
\integralaffineflagvarargs{\mu}{\nu} : \catalgebras{\Z_p} \longrightarrow \catsets
\end{equation*}
assigns to each commutative $ \Z_p $-algebra $ R $ the set of all tuples $ ( \mathcal{F}_0, \ldots, \mathcal{F}_{d/2} ) $ of $ \mathcal{R}[t] $-submodules of $ \mathcal{V}(R) $ such that
\begin{itemize}
\setlength{\itemsep}{5pt}
\item $ \mathcal{F}_0 \subset \cdots \subset \mathcal{F}_{d/2} $

\item $ (t+p)^{\nu} \mathcal{V}_i(R) \subset \mathcal{F}_i \subset (t+p)^{-\mu} \mathcal{V}_i(R) $ for each $ 0 \leq i \leq d/2 $

\item each inclusion $ \mathcal{F}_i / (t+p)^{\nu} \mathcal{V}_i(R) \hookrightarrow (t+p)^{-\mu} \mathcal{V}_i(R) / (t+p)^{\nu} \mathcal{V}_i(R)
$ splits $ R $-linearly

\item the projective rank function $ \spec(R) \rightarrow \N $ of each $ \mathcal{F}_i / (t+p)^{\nu} \mathcal{V}_i(R) $ is the constant function $ \mathfrak{p} \mapsto d(\mu+\nu) $

\item $ \mathcal{F}_0 $ is self-dual\footnote{The notion of duality here is similar to the one occurring in \BLMdef{\ref{BLMselfdual}}: it is required that $ \mathcal{F}_0 = \{ x \in \mathcal{V}(R) \suchthat \phi_R ( \mathcal{F}_0, x ) \subset (t+p)^{\nu-\mu} \mathcal{R}[t] \} $ and similarly for $ \mathcal{F}_{d/2} $.} with respect to $ (t+p)^{\mu-\nu} \phi_R $ and $ \mathcal{F}_{d/2} $ is self-dual with respect to $ (t+p)^{\mu-\nu+1} \phi_R $.
\end{itemize}
\end{integralcompleteaffineflagvarietydef}

Define
\begin{align*}
\mathcal{U}_{\textup{inf}} &\defeq (t+p)^{\nu} \mathcal{O}[t]^d \\
\mathcal{U}_{\textup{sup}} &\defeq (t+p)^{-\mu-1} \mathcal{O}[t]^d \\
\overline{\mathcal{U}}_{\textup{sup}} &\defeq \mathcal{U}_{\textup{sup}} / \mathcal{U}_{\textup{inf}}
\end{align*}

For the purpose of this subsection, redefine $ \overline{\phi} $ to be the hermitian $ \Z_p[t] $-bilinear form
\begin{equation*}
\overline{\phi} : \overline{\mathcal{U}}_{\textup{sup}} \times \overline{\mathcal{U}}_{\textup{sup}} \longrightarrow \frac{ (t+p)^{-2(\mu+1)} \mathcal{O}[t] }{ (t+p)^{\nu-\mu-1} \mathcal{O}[t] }
\end{equation*}
defined by the anti-identity matrix $ \antiid $, and redefine $ \overline{\mathcal{V}}_i $ to be the image of $ \mathcal{V}_i $ in $ \overline{\mathcal{U}}_{\textup{sup}} $.

\begin{integraliwahorisubgroupdef}
Fix $ \mu, \nu \in \N $. The functor
\begin{equation*}
\integraliwahoriargs{\mu}{\nu} : \catalgebras{\Z_p} \longrightarrow \catgroups
\end{equation*}
assigns to each commutative $ \Z_p $-algebra $ R $ the group of all $ \mathcal{R}[t] $-linear automorphisms $ g $ of $ \overline{\mathcal{U}}_{\textup{sup}} $ that stabilize each $ \overline{\mathcal{V}}_i $ and are similitudes with respect to the product $ \overline{\phi}_R $ with multiplier $ c(g) \in R[t] $ representing a \emph{unit} in $ R[t] / (t+p)^{\mu+\nu+1} R[t] $.
\end{integraliwahorisubgroupdef}
Like $ \biggergroup{m}{n} $, each of these $ \integraliwahoriargs{\mu}{\nu} $ is a smooth affine algebraic group $ \Z_p $-scheme (proof omitted).

These schemes have a few purposes. First, the full affine flag varieties over $ \Q_p $ and $ \F_p $ are just the fibers of
\begin{equation*}
\integralaffineflagvar \defeq \bigcup_{(\mu,\nu)} \integralaffineflagvarargs{\mu}{\nu},
\end{equation*}
hence the name.

Second, $ \integralaffineflagvar $ has a Bruhat-Tits Decomposition $ \integralaffineflagvar = \coprod C_w $ over $ \Z_p $, i.e. the Schubert cells $ C_w $ are $ \Z_p $-schemes (The abuse of notation is acceptable because the extension to $ \F_p $ of this $ C_w $ is the ``$ C_w $'' from \S\ref{SSbruhattitsdecomp} (page \pageref{SSbruhattitsdecomp})).

Third, one can define the \etale $ \ell $-adic intersection complex $ \IC{w} $ on $ \overline{C}_w $ and the restrictions $ \IC{w} \vert_{ \integralaffineflagvar_{ \F_p } } $ and $ \IC{w} \vert_{ \integralaffineflagvar_{ \Q_p } } $ are just the corresponding intersection complexes on the affine flag varieties over $ \F_p $ and $ \Q_p $. Because of this, \S5.2 of \cite{HN} shows that
\begin{equation*}
\ICbar{w} \vert_{ \integralaffineflagvar_{ \overline{\F}_p } } \directedisom \nearbycycles ( \IC{w} \vert_{ \integralaffineflagvar_{ \Q_p } } )
\end{equation*}

\begin{remark}
\S5.2 of \cite{HN} applies because the fields involved here are algebraically-closed: an argument similar to that given in \S6.3.3 of \cite{Ha2} proves that the schemes used here simplify to the $ \GL $-versions after passing to the algebraic closure.
\end{remark}

\subsection{Group-like schemes to act on $ \biggermodel{m}{n} $ and $ \integralaffineflagvarargs{\mu}{\nu} $} \label{SSmonoiddefs}

\begin{center}
\emph{The purpose of this subsection is to define two schemes of certain endomorphisms which play the role, in the ``truncated case'' of $ \biggermodel{m}{n} $ and $ \integralaffineflagvarargs{\mu}{\nu} $, of the algebraic group acting on its affine flag variety by left-multiplication.}
\end{center}

Fix $ m,n,\mu,\nu \in \N $. Define
\begin{equation*}
\overline{\mathcal{W}}_{\textup{sup}} \defeq \frac{ t^{-m} (t+p)^{-\mu-1} \mathcal{O}[t]^d }{ t^n (t+p)^{\nu} \mathcal{O}[t]^d }
\end{equation*}
By convention, $ \overline{\mathcal{W}}_{\textup{sup}} ( R ) = t^{-m} (t+p)^{-\mu-1} \mathcal{R}[t]^d / t^n (t+p)^{\nu} \mathcal{R}[t]^d $ for any $ \Z_p $-algebra $ R $ ($ \mathcal{R} := R \otimes_{\Z_p} \mathcal{O} $). This $ \overline{\mathcal{W}}_{\textup{sup}} $ is a universal container for all modules occurring in the definitions of both $ \biggermodel{m}{n} $ and $ \integralaffineflagvarargs{\mu}{\nu} $.

For this section, redefine
\begin{equation*}
\overline{\phi} : \overline{\mathcal{W}}_{\textup{sup}} \times \overline{\mathcal{W}}_{\textup{sup}} \longrightarrow \frac{ t^{-2m} (t+p)^{-2(\mu+1)} \mathcal{O}[t] }{ t^{n-m} (t+p)^{\nu - \mu - 1} \mathcal{O}[t] }
\end{equation*}
be the hermitian $ \Z_p[t] $-bilinear form defined by the anti-identity matrix $ \antiid $, and redefine $ \overline{\mathcal{V}}_i $ to be the image of $ \mathcal{V}_i $ in $ \overline{\mathcal{W}}_{\textup{sup}} $.

\begin{definition}
The functor\footnote{It is a slight abuse of notation to supress the indices $ \mu $ and $ \nu $ from the name of the functor.}
\begin{equation*}
\biggermonoid{m}{n} : \catalgebras{\Z_p} \longrightarrow \catsets
\end{equation*}
assigns to each commutative $ \Z_p $-algebra $ R $ ($ \mathcal{R} := R \otimes_{\Z_p} \mathcal{O} $) the set of all $ \mathcal{R}[t] $-linear maps $ g : \overline{\mathcal{W}}_{\textup{sup}} (R) \rightarrow \overline{\mathcal{W}}_{\textup{sup}} (R) $ such that
\begin{enumerate}
\setlength{\itemsep}{5pt}
\item \label{Dbiggermonoidcontainment} each $ \overline{\mathcal{L}}_i \defeq g ( t^{-m} \overline{\mathcal{V}}_i(R) ) $ satisfies $ t^n \overline{\mathcal{V}}_i(R) \subset \overline{\mathcal{L}}_i \subset t^{-m} \overline{\mathcal{V}}_i(R) $

\item \label{Dbiggermonoidprojectivity} each inclusion $ \overline{\mathcal{L}}_i / t^n \overline{\mathcal{V}}_i(R) \hookrightarrow t^{-m} \overline{\mathcal{V}}_i(R) / t^n \overline{\mathcal{V}}_i(R) $ splits $ R $-linearly

\item \label{Dbiggermonoidranks} the projective rank function $ \spec(R) \rightarrow \N $ of each $ \overline{\mathcal{L}}_i / t^n \overline{\mathcal{V}}_i(R) $ is the constant function $ \mathfrak{p} \mapsto d(m+n) $

\item \label{Dbiggermonoidsimilitude} there exists a $ c(g) \in R[t] $ representing a unit in $ R[t] / t^{m+n} (t+p)^{\mu+\nu+1} R[t] $ such that $ \overline{\phi}_R ( g(x), g(y) ) = c(g) t^{m+n} \overline{\phi}_R ( x, y ) $ for all $ x, y \in \overline{\mathcal{W}}_{\textup{sup}} (R) $

\item \label{Dbiggermonoidlifting} Setting
\begin{align*}
t^{-m} \widetilde{\mathcal{V}}_i &{\defeq} \frac{ t^{-m} (t+p)^{-1} \mathcal{O}[t]^i \oplus t^{-m} \mathcal{O}[t]^{d-i} }{ t^n (t+p)^{\mu+\nu+1} \mathcal{O}[t]^d } \\
\widetilde{\mathcal{L}}_i &{\defeq} \frac{ \mathcal{L}_i }{ t^{m+n} (t+p)^{\mu+\nu+1} \mathcal{L}_0 }
\end{align*}
there exists, Zariski-locally on $ \spec(R) $, an $ \mathcal{R}[t] $-linear isomorphism
\begin{equation*}
\widetilde{g} : t^{-m} \widetilde{\mathcal{V}}_0 (R) \directedisom \widetilde{\mathcal{L}}_0
\end{equation*}
inducing\footnote{The sense in which $ \widetilde{g} $ induces $ g $ is that multiplication by $ (t+p)^{\mu+1} $ within $ \mathcal{V} $ yields an identification of $ t^{-m} \widetilde{\mathcal{V}}_0 $ with $ \overline{\mathcal{W}}_{\textup{sup}} $. In the rest of the paper, this principle will be referred to as a ``shift''.} $ g : t^{-m} \overline{\mathcal{V}}_0 (R) \rightarrow \overline{\mathcal{L}}_0 $ such that
\begin{equation*}
\widetilde{g} ( (t+p) t^{-m} \widetilde{\mathcal{V}}_i (R) ) = (t+p) \widetilde{\mathcal{L}}_i
\end{equation*}
for all $ i $ and such that\footnote{The ordinary product $ \overline{\phi}_R $ is well-defined on $ \widetilde{\mathcal{L}}_0 $ due to \BLMdef{\ref{BLMselfdual}}.}
\begin{equation*}
\overline{\phi}_R ( \widetilde{g} ( x ), \widetilde{g} ( y ) ) = c(g) t^{m+n} \overline{\phi}_R ( x, y )
\end{equation*}
for all $ x, y \in t^{-m} \widetilde{\mathcal{V}}_0 (R) $
\end{enumerate}
\end{definition}

It is clear that any tuple $ ( \overline{\mathcal{L}}_0, \ldots, \overline{\mathcal{L}}_{d/2} ) $ coming from a point of $ \biggermonoid{m}{n} ( R ) $ has all the properties necessary to be the image in $ t^{-m} \overline{\mathcal{V}}_i(R) / t^n \overline{\mathcal{V}}_i(R) = t^{-m} \mathcal{V}_i(R) / t^n \mathcal{V}_i(R) $ of a point $ ( \mathcal{L}_0, \ldots, \mathcal{L}_{d/2} ) \in \biggermodel{m}{n}(R) $ except possibly the duality condition $ \BLMdef{\ref{BLMselfdual}} $. That condition follows from the similitude condition (notice that $ \overline{\phi} $ here restricts, descends and retracts to the $ \overline{\phi} $ from \S\ref{SSaltdesc} (page \pageref{SSaltdesc})), taking into account that the factor of $ t^{m+n} $ comes from a normalization: $ t^{m-n} \overline{\phi}_R $ and $ t^{2m} \overline{\phi}_R $ send $ \overline{\mathcal{L}}_0 \times \overline{\mathcal{L}}_0 $ and $ t^{-m} \overline{\mathcal{V}}_0 (R) \times t^{-m} \overline{\mathcal{V}}_0 (R) $ (respectively) into $ \mathcal{R}[t] $, and $ \widetilde{g} $ should identify $ t^{2m} \overline{\phi}_R $ to $ t^{m-n} \overline{\phi}_R $, hence the above requirement.

Therefore, I may define
\begin{align}
\label{Ebiggermonoidtobiggermodel} \biggermonoid{m}{n}(R) &\longrightarrow \biggermodel{m}{n}(R) \\
\nonumber g &\longmapsto ( \mathcal{L}_0, \ldots, \mathcal{L}_{d/2} )
\end{align}


\begin{prop} \label{Pbiggermonoidisrepresentable}
The $ \Z_p $-scheme $ \biggermonoid{m}{n} $ is finite-type.
\end{prop}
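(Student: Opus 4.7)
The plan is to realize $\biggermonoid{m}{n}$ as a locally closed subscheme of a finite-type affine $\Z_p$-scheme, from which the claim is immediate. Since $\overline{\mathcal{W}}_{\textup{sup}}$ is a finite free $\Z_p$-module, say of $\Z_p$-rank $N$, the functor $R \mapsto \myend_{R\textup{-lin}}(\overline{\mathcal{W}}_{\textup{sup}}(R))$ is representable by affine space $\mathbb{A}^{N^2}_{\Z_p}$. Inside this, the subfunctor of $\mathcal{R}[t]$-linear endomorphisms is cut out by the linear conditions expressing commutation with the action of $t$ and of a $\Z_p$-basis of $\mathcal{O}$, which is a finite collection of closed conditions. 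This gives a closed subscheme $E \subset \mathbb{A}^{N^2}_{\Z_p}$ that is affine and of finite type over $\Z_p$.

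The next step would be to check that conditions (\ref{Dbiggermonoidcontainment})--(\ref{Dbiggermonoidsimilitude}) cut out a locally closed subscheme of $E$. The upper containment in (\ref{Dbiggermonoidcontainment}) is closed (vanishing of certain matrix coefficients of $g$), while the lower containment $t^n \overline{\mathcal{V}}_i(R) \subset \overline{\mathcal{L}}_i$ follows automatically from the similitude condition (\ref{Dbiggermonoidsimilitude}) applied to the nondegenerate form $\overline{\phi}$. Conditions (\ref{Dbiggermonoidprojectivity}) and (\ref{Dbiggermonoidranks}) together pull back the locally closed locus in a product of Grassmannians parametrizing direct summands of constant rank $d(m+n)$ in each $t^{-m}\overline{\mathcal{V}}_i(R)/t^n\overline{\mathcal{V}}_i(R)$. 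Condition (\ref{Dbiggermonoidsimilitude}) is the combination of a closed condition (a polynomial similitude equation obtained by adjoining to $E$ the finitely many coefficients of $c(g)$ as free parameters) with an open condition (that $c(g)$ represent a unit modulo $t^{m+n}(t+p)^{\mu+\nu+1}$, which is detected by the invertibility of a finite determinant).

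The main obstacle is condition (\ref{Dbiggermonoidlifting}), the Zariski-local existence of an isomorphism $\widetilde{g}$ lifting $g$ and preserving the larger filtration $t^{-m}\widetilde{\mathcal{V}}_i$ along with the similitude. The strategy is to introduce an auxiliary functor $\widetilde{E}$ parametrizing pairs $(g, \widetilde{g})$ subject to all the compatibility conditions, which by the same analysis applied to $\mathcal{R}[t]$-linear endomorphisms of the finite free $\Z_p$-module $t^{-m}\widetilde{\mathcal{V}}_0$ is itself representable by a finite-type affine $\Z_p$-scheme. The forgetful morphism $\widetilde{E} \to E$ is then shown to be smooth (by an infinitesimal lifting argument in the spirit of Proposition \ref{PsmoothJ}, using that any lift $\widetilde{g}$ deforms to a lift modulo a square-zero thickening), so its image is open in the locus cut out by (\ref{Dbiggermonoidcontainment})--(\ref{Dbiggermonoidsimilitude}); the phrase ``Zariski-locally on $\spec(R)$'' in (\ref{Dbiggermonoidlifting}) is then precisely the condition that the classifying map factor through this image.

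Combining everything exhibits $\biggermonoid{m}{n}$ as a locally closed subscheme of the finite-type affine $\Z_p$-scheme $E$, which yields the claim. The key technical work will be the smoothness of $\widetilde{E} \to E$ over the locus already cut out by the earlier conditions; everything else is standard representability bookkeeping.
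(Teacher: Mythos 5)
Your overall framework matches the paper's: cut out conditions (\ref{Dbiggermonoidcontainment})--(\ref{Dbiggermonoidsimilitude}) with standard representability arguments to get a finite-type scheme (the paper calls it $\biggermonoid{m}{n}_{\textup{weak}}$), then isolate the Zariski-local lifting condition (\ref{Dbiggermonoidlifting}) as the delicate step and attack it with an auxiliary scheme of pairs $(g, \widetilde{g})$. But the way you close the argument diverges from the paper's, and this is where there is a genuine gap. You assert that, once $\widetilde{E} \to E'$ is shown to be smooth (hence with open image $U$), the condition ``(\ref{Dbiggermonoidlifting}) holds Zariski-locally'' is \emph{precisely} the condition that the classifying map factor through $U$. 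The forward direction is trivial, but the converse is not automatic: smooth surjective morphisms of finite presentation admit sections \'{e}tale-locally, not Zariski-locally. If $g : \spec(R) \to E'$ lands topologically in $U$, the pullback $\widetilde{E} \times_{E'} \spec(R) \to \spec(R)$ is smooth and surjective, but that does not yield Zariski-local sections --- the squaring map $\mult \to \mult$ over a field lacking square roots is finite \'{e}tale surjective with no section. In your situation the fibers of $\widetilde{E} \to E'$ are proper open subschemes of affine space (the nonvanishing of the determinant $\mydet_g$), not affine spaces, so Zariski-local triviality is unavailable; over a finite residue field the locus $\{\mydet_g \neq 0\} \subset \mathbb{A}^k$ can be a nonempty open subscheme with no rational point, and such a $g$ would lie in $U$ without lying in the functor $\biggermonoid{m}{n}$.

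The paper avoids this by \emph{directly encoding} the Zariski-local cover rather than appealing to openness of the image. It introduces extra variables $W_i, X_i, Y_i, Z_j^{(i)}$ and an explicit system of polynomial equations whose $R$-solutions package together a principal open cover of $\spec(R)$ together with invertible lifts over each piece, then takes the image under the forgetful morphism $(\widetilde{g},g) \mapsto g$. Your argument would need either to follow this explicit encoding, or to prove (if it is true in this specific setting) that the fibers $\{\mydet_g \neq 0\}$ always have rational points over the residue fields that actually arise; the bare smoothness of $\widetilde{E} \to E'$ is not enough to conclude that $\biggermonoid{m}{n}$ is the open subscheme $U$.
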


\begin{proof}
Let $ \biggermonoid{m}{n}_{\textup{weak}} : \catalgebras{\Z_p} \rightarrow \catsets $ be the functor defined only by conditions (\ref{Dbiggermonoidcontainment}), (\ref{Dbiggermonoidprojectivity}), (\ref{Dbiggermonoidranks}) and (\ref{Dbiggermonoidsimilitude}). Conditions (\ref{Dbiggermonoidcontainment}) and (\ref{Dbiggermonoidsimilitude}) obviously define a finite-type scheme, and Lemma 18 from \cite{HN} handles conditions (\ref{Dbiggermonoidprojectivity}) and (\ref{Dbiggermonoidranks}), so $ \biggermonoid{m}{n}_{\textup{weak}} $ is a finite-type scheme. Now consider $ \biggermonoid{m}{n} \subset \biggermonoid{m}{n}_{\textup{weak}} $. It is clear that the similitude part of condition (\ref{Dbiggermonoidlifting}) is no problem, so I now check the Zariski-local existence statement.

Let $ R $ be a commutative $ \Z_p $-algebra and temporarily fix $ g \in \biggermonoid{m}{n}_{\textup{weak}}(R) $. The set of \emph{possibly-non-invertible} $ \widetilde{g} $ inducing $ g $ \emph{globally} on $ \spec(R) $ is clearly the $ R $-points of a finite-dimensional affine space (choose additional matrix entries from $ \widetilde{\mathcal{L}}_0 / \overline{\mathcal{L}}_0 $). Let $ k $ be the dimension of this affine space. Then for fixed $ g $, the condition of invertibility can be expressed (by Cramer's Rule) as a polynomial equation by using the associated $ k $-variable determinant $ \det_g $. Now I extend this to the Zariski-local case.

Define for each $ n \in \N $ and $ \ell_1, \ldots, \ell_n, m_1, \ldots, m_n \in \N $ the ideal $ I ( n ; \{ \ell_i \} ; \{ m_j \} ) $ in
\begin{equation*}
\Z_p [ W_1, \ldots, W_n ; X_1, \ldots, X_n ; Y_1, \ldots, Y_n ; Z_1^{(1)}, \ldots, Z_k^{(1)}, \ldots, Z_1^{(n)}, \ldots, Z_k^{(n)} ]
\end{equation*}
by the equations
\begin{align*}
W_1 Y_1 + \cdots + W_n Y_n &= 1 \\
Y_1^{\ell_1} \cdot ( \mydet_g ( Z_1^{(1)}, \ldots, Z_k^{(1)} ) \cdot X_1 - Y_1^{m_1} \cdot 1 ) &= 0 \\
&{\vdots} \\
Y_n^{\ell_n} \cdot ( \mydet_g ( Z_1^{(n)}, \ldots, Z_k^{(n)} ) \cdot X_n - Y_n^{m_n} \cdot 1 ) &= 0 \\
\end{align*}
In any $ R $-valued solution to this system,
\begin{itemize}
\setlength{\itemsep}{5pt}
\item the values $ Y_1, \ldots, Y_n $ will, because of the first equation, be generators of the trivial ideal $ R $, i.e. a principal open cover of $ \spec(R) $ (the values $ W_i $ are auxiliary),

\item the values $ Z_1^{(i)}, \ldots, Z_k^{(i)} $ define the (at the moment possibly-non-invertible) Zariski-local lift of $ g $ over the principal open subset $ \spec(R_{Y_i}) $, and

\item the last $ n $ equations exactly express (by definition of the fraction ring $ R_{Y_i} $) that the determinant of the Zariski-local lift is a unit, i.e. that each Zariski-local lift is invertible.
\end{itemize}

Since $ \biggermonoid{m}{n}_{\textup{weak}} $ is already known to be a scheme, it is clear that the above system of equations can be extended (simply add more variables and the ideal defining $ \biggermonoid{m}{n}_{\textup{weak}} $) to eliminate the assumption that $ g $ is fixed. Taking $ I $ to be the sum of all the above ideals in the obvious countably-generated polynomial ring, it is then clear that the subfunctor $ \biggermonoid{m}{n} \subset \biggermonoid{m}{n}_{\textup{weak}} $ is representable (it is the image under the forgetful morphism $ ( \widetilde{g}, g ) \mapsto g $ of scheme defined by the ideal $ I $). This proves representability, and finite-type is then obvious.
\end{proof}

Similarly,
\begin{definition}
The functor\footnote{It is a slight abuse of notation to supress the indices $ m $ and $ n $ from the name of the functor.}
\begin{equation*}
\integralaffineflagvarmonoid{\mu}{\nu} : \catalgebras{\Z_p} \longrightarrow \catsets
\end{equation*}
assigns to each commutative $ \Z_p $-algebra $ R $ ($ \mathcal{R} := R \otimes_{\Z_p} \mathcal{O} $) the set of all $ \mathcal{R}[t] $-linear maps $ h : \overline{\mathcal{W}}_{\textup{sup}} (R) \rightarrow \overline{\mathcal{W}}_{\textup{sup}} (R) $ such that
\begin{itemize}
\setlength{\itemsep}{5pt}
\item each $ \overline{\mathcal{F}}_i \defeq h ( (t+p)^{-\mu} \overline{\mathcal{V}}_i(R) ) $ satisfies $ (t+p)^{\nu} \overline{\mathcal{V}}_i(R) \subset \overline{\mathcal{F}}_i \subset (t+p)^{-\mu} \overline{\mathcal{V}}_i(R) $

\item each inclusion $ \overline{\mathcal{F}}_i / (t+p)^{\nu} \overline{\mathcal{V}}_i(R) \hookrightarrow (t+p)^{-\mu} \overline{\mathcal{V}}_i(R) / (t+p)^{\nu} \overline{\mathcal{V}}_i(R) $ splits $ R $-linearly

\item the projective rank function $ \spec(R) \rightarrow \N $ of each $ \overline{\mathcal{F}}_i / (t+p)^{\nu} \overline{\mathcal{V}}_i(R) $ is the constant function $ \mathfrak{p} \mapsto d(\mu+\nu) $

\item there exists a $ c(h) \in R[t] $ representing a unit in $ R[t] / t^{m+n} (t+p)^{\mu+\nu+1} R[t] $ such that $ \overline{\phi}_R ( h(x), h(y) ) = c(h) (t+p)^{\mu+\nu} \overline{\phi}_R ( x, y ) $ for all $ x, y \in \overline{\mathcal{W}}_{\textup{sup}} (R) $

\item Setting
\begin{align*}
(t+p)^{-\mu} \widetilde{\mathcal{V}}_i &{\defeq} \frac{ (t+p)^{-\mu-1} \mathcal{O}[t]^i \oplus (t+p)^{-\mu} \mathcal{O}[t]^{d-i} }{ t^{m+n+1} (t+p)^{\nu} \mathcal{O}[t]^d } \\
\widetilde{\mathcal{F}}_i &{\defeq} \frac{ \mathcal{F}_i }{ t^{m+n} (t+p)^{\mu+\nu+1} \mathcal{F}_0 }
\end{align*}
there exists, Zariski-locally on $ \spec(R) $, an $ \mathcal{R}[t] $-linear isomorphism
\begin{equation*}
\widetilde{h} : (t+p)^{-\mu} \widetilde{\mathcal{V}}_0 (R) \directedisom \widetilde{\mathcal{F}}_0
\end{equation*}
inducing $ h : (t+p)^{-\mu} \overline{\mathcal{V}}_0(R) \rightarrow \overline{\mathcal{F}}_0 $ such that
\begin{equation*}
\widetilde{h} ( (t+p)^{-\mu+1} \widetilde{\mathcal{V}}_i (R) ) = (t+p) \widetilde{\mathcal{F}}_i
\end{equation*}
for all $ i $ and such that
\begin{equation*}
\overline{\phi}_R ( \widetilde{h} ( x ), \widetilde{g} ( y ) ) = c(h) (t+p)^{\mu+\nu} \overline{\phi}_R ( x, y )
\end{equation*}
for all $ x, y \in (t+p)^{-\mu} \widetilde{\mathcal{V}}_0 (R) $
\end{itemize}
\end{definition}

As before,
\begin{prop}
The $ \Z_p $-scheme $ \integralaffineflagvarmonoid{\mu}{\nu} $ is finite-type.
\end{prop}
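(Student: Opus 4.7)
The plan is to follow the proof of Proposition \ref{Pbiggermonoidisrepresentable} essentially verbatim, interchanging the roles of $t$ and $t+p$ in the bounds and similitude normalization. The definitions of $\biggermonoid{m}{n}$ and $\integralaffineflagvarmonoid{\mu}{\nu}$ have identical structure: each consists of endomorphisms of $\overline{\mathcal{W}}_{\textup{sup}}$ subject to a containment clause, a Zariski-local direct summand clause, a constant-projective-rank clause, a similitude clause with prescribed normalization factor, and a Zariski-local lifting clause. So I would expect the same proof architecture to work without any essential new input.

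First I would introduce the auxiliary functor $\integralaffineflagvarmonoid{\mu}{\nu}_{\textup{weak}}$ obtained by dropping the Zariski-local lifting clause. The containment clause together with the similitude clause (multiplier $c(g)(t+p)^{\mu+\nu}$) present $\integralaffineflagvarmonoid{\mu}{\nu}_{\textup{weak}}$ as a closed subscheme of the affine space of $\mathcal{R}[t]$-linear endomorphisms of $\overline{\mathcal{W}}_{\textup{sup}}$ sending $(t+p)^{-\mu}\overline{\mathcal{V}}_i(R)$ into the prescribed range, and Lemma 18 of \cite{HN} handles the Zariski-local summand and constant rank conditions, exactly as before. Hence $\integralaffineflagvarmonoid{\mu}{\nu}_{\textup{weak}}$ is a finite-type $\Z_p$-scheme.

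Next I would re-run the auxiliary-variable trick. For fixed $g \in \integralaffineflagvarmonoid{\mu}{\nu}_{\textup{weak}}(R)$, the set of (possibly-non-invertible) $\mathcal{R}[t]$-linear maps $\widetilde{g} : (t+p)^{-\mu}\widetilde{\mathcal{V}}_0(R) \to \widetilde{\mathcal{F}}_0$ inducing $g$, carrying $(t+p)^{-\mu+1}\widetilde{\mathcal{V}}_i(R)$ to $(t+p)\widetilde{\mathcal{F}}_i$, and satisfying the similitude identity globally on $\spec(R)$ is the $R$-points of a finite-dimensional affine space, with a Cramer-rule determinant $\mydet_g$ in $k$ variables detecting invertibility. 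Then I would form the ideal $I(n;\{\ell_i\};\{m_j\})$ generated by $W_1 Y_1 + \cdots + W_n Y_n = 1$ together with $Y_i^{\ell_i}(\mydet_g(Z_1^{(i)},\ldots,Z_k^{(i)}) X_i - Y_i^{m_i}) = 0$. Its $R$-valued solutions encode precisely a principal open cover of $\spec(R)$ together with an invertible lift $\widetilde{g}_i$ over each piece, which is the Zariski-local lifting clause up to refinement. Summing over all $n$ and extending the polynomial ring to let $g$ itself vary (possible because $\integralaffineflagvarmonoid{\mu}{\nu}_{\textup{weak}}$ is already a scheme) exhibits $\integralaffineflagvarmonoid{\mu}{\nu}$ as the image under the forgetful morphism $(\widetilde{g},g)\mapsto g$ of the scheme cut out by $I$. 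Finite-type then follows from the same property of $\integralaffineflagvarmonoid{\mu}{\nu}_{\textup{weak}}$.

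The hard part will not be any new idea but only the bookkeeping: verifying that the factor $(t+p)^{\mu+\nu}$ (rather than $t^{m+n}$) enters correctly in the similitude condition, that the duality twist makes $\overline{\phi}_R$ well-defined on $\widetilde{\mathcal{F}}_0$, and that the analogue of Lemma 18 of \cite{HN} applies verbatim after the change of bounding submodules. None of these raises a genuine obstruction, so the proposition should follow by direct transcription of the previous argument.
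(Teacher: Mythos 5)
Your proposal matches the paper's proof exactly in substance: the paper simply states that the argument is nearly identical to the one given for $\biggermonoid{m}{n}$, which is the same direct transcription (swap the roles of $t$ and $t+p$, define the weak functor, apply Lemma 18 of \cite{HN}, and run the Cramer-determinant auxiliary-variable trick) you describe. Nothing further is needed.
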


\begin{proof}
This is nearly identical to the proof for $ \biggermonoid{m}{n} $ (page \pageref{Pbiggermonoidisrepresentable}).
\end{proof}

As before, I may define
\begin{align}
\label{Eaffineflagvarmonoidtoaffineflagvar} \integralaffineflagvarmonoid{\mu}{\nu}(R) &\longrightarrow \integralaffineflagvarargs{\mu}{\nu}(R) \\
\nonumber h &\longmapsto ( h( (t+p)^{-\mu} \overline{\mathcal{V}}_0(R) ), \ldots, h( (t+p)^{-\mu} \overline{\mathcal{V}}_{d/2}(R) ) )
\end{align}


\begin{definition}
The morphism
\begin{equation*}
p_1 : \biggermonoid{m}{n} \times \integralaffineflagvarmonoid{\mu}{\nu} \longrightarrow \biggermodel{m}{n} \times \integralaffineflagvarargs{\mu}{\nu}
\end{equation*}
is the product of morphisms (\ref{Ebiggermonoidtobiggermodel}) and (\ref{Eaffineflagvarmonoidtoaffineflagvar}).
\end{definition}

\subsection{The convolution scheme}

\begin{center}
\emph{The purpose of this subsection is to define a $ \Z_p $-scheme $ \convolutionobject{m}{n}{\mu}{\nu} $ and a ``twisted action'' morphism $ p_2 $ to $ \convolutionobject{m}{n}{\mu}{\nu} $ which will express the summation that defines convolution for the Iwahori-Hecke algebra. See \S\ref{SSfusionproductcategorifies} (page \pageref{SSfusionproductcategorifies}).}
\end{center}

\begin{convolutionscheme}
The functor
\begin{equation*}
\convolutionobject{m}{n}{\mu}{\nu} : \catalgebras{\Z_p} \longrightarrow \catsets
\end{equation*}
assigns to each commutative $ \Z_p $-algebra $ R $ the set of tuples $ ( \mathcal{L}_0, \ldots, \mathcal{L}_{d/2} ; \mathcal{K}_0, \ldots, \mathcal{K}_{d/2} ) $ of $ \mathcal{R}[t] $-submodules of $ \mathcal{V}(R) $ satisfying
\begin{itemize}
\setlength{\itemsep}{5pt}
\item $ ( \mathcal{L}_0, \ldots, \mathcal{L}_{d/2} ) \in \biggermodel{m}{n} (R) $

\item each $ \mathcal{K}_i $ satisfies $ (t+p)^{\nu} \mathcal{L}_i \subset \mathcal{K}_i \subset (t+p)^{-\mu} \mathcal{L}_i $

\item each inclusion $ \mathcal{K}_i / (t+p)^{\nu} \mathcal{L}_i \hookrightarrow (t+p)^{-\mu} \mathcal{L}_i / (t+p)^{\nu} \mathcal{L}_i $ splits $ R $-linearly

\item the projective rank function $ \spec(R) \rightarrow \N $ of each $ \mathcal{K}_i / (t+p)^{\nu} \mathcal{L}_i $ is the constant function $ \mathfrak{p} \mapsto d(\mu+\nu) $

(note that the previous property implies $ R $-projectivity)

\item $ \mathcal{K}_0 $ is self-dual\footnote{The notion of duality here is similar to the one occurring in \BLMdef{\ref{BLMselfdual}}: it is required that $ \mathcal{K}_0 = \{ x \in \mathcal{V}(R) \suchthat \phi_R ( \mathcal{K}_0, x ) \subset t^{n-m} (t+p)^{\nu-\mu} \mathcal{R}[t] \} $ and similarly for $ \mathcal{K}_{d/2} $.} with respect to $ t^{m-n} (t+p)^{\mu-\nu} \phi_R $ and $ \mathcal{K}_{d/2} $ is self-dual with respect to $ t^{m-n} (t+p)^{\mu-\nu+1} \phi_R $

(the $ \phi $ used here has domain $ \mathcal{V} \times \mathcal{V} $)
\end{itemize}
\end{convolutionscheme}

Similar to $ \biggermodel{m}{n} $ (see \S\ref{SSordinarygrassmannianembedding} (page \pageref{SSordinarygrassmannianembedding})), this $ \convolutionobject{m}{n}{\mu}{\nu} $ is a closed subscheme of a product of (ordinary) Grassmannians. In particular, the $ \Z_p $-scheme $ \convolutionobject{m}{n}{\mu}{\nu} $ is \emph{proper}.

\begin{definition}
The morphism
\begin{equation*}
p_2 : \biggermonoid{m}{n} \times \integralaffineflagvarmonoid{\mu}{\nu} \longrightarrow \convolutionobject{m}{n}{\mu}{\nu}
\end{equation*}
is defined on $ R $-points by $ ( g, h ) \mapsto ( g( t^{-m} \overline{\mathcal{V}}_{\bullet}(R) ) ; g ( h ( t^{-m} (t+p)^{-\mu} \overline{\mathcal{V}}_{\bullet}(R) ) ) ) $ for any commutative $ \Z_p $-algebra $ R $ 

(these images of $ g $ and $ h $ are technically submodules of $ \overline{\mathcal{W}}_{\textup{sup}} (R) $, but as usual I replace them by their corresponding submodules of $ \mathcal{V}(R) $).
\end{definition}

To verify that the codomain of $ p_2 $ really is correct, note that the 1st coordinate of $ p_2 $ is simply the previously verified action morphism $
\biggermonoid{m}{n} \rightarrow \biggermodel{m}{n} $ and that by definition of $ h $,
\begin{equation} \label{Etwistedactionproof1}
(t+p)^{\nu} t^{-m} \overline{\mathcal{V}}_i(R) \subset h ( t^{-m} (t+p)^{-\mu} \overline{\mathcal{V}}_i(R) ) \subset (t+p)^{-\mu} t^{-m} \overline{\mathcal{V}}_i(R)
\end{equation}
and this chain is transformed by $ g $ to the chain
\begin{equation} \label{Etwistedactionproof2}
(t+p)^{\nu} \overline{\mathcal{L}}_i \subset g ( h ( t^{-m} (t+p)^{-\mu}
\overline{\mathcal{V}}_i(R) ) ) \subset (t+p)^{-\mu} \overline{\mathcal{L}}_i
\end{equation}

Finally, I define a $ \Z_p $-scheme $ \convolutiontarget{m}{n}{\mu}{\nu} $ essentially as a target for the 2nd projection from $ \convolutionobject{m}{n}{\mu}{\nu} $:

\begin{convolutionschemebase}
The functor
\begin{equation*}
\convolutiontarget{m}{n}{\mu}{\nu} : \catalgebras{\Z_p} \longrightarrow \catsets
\end{equation*}
assigns to each commutative $ \Z_p $-algebra $ R $ the set of all tuples $ ( \mathcal{K}_0, \ldots, \mathcal{K}_{d/2} ) $ of $ \mathcal{R}[t] $-submodules of $ \mathcal{V}(R) $ satisfying
\begin{itemize}
\setlength{\itemsep}{5pt}
\item each $ \mathcal{K}_i $ satisfies $ t^n (t+p)^{\nu} \mathcal{V}_i(R) \subset \mathcal{K}_i \subset t^{-m} (t+p)^{-\mu} \mathcal{V}_i(R) $

\item each inclusion $ \mathcal{K}_i / t^n (t+p)^{\nu} \mathcal{V}_i(R) \hookrightarrow t^{-m}
(t+p)^{-\mu} \mathcal{V}_i(R) / t^n (t+p)^{\nu} \mathcal{V}_i(R) $
splits $ R $-linearly

\item the projective rank function $ \spec(R) \rightarrow \N $ of each $ \mathcal{K}_i / t^n (t+p)^{\nu} \mathcal{V}_i(R) $ is the constant function $ \mathfrak{p} \mapsto (m+n+\mu+\nu)d $

(the rank here is bigger than the rank in the definition of $ \convolutionobject{m}{n}{\mu}{\nu} $ because the quotient here is also bigger)

\item $ \mathcal{K}_0 $ is self-dual\footnote{The notion of duality here is identical to that for $ \convolutionobject{m}{n}{\mu}{\nu}$.} with respect to $ t^{m-n} (t+p)^{\mu-\nu} \phi_R $ and $ \mathcal{K}_{d/2} $ is self-dual with respect to $ t^{m-n} (t+p)^{\mu-\nu+1} \phi_R $
\end{itemize}
\end{convolutionschemebase}



Similar to $ \biggermodel{m}{n} $ (see \S\ref{SSordinarygrassmannianembedding} (page \pageref{SSordinarygrassmannianembedding})), this $ \convolutiontarget{m}{n}{\mu}{\nu} $ is a closed subscheme of a product of (ordinary) Grassmannians. In particular, the $ \Z_p $-scheme $ \convolutiontarget{m}{n}{\mu}{\nu} $ is \emph{proper}.

\begin{definition}
The morphism
\begin{equation*}
m : \convolutionobject{m}{n}{\mu}{\nu} \longrightarrow \convolutiontarget{m}{n}{\mu}{\nu}
\end{equation*}
is defined on $ R $-points by $ ( \mathcal{L}_{\bullet} ; \mathcal{K}_{\bullet} ) \mapsto \mathcal{K}_{\bullet} $ for any commutative $ \Z_p $-algebra $ R $
\end{definition}

The verification that this function has codomain $ \convolutiontarget{m}{n}{\mu}{\nu} $ is very easy: the duality condition is identical for both schemes, the containment relations are verified by concatenating the containment relations satisfied by the $ \mathcal{L}_i $ (i.e. \BLMdef{\ref{BLMbounds}}) onto those satisfied by the $ \mathcal{K}_i $, and it is easy to see that the rank condition is then satisfied.

The morphism $ m $ is automatically proper due to the fact that the domain and codomain are proper schemes.

\subsection{Convolution diagram construction}
Combining all the above objects and morphisms gives, at long last, the convolution diagram:
\begin{equation*}
\biggermodel{m}{n} \times \integralaffineflagvarargs{\mu}{\nu}
\stackrel{ p_1 }{ \longleftarrow } \biggermonoid{m}{n} \times
\integralaffineflagvarmonoid{\mu}{\nu} \stackrel{ p_2 }{
\longrightarrow } \convolutionobject{m}{n}{\mu}{\nu} \stackrel{ m }{
\longrightarrow } \convolutiontarget{m}{n}{\mu}{\nu}
\end{equation*}

\subsection{The ``reversed'' convolution diagram} \label{SSreversedconvolutiondiagram}

I now construct a ``reversed'' convolution diagram, which is used to construct a ``reversed'' convolution product product (see the end of \S\ref{SSfusionproductdef} (page \pageref{SSfusionproductdef}). The statement that the convolution product of two particular functions is commutative is equivalent to the statement that the convolution product and reversed convolution product of the corresponding sheaf complexes are equal.

\begin{reversedconvolutionscheme}
Fix $ m,n,\mu,\nu \in \N $. The functor
\begin{equation*}
\reversedconvolutionobject{\mu}{\nu}{m}{n} : \catalgebras{\Z_p} \longrightarrow \catsets
\end{equation*}
assigns to each commutative $ \Z_p $-algebra $ R $ the set of tuples $ ( \mathcal{L}_0, \ldots, \mathcal{L}_{d/2} ; \mathcal{K}_0, \ldots, \mathcal{K}_{d/2} ) $ of $ \mathcal{R}[t] $-submodules of $ \mathcal{V}(R) $ satisfying
\begin{itemize}
\setlength{\itemsep}{5pt}
\item $ ( \mathcal{K}_0, \ldots, \mathcal{K}_{d/2} ) \in \integralaffineflagvarargs{\mu}{\nu} (R) $

\item each $ \mathcal{L}_i $ satisfies $ t^n \mathcal{K}_i \subset \mathcal{L}_i \subset t^{-m} \mathcal{K}_i $

\item each inclusion $ \mathcal{L}_i / t^n \mathcal{K}_i \hookrightarrow t^{-m} \mathcal{K}_i / t^n \mathcal{K}_i $ splits $ R $-linearly

\item the projective rank function $ \spec(R) \rightarrow \N $ of each $ \mathcal{L}_i / t^n \mathcal{K}_i $ is the constant function $ \mathfrak{p} \mapsto d(m+n) $

\item $ \mathcal{L}_0 $ is self-dual with respect to $ t^{m-n} (t+p)^{\mu-\nu} \phi_R $ and $ \mathcal{L}_{d/2} $ is self-dual with respect to $ t^{m-n} (t+p)^{\mu-\nu+1} \phi_R $

(the $ \phi $ used here has domain $ \mathcal{V} \times \mathcal{V} $)
\end{itemize}
\end{reversedconvolutionscheme}

\begin{definition}
The morphism
\begin{equation*}
\presuperscript{\textup{rev}}{m} : \reversedconvolutionobject{\mu}{\nu}{m}{n} \longrightarrow \convolutiontarget{m}{n}{\mu}{\nu}
\end{equation*}
is defined on $ R $-points by $ ( \mathcal{L}_{\bullet} ; \mathcal{K}_{\bullet} ) \mapsto \mathcal{L}_{\bullet} $ for any commutative $ \Z_p $-algebra $ R $.
\end{definition}
Just as for $ m $, it is easy to verify that this function really has codomain $ \convolutiontarget{m}{n}{\mu}{\nu} $.

\begin{definition}
The morphism
\begin{equation*}
\presuperscript{\textup{rev}}{p}_2 : \integralaffineflagvarmonoid{\mu}{\nu} \times \biggermonoid{m}{n} \longrightarrow \reversedconvolutionobject{\mu}{\nu}{m}{n}
\end{equation*}
is defined on $ R $-points by $ ( h, g ) \mapsto ( h ( (t+p)^{-\mu} \overline{\mathcal{V}}_{\bullet}(R) ) ; h ( g ( t^{-m} (t+p)^{-\mu} \overline{\mathcal{V}}_{\bullet}(R) ) ) ) $ for any commutative $ \Z_p $-algebra $ R $.
\end{definition}
Notice that the 1st coordinate of $ \presuperscript{\textup{rev}}{p}_2 $ is just $ \integralaffineflagvarmonoid{\mu}{\nu} \rightarrow \integralaffineflagvarargs{\mu}{\nu} $ from before.

\begin{definition}
By abuse of notation, $ \alpha_1 $ and $ \alpha_2 $ denote the actions of $ \universalgroup{m}{n}{\mu}{\nu} \times \universalgroup{m}{n}{\mu}{\nu} $ on $ \integralaffineflagvarmonoid{\mu}{\nu} \times \biggermonoid{m}{n} $ by the rules $ ( \gamma, \eta ) \cdot ( h, g ) \defeq ( h \circ \gamma^{-1}, g \circ \eta^{-1} ) $ and $ ( \gamma, \eta ) \cdot ( h, g ) \defeq ( h \circ \gamma^{-1}, \gamma \circ g \circ \eta^{-1} ) $.
\end{definition}

Substituting the new convolution object and morphisms yields a ``reversed'' convolution diagram:
\begin{equation*}
\integralaffineflagvarargs{\mu}{\nu} \times \biggermodel{m}{n} \stackrel{p_1}{\longleftarrow} \integralaffineflagvarmonoid{\mu}{\nu} \times \biggermonoid{m}{n} \stackrel{ \presuperscript{\textup{rev}}{p}_2 }{ \longrightarrow } \reversedconvolutionobject{\mu}{\nu}{m}{n} \stackrel{ \presuperscript{\textup{rev}}{m} }{ \longrightarrow } \convolutiontarget{m}{n}{\mu}{\nu}
\end{equation*}

\section{Properties of the convolution diagram} \label{Sconvdiagramproperties}

Recall that for a point $ ( \mathcal{L}_0, \ldots, \mathcal{L}_{d/2} ; \mathcal{K}_0, \ldots, \mathcal{K}_{d/2} ) \in \convolutionobject{m}{n}{\mu}{\nu} ( R ) $ it is not the case that the tuple $ ( \mathcal{K}_0, \ldots, \mathcal{K}_{d/2} ) $ is a point of $ \integralaffineflagvarargs{\mu}{\nu} (R) $ (indeed, that is the whole point). The following lemma, which says roughly that one can revert (\ref{Etwistedactionproof2}) back to (\ref{Etwistedactionproof1}) (page \pageref{Etwistedactionproof1}) even though an ``$ h $'' may not exist, is therefore useful to derive statements about $ p_2 $ from similar statements about $ p_1 $:

\begin{lemma} \label{Luntwistingconvolutionobject}
Let $ R $ be a \emph{local} commutative $ \Z_p $-algebra.

If $ ( \mathcal{L}_{\bullet} ; \mathcal{K}_{\bullet} ) \in \convolutionobject{m}{n}{\mu}{\nu} ( R ) $ and $ g \in \biggermonoid{m}{n} (R) $ has image $ \mathcal{L}_{\bullet} \in \biggermodel{m}{n} (R) $ then there exists $ \mathcal{F}_{\bullet} \in \integralaffineflagvarargs{\mu}{\nu} (R) $ such that $ g ( t^{-m} \overline{\mathcal{F}}_i ) = \overline{\mathcal{K}}_i $ for all $ i $.
\end{lemma}

\begin{proof}
By definition, $ (t+p)^{\nu} \mathcal{L}_i \subset \mathcal{K}_i \subset (t+p)^{-\mu} \mathcal{L}_i $. Quotient by $ t^{m+n} (t+p)^{\mu+\nu+1} \mathcal{L}_0 $ (so that the leftmost and rightmost modules involve $ \widetilde{\mathcal{L}}_i $, in the sense of $ \biggermonoid{m}{n} $), apply $ \widetilde{g}^{-1} $ (since $ R $ is assumed local, $ \widetilde{g} $ exists globally on $ \spec(R) $), scale by $ t^m $, and quotient by $ t^n (t+p)^{\nu} \mathcal{R}[t]^d $ to get a module $ \overline{\mathcal{F}}_i $ satisfying
\begin{equation} \label{Econtainmentrelation}
(t+p)^{\nu} \overline{\mathcal{V}}_i (R) \subset \overline{\mathcal{F}}_i \subset (t+p)^{-\mu} \overline{\mathcal{V}}_i (R)
\end{equation}
Because $ \widetilde{g} $ induces $ g $, it is true that $ g ( t^{-m} \overline{\mathcal{F}}_i ) = \overline{\mathcal{K}}_i $.

Recall the conditions for membership in $ \integralaffineflagvarargs{\mu}{\nu} (R) $. The containments (\ref{Econtainmentrelation}) are one of those conditions. Because $ \mathcal{K}_0 \subset \cdots \subset \mathcal{K}_{d/2} $, it is also true that $ \mathcal{F}_0 \subset \cdots \subset \mathcal{F}_{d/2} $. The projectivity condition and rank condition are satisfied because $ \widetilde{g} $ is an isomorphism and because of the similar properties of $ ( \mathcal{K}_0, \ldots, \mathcal{K}_{d/2} ) $. The duality condition is not totally obvious, but follows from the similitude property of $ \widetilde{g} $ and the similar duality property (page \pageref{labelintegralcompleteaffineflagvarietydef}) of $ ( \mathcal{K}_0, \ldots, \mathcal{K}_{d/2} ) $:
\begin{align*}
\phi_R ( \mathcal{F}_0, \mathcal{F}_0 ) &= \phi_R ( t^m \widetilde{g}^{-1} ( \mathcal{K}_0 ), t^m \widetilde{g}^{-1} ( \mathcal{K}_0 ) ) \\
&= t^{2m} t^{-(m+n)} c(g)^{-1} \phi_R ( \mathcal{K}_0 ) \\
&= t^{m-n} c(g)^{-1} \phi_R ( \mathcal{K}_0, \mathcal{K}_0 )
\end{align*}
so
\begin{equation*}
\phi_R ( \mathcal{K}_0, \mathcal{K}_0 ) \subset t^{n-m} (t+p)^{\nu-\mu} \mathcal{R}[t] \Longleftrightarrow \phi_R ( \mathcal{F}_0, \mathcal{F}_0 ) \subset (t+p)^{\nu-\mu} \mathcal{R}[t]
\end{equation*}
etc.
\end{proof}

The following is the analogue of Lemma 19 from \cite{HN}:

\begin{prop} \label{Psmoothsurjections}
The morphisms $ p_1 $ and $ p_2 $ are smooth and for any $ \Z_p $-field $ K $, the functions $ p_1(K) $ and $ p_2(K) $ are surjective.
\end{prop}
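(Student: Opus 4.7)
The plan is to reduce the proposition to smoothness and field-point surjectivity of the two natural morphisms
\begin{equation*}
\pi_1: \biggermonoid{m}{n} \longrightarrow \biggermodel{m}{n}, \qquad \pi_2: \integralaffineflagvarmonoid{\mu}{\nu} \longrightarrow \integralaffineflagvarargs{\mu}{\nu}
\end{equation*}
defined at the end of \S\ref{SSmonoiddefs}, and then to glue these together using Lemma \ref{Luntwistingconvolutionobject}. By construction $p_1 = \pi_1 \times \pi_2$ as a morphism over $\spec(\Z_p)$, so smoothness and field-point surjectivity of $p_1$ are immediate once they are established for $\pi_1$ and $\pi_2$ separately.

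For $p_2$ I would argue as follows. Given a $K$-point $\Delta = (\mathcal{L}_i; \mathcal{K}_i)$ of $\convolutionobject{m}{n}{\mu}{\nu}$, first use surjectivity of $\pi_1(K)$ to produce $g \in \biggermonoid{m}{n}(K)$ with $\pi_1(g) = (\mathcal{L}_i)$, then apply Lemma \ref{Luntwistingconvolutionobject} (valid because $K$ is local) to untwist $\Delta$ by $g$ and obtain a chain $(\mathcal{F}_i) \in \integralaffineflagvarargs{\mu}{\nu}(K)$ with $g(\overline{\mathcal{F}}_i) = \overline{\mathcal{K}}_i$, and finally use surjectivity of $\pi_2(K)$ to obtain $h$ with $\pi_2(h) = (\mathcal{F}_i)$; then $p_2(g,h) = \Delta$. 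Smoothness of $p_2$ would follow from running this same chain of constructions in the infinitesimal lifting setting, since the formal-smoothness criterion need only be verified over local Artinian $\Z_p$-algebras, where Lemma \ref{Luntwistingconvolutionobject} again applies.

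The heart of the proof is therefore the smoothness and field-point surjectivity of $\pi_1$, the case of $\pi_2$ being structurally identical. For surjectivity of $\pi_1(K)$ given $(\mathcal{L}_i) \in \biggermodel{m}{n}(K)$, I would appeal to the classification of filtered hermitian $K[t]$-lattice chains underlying the Bruhat--Tits decomposition of \S\ref{SSbruhattitsdecomp}: the chains $((t+p)t^{-m}\widetilde{\mathcal{V}}_i(K))$ and $((t+p)\widetilde{\mathcal{L}}_i)$ inside $t^{-m}\widetilde{\mathcal{V}}_0(K)$ are isometric up to a suitable similitude, producing the required isomorphism $\widetilde{g}$ which descends to $g \in \biggermonoid{m}{n}(K)$. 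For smoothness of $\pi_1$, I would verify the infinitesimal lifting property: given a local $\Z_p$-algebra $R$, a square-zero ideal $I \subset R$, a lift $(\mathcal{L}_i) \in \biggermodel{m}{n}(R)$, and $\bar{g} \in \biggermonoid{m}{n}(R/I)$ with $\pi_1(\bar{g}) = (\mathcal{L}_i) \bmod I$, one must lift $\bar{g}$ to $g \in \biggermonoid{m}{n}(R)$ with $\pi_1(g) = (\mathcal{L}_i)$. The argument parallels Proposition \ref{PsmoothJ}: first lift $\bar{g}$ entry by entry to an $\mathcal{R}[t]$-linear map using Nakayama plus the ``Linear Independence of Minimal Generating Sets'' lemma; then correct the lift by an element of $\mathcal{I}_t \cdot \widetilde{\mathcal{L}}_0$ to enforce the correct images $t^{-m}\overline{\mathcal{V}}_i(R) \mapsto \overline{\mathcal{L}}_i$ (using \BLMdef{\ref{BLMsummand}} for $(\mathcal{L}_i)$) and subsequently the similitude condition (using the factor $\tfrac{1}{2}$ available because $p \neq 2$, exactly as in Proposition \ref{PsmoothJ}).

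The hardest part will be the simultaneous management, during the infinitesimal step, of three interacting constraints: (i) correct image on each $t^{-m}\overline{\mathcal{V}}_i(R)$, (ii) the similitude condition, and (iii) existence of a Zariski-local lift $\widetilde{g}$ satisfying condition (\ref{Dbiggermonoidlifting}) in the definition of $\biggermonoid{m}{n}$. These constraints couple through the shared filtered hermitian structure, and careful bookkeeping of the sequence of perturbations (along the lines of Proposition \ref{PsmoothJ}, but extended to the nested filtration by the $\overline{\mathcal{V}}_i$) will be required to ensure that each adjustment preserves the previous ones.
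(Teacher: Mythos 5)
Your overall strategy is exactly the paper's: factor $p_1 = \pi_1 \times \pi_2$, prove smoothness and field-point surjectivity of the two factors $\pi_1 : \biggermonoid{m}{n} \to \biggermodel{m}{n}$ and $\pi_2 : \integralaffineflagvarmonoid{\mu}{\nu} \to \integralaffineflagvarargs{\mu}{\nu}$ separately, and bootstrap $p_2$ from these using Lemma \ref{Luntwistingconvolutionobject} after reducing to a local base. The perturbation plan for smoothness of $\pi_1$ (infinitesimal lifting modeled on Proposition \ref{PsmoothJ}, Nakayama, the factor $\tfrac{1}{2}$) is also correct. However, two steps in the treatment of $\pi_1$ are glossed over in a way that matters.

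First, before the perturbation argument can run, you must separately establish that the module $\widetilde{\mathcal{L}}_0$ built from the given point $(\mathcal{L}_i) \in \biggermodel{m}{n}(R)$ is \emph{already} a free $\mathcal{S}$-module of rank $d$ and that the descended hermitian form on it is \emph{already} perfect; this is not a formal consequence of $(\mathcal{L}_i)$ being a point of $\biggermodel{m}{n}(R)$ (it would be if you knew $(\mathcal{L}_i)$ lifted to some $g_R$, which is exactly what is being constructed). The paper isolates this as Lemma \ref{Lpreliminaryfreeness}, proved by tensoring the presentation $\mathcal{S}^d \twoheadrightarrow \widetilde{\mathcal{L}}_{0,R}$ with $R/I$ and using the $R$-projectivity of $\widetilde{\mathcal{L}}_{0,R}$ (extracted from \BLMdef{\ref{BLMsummand}} via a short-exact-sequence argument) to conclude the kernel vanishes; your ``Nakayama plus LIMGS'' phrase compresses the delicate point. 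Second, your surjectivity argument appeals to the lattice-chain classification of \S\ref{SSbruhattitsdecomp}, which lives over $\F((t))$, and so only applies directly when $\mychar(K) = p$. When $\mychar(K) = 0$ the ideals $(t)$ and $(t+p)$ are comaximal in $K[t]$, so $\overline{\mathcal{W}}_{\textup{sup}}(K)$ splits by the Chinese Remainder Theorem into $(t)$-primary and $(t+p)$-primary summands, and the single-prime argument — perfection of the forms followed by Proposition A.43 of \cite{RZ} on Zariski-local triviality of polarized chains — must be applied to each summand separately, exactly as in the proof of Lemma \ref{Lgenericfiberisom}. Without this splitting, the ring $\mathcal{S}$ is not a quotient of $K[[t]]$ by a power of the maximal ideal, and the structure theory you cite does not apply.
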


\begin{proof}
\framebox{smoothness} To prove that $ \biggermonoid{m}{n} \rightarrow \biggermodel{m}{n} $ is smooth, I must show that for
\begin{itemize}
\setlength{\itemsep}{5pt}
\item a local commutative $ \Z_p $-algebra $ R $,
\item an ideal $ I \subset R $ satisfying $ I^2 = 0 $,
\item $ \mathcal{L}_{\bullet} \in \biggermodel{m}{n} (R) $, and
\item $ g_{R/I} \in \biggermonoid{m}{n}(R/I) $ such that
\begin{equation*}
g_{R/I} ( t^{-m} \overline{\mathcal{V}}_i(R/I) ) = \overline{\mathcal{L}}_i \otimes_R ( R / I )
\end{equation*}
for each $ 0 \leq i \leq d/2 $
\end{itemize}
there exists a $ g_R \in \biggermonoid{m}{n}(R) $ such that $ g_R ( t^{-m} \overline{\mathcal{V}}_i(R) ) = \overline{\mathcal{L}}_i $ for each $ 0 \leq i \leq d/2 $ and $ g_R \mapsto g_{R/I} $ under $ \biggermonoid{m}{n}(R) \rightarrow \biggermonoid{m}{n}(R/I) $. Since $ R $ is local, denote by $ \widetilde{g}_{R/I} $ a lift of $ g_{R/I} $ as guaranteed by the definition of $ \biggermonoid{m}{n} $.

Some notation. Set
\begin{equation*}
\mathcal{S} \defeq \mathcal{R}[t] / t^{m+n} (t+p)^{\mu+\nu+1} \mathcal{R}[t]
\end{equation*}
and let $ \mathcal{I}_t $ be the extension in $ \mathcal{S} $ of the ideal $ I $. I use $ \mathcal{L}_{i,R} $ and $ \mathcal{L}_{i,R/I} $ refer to $ \mathcal{L}_i $ and $ \mathcal{L}_i \otimes_R ( R / I ) $, and so on. Let $ \jmath : \mathcal{S} \rightarrow \mathcal{S} $ be the involution induced by the non-trivial element of $ \gal ( F / \Q_p ) $.

First, a partial result:

\begin{lemma} \label{Lpreliminaryfreeness}
With $ R, I, \mathcal{L}_{\bullet}, g_{R/I} $ and the notation as above, $ \widetilde{\mathcal{L}}_0 $ is free over $ \mathcal{S} $ and the hermitian form $ \widetilde{\mathcal{L}}_0 \times \widetilde{\mathcal{L}}_0 \rightarrow \mathcal{S} $ induced by $ t^{m-n} \phi_R $ is perfect.
\end{lemma}

Note that these assertions are not automatic because it is not known \emph{a priori} that $ \mathcal{L}_{\bullet} $ is the image of some $ g_R $.

\begin{proof}
After normalization, I can assume that the domain of $ \widetilde{g} $ is $ ( \mathcal{S} / \mathcal{I}_t )^d $. Let $ \widetilde{v}_1, \ldots, \widetilde{v}_d $ be arbitrary lifts to $ \widetilde{\mathcal{L}}_0 $ of the basis $ \widetilde{g} ( e_1 ), \ldots, \widetilde{g} ( e_d ) $. By Nakayama's Lemma, $ \widetilde{v}_1, \ldots, \widetilde{v}_d $ generates $ \widetilde{\mathcal{L}}_0 $. Let
\begin{equation} \label{EpresentationoverR}
0 \longrightarrow K \longrightarrow \mathcal{S}^d \longrightarrow \widetilde{\mathcal{L}}_{0,R} \longrightarrow 0
\end{equation}
be the presentation so defined. Since $ \overline{\mathcal{L}}_{0,R} $ is $ R $-projective by \BLMdef{\ref{BLMselfdual}} (page \pageref{BLMselfdual}), and since the kernel of $ \widetilde{\mathcal{L}}_{0,R} \twoheadrightarrow \overline{\mathcal{L}}_{0,R} $ is identified $ R $-linearly with $ t^{-m} (t+p)^{-\mu} \mathcal{V}_0(R) / \mathcal{L}_{0,R} $, which is also $ R $-projective by \BLMdef{\ref{BLMselfdual}}, it follows that $ \widetilde{\mathcal{L}}_{0,R} $ is $ R $-projective. So (\ref{EpresentationoverR}) splits and
\begin{equation} \label{EpresentationoverRmodI}
0 \longrightarrow K \otimes_R R/I \longrightarrow \mathcal{S}^d \otimes_R R/I \longrightarrow \widetilde{\mathcal{L}}_{0,R} \otimes_R R/I \longrightarrow 0
\end{equation}
is still exact. The middle module is just $ ( \mathcal{S} / \mathcal{I}_t )^d $ and, again by $ R $-projectivity of $ \widetilde{\mathcal{L}}_{0,R} $, the rightmost module is just $ \widetilde{\mathcal{L}}_{0, R/I} $. This means that the presentation (\ref{EpresentationoverRmodI}) is just the one given by the \emph{isomorphism} $ \widetilde{g}_{R/I} $, which means $ K \otimes_R R/I = K / I K = 0 $. By Nakayama's Lemma (note that $ K $ is finitely-generated by the splitting of (\ref{EpresentationoverR})), $ K = 0 $ and so the lift $ \mathcal{S}^d \longrightarrow \widetilde{\mathcal{L}}_{0,R} $ of $ \widetilde{g}_{R/I} $ is an isomorphism.

For perfection, recall that the form is perfect if and only if the associated adjoint map
\begin{equation} \label{EadjointmapoverR}
\widetilde{\mathcal{L}}_{0,R} \longrightarrow \myhom_{\mathcal{S}\textup{-lin}} ( \widetilde{\mathcal{L}}_{0,R}, \mathcal{S} )
\end{equation}
is \emph{surjective}. By definition of $ \widetilde{g}_{R/I} $, the corresponding form modulo $ I $ \emph{is} perfect, i.e. the adjoint map
\begin{equation*}
\widetilde{\mathcal{L}}_{0,R/I} \longrightarrow \myhom_{(\mathcal{S} / \mathcal{I}_t)\textup{-lin}} ( \widetilde{\mathcal{L}}_{0,R/I}, \mathcal{S} / \mathcal{I}_t )
\end{equation*}
\emph{is} surjective. Since $ \widetilde{\mathcal{L}}_{0,R} $ is a free $ \mathcal{S} $-module and $ \widetilde{\mathcal{L}}_{0,R/I} = \widetilde{\mathcal{L}}_{0,R} \otimes_R R/I $,
\begin{equation*}
\myhom_{(\mathcal{S} / \mathcal{I}_t)\textup{-lin}} ( \widetilde{\mathcal{L}}_{0,R/I}, \mathcal{S} / \mathcal{I}_t ) = \myhom_{\mathcal{S}\textup{-lin}} ( \widetilde{\mathcal{L}}_{0,R}, \mathcal{S} ) \otimes_R R/I
\end{equation*}
(this is the trivial case of ``Localization of Hom-Sets'') so Nakayama's Lemma implies that (\ref{EadjointmapoverR}) must also be surjective.
\end{proof}

I return to the proof of Proposition \ref{Psmoothsurjections}. Let
\begin{equation*}
\widetilde{w}_1, \ldots, \widetilde{w}_d \in \widetilde{\mathcal{L}}_{0, R/I}
\end{equation*}
be the images (necessarily a basis) under $ \widetilde{g}_{R/I} $ of the standard basis. Let
\begin{equation*}
\widetilde{v}_1, \ldots, \widetilde{v}_d \in \widetilde{\mathcal{L}}_{0,R}
\end{equation*}
be lifts of $ \widetilde{w}_1, \ldots, \widetilde{w}_d $ such that $ (t+p) \widetilde{v}_i \in (t+p) \widetilde{\mathcal{L}}_{i, R} $ (this is possible because of the hypotheses on $ \widetilde{g} $). By Lemma \ref{Lpreliminaryfreeness}, $ \widetilde{v}_1, \ldots, \widetilde{v}_d $ is a \emph{basis} of $ \widetilde{\mathcal{L}}_{0,R} $. The normalized hermitian form $ t^{m-n} \phi_R : \mathcal{L}_0 \times \mathcal{L}_0 \rightarrow \mathcal{R}[t] $ descends to $ \widetilde{\mathcal{L}}_{0,R} \times \widetilde{\mathcal{L}}_{0,R} $ and takes values in $ \mathcal{S} $. By Lemma \ref{Lpreliminaryfreeness} again, it is \emph{perfect}.

Now that I have the basic ingredients of freeness and perfection, I can use the same method used to prove that $ \biggergroup{m}{n} \rightarrow \spec (\Z_p) $ was smooth (page \pageref{PsmoothJ}).

Let $ c \in \mathcal{S} $ be any representative of $ c( g_{R/I} ) $. Set
\begin{equation*}
C_{R/I} \defeq t^{m+n} (t+p)^{2\mu+2} c( g_{R/I} )
\end{equation*}
and
\begin{equation*}
C_R \defeq t^{m+n} (t+p)^{2\mu+2} c
\end{equation*}

Since
\begin{equation*}
\phi_{R/I} ( \widetilde{w}_i, \widetilde{w}_j ) = C_{R/I} \delta_{i, d+1-j} = \phi_{R/I} ( w_i, w_j )
\end{equation*}
there are $ x_{i,j} \in \mathcal{I}_t $ such that
\begin{equation} \label{Esmoothnesserrorterms}
\phi_R ( \widetilde{v}_i, \widetilde{v}_j ) = C_R \delta_{i, d+1-j} + x_{i,j} = \phi_R ( v_i, v_j )
\end{equation}

For each $ i $, use freeness to define an $ \mathcal{S} $-linear functional
\begin{equation*}
f_i : \widetilde{\mathcal{L}}_{0,R} \longrightarrow \mathcal{S}
\end{equation*}
by $ f_i ( \widetilde{v}_j ) = - \frac{1}{2} x_{i,j} $. Using perfection, there is an $ \widetilde{m}_i \in \widetilde{\mathcal{L}}_{0,R} $ such that
\begin{equation*}
f_i = t^{m-n} \phi_R ( -, \widetilde{m}_i ).
\end{equation*}
Let $ v_i $ and $ m_i $ be the images of $ \widetilde{v}_i $ and $ \widetilde{m}_i $ in $ \overline{\mathcal{L}}_0 $. Automatically, $ m_i \in \mathcal{I}_t \overline{\mathcal{W}}_{\textup{sup}}(R) $ (since $ \image (f_i) \subset \mathcal{I}_t \mathcal{S} $). It is automatic from definition that $ x_{j,i} = \jmath ( x_{i,j} ) $ so
\begin{equation*}
\phi_R ( \widetilde{v}_i + \widetilde{m}_i, \widetilde{v}_j + \widetilde{m}_j) = C_R \delta_{i, d+1-j} = \phi_R ( v_i + m_i, v_j + m_j )
\end{equation*}
I must verify that $ (t+p) m_i \in (t+p) \overline{\mathcal{L}}_i $ for each $ i $. The proof will then be finished by defining $ \widetilde{g}_R $ and $ g_R $ to be the maps sending the respective standard bases to $ \{ \widetilde{v}_i + \widetilde{m}_i \} $ and $ \{ t^{-(\mu + 1)} ( v_i + m_i ) \} $.

Since $ v_1, \ldots, v_i, (t+p) v_{i+1}, \ldots, (t+p) v_d $ generates $ (t+p) \overline{\mathcal{L}}_i $ for each $ i $, and since \BLMdef{\ref{BLMselfdual}} implies that
\begin{equation*}
\overline{\mathcal{L}}_i = \{ x \in \overline{\mathcal{W}}_{\textup{sup}}(R) \suchthat \phi_R ( x, \overline{\mathcal{L}}_{d-i} ) \subset t^{n-m} (t+p) \mathcal{S} \},
\end{equation*}
it suffices to show that
\begin{equation*}
\phi_R ( v_1, (t+p) m_i ), \ldots, \phi_R ( v_{d-i}, (t+p) m_i ) \in t^{n-m} (t+p)^2 \mathcal{S}
\end{equation*}
Note that the containments for $ \phi_R ( (t+p) v_j, (t+p) m_i ) $ are automatic since the defining relation (\ref{Esmoothnesserrorterms}) implies that $ x_{i,j} $, and therefore $ \phi_R ( v_j, m_i ) $, belongs to $ t^{n-m} \mathcal{S} $.

Since $ 1 \leq j \leq d-i $ implies that $ i+j \neq d+1 $, the defining equality (\ref{Esmoothnesserrorterms}) implies that
\begin{equation*}
\phi_R ( v_j, m_i ) = - \frac{1}{2} \phi_R ( v_j, v_i ) \textaftermath{$ j = 1, \ldots, d-i $}
\end{equation*}
It is now automatic from the above duality that $ \phi_R ( v_j, m_i ) $ satisfies the necessary condition.

The proof that $ \integralaffineflagvarmonoid{\mu}{\nu} \rightarrow \integralaffineflagvarargs{\mu}{\nu} $ is smooth is nearly identical.

Using Lemma \ref{Luntwistingconvolutionobject}, smoothness of $ p_2 $ is essentially a \emph{formal consequence} of smoothness of the individual factors of $ p_1 $. Let $ R $ be a local commutative $ \Z_p $-algebra and $ I \subset R $ a nilpotent ideal. I must show that for all
\begin{itemize}
\setlength{\itemsep}{5pt}
\item $ ( \mathcal{L}_0, \ldots \mathcal{L}_{d/2} ; \mathcal{K}_0, \ldots \mathcal{K}_{d/2} ) \in \convolutionobject{m}{n}{\mu}{\nu} ( R ) $

\item $ ( g_{R/I}, h_{R/I} ) \in \biggermonoid{m}{n} ( R / I ) \times \integralaffineflagvarmonoid{\mu}{\nu} ( R / I ) $
\end{itemize}
satisfying
\begin{align*}
g_{R/I} ( \ldots ) &= \overline{\mathcal{L}}_i \otimes_R R / I \\
g_{R/I} ( h_{R/I} ( \ldots ) ) &= \overline{\mathcal{K}}_i \otimes_R R / I
\end{align*}
there exists $ ( g_R, h_R ) \in \biggermonoid{m}{n} ( R ) \times \integralaffineflagvarmonoid{\mu}{\nu} ( R ) $ such that
\begin{align*}
g_R ( \ldots ) &= \overline{\mathcal{L}}_i \\
g_R ( h_R ( \ldots ) ) &= \overline{\mathcal{K}}_i
\end{align*}
and $ ( g_R, h_R ) \mapsto ( g_{R/I}, h_{R/I} ) $.

Invoke smoothness of $ \biggermonoid{m}{n} \rightarrow \biggermodel{m}{n} $ with the data $ \left\{ R, I, g_{R/I}, \mathcal{L}_{\bullet} \right\} $ to get $ g_R $. Let $ ( \mathcal{F}_0, \ldots, \mathcal{F}_{d/2} ) \in \integralaffineflagvarargs{\mu}{\nu} (R) $ be the point guaranteed by Lemma \ref{Luntwistingconvolutionobject} (page \pageref{Luntwistingconvolutionobject}). Invoke smoothness of $ \integralaffineflagvarmonoid{\mu}{\nu} \rightarrow \integralaffineflagvarargs{\mu}{\nu} $ with the data $ \{ R, I, h_{R/I}, \mathcal{F}_{\bullet} \} $ to get $ h_R $. By Lemma \ref{Luntwistingconvolutionobject},
\begin{equation*}
g_R ( h_R ( t^{-m} (t+p)^{-\mu} \overline{\mathcal{V}}_i(R) ) ) = \overline{\mathcal{K}}_i
\end{equation*}
so this pair $ ( g_R, h_R ) $ satisfies the requirements.

\framebox{surjectivity} Fix a $ \Z_p $-field $ K $ and set $ \mathcal{K} := K \otimes_{\Z_p} \mathcal{O} $. Assume first that $ \mychar ( K ) = p $, and note that $ \mathcal{K} = K \otimes_{\F_p} \F $ also. Fix $ \mathcal{L}_{\bullet} \in \biggermodel{m}{n} (K) $. By the embedding $ \biggermodel{m}{n}_{\F_p} \hookrightarrow \affineflagvariety $, there is a lift $ \mathcal{F}_{\bullet} \in \affineflagvariety ( K ) $ of $  \mathcal{L}_{\bullet} $ which, by \AFVdef{}, is a polarized lattice chain in the sense of A.41 of \cite{RZ}. By Proposition A.43 of \cite{RZ}, there are\footnote{The necessary content of the Appendix of \cite{RZ}, especially Proposition A.43, applies in the equi-characteristic case $ \F((t)) / \F_p((t)) $ here despite the use of the mixed-characteristic case in \cite{RZ}. The dualizing shift ``$ a $'' of \cite{RZ} depends on the multiplier from \AFVdef{\ref{AFVsimduality}}. } mutually compatible $ \mathcal{K}[[t]] $-linear isomorphisms $ g_i : t^{-(m+1)} \mathcal{K}[[t]]^i \oplus t^{-m} \mathcal{K}[[t]]^{d-i} \directedisom \mathcal{F}_i $ such that $ t^{2m+1} \Phi_K ( x, y ) = t^{m-n+1} \Phi_K ( g_i ( x ), g_i ( y ) ) $ for all $ x, y $ in the domain of $ g_i $. By $ \mathcal{K}[t] $-linearity, $ g_i ( t^{n+\mu+\nu+1} \mathcal{K}[[t]]^d ) \subset t^{m+n+\mu+\nu+1} \mathcal{F}_i $ and so $ g_0 $ descends to a $ \mathcal{K}[[t]] $-linear map $ \widetilde{g} : t^{-m} \widetilde{\mathcal{V}}_0 ( K ) \rightarrow \widetilde{\mathcal{L}}_0 $ (see the definition of $ \biggermonoid{m}{n} $ for these objects). Counting $ K $-dimensions shows that $ \widetilde{g} $ is an isomorphism. In other words, $ \widetilde{g} $ satisfies (\ref{Dbiggermonoidlifting}) of the definition of $ \biggermonoid{m}{n} $. Since $ t^{-m} \widetilde{\mathcal{V}}_i $ is simply a ``shift'' of $ \overline{\mathcal{W}}_{\SUP} $ by $ (t+p)^{\mu+1} $, this $ \widetilde{g} $ induces a $ \mathcal{K}[t] $-linear map $ g : \overline{\mathcal{W}}_{\SUP} \rightarrow \overline{\mathcal{W}}_{\SUP} $ and it is clear from the properties of $ \widetilde{g} $ that $ g \in \biggermonoid{m}{n} ( K ) $. By construction, the image of $ g $ in $ \biggermodel{m}{n} ( K ) $ is $ \mathcal{L}_{\bullet} $. If $ \mychar ( K ) = 0 $ then the above ideas can be used in combination with the Chinese Remainder Theorem (as in the proof of Lemma \ref{Lgenericfiberisom} (page \pageref{Lgenericfiberisom})) to prove surjectivity of $ \biggermonoid{m}{n} ( K ) \rightarrow \biggermodel{m}{n} ( K ) $. A nearly identical proof shows that $ \integralaffineflagvarmonoid{\mu}{\nu} ( K ) \rightarrow \integralaffineflagvarargs{\mu}{\nu} ( K ) $. Together, this establishes surjectivity of $ p_1 $. By the use of Lemma \ref{Luntwistingconvolutionobject}, surjectivity of $ p_2 ( K ) $ is essentially a formal consequence: For $ ( \mathcal{L}_{\bullet} ; \mathcal{K}_{\bullet} ) \in \convolutionobject{m}{n}{\mu}{\nu} ( K ) $, invoke surjectivity of $ \biggermonoid{m}{n} ( K ) \rightarrow \biggermodel{m}{n} ( K ) $ to get $ g $, use Lemma \ref{Luntwistingconvolutionobject} (page \pageref{Luntwistingconvolutionobject}) to get $ \mathcal{F}_{\bullet} \in \integralaffineflagvarargs{\mu}{\nu} ( K ) $, and invoke surjectivity of $ \integralaffineflagvarmonoid{\mu}{\nu} ( K ) \rightarrow \integralaffineflagvarargs{\mu}{\nu} ( K ) $ to get $ h $. It follows that that $ p_2 ( g, h ) = ( \mathcal{L}_{\bullet} ; \mathcal{K}_{\bullet} ) $.
\end{proof}

\begin{remark}
Uses of Proposition \ref{Psmoothsurjections}:
\begin{itemize}
\setlength{\itemsep}{5pt}
\item Smoothness of $ p_1 $ is used in \S\ref{SSfusionproductdef} (page \pageref{SSfusionproductdef}) and in the proof of Lemma 23 in \cite{HN} (which I invoke).

\item Smoothness of $ p_2 $ is used to satisfy the hypotheses of Lemma 21 in \cite{HN} (page \pageref{labelHNdescendsheaf} here) and in the proof of Lemma 23 in \cite{HN} (which I invoke).

\item The surjectivity statement for $ p_1 $ is not used.

\item The surjectivity statement for $ p_2 $ implies that the underlying map of topological spaces is surjective, which is used to satisfy the hypotheses of Lemma 21 in \cite{HN} (page \pageref{labelHNdescendsheaf} here).
\end{itemize}
\end{remark}

\subsection{The special fiber of the convolution diagram} \label{SSdiagramcollapsemodp}

Two obvious but important simplifications occur over $ \F_p $ in the convolution diagram.

First, if $ ( m, n ) = ( \mu , \nu ) $ then the definitions of $ \biggermodel{m}{n} $ and $ \integralaffineflagvarargs{m}{n} $ are equal modulo $ p $, i.e.
\begin{equation*}
\integralaffineflagvarargs{m}{n}_{\F_p} = \biggermodel{m}{n}_{\F_p}
\end{equation*}

Similarly,
\begin{align*}
\biggermonoid{m}{n}_{\F_p} &= \integralaffineflagvarmonoid{m}{n}_{\F_p} \\
\biggergroup{m}{n}_{\F_p} &= \integraliwahoriargs{m}{n}_{\F_p}
\end{align*}

Second, for any $ m, n, \mu, \nu \in \N $ it is immediate by looking at the definition that
\begin{equation*}
\convolutiontarget{m}{n}{\mu}{\nu}_{\F_p} = \integralaffineflagvarmonoid{m+\mu}{n+\nu}_{\F_p}
\end{equation*}

These observations are important for understanding (see \S\ref{SSfusionproductcategorifies}) why the convolution product of sheaf complexes (not yet defined) induces the convolution product of functions.

\subsection{The generic fiber of the convolution diagram}

The following result (an almost identical copy of Lemma 24 from \cite{HN}) is unique to the generic fiber, because of the fact that $ (t) $ and $ (t+p) $ are \emph{comaximal} in $ \Q_p[t] $ and therefore the Chinese Remainder Theorem can be applied.
\begin{lemma} \label{Lgenericfiberisom}
The extended morphisms $ m_{\Q_p} $ and $ \presuperscript{\textup{rev}}{m}_{\Q_p} $ are
isomorphisms, and there are isomorphisms $ i, \presuperscript{\textup{rev}}{i} $ such that
the square formed by these 4 morphisms is (after allowing the morphisms to be inverted) commutative:
\begin{equation*}
\begin{CD}
\biggermodel{m}{n}_{\Q_p} \times \integralaffineflagvarargs{\mu}{\nu}_{\Q_p} @<i<< \convolutionobject{m}{n}{\mu}{\nu}_{\Q_p} \\
@A\presuperscript{\textup{rev}}{i}AA @VVmV \\
\reversedconvolutionobject{\mu}{\nu}{m}{n}_{\Q_p} @>>\presuperscript{\textup{rev}}{m}> \convolutiontarget{m}{n}{\mu}{\nu}_{\Q_p}
\end{CD}
\end{equation*}
\end{lemma}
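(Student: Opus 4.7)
The plan is to exploit the Chinese Remainder Theorem: in $\Q_p[t]$, since $p$ is a unit, the ideals $(t)$ and $(t+p)$ are comaximal, so for any $\Q_p$-algebra $R$ (set $\mathcal{R} := \mathcal{O} \otimes_{\Z_p} R$) and any $a,b \geq 0$ there is a canonical splitting
\begin{equation*}
\mathcal{R}[t]/t^a(t+p)^b \cong \mathcal{R}[t]/t^a \times \mathcal{R}[t]/(t+p)^b.
\end{equation*}
Applied with $(a,b) = (m+n, \mu+\nu)$, the ambient quotient $t^{-m}(t+p)^{-\mu}\mathcal{V}_i(R)/t^n(t+p)^\nu\mathcal{V}_i(R)$ decomposes canonically as a direct sum of a $t$-primary piece (naturally isomorphic to $t^{-m}\mathcal{V}_i(R)/t^n\mathcal{V}_i(R)$, on which $t+p$ is invertible) and a $(t+p)$-primary piece (naturally isomorphic to $(t+p)^{-\mu}\mathcal{V}_i(R)/(t+p)^\nu\mathcal{V}_i(R)$, on which $t$ is invertible). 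Every $\mathcal{R}[t]$-submodule splits accordingly.

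From this I will read off a $\Q_p$-scheme equivalence $\convolutiontarget{m}{n}{\mu}{\nu}_{\Q_p} \cong \biggermodel{m}{n}_{\Q_p} \times \integralaffineflagvarargs{\mu}{\nu}_{\Q_p}$. Given $(\mathcal{K}_\bullet) \in \convolutiontarget{m}{n}{\mu}{\nu}(R)$, I let $\mathcal{L}_i$ be the preimage in $t^{-m}\mathcal{V}_i(R)$ of the $t$-primary summand of $\mathcal{K}_i/t^n(t+p)^\nu\mathcal{V}_i(R)$, and $\mathcal{F}_i$ the preimage in $(t+p)^{-\mu}\mathcal{V}_i(R)$ of the $(t+p)$-primary summand. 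Chain, projectivity, and rank conditions are additive under direct sums, so they split cleanly. The duality conditions require tracking the twist factors: on the $t$-primary piece the factor $(t+p)^{\mu-\nu}$ acts as a unit, so self-duality of $\mathcal{K}_i$ with respect to $t^{m-n}(t+p)^{\mu-\nu}\phi_R$ restricts to self-duality of $\mathcal{L}_i$ with respect to $t^{m-n}\phi_R$, which is exactly \BLMdef{\ref{BLMselfdual}}; on the $(t+p)$-primary piece the factor $t^{m-n}$ acts as a unit and serves as the Zariski-local similitude scalar $u(t)$ required by the definition of $\integralaffineflagvarargs{\mu}{\nu}$.

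To construct $i$ and $\presuperscript{\textup{rev}}{i}$, I observe that in $\convolutionobject{m}{n}{\mu}{\nu}_{\Q_p}$ the bounds $(t+p)^\nu\mathcal{L}_i \subset \mathcal{K}_i \subset (t+p)^{-\mu}\mathcal{L}_i$ collapse to $\mathcal{L}_i$ on the $t$-primary piece (since $t+p$ is invertible there), forcing the $t$-primary component of $\mathcal{K}_i$ to equal the image of $\mathcal{L}_i$. Thus $(\mathcal{L}_\bullet)$ is uniquely determined by $(\mathcal{K}_\bullet)$, so $m_{\Q_p}$ is an isomorphism; composing $m_{\Q_p}^{-1}$ with the CRT decomposition yields $i$. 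The reversed case is symmetric: in $\reversedconvolutionobject{\mu}{\nu}{m}{n}_{\Q_p}$ the outer data is now $(\mathcal{K}_\bullet)$ and the inner bounds collapse on the $(t+p)$-primary piece, so $\presuperscript{\textup{rev}}{m}_{\Q_p}$ is also an isomorphism and the CRT splitting provides $\presuperscript{\textup{rev}}{i}$. Commutativity of the square is then tautological: both $m\circ i^{-1}$ and $\presuperscript{\textup{rev}}{m}\circ\presuperscript{\textup{rev}}{i}^{-1}$ realize the same canonical CRT ``merge'' $((\mathcal{L}_\bullet), (\mathcal{F}_\bullet)) \mapsto (\mathcal{K}_\bullet)$ assembling the two primary pieces into a single tuple.

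The main obstacle will be the bookkeeping of the PEL duality under the CRT splitting — in particular verifying that the global scalars $t^{m-n}$ and $(t+p)^{\mu-\nu}$, which act invertibly on the opposite primary piece, correctly furnish the similitudes demanded by the Zariski-local self-duality of $\integralaffineflagvarargs{\mu}{\nu}$, and that the asymmetric twists at the boundary indices (where $(t+p)^{\mu-\nu+1}$ appears in place of $(t+p)^{\mu-\nu}$ for $i=d/2$) are consistent on both sides. Once these compatibilities are carefully tracked the rest of the argument is a formal manipulation of direct-sum decompositions of submodules.
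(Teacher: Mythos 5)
Your proposal is correct and takes essentially the same route as the paper's proof: the paper also decomposes $\overline{\mathcal{W}}_{\textup{sup}}(\Q_p)$ and each $\overline{\mathcal{V}}_i(\Q_p)$ into $t$-primary and $(t+p)$-primary pieces via the Chinese Remainder Theorem, derives from the containment bounds that $\overline{\mathcal{L}}_i^{(t+p)} = \overline{\mathcal{V}}_i^{(t+p)}(R)$ and $\overline{\mathcal{K}}_i^{(t)} = \overline{\mathcal{L}}_i^{(t)}$ (your ``collapse'' observations), and concludes bijectivity of $m$ and the existence of $i$ from these identities. Your more explicit tracking of how the twist factors $t^{m-n}$ and $(t+p)^{\mu-\nu}$ (and $(t+p)^{\mu-\nu+1}$ at $i=d/2$) distribute across the two primary pieces is a welcome supplement, since the paper treats the duality verification somewhat tersely.
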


\begin{proof}
Because the ideals $ (t) $ and $ (t+p) $ are comaximal in $ F[t] $, the Chinese remainder theorem implies that the $ F[t] $-module $ \overline{\mathcal{W}}_{\textup{sup}}(\Q_p) $ can be written as the direct sum
\begin{equation} \label{EQchineseremaindertheorem}
\overline{\mathcal{W}}_{\textup{sup}}(\Q_p) \cong \frac{ t^{-m} F[t]^d }{ t^n F[t]^d } \oplus \frac{ (t+p)^{-\mu-1} F[t]^d }{ (t+p)^{\nu} F[t]^d }
\end{equation}
Similarly decompose each $ \overline{\mathcal{V}}_i(\Q_p) $ into
\begin{equation*}
\overline{\mathcal{V}}_i(\Q_p) \cong \overline{\mathcal{V}}_i^{(t)} (\Q_p) \oplus \overline{\mathcal{V}}_i^{(t+p)} (\Q_p)
\end{equation*}
Let $ R $ be a commutative $ \Q_p $-algebra. Take
\begin{equation*}
( \mathcal{L}_0, \ldots, \mathcal{L}_{d/2} ; \mathcal{K}_0, \ldots, \mathcal{K}_{d/2} ) \in \convolutionobject{m}{n}{\mu}{\nu} (R).
\end{equation*}
Denote by $ \overline{\mathcal{L}}_i $ and $ \overline{\mathcal{K}}_i $ the images in $ \overline{\mathcal{W}}_{\textup{sup}} (R) $. In particular,
\begin{align}
\label{EQlocallabel1} t^n \overline{\mathcal{V}}_i(R) \subset
\overline{\mathcal{L}}_i \subset t^{-m}
\overline{\mathcal{V}}_i(R) \\
\label{EQlocallabel2} (t+p)^{\nu} \overline{\mathcal{L}}_i \subset
\overline{\mathcal{K}}_i \subset (t+p)^{-\mu}
\overline{\mathcal{L}}_i
\end{align}
Decompose each
\begin{align*}
\overline{\mathcal{L}}_i &\cong \overline{\mathcal{L}}_i^{(t)}
\oplus \overline{\mathcal{L}}_i^{(t+p)} \\
\overline{\mathcal{K}}_i &\cong \overline{\mathcal{K}}_i^{(t)}
\oplus \overline{\mathcal{K}}_i^{(t+p)} \\
\end{align*}
Since the images under the 2nd projection from (\ref{EQchineseremaindertheorem}) of $ t^k \overline{\mathcal{V}}_i(R) $ is the same, always equal to $ ( (t+p)^{-1} \mathcal{R}[t] / \mathcal{R}[t] )^i $, regardless of $ k \in \Z $, the inclusions in (\ref{EQlocallabel1}) force
\begin{equation} \label{EQlocallabel3}
\overline{\mathcal{L}}_i^{(t+p)} = \overline{\mathcal{V}}_i^{(t+p)}(R)
\end{equation}
Similarly, applying the 1st projection of from (\ref{EQchineseremaindertheorem}) to the inclusions in (\ref{EQlocallabel2}) shows that
\begin{equation} \label{EQlocallabel4}
\overline{\mathcal{K}}_i^{(t)} = \overline{\mathcal{L}}_i^{(t)}
\end{equation}
In other words, the function
\begin{equation*}
m_R ( \mathcal{L}_0, \ldots, \mathcal{L}_{d/2} ; \mathcal{K}_0, \ldots, \mathcal{K}_{d/2} ) = ( \mathcal{K}_0, \ldots, \mathcal{K}_{d/2} )
\end{equation*}
is injective. Conversely, for any $ ( \mathcal{K}_0, \ldots, \mathcal{K}_{d/2} ) \in \convolutiontarget{m}{n}{\mu}{\nu} (R) $, \emph{defining} $ ( \mathcal{L}_0, \ldots, \mathcal{L}_{d/2} ) $ by (\ref{EQlocallabel3}) and (\ref{EQlocallabel4}) yields a point of $ \convolutionobject{m}{n}{\mu}{\nu} (R) $. So $ m_R $ is an \emph{isomorphism} for any commutative $ \Q_p $-algebra $ R $.

Take $ ( \mathcal{L}_0, \ldots, \mathcal{L}_{d/2} ; \mathcal{K}_0, \ldots, \mathcal{K}_{d/2} ) \in \convolutionobject{m}{n}{\mu}{\nu} (R) $. Using the preceding argument, write $ ( \mathcal{K}_0, \ldots, \mathcal{K}_{d/2} ) $ as
\begin{equation*}
( \mathcal{L}_0^{(t)} \oplus \mathcal{K}_0^{(t+p)}, \ldots, \mathcal{L}_{d/2}^{(t)} \oplus \mathcal{K}_{d/2}^{(t+p)} )
\end{equation*}
I claim that the chain $ \overline{\mathcal{K}}_i^{(t+p)} $ (i.e. discarding the 1st summand from each $ \mathcal{K}_i $) is an element of $ \integralaffineflagvarargs{\mu}{\nu} (R) $. This is clear by applying the 2nd projection from (\ref{EQchineseremaindertheorem}) to the inclusions in (\ref{EQlocallabel2}) and then using the equality in (\ref{EQlocallabel3}) (this shows that $ \overline{\mathcal{K}}_i^{(t+p)} $ has the necessary bounds, and the other properties are automatic from the definitions).

I claim that the function
\begin{align*}
i_R : \convolutionobject{m}{n}{\mu}{\nu} (R) &\longrightarrow \biggermodel{m}{n} (R) \times \integralaffineflagvarargs{\mu}{\nu} (R) \\
( \mathcal{L}_0, \ldots, \mathcal{L}_{d/2} ; \mathcal{K}_0, \ldots, \mathcal{K}_{d/2} ) &\longmapsto \left( ( \overline{\mathcal{L}}_0^{(t)}, \ldots, \overline{\mathcal{L}}_{d/2}^{(t)} ), ( \overline{\mathcal{K}}_0^{(t+p)}, \ldots, \overline{\mathcal{K}}_{d/2}^{(t+p)} ) \right)
\end{align*}
is a bijection. Injectivity is obvious because the discarded summand $ \mathcal{L}_i^{(t)} $ in $ \mathcal{K}_i $ is not truly discarded by $ i $: it is retained by the 1st coordinate. Surjectivity is also obvious for the same reason: for any $ ( ( \overline{\mathcal{L}}_i ), ( \overline{\mathcal{F}}_i ) ) \in \biggermodel{m}{n} (R) \times \integralaffineflagvarargs{\mu}{\nu} (R) $, simply supply the respective missing summands $ \overline{\mathcal{V}}_i^{(t+p)}(R) $ and $ \overline{\mathcal{L}}_i^{(t)} $.

The proof for $ \presuperscript{\textup{rev}}{m}_{\Q_p} $ and $ \presuperscript{\textup{rev}}{i} $ is nearly identical, and the fact that the square commutes is then obvious.
\end{proof}

\subsection{An automorphism group for the convolution diagram} \label{SSuniversalgroupaction}

I also need a $ \Z_p $-group $ \universalgroup{m}{n}{\mu}{\nu} $ that acts on both $ \biggermonoid{m}{n} $ and $ \integralaffineflagvarmonoid{\mu}{\nu} $ and factors through both $ \biggergroup{m}{n} $ and $ \integraliwahoriargs{\mu}{\nu} $. The definition is a straightforward enlargement of the definition of $ \biggergroup{m}{n} $ plus a lifting condition: 

\begin{definition}
The functor
\begin{equation*}
\universalgroup{m}{n}{\mu}{\nu} : \catalgebras{\Z_p} \longrightarrow \catsets
\end{equation*}
assigns to each commutative $ \Z_p $-algebra $ R $ the set of all $ \mathcal{R}[t] $-linear automorphisms $ \gamma $ of $ \overline{\mathcal{W}}_{\textup{sup}} (R) $ satisfying:
\begin{itemize}
\setlength{\itemsep}{5pt}
\item $ \gamma ( \overline{\mathcal{V}}_i(R) ) = \overline{\mathcal{V}}_i(R) $ for all $ i $

\item there exists $ c(g) \in R[t] $ representing a \emph{unit} of $ R[t] / t^{m+n} (t+p)^{\mu+\nu+1} R[t] $ such that $ \overline{\phi}_R ( \gamma ( x ), \gamma ( y ) ) = c(g)  \overline{\phi}_R ( x, y ) $ for all $ x, y \in \overline{\mathcal{W}}_{\textup{sup}} (R) $.

\item $ \gamma $ is induced, Zariski-locally\footnote{The meaning of ``Zariski-locally'' here is as in the definition of $ \biggermonoid{m}{n} $.} on $ \spec(R) $, by some $ \mathcal{R}[t] $-linear automorphism $ \widetilde{\gamma} $ of $ t^{-m} (t+p)^{-\mu-1} \mathcal{R}[t]^d / t^{m+n} (t+p)^{\mu+\nu} \mathcal{R}[t]^d $ (of which $ \overline{\mathcal{W}}_{\textup{sup}} (R) $ is a quotient)
\end{itemize}
\end{definition}

Notice that any $ \gamma \in \universalgroup{m}{n}{\mu}{\nu} (R) $ restricts and descends from $ \overline{\mathcal{W}}_{\textup{sup}}(R) $ to $ \mathcal{R}[t] $-linear automorphisms $ \gamma_{\mathcal{V}} $ of $ \overline{\mathcal{V}}_{\textup{sup}} (R) $ and $ \gamma_{\mathcal{U}} $ of $ \overline{\mathcal{U}}_{\textup{sup}} (R) $. It is clear that these induced automorphisms $ \gamma_{\mathcal{V}}, \gamma_{\mathcal{U}} $ are similitudes for the appropriate forms $ \overline{\phi}_R $, and the multiplier $ c(\gamma) $ specified from $ \universalgroup{m}{n}{\mu}{\nu} (R) $ is also appropriate, so $ \gamma \mapsto \gamma_{\mathcal{V}} $ and $ \gamma \mapsto \gamma_{\mathcal{U}} $ define morphisms
\begin{align*}
\universalgroup{m}{n}{\mu}{\nu} &\longrightarrow \biggergroup{m}{n} \\
\universalgroup{m}{n}{\mu}{\nu} &\longrightarrow \integraliwahoriargs{\mu}{\nu}
\end{align*}
of $ \Z_p $-group schemes.


By the exact same process used in \S\ref{SSbruhattitsdecomp} (page \pageref{SSbruhattitsdecomp}) to define $ \iwahori ( \F_p ) \rightarrow \biggergroup{m}{n} ( \F_p ) $, one has a group homomorphism
\begin{equation*}
\iwahori ( \F_p ) \longrightarrow \universalgroup{m}{n}{\mu}{\nu} ( \F_p )
\end{equation*}
(note that the existence of ``$ \widetilde{\gamma} $'' is trivial) such that the composition
\begin{equation*}
\iwahori ( \F_p ) \rightarrow \universalgroup{m}{n}{\mu}{\nu} ( \F_p ) \rightarrow \biggergroup{m}{n} ( \F_p )
\end{equation*}
is exactly the group homomorphism from \S\ref{SSbruhattitsdecomp}.

Two group actions, one tailored to $ p_1 $ and one tailored to $ p_2 $, are needed to define the convolution product:

\begin{definition}
Define the group action $ \alpha_1 $ of $ \universalgroup{m}{n}{\mu}{\nu} \times \universalgroup{m}{n}{\mu}{\nu} $ on $ \biggermonoid{m}{n} \times \integralaffineflagvarmonoid{\mu}{\nu} $ by the rule
\begin{equation*}
\alpha_1 ( \gamma, \eta ; g, h ) \defeq ( g \circ \gamma^{-1}, h \circ \eta^{-1} )
\end{equation*}

Define another action $ \alpha_2 $ by the rule 
\begin{equation*}
\alpha_2 ( \gamma, \eta ; g, h ) \defeq ( g \circ \gamma^{-1}, \gamma \circ h \circ \eta^{-1} )
\end{equation*}
\end{definition}

It is perhaps helpful to briefly explain why these are legitimate actions on $ \biggermonoid{m}{n} \times \integralaffineflagvarmonoid{\mu}{\nu} $. Take $ g \in \biggermonoid{m}{n} ( R ) $. Since any $ \gamma \in \universalgroup{m}{n}{\mu}{\nu} ( R ) $ is a similitude of $ \overline{\mathcal{W}}_{\textup{sup}} (R) $ and stabilizes all $ \overline{\mathcal{V}}_i $, it is clear that $ g \circ \gamma $ satisfies all the conditions of $ \biggermonoid{m}{n} ( R ) $ except possibly the lifting property (the existence of a certain ``$ \widetilde{g \circ \gamma} $''). For this, note that $ t^{-m} (t+p)^{-\mu-1} \mathcal{R}[t]^d / t^{m+n} (t+p)^{\mu+\nu} \mathcal{R}[t]^d $ is a subquotient of the domain of $ \widetilde{\gamma} $, and define the desired lift of $ g \circ \gamma $ by composing $ \widetilde{g} $ with the automorphism induced by $ \widetilde{\gamma} $ on that subquotient. By a nearly identical argument, $ h \circ \gamma \in \integralaffineflagvarmonoid{\mu}{\nu} ( R ) $ for any $ h \in \integralaffineflagvarmonoid{\mu}{\nu} ( R ) $ and $ \gamma \in \universalgroup{m}{n}{\mu}{\nu} ( R ) $.

Note that the action via $ \alpha_1 $ stabilizes fibers of $ p_1 $, since elements $ \universalgroup{m}{n}{\mu}{\nu} $ stabilize all $ \overline{\mathcal{V}}_i $. Similarly, the action via $ \alpha_2 $ stabilizes fibers of $ p_2 $ since the 2nd coordinate of $ p_2 $ uses $ ( g \circ \gamma^{-1} ) \circ ( \gamma \circ h \circ \eta^{-1} ) = g \circ h \circ \eta^{-1} $.

For the convenience of the reader, here is the customary notion of ``fixer'' for the above two actions:
\begin{definition}
If $ A $ is a $ \Z_p $-algebra and and $ g, h $ are $ A $-points then the functor
\begin{equation*}
\fixer_2 ( g, h ) : \catalgebras{A} \longrightarrow \catgroups
\end{equation*}
assigns to any $ A $-algebra $ R $ the subgroup of all $ ( \gamma, \eta ) $ such that $ g_R \circ \gamma^{-1} = g_R $ and $ \gamma \circ h_R \circ \eta^{-1} = h_R $. The fixers
\begin{equation*}
\fixer ( g ), \fixer ( h ), \fixer_1 ( g, h ) : \catalgebras{A} \longrightarrow \catgroups
\end{equation*}
are defined similarly.
\end{definition}

\begin{prop} \label{Puniversalgroupisfinitetype}
$ \universalgroup{m}{n}{\mu}{\nu} $ is a finite-type $ \Z_p $-scheme.
\end{prop}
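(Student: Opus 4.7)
The plan is to follow the exact template established in the proof of Proposition \ref{Pbiggermonoidisrepresentable} (page \pageref{Pbiggermonoidisrepresentable}), so I will aim to adapt that argument rather than rework it from scratch.

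First I would define an auxiliary functor $\universalgroup{m}{n}{\mu}{\nu}_{\textup{weak}}$ by dropping the Zariski-local lifting condition (the existence of $\widetilde{\gamma}$), keeping only (i) $\mathcal{R}[t]$-linearity and invertibility, (ii) stability of each $\overline{\mathcal{V}}_i$, and (iii) the similitude condition with a unit multiplier $c(\gamma)$. Since $\overline{\mathcal{W}}_{\textup{sup}}$ is a free $\Z_p$-module of finite rank $N$, the functor of $\Z_p$-linear automorphisms is $\GL_N$, an affine finite-type $\Z_p$-scheme. Requiring commutation with the actions of $\mathcal{O}$ and of $t$ cuts out a closed subscheme of $\GL_N$, and stability of each $\overline{\mathcal{V}}_i$ is also a closed condition. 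For the similitude condition, I parametrize $c(\gamma)$ as an element of the finite-rank free $\Z_p$-module $\Z_p[t]/t^{m+n}(t+p)^{\mu+\nu+1}\Z_p[t]$ together with an inverse (imposing invertibility by a single auxiliary variable and a polynomial equation), and impose the bilinear identity $\overline{\phi}_R(\gamma(x),\gamma(y)) = c(\gamma)\overline{\phi}_R(x,y)$ as a system of polynomial equations on a free generating set. This shows $\universalgroup{m}{n}{\mu}{\nu}_{\textup{weak}}$ is an affine finite-type $\Z_p$-scheme.

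Next I would incorporate the Zariski-local lifting condition, exactly imitating the construction with the ideals $I(n;\{\ell_i\};\{m_j\})$ used in Proposition \ref{Pbiggermonoidisrepresentable}. For a fixed $\gamma \in \universalgroup{m}{n}{\mu}{\nu}_{\textup{weak}}(R)$, the set of possibly-non-invertible $\mathcal{R}[t]$-linear endomorphisms $\widetilde{\gamma}$ of the larger module in (\ref{Euniversalgroupliftingmodule}) which induce $\gamma$ on the quotient $\overline{\mathcal{W}}_{\textup{sup}}(R)$ forms the $R$-points of a finite-dimensional affine space (one is free to choose the additional matrix entries in the relevant kernel). Cramer's rule then expresses invertibility of $\widetilde{\gamma}$ as non-vanishing of a single polynomial $\mydet_\gamma$ in those free coordinates. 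Introducing Zariski-local covering variables $W_i, Y_i$ subject to $W_1Y_1+\cdots+W_nY_n=1$ together with the equations expressing that each localized lift becomes invertible over $\spec(R_{Y_i})$, and summing these ideals over varying $\gamma$, builds a representable functor whose forgetful image in $\universalgroup{m}{n}{\mu}{\nu}_{\textup{weak}}$ is precisely $\universalgroup{m}{n}{\mu}{\nu}$. Representability and finite-typeness follow.

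The main obstacle, and essentially the only non-formal point, is handling the Zariski-local lifting condition in a way that produces a \emph{scheme} rather than merely a presheaf; but this is settled verbatim by the construction already used for $\biggermonoid{m}{n}$, with the single change that the ambient module (\ref{Euniversalgroupliftingmodule}) and the form-precision now combine both the $t$-truncation and $(t+p)$-truncation. Everything else (the linearity conditions, the stabilization of the filtration, and the similitude condition with invertible multiplier) is a routine closed/finite-type condition, and the finite-type property of the result is automatic from its presentation as the image of a finite-type scheme under a morphism to a finite-type scheme.
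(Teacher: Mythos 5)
Your proposal is correct and takes essentially the same approach as the paper, which simply says the proof is "nearly identical (in fact easier since the codomain of the Zariski-local lifts is not varying) to that given for $\biggermonoid{m}{n}$." You have supplied the details the paper leaves implicit, and your only omission is the paper's passing remark that the argument simplifies slightly here because the target module of the lifts is fixed rather than varying with $\gamma$.
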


\begin{proof}
The proof is nearly identical (in fact easier since the codomain of the Zariski-local lifts is not varying) to that given for $ \biggermonoid{m}{n} $ (page \pageref{Pbiggermonoidisrepresentable}).
\end{proof}

\subsection{The automorphism group is smooth}

\begin{prop} \label{PsmoothuniversalJ}
$ \universalgroup{m}{n}{\mu}{\nu} $ is a smooth $ \Z_p $-scheme.
\end{prop}

\begin{proof}
Let $ \universalgroup{m}{n}{\mu}{\nu}_{\textup{weak}} $ be the $ \Z_p  $-group scheme defined using only the 1st and 2nd conditions (i.e. \emph{excluding} the Zariski-local lifting condition). It is obvious that $ \universalgroup{m}{n}{\mu}{\nu}_{\textup{weak}} $ is finite-type, so to show that $ \universalgroup{m}{n}{\mu}{\nu}_{\textup{weak}} $ is smooth, it suffices to verify the infinitesimal lifting property (page \pageref{formalsmoothness}), and for this, the proof that $ \biggergroup{m}{n} $ is smooth, Proposition \ref{PsmoothJ} (page \pageref{PsmoothJ}), works almost verbatim: for a local commutative $ \Z_p $-algebra $ R $ and an ideal $ I \subset R $ satisfying $ I^2 = 0 $, simply use
\begin{itemize}
\setlength{\itemsep}{5pt}
\item $ M := \overline{\mathcal{W}}_{\textup{sup}} (R) $
\item $ S := R[t] / t^{m+n} (t+p)^{\mu+\nu+1} R[t] $
\item $ \overline{\phi} $ to be the product on $ \overline{\mathcal{W}}_{\textup{sup}} $
\item continue to use $ \sigma := (t+p) $
\end{itemize}
and define all ideals and submodules as before using the new objects just listed. This proves that $ \universalgroup{m}{n}{\mu}{\nu}_{\textup{weak}} \rightarrow \spec(\Z_p) $ is smooth. By Proposition \ref{Puniversalgroupisfinitetype}, $ \universalgroup{m}{n}{\mu}{\nu} $ is finite-type, so it again suffices to verify the infinitesimal lifting property. Suppose
$ \gamma_{R/I} \in \universalgroup{m}{n}{\mu}{\nu} ( R / I ) $. Since $ \universalgroup{m}{n}{\mu}{\nu} ( R / I ) \subset \universalgroup{m}{n}{\mu}{\nu}_{\textup{weak}} ( R / I ) $, smoothness implies that there is $ \gamma_R \in \universalgroup{m}{n}{\mu}{\nu}_{\textup{weak}} ( R ) $ such that $ \gamma_R \mapsto \gamma_{R/I} $. Since $ R $ is local, to show that in fact $ \gamma_R \in \universalgroup{m}{n}{\mu}{\nu} ( R ) $, I must show the existence of $ \widetilde{\gamma}_R $ globally on $ \spec(R) $. Let $ \widetilde{\gamma}_R $ be an arbitrary lift of $ \gamma_R $ to an $ \mathcal{R}[t] $-linear endomorphism of $ t^{-m} (t+p)^{-\mu-1} \mathcal{R}[t]^d / t^{m+n} (t+p)^{\mu+\nu} \mathcal{R}[t]^d $. Since $ \gamma_{R/I} \in \universalgroup{m}{n}{\mu}{\nu} ( R / I ) $ (note that $ \widetilde{\gamma}_{R/I} $ exists globally on $ \spec(R) $), Nakayama's Lemma implies that $ \gamma_{R/I} $ is \emph{surjective}, and since the domain and codomain are the same rank $ d $ free module, ``Linear Independence of Minimal Generating Sets'' then implies that $ \gamma_{R/I} $ is an \emph{automorphism}.
\end{proof}

\subsection{Properties of the actions on the convolution diagram}

To deduce statements about $ \alpha_2 $ from statements about $ \alpha_1 $, it is helpful to isolate and record the following:

\begin{lemma} \label{Lactionhelper}
Let $ R $ be a \emph{local} commutative $ \Z_p $-algebra. Fix $ g \in \biggermonoid{m}{n} ( R ) $ and $ h \in \integralaffineflagvarmonoid{\mu}{\nu} ( R ) $. If $ \gamma \in \universalgroup{m}{n}{\mu}{\nu} ( R ) $ is such that $ g \circ \gamma = g $ then $ \gamma \circ h $ and $ h $ are in the same fiber of $ \integralaffineflagvarmonoid{\mu}{\nu} \rightarrow \integralaffineflagvarargs{\mu}{\nu} $.
\end{lemma}

\begin{proof}
Let $ \mathcal{F}_{\bullet} \in \integralaffineflagvarargs{\mu}{\nu} ( R ) $ be the image of $ h $. Lifting $ \mathcal{F}_0 $ to $ \mathcal{V} ( R ) = \mathcal{R} [ t, t^{-1}, (t+p)^{-1} ]^d $ and scaling by $ t^{-m} $ yields a submodule of $ t^{-m} (t+p)^{-\mu} \mathcal{R}[t]^d $ which then descends to a $ \mathcal{R}[t] $-submodule $ \mathcal{F}_0^{+} \subset \overline{\mathcal{W}}_{\SUP} ( R ) $. Note that $ h ( t^{-m} (t+p)^{-\mu} \mathcal{R}[t]^d ) = \mathcal{F}_0^{+} $ and that $ t^{-m} (t+p)^{\nu} \mathcal{R}[t]^d \subset \mathcal{F}_0^{+} $. I claim first that $ \kernel ( g ) \subset \mathcal{F}_0^{+} $. By the previous containment, it suffices to show that $ \kernel ( g ) \subset t^{-m} (t+p)^{\nu} \mathcal{R}[t]^d $. By definition of $ \overline{\mathcal{W}}_{\SUP} $, it is equivalent to show that $ t^{m+n} \cdot \kernel ( g ) = 0 $ inside $ \overline{\mathcal{W}}_{\SUP} ( R ) $. Let $ \widetilde{g} $ be a lift of $ g $ (recall that $ R $ is local) as guaranteed by the definition of $ \biggermonoid{m}{n} $, so that $ \kernel ( g ) = \widetilde{g}^{-1} ( t^n (t+p)^{\nu} \mathcal{R}[t]^d ) $. Let $ \mathcal{L}_{\bullet} \in \biggermonoid{m}{n} ( R ) $ be the image of $ g $. By $ \mathcal{R}[t] $-linearity of $ \widetilde{g} $, it is equivalent to show that $ t^{m + 2n} (t+p)^{\nu} \mathcal{R}[t]^d \subset t^{m+n} (t+p)^{\nu} \mathcal{L}_0 $, but this is immediate by \BLMdef{\ref{BLMbounds}}. Now return to the main result. By hypothesis on $ \gamma $, it is true that $ \gamma ( h ( v ) ) - h ( v ) \in \kernel ( g ) $ for all $ v \in t^{-m} (t+p)^{-\mu} \mathcal{R}[t]^d $. By what was already proved, $ \gamma ( h ( t^{-m} (t+p)^{-\mu} \mathcal{R}[t]^d ) ) = h ( t^{-m} (t+p)^{-\mu} \mathcal{R}[t]^d ) $, and the proof is finished by scaling both sides by $ t^m $.
\end{proof}

The following is the analogue of Lemma 20 from \cite{HN}:

\begin{prop} \label{Ptransitiveactions}
Let $ K $ be a $ \Z_p $-field.
\begin{itemize}
\item The action $ \alpha_1 $ (resp. $ \alpha_2 $) is transitive on each fiber of $ p_1 $ (resp. $ p_2 $) over a $ K $-point $ ( g, h ) $.

\item The fixers $ \fixer_1 ( g, h ) $ and $ \fixer_2 ( g, h ) $ are smooth and connected for all $ K $-points $ ( g, h ) $.
\end{itemize}
\end{prop}

\begin{proof}
\framebox{transitivity} Let $ R $ be a \emph{local} commutative $ \Z_p $-algebra. Suppose $ g, h \in \biggermonoid{m}{n} ( R ) $ are in the same fiber of $ \biggermonoid{m}{n} \rightarrow \biggermodel{m}{n} $. Let $ \widetilde{g} $ and $ \widetilde{h} $ be the (global!) lifts of $ g $ and $ h $ guaranteed by the definition of $ \biggermonoid{m}{n} $. The assumption that $ g $ and $ h $ are in the same fiber means that both $ \widetilde{g} $ and $ \widetilde{h} $ have the same codomain and so $ \widetilde{g}^{-1} \circ \widetilde{h} $ is an $ \mathcal{R}[t] $-linear automorphism of $ t^{-m} \widetilde{\mathcal{V}}_0 ( R ) $. Since this module is simply a ``shift'' of $ \overline{\mathcal{W}}_{\SUP} ( R ) $ by $ (t+p)^{\mu+1} $, the automorphism $ \widetilde{g}^{-1} \circ \widetilde{h} $ can be considered as an $ \mathcal{R}[t] $-linear automorphism $ \gamma $ of $ \overline{\mathcal{W}}_{\SUP} ( R ) $. By the part of property (\ref{Dbiggermonoidlifting}) of $ \widetilde{g} $ that concerns chains and the fact that $ g $ and $ h $ are in the same fiber, it follows that $ \gamma $ stabilizes the chain $ \overline{\mathcal{V}}_{\bullet} ( R ) $. Finally, it is immediate that $ \gamma $ has the required similitude property, with multiplier $ c(g)^{-1} \cdot c(h) \in ( R[t] / t^{m+n} (t+p)^{\mu+\nu+1} R[t] )^{\times} $. Thus, $ \gamma \in \universalgroup{m}{n}{\mu}{\nu} ( R ) $ and $ g \circ \gamma = h $. A nearly identical proof shows transitivity after replacing $ \biggermonoid{m}{n} \rightarrow \biggermodel{m}{n} $ with $ \integralaffineflagvarmonoid{\mu}{\nu} \rightarrow \integralaffineflagvarargs{\mu}{\nu} $, so this establishes transitivity of the action of $ \universalgroup{m}{n}{\mu}{\nu} ( R ) \times \universalgroup{m}{n}{\mu}{\nu} ( R ) $ via $ \alpha_1 $ on fibers of $ p_1 $ over points of $ \biggermonoid{m}{n} ( R ) $ whenever $ R $ is local.

Now consider $ \alpha_2 $ and $ p_2 $. Let $ R $ be a local commutative $ \Z_p $-algebra. If $ g_1, h_1 \in \biggermonoid{m}{n} ( R ) $ and $ g_2, h_2 \in \integralaffineflagvarmonoid{\mu}{\nu} ( R ) $ are such that $ p_2 ( g_1, g_2 ) = p_2 ( h_1, h_2 ) $ then the goal is to find $ \gamma, \eta \in \universalgroup{m}{n}{\mu}{\nu} ( R ) $ such that $ g_1 \circ \gamma^{-1} = h_1 $ and $ \gamma \circ g_2 \circ \eta^{-1} = h_2 $. By definition of $ p_2 $, the assumed equality forces that $ g_1 $ and $ h_1 $ are in the same fiber of $ \biggermonoid{m}{n} \rightarrow \biggermodel{m}{n} $. By transitivity on fibers of $ p_1 $ (proved already), there is $ \gamma \in \universalgroup{m}{n}{\mu}{\nu} ( R ) $ such that $ g_1 \circ \gamma^{-1} = h_1 $. By this equality and the definition of $ p_2 $, both $ g_1 \circ g_2 = h_1 \circ \gamma \circ g_2 $ and $ h_1 \circ h_2 $ have the same image on $ t^{-m} (t+p)^{-\mu} \mathcal{R}[t]^d $. Lemma \ref{Lactionhelper} then implies\footnote{The equality in hand, $ h_1 ( \gamma ( g_2 ( t^{-m} (t+p)^{-\mu} \mathcal{R}[t]^d ) ) ) = h_1 ( h_2 ( t^{-m} (t+p)^{-\mu} \mathcal{R}[t]^d ) ) $, is logically weaker than the element-wise equality that was used to conclude the proof of Lemma \ref{Lactionhelper} but is nonetheless sufficient for that conclusion.} that $ \gamma \circ g_2 $ and $ h_2 $ are in the same fiber of $ \integralaffineflagvarmonoid{\mu}{\nu} \rightarrow \integralaffineflagvarargs{\mu}{\nu} $ so transitivity on fibers of $ p_1 $ yields $ \eta \in \universalgroup{m}{n}{\mu}{\nu} ( R ) $ such that $ \gamma \circ g_2 \circ \eta^{-1} = h_2 $.

\framebox{connectedness, smoothness} Let $ K $ be a $ \Z_p $-field and set $ \mathcal{K} := K \otimes_{\Z_p} \mathcal{O} $. Assume first that $ \mychar ( K ) = p $ and note that $ \mathcal{K} = K \otimes_{\F_p} \F $ also (this uses that $ F / \Q_p $ is unramified). Fix $ g \in \biggermonoid{m}{n} ( K ) $ and let $ \mathcal{L}_{\bullet} \in \biggermodel{m}{n} ( K ) $ be the image via $ \biggermonoid{m}{n} \rightarrow \biggermodel{m}{n} $. Suppose $ \gamma \in \universalgroup{m}{n}{\mu}{\nu} ( K ) $ is such that $ g \circ \gamma^{-1} = g $, which is the same as to say: $ \gamma $ belongs to the $ K[t] / t^N K[t] $-points of the unitary similitude group $ \mathbb{G} $ of the hermitian form $ \overline{\phi}_{\F_p} $ ($ N := m + n + \mu + \nu + 1 $), stabilizes the standard ``lattice'' chain $ \overline{\mathcal{V}}_{\bullet} ( K ) $, and satisfies $ \gamma ( v ) - v \in \kernel ( g ) $ for all $ v \in \overline{\mathcal{W}}_{\SUP} ( K ) $. By the proof of the embedding $ \biggermodel{m}{n}_{\F_p} \hookrightarrow \affineflagvariety $, the chain $ \mathcal{L}_{\bullet} $ lifts to a chain $ \mathcal{F}_{\bullet} \in \affineflagvariety ( K ) $, which is a polarized lattice chain in the sense of A.41 of \cite{RZ} relative to the products $ t^{n - m} \Phi_K $. By Proposition A.43 of \cite{RZ}, there is $ \mathbf{g} \in \GU_d ( K (( t )) ) $ which is an isomorphism $ ( t^{-m} \lambda_{\bullet} \widehat{\otimes}_{\F_p} K, t^{2 m} \Phi_K ) \directedisom ( \mathcal{F}_{\bullet}, t^{n - m} \Phi_K ) $. This $ \mathbf{g} $ clearly induces some $ g^{\prime} $ in the fiber of $ \biggermonoid{m}{n} ( K ) \rightarrow \biggermodel{m}{n} ( K ) $ over $ \mathcal{L}_{\bullet} $. Since it was already proved that $ \universalgroup{m}{n}{\mu}{\nu} $ is transitive on such fibers, replacing $ g $ with $ g^{\prime} $ simply conjugates the fixer to which $ \gamma $ belongs. Hence, I may assume that $ g $ is induced by $ \mathbf{g} $. By the Bruhat-Tits Decomposition, there are $ \alpha, \beta \in \iwahori ( K ) $ such that $ \alpha \cdot \mathbf{g} \cdot \beta $ is ``standard'', i.e. is a nice representative of the extended affine Weyl group and transforms any standard lattice $ t^{a_i} \mathcal{K}[[t]] \oplus \cdots \oplus t^{a_d} \mathcal{K}[[t]] $ to another standard lattice. Conjugating the fixer of $ g $ again, by $ \beta $, I may assume that $ g $ is induced by a standard $ \mathbf{g} $ as just described. This implies that $ \kernel ( g ) $ is a standard lattice chain, generated over $ \mathcal{K}[t] $ by $ t^{a_1} e_1, \ldots, t^{a_d} e_d $ for some $ a_1, \ldots, a_d \in \Z $. Altogether, $ \gamma \in \universalgroup{m}{n}{\mu}{\nu} ( K ) $ fixes $ g $ if and only if $ \gamma \in \mathbb{G} ( K[t] / t^N K[t] ) $ is in the image of $ \iwahori ( K ) $ and $ \gamma ( v ) - v \in t^{a_1} \mathcal{K}[[t]] \oplus \cdots \oplus t^{a_d} \mathcal{K}[[t]] $ for all $ v \in \overline{\mathcal{W}}_{\SUP} ( K ) $. By the Iwahori Factorization, it is easy to identify this subgroup: it is a product of groups $ R ( \Gm ) $ and $ R ( \Ga ) $ and $ R^0 ( \Ga ) $, where $ R $ denotes restriction-of-scalars from $ \F $ to $ \F_p $ and $ R^0 \subset R $ is the subgroup defined by $ \trace_{\F / \F_p} ( - ) = 0 $. Applying the above argument twice proves the claim for the action $ \alpha_1 $. If $ \mychar ( K ) = 0 $ then the above ideas may be used after invoking the Chinese Remainder Theorem as in the proof of Lemma \ref{Lgenericfiberisom} (page \pageref{Lgenericfiberisom}) to decompose all relevant rings and modules into a $ t $ part and a $ t + p $ part.

Now consider the $ \alpha_2 $-action. Let $ K $ be a $ \Z_p $-field. The two maps $ \eta \mapsto ( \identity, \eta ) $ and $ ( \gamma, \eta ) \mapsto \gamma $ yield an exact sequence $ 1 \rightarrow \fixer ( h ) \rightarrow \fixer ( g, h ) \rightarrow \fixer ( g ) $ of $ K $-groups. Let $ R $ be a \emph{local} commutative $ K $-algebra and suppose $ \gamma \in \fixer ( g ) ( R ) $. By Lemma \ref{Lactionhelper}, $ \gamma \circ h $ is in the same fiber as $ h $ so transitivity of $ \universalgroup{m}{n}{\mu}{\nu} $ (proved already) implies the existence of $ \eta $ such that $ \gamma \circ h \circ \eta^{-1} = h $, i.e. $ \fixer ( g, h ) ( R ) \rightarrow \fixer ( g ) ( R ) $ is surjective. It follows that $ \fixer ( g, h ) \rightarrow \fixer ( g ) $ an \emph{epimorphism} and the above sequence extends to a short-exact-sequence. Since extensions of connected affine algebraic groups are connected, $ \fixer ( g, h ) ( K ) $ is connected whenever $ K $ is separably closed. To prove that $ \fixer ( g, h ) $ is smooth is equivalent to verify the infinitesimal lifting property. So, let $ R $ be a \emph{local} commutative $ K $-algebra and let $ I \subset R $ be a nilpotent ideal. By the earlier part of this paragraph, there is a short-exact-sequence $ 1 \rightarrow \fixer ( h ) ( R ) \rightarrow \fixer ( g, h ) ( R ) \rightarrow \fixer ( g ) ( R ) \rightarrow 1 $, a similar sequence with $ R / I $ in place of $ R $, and the obvious three morphisms $ * ( R ) \rightarrow * ( R / I ) $ connecting the two. Since $ \fixer ( * ) ( R ) \rightarrow \fixer ( * ) ( R / I ) $ is surjective for $ * = g, h $ (it was already proved that both fixers are smooth), a diagram-chase proves that $ \fixer ( g, h ) ( R ) \rightarrow \fixer ( g, h ) ( R / I ) $ is also surjective.
\end{proof}

\begin{remark}
Uses of Proposition \ref{Ptransitiveactions}:
\begin{itemize}
\item Transitivity for $ \alpha_1 $ is used in \S\ref{SSfusionproductdef} (page \pageref{SSfusionproductdef}).

\item Connectivity and Smoothness for $ \alpha_1 $ are not used.

\item Transitivity, Connectivity, and Smoothness for $ \alpha_2 $ are used to satisfy the hypotheses of Lemma 21 of \cite{HN} (page \pageref{labelHNdescendsheaf} here).
\end{itemize}
\end{remark}

All of this section's results involving $ p_2 $ and $ m $ are also true of $ \presuperscript{\textup{rev}}{p}_2 $ and $ \presuperscript{\textup{rev}}{m} $:
\begin{itemize}
\setlength{\itemsep}{5pt}
\item $ \presuperscript{\textup{rev}}{p}_2 $ is smooth and and $ \presuperscript{\textup{rev}}{p}_2 (K) $ is surjective for any $ \Z_p $-field $ K $,

\item the action via $ \alpha_2 $ stabilizes and is transitive on fibers of $ \presuperscript{\textup{rev}}{p}_2 $ over $ K $-points for any $ \Z_p $-field $ K $,

\item the fixer via $ \alpha_2 $ of any $ K $-point is smooth and connected for any $ \Z_p $-field $ K $, and

\item $ \presuperscript{\textup{rev}}{m} $ is proper.
\end{itemize}

\section{The convolution product} \label{Sfusionproduct}

\subsection{Construction of the convolution product} \label{SSfusionproductdef}

I use the following general Proposition 4.2.5 on page 109 of \cite{BBD}
\begin{BBDpullbackofperverse} \label{labelBBDpullbackofperverse}
If $ f : X \rightarrow Y $ is a smooth morphism of schemes with relative dimension $ n $ and the geometric fibers of $ f $ are connected, then the shifted pullback $ f^{*}[n] $ is a fully-faithful functor from perverse sheaves on $ Y $ to perverse sheaves on $ X $.
\end{BBDpullbackofperverse}
The phrase ``geometric fiber'' here has the customary meaning: the fiber of $ f $ over a $ K $-point of $ Y $ for a separably-closed field $ K $.

Fix a $ \Z_p $-field $ K $ and $ m, n, \mu, \nu \in \N $.

Let $ \mathcal{A} $ be a perverse $ \biggergroup{m}{n}_K $-equivariant $ \ell $-adic sheaf on $ \biggermodel{m}{n}_K $ and $ \mathcal{B} $ a perverse $ \integraliwahoriargs{\mu}{\nu}_K $-equivariant $ \ell $-adic sheaf on $ \integralaffineflagvarargs{\mu}{\nu}_K $. Because of the morphisms from \S\ref{SSuniversalgroupaction} (page \pageref{SSuniversalgroupaction}), I can unify these equivariance properties by saying that both are $ \universalgroup{m}{n}{\mu}{\nu}_K $-equivariant.

The external tensor product $ \mathcal{A} \boxtimes_K \mathcal{B} $ (ordinary derived tensor product of the pullbacks along both projections) is another perverse $ \universalgroup{m}{n}{\mu}{\nu}_K $-equivariant $ \ell $-adic sheaf on $ \biggermodel{m}{n}_K \times \integralaffineflagvarargs{\mu}{\nu}_K $. By Proposition \ref{Psmoothsurjections} (page \pageref{Psmoothsurjections}), $ p_1 $ is smooth.

The action by $ \universalgroup{m}{n}{\mu}{\nu}_K $ comes from the loop group $ R \mapsto \mathcal{P}(R[[t]]) $ of a parahoric $ \mathcal{P} $ (depending on $ \mychar(K) $), which is \emph{connected}, so by the transitivity statement of Proposition \ref{Ptransitiveactions} (page \pageref{Ptransitiveactions}) the geometric fibers of $ p_1 $ are connected. By Proposition 4.2.5 of \cite{BBD}, the pullback $ p_1^{*} ( \mathcal{A} \boxtimes_K \mathcal{B} ) $ is a perverse $ \ell $-adic sheaf.

Since the action of $ \universalgroup{m}{n}{\mu}{\nu} \times \universalgroup{m}{n}{\mu}{\nu} $ by $ \alpha_1 $ stabilizes $ p_1 $-fibers, $ p_1^{*} ( \mathcal{A} \boxtimes_K \mathcal{B} ) $ is \emph{trivially} $ \alpha_1 $-equivariant: $ p_1 \circ \alpha_1 = p_1 \circ \pr $ already. Since the difference between $ \alpha_1 $ and $ \alpha_2 $ is the action
\begin{align*}
\universalgroup{m}{n}{\mu}{\nu} \times ( \biggermonoid{m}{n} \times \integralaffineflagvarmonoid{\mu}{\nu} ) &\longrightarrow ( \biggermonoid{m}{n} \times \integralaffineflagvarmonoid{\mu}{\nu} ) \\
( \gamma , ( g, h ) ) &\longmapsto ( g, \gamma \circ h )
\end{align*}
the initial assumption that $ \mathcal{A} $ and $ \mathcal{B} $ were $ \universalgroup{m}{n}{\mu}{\nu} $-equivariant implies that $ p_1^{*} ( \mathcal{A} \boxtimes_K \mathcal{B} ) $ is $ \alpha_2 $-equivariant.

I use the following general Lemma 21 from \cite{HN}:
\begin{HNdescendsheaf} \label{labelHNdescendsheaf}
Let $ \pi : X \rightarrow Y $ be a morphism of finite-type $ \Z_p $-schemes. Let $ G $ be a $ \Z_p $-group scheme. Let $ a_X : G \times X \rightarrow X $ a group action over $ \spec(\Z_p) $. Let $ G $ act trivially on $ Y $. Let $ \mathcal{F} $ be a perverse $ a_X $-equivariant \etale $ \ell $-adic sheaf on $ X $. Assume that $ \pi $ is smooth and surjective on the level of topological spaces, that $ G $ is smooth, that for any $ \Z_p $-field $ K $ the action of $ G $ on $ X $ is transitive on fibers of $ \pi $ over $ K $-points, $ G_K $ is connected, the fixer subscheme of a $ K $-point of $ X $ is a smooth connected subgroup of $ G_K $. \textbf{Assertion}: There is a unique perverse $ \ell $-adic sheaf $ \mathcal{G} $ on $ Y $ such that $ \mathcal{F} \cong \pi^{*}( \mathcal{G} ) $.
\end{HNdescendsheaf}

\begin{definition}
The sheaf complex
\begin{equation*}
\mathcal{A} \odot_K \mathcal{B}
\end{equation*}
on $ \convolutionobject{m}{n}{\mu}{\nu} $ is the complex ``$ \mathcal{G} $'' guaranteed by Lemma 21 of \cite{HN} for
\begin{itemize}
\item the morphism $ p_2 : \biggermonoid{m}{n} \times \integralaffineflagvarmonoid{\mu}{\nu} \longrightarrow \convolutionobject{m}{n}{\mu}{\nu} $,

(hypothesis provided by Proposition \ref{Psmoothsurjections})

\item the action of $ \universalgroup{m}{n}{\mu}{\nu} \times \universalgroup{m}{n}{\mu}{\nu} $ via $ \alpha_2 $, and

(hypotheses provided by Proposition \ref{PsmoothuniversalJ}, the discussion in \S\ref{SSfusionproductdef}, and Proposition \ref{Ptransitiveactions})

\item the sheaf complex $ p_1^{*} ( \mathcal{A} \boxtimes_K \mathcal{B} ) $.

(hypotheses provided by the discussion earlier in this subsection)
\end{itemize}
\end{definition}
Note that Proposition \ref{Psmoothsurjections} (page \pageref{Psmoothsurjections}) and Lemma \ref{Lgenericfiberisom} (page \pageref{Lgenericfiberisom}) together imply that $ p_1 $ and $ p_2 $ have the same relative dimension over each component of $ \convolutionobject{m}{n}{\mu}{\nu} $ and $ \biggermodel{m}{n} \times \integralaffineflagvarargs{\mu}{\nu} $: smoothness of $ p_1 $ and $ p_2 $ imply \emph{constant} relative dimension, but at the same time $ \convolutionobject{m}{n}{\mu}{\nu}_{\Q_p} \cong \biggermodel{m}{n}_{\Q_p} \times \integralaffineflagvarargs{\mu}{\nu}_{\Q_p} $. This means that $ \mathcal{A} \odot_K \mathcal{B} $ is already perverse (i.e. no shift is needed).

\begin{convolutionproduct}
The convolution product $ *_K $ is defined by
\begin{equation*} \label{Efusionproductdef}
\mathcal{A} *_K \mathcal{B} \defeq R m_{!} ( \mathcal{A} \odot_K \mathcal{B} ),
\end{equation*}
a complex of $ \ell $-adic sheaves on $ \convolutiontarget{m}{n}{\mu}{\nu}_K $.
\end{convolutionproduct}
Note that $ m_{*} = m_{!} $ since $ m $ is a proper morphism.

Repeating the above using the \emph{reversed} convolution diagram from \S\ref{SSreversedconvolutiondiagram} (page \pageref{SSreversedconvolutiondiagram}) produces the product $ \mathcal{B} \odot_K \mathcal{A} $ on $ \reversedconvolutionobject{\mu}{\nu}{m}{n} $ and the \emph{reversed} convolution product:
\begin{reversedconvolutionproduct}
The ``reversed'' convolution product $ \mathcal{B} *_K \mathcal{A} $ on $ \convolutiontarget{m}{n}{\mu}{\nu}_K $ is defined by
\begin{equation*}
\mathcal{B} *_K \mathcal{A} \defeq R ( \presuperscript{\textup{rev}}{m}_{!} ) ( \mathcal{B} \odot_K \mathcal{A} )
\end{equation*}
\end{reversedconvolutionproduct}


\subsection{The product of sheaves categorifies the product of functions}
\label{SSfusionproductcategorifies}

\begin{center}
\emph{It is natural to ask exactly how the convolution product of sheaf complexes is related to the convolution product of functions in the Hecke algebra. This is apparently well-known, but since I have not seen it in print, I explain it. This subsection is not logically required for any other part of the paper.}
\end{center}

Let $ m, n, \mu, \nu \in \N $ be arbitrary. Let $ \mathcal{A} $ and $ \mathcal{B} $ be (bounded, constructible) complexes of $ \ell $-adic sheaves on $ \biggermodel{m}{n}_{\overline{\F}_p} $ and $ \integralaffineflagvarargs{\mu}{\nu}_{\overline{\F}_p} $ equipped with actions of $ \gal ( \overline{\Q}_p / \Q_p ) $ that are consistent with the action of $ \gal ( \overline{\F}_p / \F_p ) $ on $ \biggermodel{m}{n}_{\overline{\F}_p} $ and $ \integralaffineflagvarargs{\mu}{\nu}_{\overline{\F}_p} $. Then $ \mathcal{A} *_{\overline{\F}_p} \mathcal{B} $ is a (bounded, constructible) complex of $ \ell $-adic sheaves on $ \convolutiontarget{m}{n}{\mu}{\nu}_{\overline{\F}_p} $ with all the same properties.

By \S\ref{SSdiagramcollapsemodp} (page \pageref{SSdiagramcollapsemodp}), the associated trace functions under consideration are:
\begin{align*}
\sstrace{ \mathcal{A} } &: \integralaffineflagvarargs{m}{n} ( \F_p ) \longrightarrow \overline{\Q}_{\ell} \\
\sstrace{ \mathcal{B} } &: \integralaffineflagvarargs{\mu}{\nu} ( \F_p ) \longrightarrow \overline{\Q}_{\ell} \\
\sstrace{ \mathcal{A} * \mathcal{B} } &: \integralaffineflagvarargs{m+\mu}{n+\nu} ( \F_p ) \longrightarrow \overline{\Q}_{\ell}
\end{align*}

Let $ x \in \integralaffineflagvarargs{m+\mu}{n+\nu} ( \F_p ) $ be arbitrary and set
\begin{equation*}
\mathfrak{f} \defeq \{ y \in \integralaffineflagvarargs{m}{n} ( \F_p ) \suchthat ( y, x ) \in \convolutionobject{m}{n}{\mu}{\nu} ( \F_p ) \}
\end{equation*}
which is essentially the fiber $ m ( \F_p )^{-1} ( x ) \subset \convolutionobject{m}{n}{\mu}{\nu} ( \F_p ) $.

Because of the semisimplification done in \S\ref{SStracedef} (page \pageref{SStracedef}), the $ \Gamma_0 $-invariants operation is exact, and the following general Proposition 10 from \cite{HN} results:
\begin{HNfibersum}
Let $ f : X \rightarrow Y $ be a morphism of $ \F_p $-schemes and let $ \mathcal{C} $ be a complex of $ \ell $-adic sheaves on $ X $ with an action $ \gal ( \overline{\Q}_p / \Q_p ) $ compatible with that on $ X(\F_p) $. Then for any $ y \in Y ( \F_p ) $,
\begin{equation*}
\sstrace{ f_{!}( \mathcal{C} ) } (y) = \sum_{ \substack{ x \in X(\F_p) \\ f(x)=y } } \sstrace{ \mathcal{C} }(x)
\end{equation*}
\end{HNfibersum}
This implies that
\begin{equation*}
\sstrace{ \mathcal{A} * \mathcal{B} } ( x ) = \sum_{ y \in \mathfrak{f} } \sstrace{ \mathcal{A} \odot \mathcal{B} } ( y, x )
\end{equation*}

For any $ y \in \mathfrak{f} $, if $ ( h, k ) \in \integralaffineflagvarmonoid{m}{n} ( \F_p ) \times \integralaffineflagvarmonoid{\mu}{\nu} ( \F_p ) $ is such that $ p_2 ( h, k ) = ( y, x ) $ (such elements exist by Proposition \ref{Psmoothsurjections} (page \pageref{Psmoothsurjections}))) then setting $ ( y, z ) := p_1 ( h, k ) $ (recall that the first coordinates $ p_1 $ and $ p_2 $ are the same), it is true that
\begin{equation*}
\sstrace{ \mathcal{A} \odot \mathcal{B} } ( y, x ) = \sstrace{ \mathcal{A} } ( y ) \cdot \sstrace{ \mathcal{B} } ( z )
\end{equation*}
(this follows from general sheaf theory: the way the operations $ \boxtimes $ (external tensor product), $ p_1^{*} $ and $ p_2^{*} $ interact with stalks)

Let $ \mathfrak{e} \subset \integralaffineflagvarargs{\mu}{\nu} ( \F_p ) $ be the set of all $ z $ occuring in this way. Then the above sum can be rewritten
\begin{equation*}
\sstrace{ \mathcal{A} * \mathcal{B} } ( x ) = \sum_{ \substack{ y \in \mathfrak{f} \\ z \in \mathfrak{e} } } \sstrace{ \mathcal{A} } ( y ) \cdot \sstrace{ \mathcal{B} } ( z )
\end{equation*}

To see this as a convolution, inflate $ \sstrace{ \mathcal{A} } $ and $ \sstrace{ \mathcal{B} } $ to $ \integralaffineflagvarmonoid{m}{n} ( \F_p ) $ and $ \integralaffineflagvarmonoid{\mu}{\nu} ( \F_p ) $ and recall the ``twisting'' that occurs in the 2nd coordinate of $ p_2 $. Then for any $ x \in \integralaffineflagvarargs{m+\mu}{n+\nu} ( \F_p ) $,
\begin{equation*}
\sstrace{ \mathcal{A} * \mathcal{B} } ( x ) = \sum_{ \substack{ h \in \integralaffineflagvarmonoid{m}{n} ( \F_p ) \\ k \in \integralaffineflagvarmonoid{\mu}{\nu} ( \F_p ) \\ h ( k ( - ) ) = x } } \sstrace{ \mathcal{A} } ( h ) \cdot \sstrace{ \mathcal{B} } ( k )
\end{equation*}
(so $ h $ plays the role of ``$ y $'' and $ k $ plays the role of ``$ y^{-1} x $'' in the expression ``$ (f*g) (x) = \sum_{y} f(y)g(y^{-1}x) $'')

\section{Proof of the Main Theorem} \label{Smainproof}

Let $ w \in \widetilde{W} $ be arbitrary. There exists $ \mu, \nu \in \N $ (infinitely many, all with the same difference $ \Delta = \mu - \nu $) such that the Schubert variety $ \overline{C}_w $ is contained in $ \integralaffineflagvarargs{\mu}{\nu}_{ \overline{\F}_p } $. Let $ \ICbar{w} $ be the (perverse) \etale $ \ell $-adic intersection complex associated to the cell $ C_w $ in the Bruhat-Tits Decomposition of $ \integralaffineflagvarargs{\mu}{\nu}_{ \overline{\F}_p } $. The function
\begin{equation*}
\sstrace{ \ICbar{w} } : \integralaffineflagvarargs{m}{n} ( \F_p ) \longrightarrow \overline{\Q}_{\ell}
\end{equation*}
is also an element of the Iwahori-Hecke algebra $ \mathcal{H} $. By the main theorems of \cite{KL1} and \cite{KL2}, the set of these functions $ \sstrace{ \ICbar{w} } $ for all $ w \in \widetilde{W} $ forms a vector-space basis for $ \mathcal{H} $ .

Therefore, to show that $ \sstrace{\mu} \in Z ( \mathcal{H} ) $, it suffices to show that
\begin{equation*}
\sstrace{\mu} * \sstrace{ \ICbar{w} } \iseq \sstrace{ \ICbar{w} } * \sstrace{\mu}
\end{equation*}
(ordinary convolution of functions) for every $ w \in \widetilde{W} $.


Recall from \S\ref{SSintegralcompleteaffineflagvariety} (page \pageref{SSintegralcompleteaffineflagvariety}) that if $ \IC{w} $ is the (perverse) \etale $ \ell $-adic intersection complex associated to the cell $ C_w $ in the Bruhat-Tits Decomposition of $ \affineflagvariety_{\Q_p} $ (note the base field here), then $ \ICbar{w} \directedisom \nearbycycles ( \IC{w} ) $. Recall from \S\ref{SStracedef} (page \pageref{SStracedef}) that \emph{by definition} if $ \IC{\mu} $ is the (perverse) \etale $ \ell $-adic intersection complex associated to the cell $ O_{\mu} $ in the Cartan Decomposition of $ \affinegrassmannian_{\Q_p} $ then $ \sstrace{\mu} = \sstrace{ \nearbycycles ( \IC{\mu} ) } $. Using these two identities, it suffices to prove that
\begin{equation*}
\sstrace{ \nearbycycles ( \IC{\mu} ) } * \sstrace{\nearbycycles ( \IC{w} )} \iseq \sstrace{\nearbycycles ( \IC{w} )} * \sstrace{ \nearbycycles ( \IC{\mu} ) }
\end{equation*}
By \S\ref{SSfusionproductcategorifies} (page \pageref{SSfusionproductcategorifies}), the convolution product of sheaves induces the convolution product of functions, so it suffices to show\footnote{Note that the \emph{reversed} convolution product occurs on the right-hand-side here.} that
\begin{equation*}
\nearbycycles ( \IC{\mu} ) *_{ \overline{\F}_p } \nearbycycles ( \IC{w} ) \iseq \nearbycycles ( \IC{w} ) *_{ \overline{\F}_p } \nearbycycles ( \IC{\mu} )
\end{equation*}

By the general Lemma 23 of \cite{HN} (``nearby cycles commutes with convolution product''), this equality is equivalent to the equality
\begin{equation} \label{Efinalsheafisom}
\nearbycycles ( \IC{\mu} *_{ \Q_p } \IC{w} ) \iseq \nearbycycles ( \IC{w} *_{ \Q_p } \IC{\mu} )
\end{equation}

\begin{remark}
Lemma 23 of \cite{HN} applies because the fields involved here are algebraically-closed: an argument similar to that given in \S6.3.3 of \cite{Ha2} proves that the schemes used here simplify to the $ \GL $-versions after passing to the algebraic closure.
\end{remark}

\begin{remark}
Lemma 23 in \cite{HN} uses ``smooth base-change'' for $ p_1 $ and $ p_2 $, and therefore requires Proposition \ref{Psmoothsurjections} (page \pageref{Psmoothsurjections}).
\end{remark}

The following lemma implies that this last isomorphism (\ref{Efinalsheafisom}) is true.
\begin{lemma} \label{Lgenericfibercommutativity}
Recall the isomorphisms from Lemma \ref{Lgenericfiberisom} (page \pageref{Lgenericfiberisom}):
\begin{equation*}
\begin{CD}
\biggermodel{m}{n}_{\Q_p} \times \integralaffineflagvarargs{\mu}{\nu}_{\Q_p} @<i<< \convolutionobject{m}{n}{\mu}{\nu}_{\Q_p} \\
@A\presuperscript{\textup{rev}}{i}AA @VVmV \\
\reversedconvolutionobject{\mu}{\nu}{m}{n}_{\Q_p} @>>\presuperscript{\textup{rev}}{m}> \convolutiontarget{m}{n}{\mu}{\nu}_{\Q_p}
\end{CD}
\end{equation*}

\textbf{Assertion}: if $ \mathcal{A} $ and $ \mathcal{B} $ are complexes of $ \universalgroup{m}{n}{\mu}{\nu}_{\Q_p} $-equivariant $ \ell $-adic sheaves on $ \biggermodel{m}{n}_{\Q_p} $ and $ \integralaffineflagvarargs{\mu}{\nu}_{\Q_p} $ respectively, then
\begin{align*}
i^{*} ( \mathcal{A} \boxtimes_{\Q_p} \mathcal{B} ) &\directedisom \mathcal{A} \odot_{\Q_p} \mathcal{B} \\
\presuperscript{\textup{rev}}{i}^{*} ( \mathcal{A} \boxtimes_{\Q_p} \mathcal{B} ) &\directedisom \mathcal{B} \odot_{\Q_p} \mathcal{A}
\end{align*}
Applying $ R m_{!} $ and $ R ( \presuperscript{\textup{rev}}{m}_{!} ) $ to these isomorphisms and using commutativity of the above square implies that
\begin{equation*}
\mathcal{A} *_{\Q_p} \mathcal{B} \cong \mathcal{B} *_{\Q_p} \mathcal{A}.
\end{equation*}
\end{lemma}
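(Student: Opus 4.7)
The plan is to use the descent characterization of $\mathcal{A} \odot_{\Q_p} \mathcal{B}$ furnished by Haines--Ng\^{o} Lemma 21 in \S\ref{SSfusionproductdef}: it is the unique perverse $\ell$-adic sheaf $\mathcal{C}$ on $\convolutionobject{m}{n}{\mu}{\nu}_{\Q_p}$ equipped with a prescribed $\alpha_2$-equivariant isomorphism $p_2^{*}(\mathcal{C}) \directedisom p_1^{*}(\mathcal{A} \boxtimes_{\Q_p} \mathcal{B})$. Since $i$ is an isomorphism by Lemma \ref{Lgenericfiberisom}, the complex $i^{*}(\mathcal{A} \boxtimes_{\Q_p} \mathcal{B})$ is automatically perverse, and the first claimed isomorphism will follow once I produce a natural $\alpha_2$-equivariant identification
\begin{equation*}
(i \circ p_2)^{*}(\mathcal{A} \boxtimes_{\Q_p} \mathcal{B}) \directedisom p_1^{*}(\mathcal{A} \boxtimes_{\Q_p} \mathcal{B})
\end{equation*}
of perverse sheaves on $\biggermonoid{m}{n}_{\Q_p} \times \integralaffineflagvarmonoid{\mu}{\nu}_{\Q_p}$. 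Uniqueness then forces $i^{*}(\mathcal{A} \boxtimes_{\Q_p} \mathcal{B}) \directedisom \mathcal{A} \odot_{\Q_p} \mathcal{B}$.

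First I would exploit the Chinese Remainder decomposition of $\overline{\mathcal{W}}_{\textup{sup}}(\Q_p)$ into its $(t)$- and $(t+p)$-primary summands, as used in the proof of Lemma \ref{Lgenericfiberisom}. Since any $\mathcal{R}[t]$-linear endomorphism of a module that splits along coprime prime ideals must preserve the decomposition, each $g \in \biggermonoid{m}{n}_{\Q_p}$ decomposes canonically as $g = g^{(t)} \oplus g^{(t+p)}$, and similarly $h = h^{(t)} \oplus h^{(t+p)}$ for each $h \in \integralaffineflagvarmonoid{\mu}{\nu}_{\Q_p}$. Examining the bounding and similitude conditions in each monoid together with the fact that $t$ is a unit on the $(t+p)$-summand and $(t+p)$ is a unit on the $(t)$-summand, one checks that $g^{(t+p)}$ stabilizes each $\overline{\mathcal{V}}_i^{(t+p)}$ with an appropriate similitude multiplier, and the Zariski-local lift $\widetilde{g}$ furnishes the corresponding $\widetilde{\gamma}$ on the $(t+p)$-summand; this produces a natural morphism $\biggermonoid{m}{n}_{\Q_p} \to \universalgroup{m}{n}{\mu}{\nu}_{\Q_p}$ via $g \mapsto g^{(t+p)}$, and symmetrically $h \mapsto h^{(t)}$ defines a natural morphism $\integralaffineflagvarmonoid{\mu}{\nu}_{\Q_p} \to \universalgroup{m}{n}{\mu}{\nu}_{\Q_p}$.

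Next I would compute $p_1(g,h)$ and $(i \circ p_2)(g,h)$ explicitly in terms of these splittings. The first coordinates agree under the canonical identifications because, as already recorded in the proof of Lemma \ref{Lgenericfiberisom}, the $(t+p)$-part of any point of $\biggermodel{m}{n}_{\Q_p}$ is forced by the bounding conditions to equal $\overline{\mathcal{V}}_i^{(t+p)}$. For the second coordinates, a direct computation shows that
\begin{equation*}
p_1(g,h)_2 = h^{(t+p)}\bigl((t+p)^{-\mu}\overline{\mathcal{V}}_\bullet^{(t+p)}\bigr), \qquad (i \circ p_2)(g,h)_2 = g^{(t+p)} h^{(t+p)}\bigl((t+p)^{-\mu}\overline{\mathcal{V}}_\bullet^{(t+p)}\bigr).
\end{equation*}
In other words, $i \circ p_2$ and $p_1$ agree in the first slot and differ in the second slot precisely by the left-translation action of the element $g^{(t+p)} \in \universalgroup{m}{n}{\mu}{\nu}_{\Q_p}$. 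The $\universalgroup_{\Q_p}$-equivariance of $\mathcal{B}$ on $\integralaffineflagvarargs{\mu}{\nu}_{\Q_p}$ then furnishes, naturally in $(g,h)$, a canonical isomorphism between the two pullbacks. Applying the analogous argument to the reversed convolution diagram of \S\ref{SSreversedconvolutiondiagram} establishes $\presuperscript{\textup{rev}}{i}^{*}(\mathcal{A} \boxtimes_{\Q_p} \mathcal{B}) \directedisom \mathcal{B} \odot_{\Q_p} \mathcal{A}$. Finally, applying $Rm_{!}$ and $R(\presuperscript{\textup{rev}}{m}_{!})$ and invoking the commutativity of the square in Lemma \ref{Lgenericfiberisom} identifies both sides with the pushforward of $\mathcal{A} \boxtimes_{\Q_p} \mathcal{B}$ along a common composite morphism to $\convolutiontarget{m}{n}{\mu}{\nu}_{\Q_p}$, giving $\mathcal{A} *_{\Q_p} \mathcal{B} \cong \mathcal{B} *_{\Q_p} \mathcal{A}$.

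The main obstacle will be the verification that the canonical isomorphism produced by $\universalgroup_{\Q_p}$-equivariance of $\mathcal{B}$ is actually $\alpha_2$-equivariant, so that the uniqueness clause of the descent lemma is applicable. Concretely, one must check that the splitting morphisms $g \mapsto g^{(t+p)}$ and $h \mapsto h^{(t)}$ intertwine the $\alpha_2$-action on $\biggermonoid \times \integralaffineflagvarmonoid$ with an honest $\universalgroup_{\Q_p}$-action on the target rather than merely agreeing pointwise. This reduces to bookkeeping the $\alpha_2$-formula $(\gamma,\eta;g,h) \mapsto (g \circ \gamma^{-1}, \gamma \circ h \circ \eta^{-1})$ against the Chinese Remainder decomposition of $(\gamma,\eta)$; the $(t+p)$-summand of $\gamma \circ h \circ \eta^{-1}$ is $\gamma^{(t+p)} \circ h^{(t+p)} \circ (\eta^{-1})^{(t+p)}$, and tracking how this interacts with the left-translation by $g^{(t+p)}$ that mediates the comparison of $p_1$ and $i \circ p_2$ is the heart of the calculation. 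The argument is formal from the splittings but requires careful attention to the order of composition and to the twisting in the second slot of $\alpha_2$.
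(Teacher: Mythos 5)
Your proposal is essentially the paper's intended proof; the paper simply outsources to the second part of Lemma 24 in \cite{HN}, and what you have written is the correct fleshing-out of that argument: CRT-split the monoid elements, observe that $i \circ p_2$ and $p_1$ agree in the first slot and differ in the second by left-translation by the $(t+p)$-component of $g$, and use equivariance of $\mathcal{B}$ to produce the comparison isomorphism $p_2^{*}\bigl(i^{*}(\mathcal{A} \boxtimes \mathcal{B})\bigr) \cong p_1^{*}(\mathcal{A} \boxtimes \mathcal{B})$.

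One point you flag as ``the main obstacle'' is in fact a non-issue, and recognizing this simplifies the argument. The uniqueness in Haines--Ng\^{o} Lemma 21 does not require the comparison isomorphism itself to be $\alpha_2$-equivariant: it is a direct consequence of the full faithfulness of $p_2^{*}[n]$ on perverse sheaves (BBD Proposition 4.2.5), valid because $p_2$ is smooth with connected geometric fibers. If $\mathcal{G}_1, \mathcal{G}_2$ are perverse with $p_2^{*}\mathcal{G}_1 \cong \mathcal{F} \cong p_2^{*}\mathcal{G}_2$, then $\mathcal{G}_1 \cong \mathcal{G}_2$; $\alpha_2$-equivariance of the intermediate isomorphism plays no role. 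Since $i^{*}(\mathcal{A} \boxtimes \mathcal{B})$ is perverse (as $i$ is an isomorphism) and you have produced \emph{some} isomorphism $p_2^{*}\bigl(i^{*}(\mathcal{A} \boxtimes \mathcal{B})\bigr) \cong p_1^{*}(\mathcal{A} \boxtimes \mathcal{B})$, the identification with $\mathcal{A} \odot_{\Q_p} \mathcal{B}$ follows immediately. The bookkeeping in your last paragraph can therefore be dropped. One further small correction: $g^{(t+p)}$ naturally acts on the $(t+p)$-summand of $\overline{\mathcal{W}}_{\textup{sup}}(\Q_p)$, which is $\overline{\mathcal{U}}_{\textup{sup}}(\Q_p)$, so the morphism you want is $\biggermonoid{m}{n}_{\Q_p} \to \integraliwahoriargs{\mu}{\nu}_{\Q_p}$ (with a unit similitude multiplier, since $t^{m+n}$ is a unit there) rather than into $\universalgroup{m}{n}{\mu}{\nu}_{\Q_p}$; the equivariance of $\mathcal{B}$ for $\integraliwahoriargs{\mu}{\nu}_{\Q_p}$ is exactly what is used.
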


\begin{proof}
The proof is nearly identical to the one occuring for the 2nd part of Lemma 24 in \cite{HN}, replacing the objects and morphisms used there by the slightly modified objects and morphisms used in this paper for Lemma \ref{Lgenericfiberisom} (page \pageref{Lgenericfiberisom}).
\end{proof}

\begin{bibdiv}
\begin{biblist}

\bib{AM}{book}{
   author={Atiyah, M. F.},
   author={Macdonald, I. G.},
   title={Introduction to commutative algebra},
   publisher={Addison-Wesley Publishing Co., Reading, Mass.-London-Don Mills, Ont.},
   date={1969},
}

\bib{Bo}{book}{
   author={Bourbaki, Nicolas},
   title={Commutative algebra. Chapters 1--7},
   series={Elements of Mathematics (Berlin)},
   note={Translated from the French; Reprint of the 1989 English translation},
   publisher={Springer-Verlag, Berlin},
   date={1998},
   isbn={3-540-64239-0},
}

\bib{BBD}{article}{
   author={Be{\u\i}linson, A. A.},
   author={Bernstein, J.},
   author={Deligne, P.},
   title={Faisceaux pervers},
   language={French},
   conference={
      title={Analysis and topology on singular spaces, I},
      address={Luminy},
      date={1981},
   },
   book={
      series={Ast\'erisque},
      volume={100},
      publisher={Soc. Math. France, Paris},
   },
   date={1982},
   pages={5--171},
}


\bib{DG}{book}{
   author={Demazure, Michel},
   author={Gabriel, Peter},
   title={Introduction to algebraic geometry and algebraic groups},
   series={North-Holland Mathematics Studies},
   volume={39},
   note={Translated from the French by J. Bell},
   publisher={North-Holland Publishing Co., Amsterdam-New York},
   date={1980},
   isbn={0-444-85443-6},
}

\bib{Ei}{book}{
   author={Eisenbud, David},
   title={Commutative algebra, With a view toward algebraic geometry},
   series={Graduate Texts in Mathematics},
   volume={150},
   publisher={Springer-Verlag, New York},
   date={1995},
   isbn={0-387-94269-6},
}

\bib{Ga}{article}{
   author={Gaitsgory, D.},
   title={Construction of central elements in the affine Hecke algebra via nearby cycles},
   journal={Invent. Math.},
   volume={144},
   date={2001},
   number={2},
   pages={253--280},
}

\bib{Go1}{article}{
author={G\"{o}rtz, Ulrich},
title={Flatness of local models of certain Shimura varieties of PEL type},
journal={Math. Ann.},
volume={321},
pages={689--727},
date={2001}
}

\bib{Go2}{article}{
   author={G{\"o}rtz, Ulrich},
   title={Affine Springer fibers and affine Deligne-Lusztig varieties},
   conference={
      title={Affine flag manifolds and principal bundles},
   },
   book={
      series={Trends Math.},
      publisher={Birkh\"auser/Springer Basel AG, Basel},
   },
   date={2010},
   pages={1--50},
}

\bib{GW}{book}{
   author={G{\"o}rtz, Ulrich},
   author={Wedhorn, Torsten},
   title={Algebraic geometry I},
   series={Advanced Lectures in Mathematics},
   note={Schemes with examples and exercises},
   publisher={Vieweg + Teubner, Wiesbaden},
   date={2010},
   isbn={978-3-8348-0676-5},
}

\bib{Ha1}{article}{
   author={Haines, Thomas J.},
   title={The combinatorics of Bernstein functions},
   journal={Trans. Amer. Math. Soc.},
   volume={353},
   date={2001},
   number={3},
   pages={1251--1278 (electronic)},
}

\bib{Ha2}{article}{
   author={Haines, Thomas J.},
   title={Introduction to Shimura varieties with bad reduction of parahoric type},
   conference={
      title={Harmonic analysis, the trace formula, and Shimura varieties},
   },
   book={
      series={Clay Math. Proc.},
      volume={4},
      publisher={Amer. Math. Soc., Providence, RI},
   },
   date={2005},
   pages={583--642},
}

\bib{HN}{article}{
author={Haines, Thomas},
author={Ng\^{o}, B\'{a}u Ch\^{a}u}*{inverted={yes}},
title={Nearby cycles for local models of some Shimura varieties},
journal={Compositio Math.},
volume={133},
pages={117--150},
date={2002}
}



\bib{KL1}{article}{
author={Kazhdan, David},
author={Lusztig, George},
title={Representations of Coxeter groups and Hecke algebras},
journal={Invent. Math.},
volume={53},
pages={165--184},
date={1979}
}

\bib{KL2}{article}{
author={Kazhdan, David},
author={Lusztig, George},
title={Schubert varieties and Poincar\'{e} duality},
journal={Proc. Sympos. Pure Math.},
volume={36},
pages={185--203},
date={1980}
}

\bib{Ko1}{article}{
author={Kottwitz, Robert},
title={On the $ \lambda $-adic representations associated to some simple Shimura varieties},
journal={Invent. Math.},
volume={108},
pages={653--665},
date={1992}
}


\bib{KR}{article}{
   author={Kottwitz, R.},
   author={Rapoport, M.},
   title={Minuscule alcoves for ${\rm GL}_n$ and $G{\rm Sp}_{2n}$},
   journal={Manuscripta Math.},
   volume={102},
   date={2000},
   number={4},
   pages={403--428}
}

\bib{Lu}{article}{
author={Lusztig, George},
title={Cells in affine Weyl groups and tensor categories},
journal={Adv. Math.},
volume={129},
pages={85--98},
date={1997}
}


\bib{P}{article}{
author={Pappas, Georgios},
title={On the arithmetic moduli schemes of PEL Shimura varieties},
journal={J. Algebraic Geom.},
volume={9},
pages={577--605},
date={2000}
}

\bib{PR1}{article}{
author={Pappas, Georgios},
author={Rapoport, Michael},
title={Local models in the ramified case 1: the EL case},
journal={J. Algebraic Geom.},
volume={12},
pages={107--145},
date={2003}
}

\bib{PR2}{article}{
author={Pappas, Georgios},
author={Rapoport, Michael},
title={Local models in the ramified case 2: splitting models},
journal={Duke Math. J.},
volume={127},
pages={193--250},
date={2005}
}

\bib{PR3}{article}{
author={Pappas, Georgios},
author={Rapoport, Michael},
title={Local models in the ramified case 3: unitary groups},
journal={J. Inst. Math. Jussieu},
volume={8},
pages={507--564},
date={2009}
}


\bib{PZ}{article}{
   author={Pappas, G.},
   author={Zhu, X.},
   title={Local models of Shimura varieties and a conjecture of Kottwitz},
   journal={Invent. Math.},
   volume={194},
   date={2013},
   number={1},
   pages={147--254},
}

\bib{R}{article}{
author={Rapoport, Michael},
title={On the bad reduction of Shimura varieties},
booktitle={Automorphic forms, Shimura varieties, and $ L $-functions},
journal={Perspect. Math.},
volume={11},
pages={253--321},
date={1988}
}

\bib{RZ}{book}{
   author={Rapoport, M.},
   author={Zink, Th.},
   title={Period spaces for $p$-divisible groups},
   series={Annals of Mathematics Studies},
   volume={141},
   publisher={Princeton University Press, Princeton, NJ},
   date={1996},
   isbn={0-691-02781-1},
}

\bib{roro}{article}{
   author={Rostami, Sean},
   title={The Bernstein presentation for general connected reductive groups},
   journal={J. London Math. Soc.},
   volume={9},
   date={2015},
   number={2},
   pages={514--536},
}

\bib{Sc}{book}{
   author={Scharlau, Winfried},
   title={Quadratic and Hermitian forms},
   series={Grundlehren der Mathematischen Wissenschaften [Fundamental
   Principles of Mathematical Sciences]},
   volume={270},
   publisher={Springer-Verlag, Berlin},
   date={1985},
   isbn={3-540-13724-6},
}

\bib{Sm1}{article}{
   author={Smithling, Brian D.},
   title={Topological flatness of orthogonal local models in the split, even case. I},
   journal={Math. Ann.},
   volume={350},
   date={2011},
   number={2},
   pages={381--416},
}

\bib{Sm2}{article}{
   author={Smithling, Brian D.},
   title={Topological flatness of local models for ramified unitary groups. I. The odd dimensional case},
   journal={Adv. Math.},
   volume={226},
   date={2011},
   number={4},
   pages={3160--3190},
}

\bib{Sm3}{article}{
   author={Smithling, Brian D.},
   title={Topological flatness of local models for ramified unitary groups. II. The even dimensional case},
   journal={J. Inst. Math. Jussieu},
   volume={13},
   date={2014},
   number={2},
   pages={303--393},
}


\end{biblist}
\end{bibdiv}

\end{document}